\renewcommand{\mathcal}[1]{{\mathscr#1}}
\newtheorem{theorem}{Theorem}[section]
\newtheorem{corollary}[theorem]{Corollary}
\newtheorem{lemma}[theorem]{Lemma}
\newtheorem{prop}[theorem]{Proposition}
\theoremstyle{definition}
\newtheorem{defn}[theorem]{Definition}
\theoremstyle{remark}
\newtheorem{rem}[theorem]{Remark}
\numberwithin{section}{chapter}
\numberwithin{equation}{section}
\newcommand{\R}{{\mathbb R}}
\newcommand{\N}{{\mathbb N}}
\renewcommand{\leq}{\leqslant}
\renewcommand{\le}{\leqslant}
\renewcommand{\geq}{\geqslant}
\renewcommand{\ge}{\geqslant}
\newcommand{\eps}{\varepsilon }
\renewcommand{\epsilon}{\varepsilon }
\def\Xint#1{\mathchoice
{\XXint\displaystyle\textstyle{#1}}%
{\XXint\textstyle\scriptstyle{#1}}%
{\XXint\scriptstyle\scriptscriptstyle{#1}}%
{\XXint\scriptscriptstyle\scriptscriptstyle{#1}}%
\!\int}
\def\XXint#1#2#3{{\setbox0=\hbox{$#1{#2#3}{\int}$ }
\vcenter{\hbox{$#2#3$ }}\kern-.6\wd0}}
\def\dashint{\Xint-}
\newlength{\defbaselineskip}
\newcommand{\setlinespacing}[1]
           {\setlength{\baselineskip}{#1 \defbaselineskip}}
\author[S. Dipierro]{Serena Dipierro}
\address[Serena Dipierro]{School of Mathematics and Statistics,
University of Melbourne,
813 Swanston Street, Parkville VIC 3010, Australia}
\email{serena.dipierro@ed.ac.uk}
\author[M. Medina]{Mar\'{i}a Medina}
\address[Mar\'{i}a Medina]{Departamento de Matem\'aticas, Universidad Aut\'onoma de Madrid,
        28049, Madrid, Spain. }
\email{maria.medina@uam.es}
\author[E. Valdinoci]{Enrico Valdinoci}
\address[Enrico Valdinoci]{School of Mathematics and Statistics,
University of Melbourne,
813 Swanston Street, Parkville VIC 3010, Australia, and
Universit\`a degli studi di Milano,
Via Saldini 50, 20133 Milan, Italy, and
Weierstra{\ss} Institut f\"ur 
Angewandte Analysis und Stochastik, Hausvogteiplatz 11A, 10117 Berlin, Germany.}
\email{enrico.valdinoci@wias-berlin.de}
\subjclass[2010]{35A15, 35B40, 35D30, 35J20, 35R11, 49N60.}
\keywords{Fractional equation, critical problem, concentration-compactness principle, 
Mountain Pass Theorem.}
\thanks{{\it Acknowledgements}.
The first author has been supported by EPSRC grant  EP/K024566/1
\emph{Monotonicity formula methods for nonlinear PDEs}
and by the Alexander von Humboldt Foundation.
The second author has been supported by projects  MTM2010-18128 and MTM2013-40846-P, MINECO.
The third author has been supported by ERC grant 277749 \emph{EPSILON Elliptic
Pde's and Symmetry of Interfaces and Layers for Odd Nonlinearities}
and PRIN grant
201274FYK7 \emph{Critical Point Theory
and Perturbative Methods for Nonlinear Differential Equations}.
}
\title[A fractional problem]{Fractional elliptic problems\\
with critical growth\\
in the whole of $\R^n$}
\begin{document}

\maketitle

{\small
\tableofcontents 
}

\chapter*{Abstract}
This is a research monograph devoted
to the analysis of a nonlocal equation in the whole of
the Euclidean space. 
In studying this equation, we will introduce all the necessary
material in the most self-contained way as possible,
giving precise reference to the literature when necessary.

In further detail,
we study here
the following nonlinear and nonlocal elliptic equation in~$\R^n$
$$
(-\Delta)^s u = \epsilon\,h\,u^q + u^p \ {\mbox{ in }}\R^n, 
$$
where~$s\in(0,1)$, $n>2s$, $\epsilon>0$ is a small parameter, $p=\frac{n+2s}{n-2s}$, $q\in(0,1)$, and~$h\in L^1(\R^n)\cap L^\infty(\R^n)$. 
The problem has a variational structure, and this allows us to find 
a positive solution by looking at critical points of a suitable energy 
functional. In particular, in this monograph, we find a local minimum 
and a different
solution of this functional (this second solution
is found by a contradiction argument which uses a
mountain pass technique, so the solution is not necessarily proven to
be of mountain pass type).
 
One of the crucial ingredient in the proof is the use of a suitable
Concentration-Compactness 
principle.  

Some difficulties arise from 
the nonlocal structure of the problem and from the fact that we deal with an 
equation in the whole of~$\R^n$ (and this causes lack of compactness of some 
embeddings). We overcome these difficulties 
by looking at an equivalent extended problem. 
\medskip

This monograph is organized as follows.

Chapter \ref{90f56rfxFFFj}
gives an elementary introduction to
the techniques involved, providing also some motivations
for nonlocal equations and auxiliary remarks on critical point theory.

Chapter \ref{intro-mono} gives a detailed description of the class of problems
under consideration (including the main equation studied in this
monograph) and provides further motivations.

Chapter \ref{FAS} introduces the analytic setting necessary
for the study of nonlocal and nonlinear equations
(this part is of rather general interest, since 
the functional analytic setting is common in different
problems in this area).

The research oriented part of the monograph is mainly concentrated
in Chapters \ref{ECXMII}, \ref{7xucjhgfgh345678}
and \ref{EMP:CHAP}
(as a matter of fact, Chapter~\ref{7xucjhgfgh345678} may also be of general
interest, since it deals with a regularity theory
for a general class of equations).

\chapter{Introduction}\label{90f56rfxFFFj}

This research monograph deals with a nonlocal problem with critical
nonlinearities. The techniques used are variational and they
rely on classical variational methods, such as the Mountain Pass Theorem
and the Concentration-Compactness Principle (suitably adapted, in order
to fit with the nonlocal structure of the problem under consideration).
The subsequent sections will give a brief introduction
to the fractional Laplacian and to the variational methods exploited.

Of course, a comprehensive introduction goes far beyond the scopes
of a research monograph, but we will try to let the interested
reader get acquainted with the problem under consideration and
with the methods used in a rather self-contained form,
by keeping the discussion at the simplest possible level
(but trying to avoid oversimplifications).
The expert reader may well skip this initial overview and
go directly to Chapter~\ref{intro-mono}.

\section{The fractional Laplacian}\label{pruzzo}

The operator dealt with in this paper is the so-called
fractional Laplacian.

For a ``nice'' function~$u$ (for instance, if~$u$
lies in the Schwartz Class of smooth and rapidly
decreasing functions), the $s$ power of the Laplacian,
for~$s\in(0,1)$, can be easily defined in the Fourier frequency space.
Namely, by taking the Fourier transform
$$ \hat u(\xi) = {\mathcal{F}} u(\xi)=
\int_{\R^n} u(x)\,e^{-2\pi i x\cdot\xi} \,dx,$$
and by looking at the Fourier Inversion Formula
$$ u(x) = {\mathcal{F}}^{-1} \hat u(x)=
\int_{\R^n} \hat u(\xi)\,e^{2\pi i x\cdot\xi} \,d\xi,$$
one notices that the derivative (say, in the $k$th coordinate
direction) in the original variables
corresponds to the multiplication by~$2\pi i \xi_k$ in the frequency
variables, that is
$$ \partial_k u(x) =
\int_{\R^n} 2\pi i \xi_k \,\hat u(\xi)\,e^{2\pi i x\cdot\xi} \,d\xi
= {\mathcal{F}}^{-1} \big( 2\pi i \xi_k \,\hat u\big).
$$
Accordingly, the operator~$(-\Delta)=-\sum_{k=1}^n \partial^2_{k}$
corresponds to the multiplication by~$(2\pi |\xi|)^2$ in the frequency
variables, that is
$$ -\Delta u(x) =                              
\int_{\R^n} (2\pi |\xi|)^2 \,\hat u(\xi)\,e^{2\pi i x\cdot\xi} \,d\xi
= {\mathcal{F}}^{-1} \big( (2\pi | \xi|)^2\,\hat u\big).
$$
With this respect, it is not too surprising to define
the power $s$ of the operator~$(-\Delta)$
as the multiplication by~$(2\pi |\xi|)^{2s}$ in the frequency
variables, that is
\begin{equation}\label{DEF:1}
(-\Delta)^s u(x)                    
:= {\mathcal{F}}^{-1} \big( (2\pi | \xi|)^{2s}\,\hat u\big).
\end{equation}
Another possible approach to the fractional Laplacian
comes from the theory of semigroups and fractional 
calculus. Namely, for any~$\lambda>0$,
using the substitution~$\tau=\lambda t$ and an integration by parts, one sees that
$$ \int_0^{+\infty} t^{-s-1}(e^{-\lambda t}-1)\,dt
= \Gamma(-s)\,\lambda^{s},$$
where~$\Gamma$ is the Euler's Gamma-function.
Once again, not too surprising, one can define the fractional
power of the Laplacian by formally replacing
the positive real number~$\lambda$
with the positive operator~$-\Delta$ in the above formula, that is
$$ (-\Delta)^{s} :=
\frac{1}{\Gamma(-s)} \int_0^{+\infty} t^{-s-1}(e^{\Delta t}-1)\,dt,$$
which reads as
\begin{equation}\label{DEF:2}
(-\Delta)^{s} u(x) =
\frac{1}{\Gamma(-s)} \int_0^{+\infty} t^{-s-1}(e^{\Delta t}u(x)-u(x))\,dt.
\end{equation}
Here above, the function~$U(x,t)=e^{\Delta t}u(x)$
is the solution of the heat equation~$\partial_t U=\Delta U$
with initial datum~$U|_{t=0}=u$.

The equivalence between the two definitions
in~\eqref{DEF:1} and~\eqref{DEF:2}
can be proved by suitable elementary calculations,
see e.g.~\cite{Bucur}.

The two definitions
in~\eqref{DEF:1} and~\eqref{DEF:2} are both useful for many
properties and they give different useful pieces of information.
Nevertheless, in this monograph, we will
take another definition, which is equivalent to the
ones in~\eqref{DEF:1} and~\eqref{DEF:2}
(at least for nice functions), but which is
more flexible for our purposes.
Namely, we set
\begin{equation}\label{DEF:3}\begin{split}
(-\Delta)^s u(x)\,&:= c_{n,s}\,PV \int_{\R^n}\frac{u(x)-u(y)}{|x-y|^{n+2s}}\,dy 
\\&:=c_{n,s}\,\lim_{r\to0} \int_{\R^n\setminus B_r(x)}
\frac{u(x)-u(y)}{|x-y|^{n+2s}}\,dy,\end{split}\end{equation}
where
$$ c_{n,s}:=\frac{2^{2s}\,s\,\Gamma\left( \frac{n}{2}+s\right)}{\pi^{n/2}\,
\Gamma(1-s)}.$$
See for instance~\cite{Bucur}
for the equivalence of~\eqref{DEF:3}
with~\eqref{DEF:1} and~\eqref{DEF:2}.

Roughly speaking, our preference (at least for what concerns
this monograph) for the
definition in~\eqref{DEF:3} lies in the following
features. First of all, the definition in~\eqref{DEF:3}
is more unpleasant, but geometrically more intuitive
(and often somehow more treatable) than the ones
in~\eqref{DEF:1} and~\eqref{DEF:2}, since
it describes an incremental quotient (of differential order~$2s$)
weighted in the whole of~$\R^n$. As a consequence,
one may obtain a ``rough'' idea on how~$(-\Delta)^s$ looks like
by considering the oscillations of the original function~$u$,
suitably weighted.

Conversely, the definitions in~\eqref{DEF:1} and~\eqref{DEF:2}
are perhaps shorter and more evocative, but 
they require some ``hidden calculations''
since they involve either the Fourier transform 
or the heat flow of the function~$u$, rather than the
function~$u$ itself.

Moreover, the definition in~\eqref{DEF:3}
has straightforward probabilistic interpretations (see e.g.~\cite{Bucur}
and references therein) and can be directly generalized
to other singular integrodifferential kernels
(of course, in many cases, even when dealing in principle
with the definition in~\eqref{DEF:3},
the other equivalent definitions do provide additional results).

In addition, by taking the definition in~\eqref{DEF:3},
we do not need~$u$ to be necessarily in the
Schwartz Class, but we can look at weak, distributional
solutions, in a similar way to the theory of
classical Sobolev spaces. We refer for instance
to~\cite{DPV} for a basic discussion on the fractional Sobolev spaces
and to~\cite{SV-2} for the main functional analytic
setting needed in the study of variational problems. \medskip

To complete this short introduction to the fractional Laplacian,
we briefly describe a simple probabilistic motivation
arising from game theory on a traced space
(here, we keep the discussion at a simple, and even heuristic level,
see for instance~\cite{Bertoin}, \cite{MOL}
and the references therein
for further details). The following discussion describes
the fractional Laplacian occurring as a consequence
of a classical random process in one additional dimension
(see Figure~\ref{BROWN}).

\begin{figure}
    \centering
    \includegraphics[width=12.4cm]{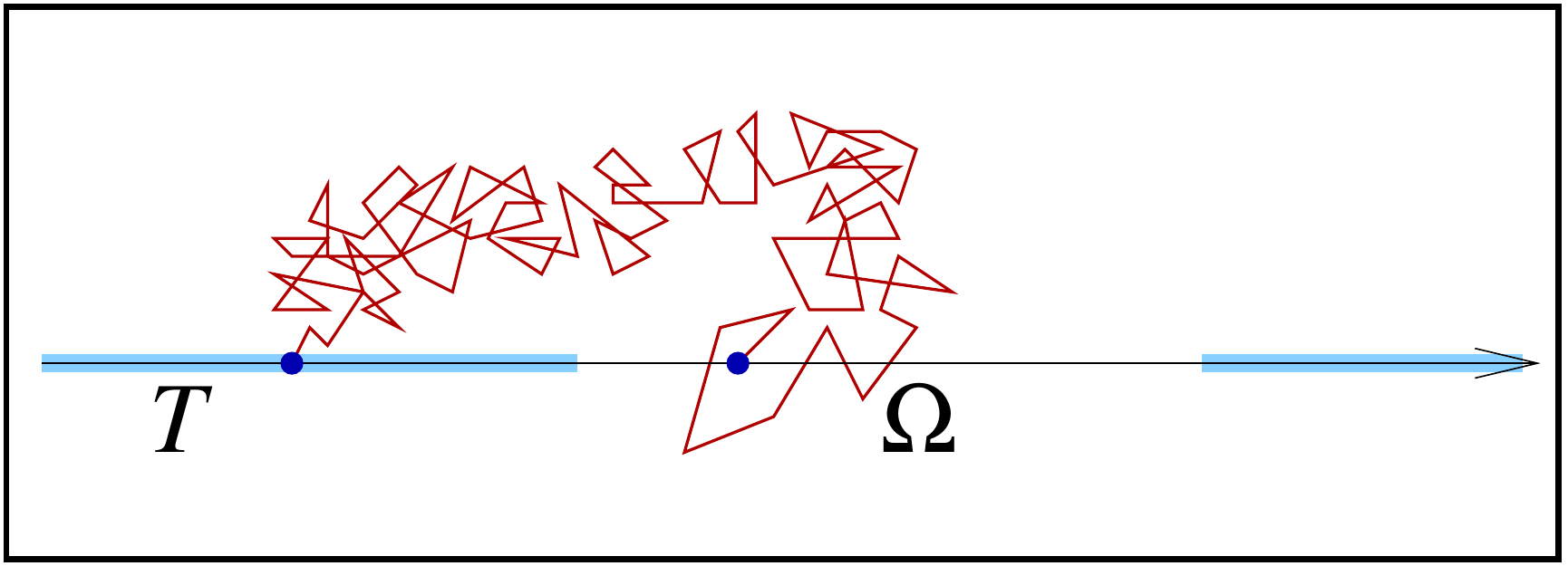}
    \caption{A Brownian motion on~$\R^{n+1}$ and the payoff
on~$\R^n\times\{0\}$.}
    \label{BROWN}
\end{figure}

We consider a bounded and smooth domain~$\Omega\subset\R^n$
and a nice (and, for simplicity, rapidly
decaying) payoff function~$f:\R^n\setminus\Omega\to [0,1]$.
We immerse this problem into~$\R^{n+1}$, by defining~$\Omega_*:=\Omega
\times\{0\}$.

The game goes as follows: we start at some point of~$\Omega_*$
and we move randomly in~$\R^{n+1}$ by following a Brownian motion,
till we hit~$(\R^n\setminus\Omega)\times\{0\}$ at some point~$p$:
in this case we receive a payoff of~$f(p)$ livres.

For any~$x\in\R^n$,
we denote by~$u(x)$ the expected value of the payoff
when we start at the point~$(x,0)\in\R^{n+1}$
(that is, roughly speaking, how much we expect to win
if we start the game from the point~$(x,0)\in\Omega\times\{0\}$).
We will show that~$u$ is solution of the fractional equation
\begin{equation}\label{EQ-LW-iu-1}
\left\{ \begin{matrix}
(-\Delta)^{1/2} u =0 & {\mbox{ in }}\Omega,\\
u =f & {\mbox{ in }}\R^n\setminus\Omega.
\end{matrix}
\right.
\end{equation}
Notice that the condition~$u=f$ in~$\R^n\setminus\Omega$
is obvious from the construction (if we start directly at a place
where a payoff is given, we get that). So the real issue
is to understand the equation satisfied by~$u$.

For this scope, for any~$(x,y)\in\R^n\times\R=\R^{n+1}$,
we denote by~$U(x,y)$ the expected value of the payoff
when we start at the point~$(x,y)$. We observe that~$U(x,0)=u(x)$.
Also, we define~$T:=(\R^n\setminus\Omega)\times\{0\}$
($T$ is our ``target'' domain) and we
claim that~$U$ is harmonic in~$\R^{n+1}\setminus T$,
i.e.
\begin{equation}\label{EQ-LW-iu-2}
{\mbox{$\Delta U=0$ in $\R^{n+1}\setminus T$.}}
\end{equation}
To prove this, we argue as follows. Fix~$P\in \R^{n+1}\setminus T$
and a small ball of radius~$r>0$ around it, such that~$B_r(P)\subset
\R^{n+1}\setminus T$. Then, the expected value
that we receive starting the game from $P$ should be the average
of the expected value that we receive starting the game from another
point~$Q\in \partial B_r(P)$ times the probability of drifting from~$Q$
to~$P$. Since the Brownian motion is rotationally invariant,
all the points on the sphere have the same probability of drifting
towards~$P$, and this gives that
$$ U(P) = \dashint_{\partial B_r(P)} U(Q)\,d{\mathcal{H}}^n(Q).$$
That is, $U$ satisfies the mean value property of
harmonic functions, and this establishes~\eqref{EQ-LW-iu-2}.

Furthermore, since the problem is symmetric with respect to the $(n+1)$th
variable we also have that~$U(x,y)=U(x,-y)$ for any~$x\in\Omega$
and so
\begin{equation}\label{EQ-LW-iu-3}
{\mbox{$\partial_y U(x,0)=0$ for any~$x\in\Omega$.}}
\end{equation}
Now we take Fourier transforms in the variable~$x\in\R^n$,
for a fixed~$y>0$. {F}rom~\eqref{EQ-LW-iu-2}, we know that~$\Delta U(x,y)=0$
for any~$x\in\R^n$ and~$y>0$, therefore
$$ -(2\pi|\xi|)^2 \hat U(\xi,y)+\partial_{yy} \hat U(\xi,y)=0,$$
for any~$\xi\in\R^n$ and~$y>0$. This is an ordinary differential equation
in~$y>0$, which
can be explicitly solved: we find that
$$ \hat U(\xi,y) = \alpha(\xi)\, e^{2\pi|\xi| y} + \beta(\xi)\, e^{-2\pi |\xi| y},$$
for suitable functions~$\alpha$ and~$\beta$.
As a matter of fact, since
$$ \lim_{y\to+\infty} e^{2\pi|\xi| y}=+\infty,$$
to keep~$\hat U$ bounded we have that~$\alpha(\xi)=0$ for any~$\xi\in\R^n$.
This gives that
$$ \hat U(\xi,y) = \beta(\xi)\, e^{-2\pi|\xi| y}.$$
We now observe that
$$ \hat u(\xi) =\hat U(\xi, 0)=\beta(\xi),$$
therefore
$$ \hat U(\xi,y) = \hat u(\xi)\, e^{-2\pi|\xi| y}$$
and so
$$ {\mathcal{F}}(\partial_y U)(\xi,y)=
\partial_y \hat U(\xi,y) = -2\pi|\xi|\,\hat u(\xi)\, e^{-2\pi|\xi| y}.$$
In particular, ${\mathcal{F}}(\partial_y U)(\xi,0)=-2\pi|\xi|\,\hat u(\xi)$.
Hence we exploit~\eqref{EQ-LW-iu-3}
(and we also recall~\eqref{DEF:1}): in this way,
we obtain that, for any~$x\in\Omega$,
$$ 0= \partial_y U(x,0)= - {\mathcal{F}}^{-1} \Big(
2\pi\,|\xi|\,\hat u(\xi)\Big) (x)= -(-\Delta)^{1/2} u(x),$$
which proves \eqref{EQ-LW-iu-1}.

\section{The Mountain Pass Theorem}

Many of the problems in mathematical analysis deal with the
construction of suitable solutions. The word ``construction''
is often intended in a ``weak'' sense, not only because the
solutions found are taken in a ``distributional'' sense,
but also because the proof of the existence of the solution
is often somehow not constructive
(though some qualitative or quantitative properties
of the solutions may be often additionally found).

In some cases, the problem taken into account presents a variational
structure, namely the desired solutions may be found as critical
points of a functional (this functional is often called ``energy''
in the literature, though it is in many cases related more to
a ``Lagrangian action'' from the physical point of view).

When the problem has a variational structure,
it can be attacked by all the methods which aim to prove
that a functional indeed possesses a critical point.
Some of these methods arise as the ``natural'' generalizations
from basic Calculus to advanced Functional Analysis:
for instance, by a variation of the classical Weierstra{\ss} Theorem,
one can find solutions corresponding to local (or sometimes global)
minima of the functional.
\medskip

In many circumstances, these minimal solutions do not exhaust
the complexity of the problem itself. For instance,
the minimal solutions happen in many cases to be ``trivial''
(for example, corresponding to the zero solution). Or, in any case,
solutions different from the minimal ones may exist, and they may
indeed have interesting properties. For example, the fact that
they come from a ``higher energy level'' may allow them to show
additional oscillations, or having ``directions'' along which
the energy is not minimized may produce some intriguing forms
of instabilities.

Detecting non-minimal solutions is of course, in principle,
harder than finding minimal ones, since the direct methods
leading to the Weierstra{\ss} Theorem (basically reducing
to compactness and some sort of continuity) are in general not enough.

As a matter of fact, these methods need to be implemented with the aid of
additional ``topological'' methods, mostly inspired by
Morse Theory (see~\cite{Milnor}). Roughly speaking, these methods
rely on the idea that critical points prevent the energy graph
to be continuously deformed by following lines of steepest
descent (i.e. gradient flows).
\medskip

One of the most important devices to detect critical points of non-minimal
type is the so called Mountain Pass Theorem.
This result can be pictorially depicted
by thinking that the energy functional is simply
the elevation ${\mathcal{E}}$ of a given point on the Earth.
The basic assumption of the Mountain Pass Theorem
is that there are
(at least) two low spots in the landscape,
for instance, the origin, which (up to translations)
is supposed to lie at the sea level (say, ${\mathcal{E}}(0)=0$)
and a far-away place~$p$ which also lies at the sea level,
or even below (say, ${\mathcal{E}}(p)\leq0$). 

The origin is also supposed to be surrounded by points
of higher elevation (namely,
there exist~$r$, $a>0$ such that~${\mathcal{E}}(u)\geq a$
if~$|u| =r$). Under this assumption,
any path joining
the origin with~$p$ is supposed to ``climb up'' some mountains
(i.e., it has to go up, at least at level~$a>0$, and then
reach again the sea level in order to reach~$p$).

Thus, each of the path joining~$0$ to~$p$ will have a highest
point. If one needs to travel in ``real life'' from~$0$ to~$p$,
then (s)he would like to minimize the value of this highest point,
to make the effort as small as possible. This corresponds,
in mathematical jargon, to the search of the value
\begin{equation}\label{78ghKKK}
c:=\inf_{\Gamma}\sup_{t\in[0,1]} {\mathcal{E}}(g(t)),
\end{equation}
where~$\Gamma$ is the collection of all possible path~$g$
such that~$g(0)=0$ and~$g(1)=p$.

Roughly speaking, one should expect~$c$ to be a critical value
of saddle type, since the ``minimal path'' has a maximum
in the direction ``transversal to the range of mountains'',
but has a minimum with respect to the tangential directions,
since the competing paths reach a higher altitude.

\begin{figure}
    \centering
    \includegraphics[height=7.5cm]{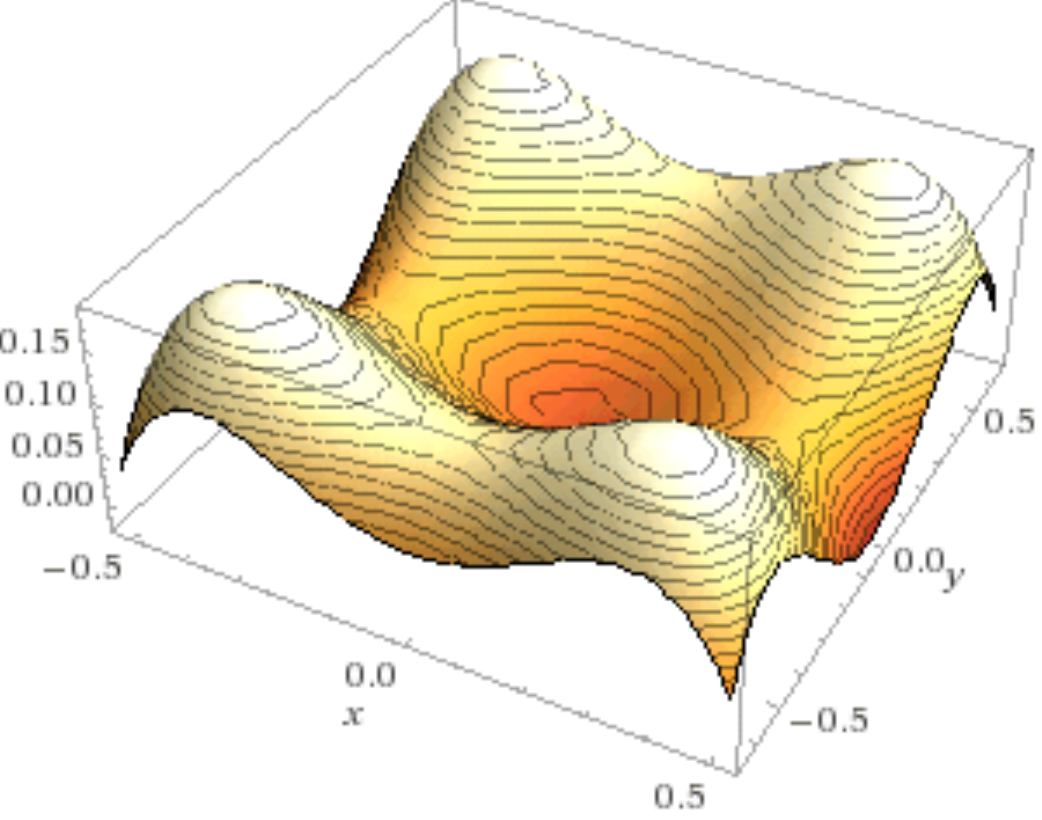} \\
    \includegraphics[height=7.5cm]{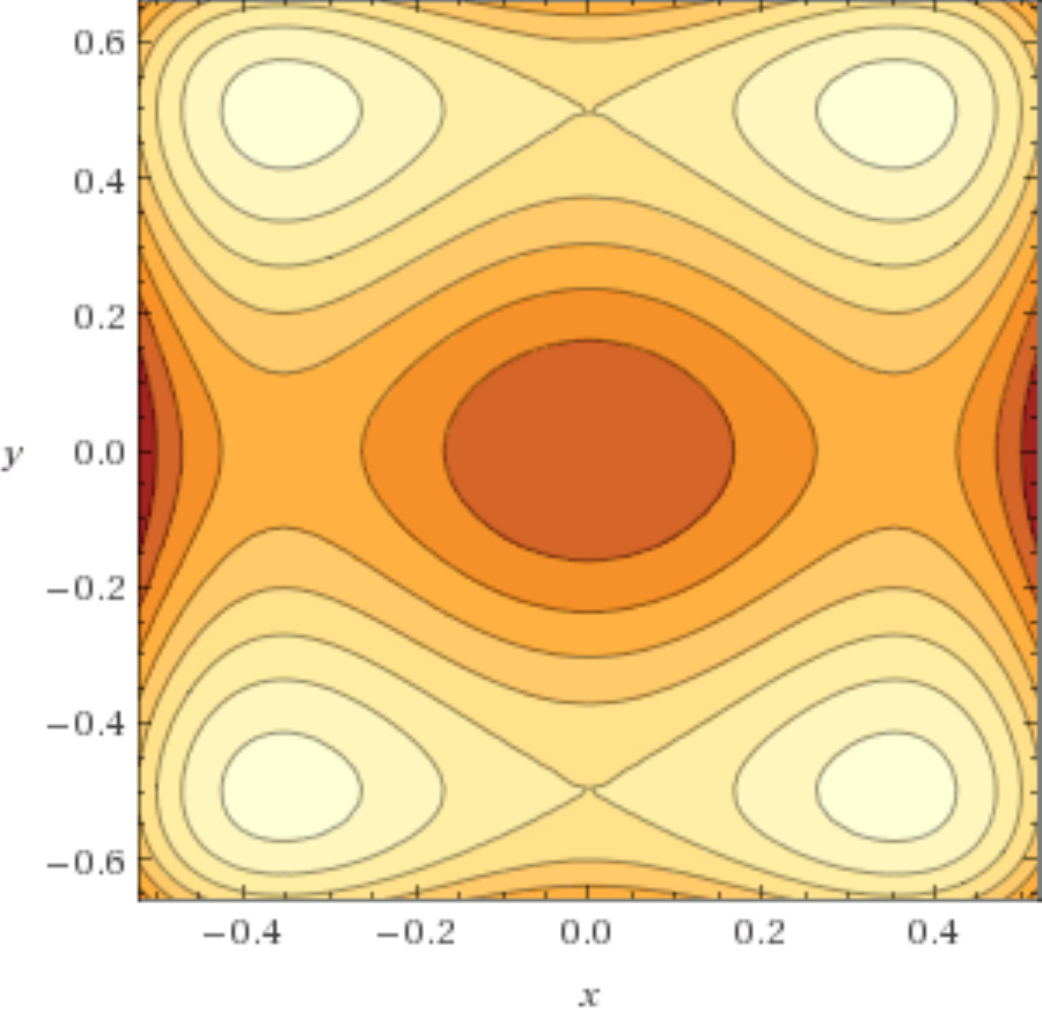}
    \caption{The function $z=x^2+y^2-4x^4-2y^4$ ($3$D plot and
level sets).}
    \label{FIG-1}
\end{figure}

A possible picture of the structure of this mountain pass
is depicted in Figure~\ref{FIG-1}.
On the other hand, to make the argument really work,
one needs a compactness condition, in order to avoid that
the critical point ``drifts to infinity''.

We stress that this loss of compactness for critical points
is not necessarily due to the fact that one works in complicate
functional spaces, and indeed simple examples
can be given even in Calculus curses, see
for instance the following example taken from
Exercise~5.42 in~\cite{Chierchia}.
One can consider the function of two real variables
$$ f(x,y)= (e^x+e^{-x^2})y^2(2-y^2)-e^{-x^2}+1.$$
By construction~$f(0,0)=0$,
\begin{eqnarray*}
\partial_x f &=& (e^x-2x e^{-x^2})y^2(2-y^2)+2xe^{-x^2}, \\
\partial_y f &=& 2(e^x+e^{-x^2})y(2-y^2)-2(e^x+e^{-x^2})y^3\\
{\mbox{and }}\qquad
D^2f(0,0) &=&\left(
\begin{matrix}
2 & 0 \\
0 & 8
\end{matrix}
\right).
\end{eqnarray*}
As a consequence, the origin is a nondegenerate local minimum for~$f$.
In addition, $f(0,\sqrt{2})=0$, so the geometry of
the mountain pass is satisfied. Nevertheless, the function~$f$
does not have any other critical points except the origin.
Indeed, a critical point should satisfy
\begin{eqnarray}
\label{R5:LK1}
&& (e^x-2x e^{-x^2})y^2(2-y^2)+2xe^{-x^2}=0 \\
\label{R5:LK2}
{\mbox{and }}&&
2(e^x+e^{-x^2})y(2-y^2)-2(e^x+e^{-x^2})y^3=0.
\end{eqnarray}
If~$y=0$, then we deduce from~\eqref{R5:LK1} that also~$x=0$,
which gives the origin. So we can suppose that~$y\ne0$
and write~\eqref{R5:LK2} as
$$ 2(e^x+e^{-x^2})(2-y^2)-2(e^x+e^{-x^2})y^2=0,$$
which, after a further simplification gives~$(2-y^2)-y^2=0$,
and therefore~$y=\pm 1$.

By inserting this into~\eqref{R5:LK1}, we obtain that
$$ 0 =
(e^x-2x e^{-x^2})+2xe^{-x^2}= e^x,$$
which produces no solutions. This shows that this example
provides no additional critical points than the origin,
in spite of its mountain pass structure.

The reason for this is that the critical point 
has somehow drifted to infinity: indeed
$$ \lim_{n\to+\infty} \nabla f( -n,1)=0.$$
\medskip

To avoid this type of pathologies of critical points\footnote{We
observe that this pathology does not occur for functions in~$\R$,
since, in one variable, the conditions~${\mathcal{E}}(0)=0$,
${\mathcal{E}}(r)=a>0$
and~${\mathcal{E}}(p)\leq0$ imply the existence of
another critical point, by 
Rolle's Theorem.}
drifting to infinity, one requires an assumption
that provides the compactness (up to subsequences)
of ``almost critical'' points.

This additional compactness assumption is called in the literature
``Palais-Smale condition'' and 
requires that if a sequence~$u_k$ is such that~${\mathcal{E}}(u_k)$
is bounded and~${\mathcal{E}}'(u_k)$ is infinitesimal, then~$u_k$
has a convergent subsequence.

We remark that in~$\R^n$
a condition of this sort
is satisfied automatically for proper maps
(i.e., for functions which do not take unbounded sets into bounded sets),
but in functional spaces the situation is definitely more delicate.
We refer to~\cite{Mawhin-Willem}
and the references therein for a throughout discussion about the
Palais-Smale condition.

\begin{figure}
    \centering
    \includegraphics[height=7.5cm]{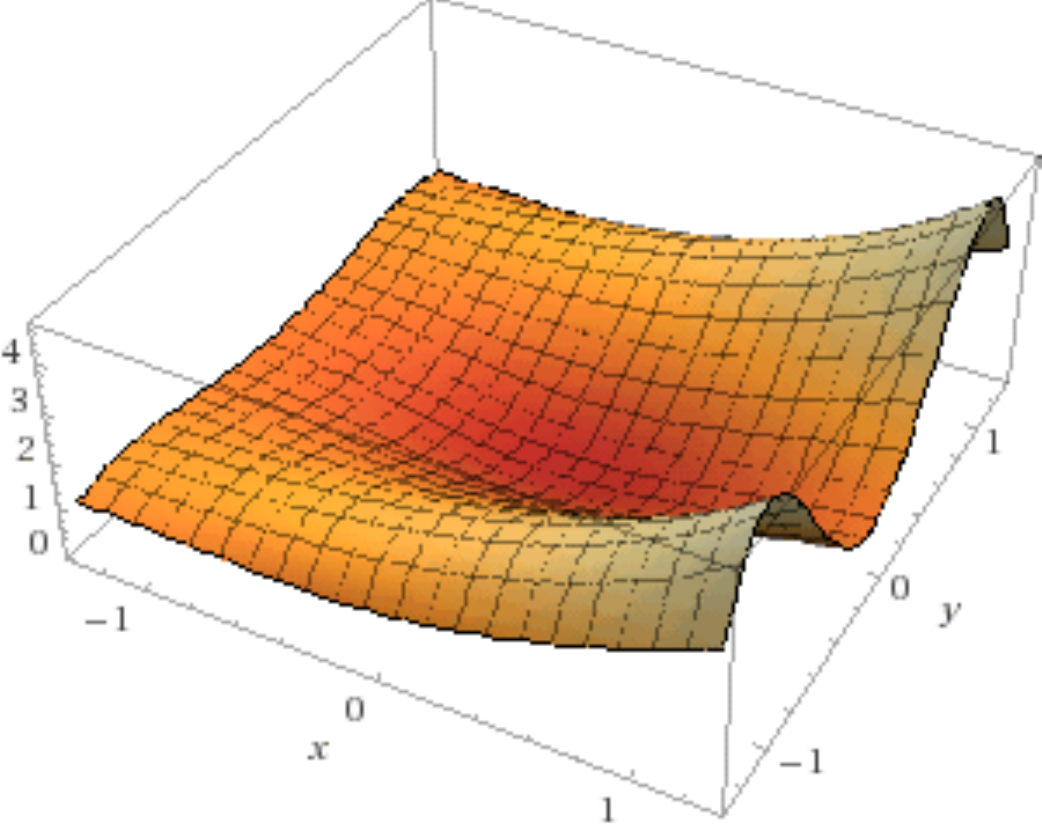} \\
    \includegraphics[height=7.5cm]{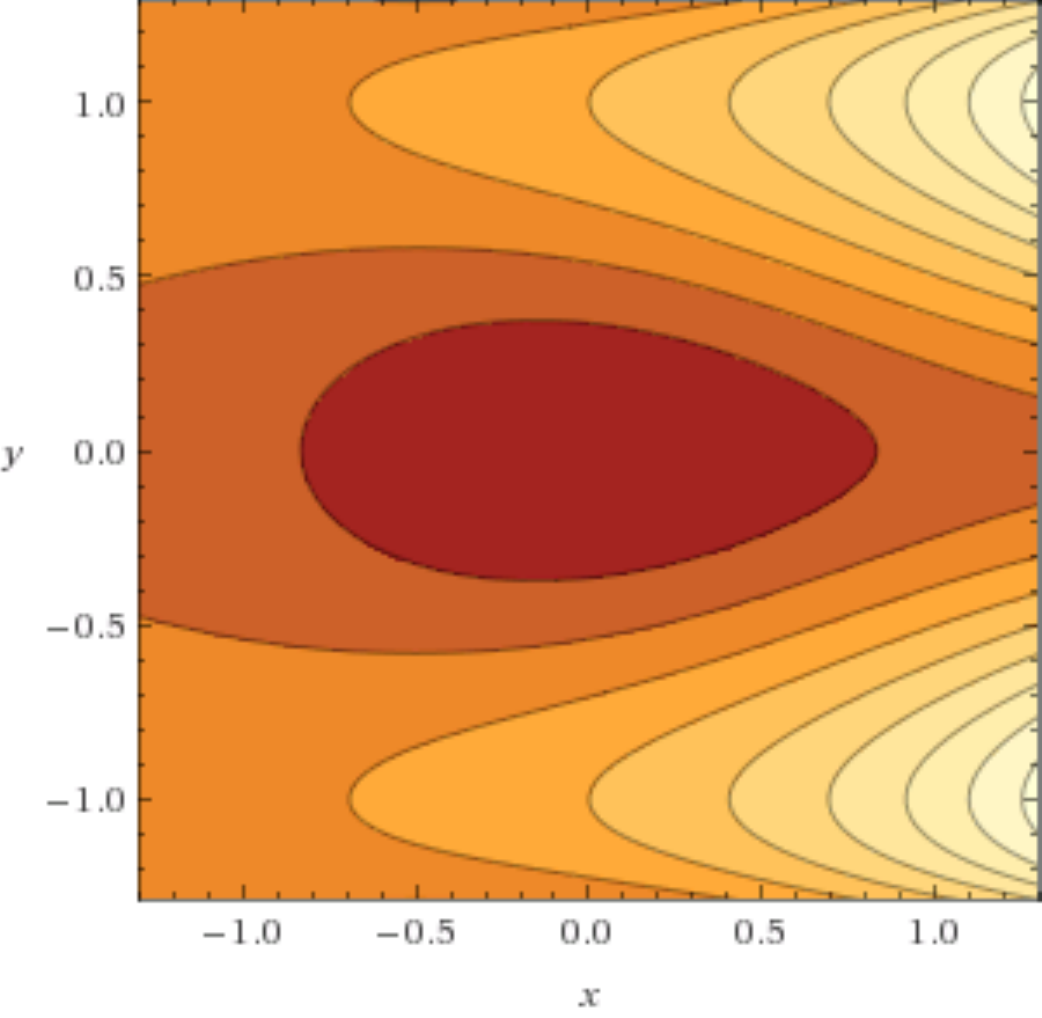}
    \caption{The function $z=(e^x+e^{-x^2})y^2(2-y^2)-e^{-x^2}+1$ ($3$D plot and
level sets).}
    \label{FIG-2}
\end{figure}

The standard version of the Mountain Pass Theorem is due to~\cite{AmRab}, and goes as follows:

\begin{theorem}\label{mpt}
Let~${\mathcal{H}}$ be a Hilbert space and let~${\mathcal{E}}$ be
in~$C^1({\mathcal{H}},\R)$. 
Suppose that there exist~$u_0$, $u_1\in {\mathcal{H}}$ and~$r>0$ such that 
\begin{eqnarray}
\label{1.5} && \inf_{\|u-u_0\|=r}{\mathcal{E}}(u)> {\mathcal{E}}(u_0),\\
\label{1.6}&& \|u_1-u_0\|> r \;{\mbox{ and }}\; {\mathcal{E}}(u_1)\le {\mathcal{E}}(u_0).
\end{eqnarray}
Suppose also that the Palais-Smale condition holds at level~$c$,
with
$$ c:=\inf_{\Gamma}\sup_{t\in[0,1]} {\mathcal{E}}(g(t)),$$
where~$\Gamma$ is the collection of all possible path~$g$
such that~$g(0)=u_0$ and~$g(1)=u_1$.

Then~${\mathcal{E}}$ has a critical point at level~$c$. 
\end{theorem}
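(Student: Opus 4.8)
The plan is to argue by contradiction, combining the \emph{Deformation Lemma} with the min-max characterization of the candidate critical level~$c$. So, suppose that~$c$ is a regular value of~${\mathcal{E}}$, i.e. that there is no critical point at level~$c$. First I would record the elementary but essential fact that~$c\geq \inf_{\|u-u_0\|=r}{\mathcal{E}}(u) > {\mathcal{E}}(u_0)\geq {\mathcal{E}}(u_1)$: indeed, every admissible path~$g\in\Gamma$ starts at~$u_0$ inside the ball~$\{\|u-u_0\|<r\}$ and ends at~$u_1$ outside it (by~\eqref{1.6}), so by continuity it must cross the sphere~$\{\|u-u_0\|=r\}$, and on that sphere~${\mathcal{E}}$ is bounded below by~\eqref{1.5}; taking the sup over~$t$ and then the inf over~$g$ gives~$c\geq \inf_{\|u-u_0\|=r}{\mathcal{E}}(u)$. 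In particular~$c$ is a finite real number strictly above~${\mathcal{E}}(u_0)$ and~${\mathcal{E}}(u_1)$.

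Next I would invoke the Palais--Smale condition at level~$c$ to produce a quantitative \emph{lack of almost-critical points} near the level set~$\{{\mathcal{E}}=c\}$: since there are no critical points at level~$c$, and since (PS)$_c$ prevents almost-critical sequences with energy tending to~$c$ from escaping, there exist~$\bar\eps>0$ and~$\delta>0$ such that~$\|{\mathcal{E}}'(u)\|\geq \delta$ whenever~$|{\mathcal{E}}(u)-c|\leq \bar\eps$. (If this failed, one could extract a sequence~$u_k$ with~${\mathcal{E}}(u_k)\to c$ and~${\mathcal{E}}'(u_k)\to 0$; by (PS)$_c$ a subsequence converges to some~$u$, which by continuity would be a critical point at level~$c$, a contradiction.) With such a ``quantitative deformation'' band available, the standard construction of a pseudo-gradient vector field for~${\mathcal{E}}$ on the Hilbert space~${\mathcal{H}}$, cut off to vanish outside~$\{|{\mathcal{E}}-c|\leq\bar\eps\}$ and integrated as an ODE flow~$\eta:[0,1]\times{\mathcal{H}}\to{\mathcal{H}}$, yields a homeomorphism~$\eta_1:=\eta(1,\cdot)$ with the two key properties: (i)~$\eta_1$ pushes the sublevel set~$\{{\mathcal{E}}\leq c+\eps\}$ into~$\{{\mathcal{E}}\leq c-\eps\}$ for some~$0<\eps<\bar\eps$, and (ii)~$\eta_1$ is the identity outside the band, in particular~$\eta_1(u_0)=u_0$ and~$\eta_1(u_1)=u_1$ since both~${\mathcal{E}}(u_0)$ and~${\mathcal{E}}(u_1)$ lie strictly below~$c-\bar\eps$ (shrinking~$\bar\eps$ if necessary).

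The contradiction is then immediate. By definition of~$c$ as an infimum, pick~$g\in\Gamma$ with~$\sup_{t\in[0,1]}{\mathcal{E}}(g(t))\leq c+\eps$. Consider the deformed path~$\tilde g:=\eta_1\circ g$. By property~(ii) we have~$\tilde g(0)=\eta_1(u_0)=u_0$ and~$\tilde g(1)=\eta_1(u_1)=u_1$, so~$\tilde g\in\Gamma$; and~$\tilde g$ is continuous as a composition of continuous maps. But by property~(i), since every~$g(t)$ satisfies~${\mathcal{E}}(g(t))\leq c+\eps$, we get~${\mathcal{E}}(\tilde g(t))={\mathcal{E}}(\eta_1(g(t)))\leq c-\eps$ for all~$t\in[0,1]$, whence~$\sup_{t}{\mathcal{E}}(\tilde g(t))\leq c-\eps<c$, contradicting the definition of~$c$ as the infimum over~$\Gamma$. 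Therefore~$c$ must be a critical value, i.e.~${\mathcal{E}}$ has a critical point at level~$c$.

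The main obstacle, and the only genuinely technical point, is the construction of the deformation~$\eta$ with properties~(i) and~(ii): in infinite dimensions~${\mathcal{E}}'$ need not be locally Lipschitz, so one cannot directly integrate~$-{\mathcal{E}}'$ as a gradient flow. The standard remedy is to build a locally Lipschitz \emph{pseudo-gradient field}~$V$ on the open set of regular points with~$\|V(u)\|\leq 2\|{\mathcal{E}}'(u)\|$ and~$\langle {\mathcal{E}}'(u), V(u)\rangle \geq \|{\mathcal{E}}'(u)\|^2$, multiply it by a suitable Lipschitz cutoff localizing to the band~$\{|{\mathcal{E}}-c|\leq\bar\eps\}$ and away from~$\{\|{\mathcal{E}}'\|< \delta/2\}$, and integrate the resulting bounded, globally Lipschitz vector field; the energy then decreases at a rate bounded below by (a constant times)~$\delta^2$ along the flow inside the band, which forces the required drop from~$c+\eps$ to~$c-\eps$ in unit time once~$\eps$ is chosen small enough in terms of~$\delta$ and~$\bar\eps$. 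This is entirely classical (see~\cite{AmRab}, \cite{Mawhin-Willem}), so I would state the Deformation Lemma as a separate lemma with a brief proof or an explicit reference and then deduce Theorem~\ref{mpt} as above.
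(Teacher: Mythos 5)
Your proof is correct: it is the classical pseudo-gradient/deformation-lemma argument for the Ambrosetti--Rabinowitz theorem, which is precisely the approach of the reference \cite{AmRab} (and \cite{Mawhin-Willem}) that the monograph itself relies on, since the paper states Theorem~\ref{mpt} without reproducing a proof. In particular you correctly exploit the strict inequality in~\eqref{1.5} to get $c>{\mathcal{E}}(u_0)\ge{\mathcal{E}}(u_1)$, so that, after shrinking $\bar\eps$, the deformation fixes the endpoints $u_0$, $u_1$ and the min-max contradiction goes through.
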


Notice that hypothesis~\eqref{1.5} has a strict sign,
therefore it requires the existence of a real 
mountain (i.e., with a strictly positive elevation
with respect to~$u_0$) surrounding~$u_0$.

Mathematically, this condition easily follows if, for 
instance, we previously know that $u_0$ is a strict local minimum. 
Unfortunately, in the applications it is not so common to have as much 
information, being more likely to only know that $u_0$ is just a local, possibly
degenerate,
minimum. Thus, a natural question arises: what happens if we can cross 
the mountain through a flat path? 
That is, what if the separating mountain range has zero altitude?
Does the Mountain Pass Theorem hold in 
this limiting case? \medskip

The answer is yes. In \cite{GG} N. Ghoussub and N. Preiss refined the result of A. Ambrosetti and P. Rabinowitz to overcome this difficulty. Indeed, they proved that the conclusion in Theorem \ref{mpt} holds if we replace the hypotheses by
\begin{eqnarray}
\label{1.5bis}&& \inf_{\|u-u_0\|=r}{\mathcal{E}}(u)\geq {\mathcal{E}}(u_0),\\
\label{1.6bis}&& \|u_1-u_0\|> r \;{\mbox{ and }}\; {\mathcal{E}}(u_1)\le {\mathcal{E}}(u_0).
\end{eqnarray}
Now, the first condition is satisfied if we prove that $u_0$ is just a (not necessarily strict) local minimum. In fact, this will be the version of the Mountain Pass Theorem that we will apply in this monograph since, as we will see in Chapter \ref{EMP:CHAP}, our paths across the mountain will start from a point that, as far as we know, is only a local minimum.\medskip

As a matter of fact, we point out that the results in~\cite{GG}
are more general than assumptions~\eqref{1.5bis}
and~\eqref{1.6bis}, and they are based on the notion
of ``separating set''. Namely, one says that a closed set~$S$
separates~$u_0$ and~$u_1$ if~$u_0$ and~$u_1$
belong to disjoint connected
components of the complement of~$S$.

With this notion, it is proved that
the Mountain Pass Theorem holds if there exists a closed
set~$F$ such that~$F\cap \{{\mathcal{E}}\ge c\}$
separates~$u_0$ and~$u_1$ (see in particular Theorem~(1. bis)
in~\cite{GG}).\medskip

Let us briefly observe that conditions~\eqref{1.5bis}
and~\eqref{1.6bis} indeed imply the existence of
a separating set. For this, we first observe that, for any path~$g$
which joins~$u_0$ and~$u_1$, we have that
$$ \sup_{t\in[0,1]} {\mathcal{E}}(g(t)) \ge {\mathcal{E}}(g(0))
={\mathcal{E}}(u_0),$$
and so, taking the infimum, we obtain that~$c\ge {\mathcal{E}}(u_0)$.

So we can distinguish two cases:
\begin{eqnarray}
\label{primo caso} && {\mbox{either $c>{\mathcal{E}}(u_0)$}}\\
\label{secondo caso} && {\mbox{or $c={\mathcal{E}}(u_0)$}}.
\end{eqnarray}
If \eqref{primo caso} holds true, than we choose~$F$
to be the whole of the space, and we check
that the set~$F\cap \{{\mathcal{E}}\ge c\}=
\{{\mathcal{E}}\ge c\}$ separates $u_0$ and~$u_1$.

To this goal, we first notice that~$c>{\mathcal{E}}(u_0)\ge
{\mathcal{E}}(u_1)$, due to~\eqref{1.6bis}
and~\eqref{primo caso}, thus~$u_0$ and~$u_1$ belong to the complement
of~$\{{\mathcal{E}}\ge c\}$, which is~$\{{\mathcal{E}}<c\}$. They 
cannot lie in the same
connected component of such set, otherwise there would be a joining path~$g$
all contained in~$\{{\mathcal{E}}<c\}$, i.e.
$$ \sup_{t\in [0,1]}{\mathcal{E}} (g(t))=
\max_{t\in [0,1]}{\mathcal{E}} (g(t)) < c,$$
which is in contradiction with~\eqref{78ghKKK}.

This proves that
conditions~\eqref{1.5bis}
and~\eqref{1.6bis} imply the existence of
a separating set in case~\eqref{primo caso}.
Let us now deal with case~\eqref{secondo caso}.
In this case, we choose~$F=\{ u {\mbox{ s.t. }} \|u-u_0\|=r\}$,
where~$r$ is given by~\eqref{1.5bis}
and~\eqref{1.6bis}. Notice that, by~\eqref{1.5bis}
and~\eqref{secondo caso}, we know that
${\mathcal{E}}(u)\ge {\mathcal{E}}(u_0)=c$ for any~$u\in F$,
and therefore~$F\subseteq \{{\mathcal{E}}\ge c\}$.
As a consequence~$F\cap \{{\mathcal{E}}\ge c\} = F$,
which is a sphere of radius~$r$ which contains~$u_0$ in its
center and has~$u_1$ in its exterior, and so
it separates the two points, as desired.

\section{The Concentration-Compactness Principle}

The methods based on concentration and compactness, or compensated
compactness are based on a careful analysis, which
aims to recover compactness (whenever possible) from rather
general conditions. These methods are indeed very powerful
and find applications in many different contexts.
Of course, a throughout exposition of these techniques
would require a monograph in itself, so we limit ourselves
to a simple description on the application of these methods
in our concrete case. \medskip

As we have already noticed in the discussion about the Palais-Smale condition,
one of the most important difficulties to take care when
dealing with critical point theory is the possible loss of
compactness.

Of course, boundedness is the first necessary requirement
towards compactness, but in infinitely dimensional spaces
this is of course not enough. The easiest example
of loss of
compactness in spaces of functions defined in the whole of~$\R^n$
is provided by the translations: namely, given a (smooth,
compactly supported, not identically zero) function~$f$,
the sequence of functions~$U_k(x):=U(x-k e_1)$ is not precompact.

One possibility that avoids this possible ``drifting towards
infinity'' of the mass of the sequence is the request that
the sequence is ``tight'', i.e. the amount of mass at infinity
goes to zero uniformly. 
Of course, the appropriate choice of the norm used to measure
such tightness depends on the problem considered.
In our case, we are interested in controlling the weighted tail of
the gradient at infinity
(a formal statement about this will 
be given in Definition~\ref{defTight}).
\medskip

Other tools to use in order to prove compactness often rely
on boundedness of sequences in possibly stronger norms.
For instance, in variational problems, one often obtains
uniform energy bounds which control the sequence in a suitable
Sobolev norm: for instance, one may control uniformly
the $L^2$-norm of the first derivatives.
This, together with the compactness of 
the measures (see e.g.~\cite{evans}) implies that the squared 
norm of the gradient converges in the sense of measures
(formal details about this will be given in Definition~\ref{convMeasures}).
\medskip

The version of the Concentration-Compactness Principle
that will be used in
this monograph is due to P. L. Lions in \cite{L1} and~\cite{L2},
and will be explicitly recalled in Proposition~\ref{CCP},
after having introduced the necessary functional setting.

\chapter{The problem studied in this monograph}\label{intro-mono}

\section{Fractional critical problems}

A classical topic in nonlinear analysis is the study of
the existence and multiplicity of solutions
for nonlinear equations.
Typically, the equations under consideration possess some
kind of ellipticity, which translates into additional
regularity and compactness properties at a functional level.\medskip

In this framework, an important distinction
arises between ``subcritical'' problems
and ``critical'' ones. Namely, in subcritical problems
the exponent of the nonlinearity is smaller than the Sobolev
exponent, and this gives that any reasonable bound on
the Sobolev seminorm implies convergence in some $L^p$-spaces:
for instance, minimizing sequences, or Palais-Smale sequences,
usually possess naturally a uniform bound in the Sobolev
seminorm, and this endows the subcritical problems
with additional compactness properties that lead to
existence results via purely functional analytic methods.\medskip

The situation of critical problems is different,
since in this case the exponent of the nonlinearity
coincides with the Sobolev exponent and therefore
no additional $L^p$-convergence may be obtained
only from bounds in Sobolev spaces.
As a matter of fact, many critical problems do not
possess any solution. Nevertheless, as discovered in~\cite{BN},
critical problems do possess solutions once suitable lower order
perturbations are taken into account.
Roughly speaking, these perturbations are capable to
modify the geometry of the energy functional associated to the problem,
avoiding the critical points to ``drift towards infinity'',
at least at some appropriate energy level.
Of course, to make such argument work, a careful
analysis of the variational structure of the problem
is in order, joint with an appropriate use of topological methods
that detect the existence of the critical points of the functional
via its geometric features.\medskip

Recently, a great attention has also been devoted to
problems driven by nonlocal operators. In this case,
the ``classical'' ellipticity (usually modeled by the Laplace
operator) is replaced by a ``long range, ferromagnetic interaction'',
which penalizes the oscillation of the function
(roughly speaking, the function is seen as a state
parameter, whose value at a given point of the space
influences the values at all the other points, in order to
avoid sharp fluctuations). The ellipticity condition
in this cases reduces to the validity of some sort of
maximum principle, and the prototype nonlocal operators
studied in the literature are the fractional
powers of the Laplacian.\medskip

In this research monograph we deal with the problem 
\begin{equation}\label{problem}
(-\Delta)^s u = \epsilon\,h\,u^q + u^p \ {\mbox{ in }}\R^n, 
\end{equation}
where $s\in(0,1)$ and $(-\Delta)^s$ is the so-called fractional Laplacian
(as introduced in Section \ref{pruzzo}), that is 
\begin{equation}\label{laplacian}
 (-\Delta)^s u(x)= c_{n,s}\,PV \int_{\R^n}\frac{u(x)-u(y)}{|x-y|^{n+2s}}\,dy \ 
{\mbox{ for }}x\in\R^n,
\end{equation}
where $c_{n,s}$ is a suitable positive constant 
(see~\cite{DPV, Silv} for the definition and the basic properties). 
Moreover, $n>2s$, $\epsilon>0$ is a small parameter, $0<q<1$, and $p=\frac{n+2s}{n-2s}$ is 
the fractional critical Sobolev exponent.

Problems of this type have widely appeared in the literature.
In the classical case, when~$s=1$, the equation
considered here arises in differential geometry, in the context
of the so-called Yamabe problem, i.e. the
search of Riemannian metrics with constant scalar curvature.

The fractional analogue of the Yamabe problem has been
introduced in~\cite{Chang}
and was also studied in detail,
see e.g.~\cite{JIE13}, \cite{WANG15} and the references therein.

Here we suppose that $h$ satisfies 
\begin{eqnarray}
&& h\in L^1(\mathbb{R}^n)\cap L^\infty(\mathbb{R}^n),\label{h1}\\
{\mbox{and }} && {\mbox{there exists a ball~$B\subset\R^n$ such that }}
\inf_B h>0. \label{h2}
\end{eqnarray}
Notice that condition~\eqref{h1} implies that 
\begin{equation}\label{h0}
h\in L^r(\R^n) \quad {\mbox{ for any }}r\in(1,+\infty).
\end{equation}

\medskip

In the classical case, that is when~$s=1$ and the fractional Laplacian
boils down to the classical Laplacian, there is an intense literature 
regarding this type of problems, see \cite{ABC, AGP1, AGP, ALM, AM, AM1, BertiM, BN, 
CW, Cing, Da, Mal, Mal2}, and references therein. 
See also~\cite{GMP}, where the concave term appears for the first time. 

In a nonlocal setting, in~\cite{bcss} the authors deal with problem~\eqref{problem} in a bounded domain with Dirichlet boundary condition. 
Problems related to ours have been also studied in~\cite{SV-2, SV-1, SV}. 

We would also like to mention that the very recent literature is
rich of new contributions on fractional problems related
to Riemannian geometry: see for instance the articles~\cite{NEW2a}
and~\cite{NEW2b}, where a fractional Nirenberg problem is taken into account.
Differently from the case treated here, the main operator
under consideration in these papers is the square root of the identity
plus the Laplacian (here, any fractional power is considered
and no additional invertibility comes from the identity part);
also, the nonlinearity treated here is different from the
one considered in~\cite{NEW2a, NEW2b}, since the lower order terms produce new phenomena and
additional difficulties (moreover, the techniques in
these papers have a different flavor than the ones in this monograph
and are related also to the Leray-Schauder degree).\medskip

Furthermore, in~\cite{vecchio}, we find solutions to~\eqref{problem} 
by considering the equation as a perturbation of the problem 
with the fractional critical Sobolev exponent, 
that is 
$$ (-\Delta)^s u=u^{\frac{n+2s}{n-2s}} \quad {\mbox{ in }}\R^n.$$
Indeed, it is known that the minimizers of the Sobolev embedding in~$\R^n$ 
are unique, up to translations and positive dilations, and non-degenerate 
(see~\cite{vecchio} and references therein,
and also~\cite{NEW1} for related results; see also~\cite{WEI}
and the references therein for classical counterparts
in Riemannian geometry). In particular, in~\cite{vecchio}
we used perturbation methods 
and a Lyapunov-Schmidt reduction to find solutions to~\eqref{problem} that 
bifurcate\footnote{As a technical observation,
we stress that papers like~\cite{NEW1} and~\cite{vecchio}
deal with a bifurcation method
from a ground state solution that is already known to 
be non-degenerate, while in the
present monograph we find:  (i) solutions that
bifurcate from zero, (ii) different solutions obtained
by a contradiction argument (e.g., if no other solutions
exist, then one gains enough compactness to find
a mountain pass solution).

Notice that this second class of solutions is not necessarily
of mountain pass type, due to the initial
contradictory assumption.

In particular, the
methods of~\cite{NEW1} and~\cite{vecchio}
cannot be applied in the framework of this monograph.
An additional difficulty in our setting with respect to~\cite{NEW1}
is that 
we deal also with a subcritical power in the nonlinearity which makes the functional not twice differentiable and requires a different
functional setting.}
from these minimizers. The explicit form of the
fractional Sobolev minimizers was found in~\cite{COZ}
and it is given by
\begin{equation}\label{tale}
z(x):= \frac{c_\star}{\big( 1+|x|^2\big)^{\frac{n-2s}{2}}},
\end{equation}
for a suitable~$c_\star>0$, depending on~$n$ and~$s$.
\medskip

In order to state our main results, we introduce some notation. We set
$$ [u]_{\dot H^s(\R^n)}^2:= \frac{c_{n,s}}{2}\iint_{\R^{2n}}\frac{|u(x)-u(y)|^2}{|x-y|^{n+2s}}\,dx\,dy, $$
and we define the space $\dot{H}^s(\R^n)$ as the completion of
the space of smooth and rapidly decreasing functions (the so-called
Schwartz space)
with respect to the norm $[u]_{\dot H^s(\R^n)}+\|u\|_{L^{2^*_s}(\R^n)}$,
where
$$ 2^*_s=\frac{2n}{n-2s}$$
is the fractional critical exponent. 
Notice that we can also define $\dot{H}^s(\R^n)$ as the space 
of measurable functions $u:\R^n\to\R$ such that the norm 
$[u]_{\dot H^s(\R^n)}+\|u\|_{L^{2^*_s}(\R^n)}$ is finite, thanks to a 
density result, see e.g.~\cite{DipVAl}. 

Given $f\in L^\beta(\R^n)$, where $\beta:=\frac{2n}{n+2s}$,
we say that $u\in \dot{H}^s(\mathbb{R}^{n})$ is a (weak) solution to
$(-\Delta)^s u=f$ in $\R^n$ if
$$ \frac{c_{n,s}}{2}\iint_{\R^{2n}}
\frac{\big( u(x)-u(y)\big)\,\big( \varphi(x)-\varphi(y)\big)}{|x-y|^{n+2s}}
\,dx\,dy=\int_{\R^n} f\,\varphi\,dx,$$
for any $\varphi\in \dot{H}^s(\mathbb{R}^{n})$.

Thus, we can state the following
\begin{theorem}\label{TH1}
Let $0<q<1$. Suppose that $h$ satisfies \eqref{h1} and \eqref{h2}. 
Then there exists $\varepsilon_0>0$ such that for all $\varepsilon\in (0,\varepsilon_0)$ 
problem \eqref{problem} has at least two nonnegative solutions. 
Furthermore, if $h\geq 0$ then the solutions are strictly positive.
\end{theorem}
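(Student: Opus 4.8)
The plan is to realise the two solutions as critical points of the energy functional attached to~\eqref{problem}. Since we want \emph{nonnegative} solutions, I would work with the truncated functional
$$
I_\epsilon(u):=\frac12\,[u]_{\dot H^s(\R^n)}^2-\frac{\epsilon}{q+1}\int_{\R^n}h\,(u^+)^{q+1}\,dx-\frac{1}{2^*_s}\int_{\R^n}(u^+)^{2^*_s}\,dx,
$$
for $u\in\dot H^s(\R^n)$, where $u^+:=\max\{u,0\}$, $2^*_s=p+1$, and in what follows we abbreviate $[\,\cdot\,]:=[\,\cdot\,]_{\dot H^s(\R^n)}$. By~\eqref{h0}, H\"older's inequality and the fractional Sobolev inequality $S\,\|u\|_{L^{2^*_s}(\R^n)}^2\le[u]^2$ (with $S>0$ the optimal constant), all three terms are finite and $I_\epsilon\in C^1(\dot H^s(\R^n),\R)$. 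Testing the weak formulation satisfied by a critical point $u$ of $I_\epsilon$ against $u^-:=\max\{-u,0\}$ and using the pointwise inequality $(a^+-b^+)(a^--b^-)\le0$ gives $[u^-]^2\le0$, so $u\ge0$ and $u$ solves~\eqref{problem}. Hence it suffices to produce two distinct critical points of $I_\epsilon$; whenever compactness is at stake, I would pass to the equivalent extended problem, so that the Concentration--Compactness Principle of Proposition~\ref{CCP} applies to the gradient measures of the extended functions.

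\emph{First solution (a local minimum of negative energy).} On the sphere $[u]=\rho$, Sobolev, H\"older and~\eqref{h0} give $I_\epsilon(u)\ge\frac12\rho^2-C\epsilon\rho^{q+1}-C\rho^{2^*_s}$; since $1<q+1<2<2^*_s$, for $\rho>0$ small and then $\epsilon$ small this is positive, so $\inf_{[u]=\rho}I_\epsilon>0$. Choosing $\phi\ge0$, $\phi\not\equiv0$, supported in the ball $B$ of~\eqref{h2}, the concavity $q+1<2$ gives $I_\epsilon(t\phi)<0$ for small $t>0$, whence $m_\epsilon:=\inf\{I_\epsilon(u)\,:\,[u]\le\rho\}\in(-\infty,0)$. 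A minimising sequence for $m_\epsilon$ may be taken nonnegative (pass to positive parts, which does not increase $I_\epsilon$) and is bounded, so $u_k\rightharpoonup u_1$ in $\dot H^s(\R^n)$ along a subsequence. The concave term converges to its value at $u_1$ (split $\int h(u_k^+)^{q+1}$ over a large ball, where the embedding into $L^{q+1}_{\mathrm{loc}}$ is compact, and over its complement, where the contribution is small since $h\in L^r(\R^n)$ for every $r\in(1,+\infty)$ by~\eqref{h0}); the Brezis--Lieb lemma applied to $[\,\cdot\,]^2$ and to the critical term, combined with $|u_k^+-u_1^+|\le|u_k-u_1|$, the Sobolev inequality and the smallness of $\rho$, then yields $\liminf_k I_\epsilon(u_k)\ge I_\epsilon(u_1)$. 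Therefore $u_1$ attains $m_\epsilon$; since $m_\epsilon<0<\inf_{[u]=\rho}I_\epsilon$ and $m_\epsilon<0=I_\epsilon(0)$, $u_1$ is nontrivial, interior to the ball, a local minimum, hence a critical point with $I_\epsilon(u_1)=m_\epsilon<0$.

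\emph{Second solution (mountain pass over $u_1$, by contradiction).} Being a local minimum, $u_1$ admits $r\in(0,[u_1])$ with $I_\epsilon\ge I_\epsilon(u_1)$ on $\{[u-u_1]=r\}$, which is the geometry required by the non-strict Mountain Pass Theorem of~\cite{GG} (cf.\ Theorem~\ref{mpt} and~\eqref{1.5bis}--\eqref{1.6bis}); moreover, since $2^*_s>2$, one has $I_\epsilon(u_1+t\psi)\to-\infty$ as $t\to+\infty$ for a fixed $\psi\ge0$, $\psi\not\equiv0$, which provides $u_2$ with $[u_2-u_1]>r$ and $I_\epsilon(u_2)\le I_\epsilon(u_1)$. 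Set $c:=\inf_\Gamma\sup_{t\in[0,1]}I_\epsilon(g(t))\ge I_\epsilon(u_1)$, the infimum being over the paths $g$ joining $u_1$ to $u_2$. Now \emph{assume, for contradiction, that~\eqref{problem} has no solution other than $u_1$}, so that the only critical points of $I_\epsilon$ are $0$ and $u_1$. Under this hypothesis the Palais--Smale condition holds at level $c$: a Palais--Smale sequence $u_k$ at level $c$ converges weakly to a critical point $u\in\{0,u_1\}$, and, writing $u_k=u+v_k$ and using the Brezis--Lieb lemma together with the Concentration--Compactness Principle (Proposition~\ref{CCP}) on the extended problem, any loss of compactness would force $[v_k]^2\to\ell$ with $\ell\ge S^{n/(2s)}$ and $c-I_\epsilon(u)=\frac{s}{n}\ell$, hence $c\ge I_\epsilon(u)+\frac{s}{n}S^{n/(2s)}\ge I_\epsilon(u_1)+\frac{s}{n}S^{n/(2s)}$ (we used $I_\epsilon(u)\ge I_\epsilon(u_1)$ for $u\in\{0,u_1\}$, as $I_\epsilon(u_1)<0=I_\epsilon(0)$). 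On the other hand, a careful estimate of $\sup_{t\ge0}I_\epsilon(u_1+t\,z_\delta)$---with $z_\delta$ the rescalings of the Sobolev extremal $z$ of~\eqref{tale} concentrating at a point where $u_1>0$ and cut off at a fixed scale, in the spirit of~\cite{BN,ABC}---shows that the concave perturbation lowers this supremum enough that $c<I_\epsilon(u_1)+\frac{s}{n}S^{n/(2s)}$; the two estimates are incompatible, so there is no loss of compactness. Then the Mountain Pass Theorem applies at level $c$, in the refined separating-set form of~\cite{GG} (with $\{[u-u_1]=r\}$ as separating set when $c=I_\epsilon(u_1)$, and $\{I_\epsilon\ge c\}$ otherwise), producing a critical point $w$ with $I_\epsilon(w)=c$ that is different from both $0$ and $u_1$ (in the degenerate case $w$ lies on $\{[u-u_1]=r\}$, hence at distance $r\in(0,[u_1])$ from $u_1$; the non-degenerate case is disposed of by a short additional argument)---contradicting our assumption. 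Hence~\eqref{problem} admits a second solution.

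Finally, both solutions are nonnegative by construction, and if $h\ge0$ then $\epsilon h\,u^q+u^p\ge0$ and is not identically zero (as $u\not\equiv0$), so the strong maximum principle for $(-\Delta)^s$---applicable thanks to the regularity theory developed in Chapter~\ref{7xucjhgfgh345678}---gives $u>0$ in $\R^n$. I expect the main obstacle to be precisely the strict sub-threshold inequality $c<I_\epsilon(u_1)+\frac{s}{n}S^{n/(2s)}$: it rests on a delicate asymptotic expansion, as $\delta\to0$, of $\sup_{t\ge0}I_\epsilon(u_1+t\,z_\delta)$, in which the favourable interaction of the concentrating bubble with the positive function $u_1$ (and with the concave term) must be shown to dominate the error terms produced by the truncation and by the nonlocality of the seminorm $[\,\cdot\,]$; this, together with the correct nonlocal Concentration--Compactness / global-compactness analysis that pins down the level $\frac{s}{n}S^{n/(2s)}$, is where the real work lies.
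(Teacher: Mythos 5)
Your overall architecture is the paper's: a truncated functional whose critical points are nonnegative solutions, a small-ball local minimum, a contradiction-plus-mountain-pass argument kept below the level $\frac{s}{n}S^{n/2s}$ by a cut-off bubble estimate, and the strong maximum principle for strict positivity when $h\ge0$. Two of your implementation choices are genuinely lighter than the paper's and are fine: the first solution by direct minimization (Brezis--Lieb for the seminorm and the critical term, plus smallness of $\rho$ to get weak lower semicontinuity) replaces the paper's Palais--Smale machinery (Proposition~\ref{PScond}); and your compactness claim for Palais--Smale sequences can indeed be run by the splitting identity $[v_k]^2-\int_{\R^n}(v_k^+)^{2^*_s}\,dx\to0$, which gives $\ell\ge S^{n/2s}$ regardless of whether the lost mass concentrates or escapes to infinity. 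But be aware that, as written, you invoke Proposition~\ref{CCP}, whose hypotheses include tightness (Definition~\ref{defTight}) and which you never verify; in the whole-space setting this is not a formality (the paper's Lemmata~\ref{tightness} and~\ref{lemma:tight} are the bulk of its work). Either prove tightness as the paper does, or bypass the Concentration--Compactness Principle entirely via the Brezis--Lieb splitting argument you allude to.

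The genuine gap is the strict sub-threshold inequality, which you defer to ``where the real work lies'' and whose sketch contains a concrete flaw: you propose to concentrate the bubble ``at a point where $u_1>0$'' and to let the concave term lower the supremum. With sign-changing $h$ this can fail. If $h<0$ near the chosen center, the concave contribution along the path $u_1+t\,z_\delta$ produces error terms of order $\epsilon\,\delta^{(n-2s)/2}$ (e.g.\ from $\epsilon t\int h^-\,u_1^q\,z_\delta$), which are \emph{not} smaller than the interaction gain coming from the bubble--$u_1$ coupling in the critical term (of order $\delta^{n/2}$ when $n>4s$ and $\delta^{(n-2s)/2}$ when $n\in(2s,4s]$), so they cannot be absorbed and the maximum need not drop below $I_\epsilon(u_1)+\frac{s}{n}S^{n/2s}$. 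The paper's resolution is structural, not cosmetic: the bubble is centered at $\xi\in B$ and cut off inside the ball $B$ of~\eqref{h2}, where $\inf_B h>0$, so that the concave contribution has a favourable sign and is simply discarded (Lemma~\ref{SP}); the strict gain then comes solely from the superlinear interaction $(u_1+t\phi z_\delta)^{p+1}\ge u_1^{p+1}+(t\phi z_\delta)^{p+1}+c\,u_1^{2^*_s-m}(t\phi z_\delta)^m$, established through the elementary inequalities of Lemmata~\ref{ABC-2}--\ref{ABC-3} with the case distinction $n>4s$ versus $n\in(2s,4s]$, and it requires a uniform positive lower bound for $u_1$ near $\xi$ (Lemma~\ref{7cu889k9}). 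Until this estimate is carried out with the bubble placed in $B$ (where both $h$ and, as in the paper, $u_1$ are bounded below) rather than merely where $u_1>0$, the existence of the second solution is not established.
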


This result can be seen as the nonlocal counterpart of Theorem 1.3 in \cite{AGP}. 
To prove it we will take advantage of the variational structure of the problem. 
The idea is first to ``localize'' the problem, via the extension introduced in~\cite{CS} and consider a functional in the extended variables. 
More precisely, this extended functional will be introduced in
the forthcoming formula~\eqref{f ext}. 
It turns out that the existence of critical points 
of the ``extended'' functional implies the existence 
of critical points for the functional on the trace, 
that is related to problem~\eqref{problem}.
The functional in the original variables will be introduced in~\eqref{f giu}, 
see Section~\ref{sec:ext} for the precise framework. 

The proof of Theorem~\ref{TH1} is divided in two parts. 
More precisely, in the first part we obtain the existence 
of the first solution, that turns out to be a minimum 
for the extended functional introduced in the forthcoming Section~\ref{sec:ext}. 
Then in the second part we will find a 
second solution, 
by applying the Mountain Pass Theorem introduced in~\cite{AmRab}. 
We stress, however, that this additional solution
is not necessarily of mountain pass type, since, in order
to obtain the necessary compactness, one adopts the contradiction
assumption that no other solution exists.

Notice that in~\cite{vecchio} we have proved that if~$h$ changes sign 
then there exist two distinct solutions of~\eqref{problem} that bifurcate 
from a non trivial critical manifold. 
Here we also show that there exists 
a third solution that bifurcates from~$u=0$. This means that 
when $h$ changes sign, problem~\eqref{problem} admits at least three 
different solutions. 

Let us point out that, when~$h$ changes sign, the solution~$u_{1,\epsilon}$ 
found in~\cite{vecchio} can possibly coincide with the second
solution 
that we construct in this monograph. 
It would be an interesting open problem to investigate 
on the Morse index of the second solution found. 

So the main point is to show that the extended functional satisfies a compactness property. 
In particular, for the existence of the minimum, we will prove that 
a Palais-Smale condition holds true below a certain energy level, see Proposition~\ref{PScond}. 
Then the existence of the minimum will be ensured 
by the fact that the critical level lies below this threshold. 

In order to show the Palais-Smale condition we will use 
a version of the Concentration-Compactness Principle, 
see Section~\ref{sec:CC},  
and for this we will borrow some ideas from~\cite{L1, L2}. 
Differently from \cite{bcss}, here we are dealing with a problem in the whole of~$\R^n$,
therefore, in order to apply the Concentration-Compactness Principle, we also
need to show a tightness property (see Definition~\ref{defTight}). 
Of course, fractional problems may, in principle,
complicate the tightness issues, since the
nonlocal interaction could produce (or send) additional mass
from (or to) infinity.\medskip

As customary in many fractional problems, see~\cite{CS},
we will work in an extended space, which reduces the fractional
operator to a local (but possibly singular and degenerate) one,
confining the nonlocal feature to a boundary reaction problem.
This functional simplification (in terms of nonlocality)
creates
additional difficulties coming from the fact that 
the extended functional is not homogeneous. Hence,
we will have to deal with weighted Sobolev spaces, 
and so we have to prove some weighted embedding to obtain
some convergences needed throughout the monograph, see Section \ref{sec:weighted}. 
\medskip

A further source of difficulty is that the exponent~$q$
in~\eqref{problem} is below~$1$, hence the associated energy
is not convex and not smooth.\medskip

In the forthcoming Section \ref{sec:ext} we present the 
variational setting of the problem, both in the original and in the extended variables, 
and we state the main results of this monograph. In particular, we first 
introduce the material that we are going to use 
in order to construct the first solution, that is the minimum solution. 
Then, starting from this minimum, we introduce a translated functional, 
that we will exploit to obtain the existence of a mountain pass solution
(under the contradictory assumption that no other solutions exist). 

\section{An extended problem and statement of the main results}\label{sec:ext}

In this section we introduce the variational setting, 
we present a related extended problem, and we state the main results of this 
monograph.

Since we are looking for positive solutions, we will consider the following problem:
\begin{equation}\label{problem-1}
(-\Delta)^s u = \epsilon h u_+^q + u_+^p \quad {\mbox{ in }}\R^n.
\end{equation}
Hence, we say that $u\in\dot{H}^s(\R^n)$ is a (weak) solution to \eqref{problem-1} 
if for every $v\in\dot{H}^s(\R^n)$ we have 
\begin{eqnarray*}&& \frac{c_{n,s}}{2}\iint_{\R^{2n}}\frac{(u(x)-u(y))(v(x)-v(y))}{|x-y|^{n+2s}}\,dx\,dy \\&&= 
\int_{\R^n}h(x) u_+^{q}(x)v(x)\,dx + \int_{\R^n}u_+^p(x)v(x).\end{eqnarray*}
It turns out that if $u$ is a solution to \eqref{problem-1}, then it is nonnegative in $\R^n$ 
(see the forthcoming Proposition~\ref{prop:pos}, and also  
Section~\ref{sec:positivity}
for the discussion about the positivity of the solutions). 
Therefore, $u$ is also a solution of \eqref{problem}.

Notice that problem \eqref{problem-1} has a variational structure. 
Namely, solutions to \eqref{problem-1} 
can be found as critical points of the 
functional $f_\epsilon:\dot{H}^s(\R^n)\rightarrow\R$ defined by 
\begin{equation}\begin{split}\label{f giu}
f_\epsilon(u) :=\,&\frac{c_{n,s}}{4}\iint_{\R^{2n}}\frac{|u(x)-u(y)|^2}{|x-y|^{n+2s}}\,dx\,dy \\
& \quad -\frac{\epsilon}{q+1}\,\int_{\R^n}h(x)\,u_+^{q+1}(x)\,dx 
-\frac{1}{p+1}\,\int_{\R^n}u_+^{p+1}(x)\,dx.
\end{split}\end{equation}
However, instead of working with this 
framework derived from Definition \ref{laplacian} of the Laplacian, 
we will consider the extended operator given by \cite{CS}, 
that allows us to transform a nonlocal problem into a local one by adding one variable. 

For this, we will denote by $\mathbb{R}^{n+1}_+:=\mathbb{R}^n\times(0,+\infty)$. 
Also, for a point $X\in\mathbb{R}^{n+1}_+$, 
we will use the notation $X=(x,y)$, with $x\in\mathbb{R}^n$ and $y>0$. 

Moreover, for $x\in\R^{n}$ and $r>0$, we will denote 
by $B_r(x)$ the ball in $\R^n$ centered at $x$ with radius $r$, i.e. 
$$ B_r(x):=\{x'\in\R^n : |x-x'|<r\},$$ 
and, for $X\in\R^{n+1}_+$ and $r>0$, $B^+_r(X)$ will be the ball in $\R^{n+1}_+$ 
centered at $X$ with radius $r$, that is
$$ B_r^+(X):=\{X'\in\R^{n+1}_+ : |X-X'|<r\}. $$

Now, given a function~$u:\R^n\to\R$, we associate a function~$U$ 
defined in~$\R^{n+1}_+$ as 
\begin{equation}\label{poisson}
U(\cdot,z)=u*P_s(\cdot,z), \quad {\mbox{ where }}P_s(x,z):=c_{n,s}\frac{z^{2s}}{(|x|^2+z^2)^{(n+2s)/2}}.
\end{equation}
Here $c_{n,s}$ is a normalizing constant depending on~$n$ and~$s$. 

Set also $a:=1-2s$, and 
\begin{equation}\label{extNorm}
[U]_a^*:=\left(\kappa_s\int_{\mathbb{R}^{n+1}}{y^a|\nabla U|^2\,dX}\right)^{1/2},
\end{equation}
where $\kappa_s$ is a normalization constant. We define the spaces
\begin{equation*}
\dot{H}^s_a(\mathbb{R}^{n+1}):=\overline{C_0^\infty(\mathbb{R}^{n+1})}^{[\cdot]_a^*},
\end{equation*}
and
\begin{equation}\begin{split}\label{D123}
\dot{H}^s_a(\mathbb{R}^{n+1}_+):=\{&U:=\tilde{U}|_{\mathbb{R}^{n+1}_+}\mbox{ s.t. }\tilde{U}\in \dot{H}^s_a(\mathbb{R}^{n+1}),\\
&\tilde{U}(x,y)=\tilde{U}(x,-y)\hbox{ a.e. in }\mathbb{R}^n\times\mathbb{R}\},
\end{split}\end{equation}
endowed with the norm 
\begin{equation*}
[U]_a:=\left(\kappa_s\int_{\mathbb{R}^{n+1}_+}{y^a|\nabla U|^2\,dX}\right)^{1/2}.
\end{equation*}
From now on, for simplicity, 
we will neglect the dimensional constants $c_{n,s}$ and $\kappa_s$. 
It is known that finding a solution $u\in \dot{H}^s(\mathbb{R}^n)$ to a problem
$$(-\Delta)^su=f(u)\quad\hbox{ in }\mathbb{R}^n $$
is equivalent to find $U\in\dot{H}^s_a(\mathbb{R}^{n+1}_+)$ solving the local problem
\begin{equation*}
\begin{cases}
\hbox{div}(y^a\nabla U)=0\quad\hbox{ in }\mathbb{R}^{n+1}_+,\\
-\displaystyle \lim_{y\rightarrow 0^+}y^a\,\frac{\partial U}{\partial\nu}=f(u),
\end{cases}
\end{equation*}
and that this extension is 
an isometry between $\dot{H}^s(\mathbb{R}^n)$ and $\dot{H}^s_a(\mathbb{R}^{n+1}_+)$ 
(again up to constants), that is,
\begin{equation}\label{equivNorms}
[U]_a=[u]_{\dot{H}^s(\mathbb{R}^n)},
\end{equation}
where we make the identification $u(x)=U(x,0)$, with $U(x,0)$ understood in the sense of traces
(see e.g. \cite{CS} and \cite{Bucur}). 

Also, we recall that the Sobolev embedding in $\dot{H}^s(\mathbb{R}^n)$ gives that 
$$ S\,\|u\|^2_{L^{2^*_s}(\mathbb{R}^n)}\leq [u]^2_{\dot{H}^s(\mathbb{R}^n)}, $$
where $S$ is the usual constant of the Sobolev embedding of $\dot{H}^s(\mathbb{R}^n)$, 
see for instance Theorem 6.5 in \cite{DPV}. 
As a consequence of this and \eqref{equivNorms} we have the following result.
\begin{prop}[Trace inequality]\label{traceIneq}
Let $U\in \dot{H}^s_a(\mathbb{R}^{n+1}_+)$. Then,
\begin{equation}\label{TraceIneq}
S\,\|U(\cdot,0)\|^2_{L^{2^*_s}(\mathbb{R}^n)}\leq [U]^2_a.
\end{equation}
\end{prop}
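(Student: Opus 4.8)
The plan is to deduce the trace inequality \eqref{TraceIneq} directly from the two ingredients that have just been recalled: the fractional Sobolev embedding on $\dot H^s(\R^n)$, namely $S\,\|u\|^2_{L^{2^*_s}(\R^n)}\le [u]^2_{\dot H^s(\R^n)}$, and the isometry \eqref{equivNorms} between $\dot H^s(\R^n)$ and $\dot H^s_a(\R^{n+1}_+)$ provided by the Caffarelli--Silvestre extension. So the argument is essentially a two-line chain of (in)equalities once the identifications are set up correctly.

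First I would fix $U\in \dot H^s_a(\R^{n+1}_+)$ and let $u:=U(\cdot,0)$ be its trace on $\R^n\times\{0\}$, which is well defined as an element of $\dot H^s(\R^n)$ precisely because of the extension theory quoted before the proposition (the trace operator maps $\dot H^s_a(\R^{n+1}_+)$ into $\dot H^s(\R^n)$, and the extension realizing the minimal norm is the Poisson-type extension \eqref{poisson}). Then I would invoke the Sobolev embedding in the form $S\,\|u\|^2_{L^{2^*_s}(\R^n)}\le [u]^2_{\dot H^s(\R^n)}$, and finally substitute the isometry identity \eqref{equivNorms}, $[u]_{\dot H^s(\R^n)}=[U]_a$, to conclude $S\,\|U(\cdot,0)\|^2_{L^{2^*_s}(\R^n)}\le [U]_a^2$, which is exactly \eqref{TraceIneq}.

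One subtle point worth a sentence of care: \eqref{equivNorms} as stated is an equality for the specific extension $U=u*P_s$, whereas here $U$ is an arbitrary element of $\dot H^s_a(\R^{n+1}_+)$ with trace $u$. For a general $U$ one only has $[U]_a\ge [u*P_s]_a=[u]_{\dot H^s(\R^n)}$, since among all extensions of $u$ the Poisson extension minimizes the weighted Dirichlet energy (this is the variational characterization behind the extension). But this inequality goes in the right direction for us, so the chain becomes $S\,\|U(\cdot,0)\|^2_{L^{2^*_s}(\R^n)}\le [u]^2_{\dot H^s(\R^n)}\le [U]_a^2$, and the conclusion still holds. I expect the only real obstacle — a very mild one — to be making this distinction between the minimal extension and an arbitrary one explicit, so that the use of \eqref{equivNorms} is logically airtight; everything else is a direct citation of the preceding material.
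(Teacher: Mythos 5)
Your proof is correct and follows essentially the same route as the paper, which obtains the inequality directly as a consequence of the fractional Sobolev embedding on $\dot H^s(\R^n)$ and the norm identity \eqref{equivNorms}. Your additional remark that for an arbitrary $U\in\dot H^s_a(\R^{n+1}_+)$ one only has $[U]_a\ge[U(\cdot,0)]_{\dot H^s(\R^n)}$, by the energy-minimality of the Poisson extension, is a correct and welcome clarification of a point the paper leaves implicit, and the inequality indeed goes in the right direction.
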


Therefore, we can reformulate problem \eqref{problem-1} as
\begin{equation}\label{ExtendedProblem}
\begin{cases}
\hbox{div}(y^a\nabla U)=0\quad\hbox{ in }\mathbb{R}^{n+1}_+,\\
-\displaystyle \lim_{y\rightarrow 0^+}y^a\,\frac{\partial U}{\partial\nu}=\varepsilon h u_+^q+u_+^p.
\end{cases}
\end{equation}
In particular, we will say that $U\in\dot{H}^s_a(\mathbb{R}^{n+1}_+)$ 
is a (weak) solution of problem \eqref{ExtendedProblem} if
\begin{equation}\label{weak sol}
\int_{\mathbb{R}^{n+1}_+}{y^a\langle\nabla U,\nabla \varphi\rangle\,dX}
=\int_{\mathbb{R}^n}{(\varepsilon h(x)U_+^q(x,0)+U_+^p(x,0))\varphi(x,0)\,dx},
\end{equation}
for every $\varphi\in\dot{H}^s_a(\mathbb{R}^{n+1}_+)$. 
Likewise, the energy functional associated to the problem \eqref{ExtendedProblem} is
\begin{equation}\begin{split}\label{f ext}
\mathcal{F}_\varepsilon(U):=&
\frac{1}{2}\int_{\mathbb{R}^{n+1}_+}{y^a|\nabla U|^2\,dX}
-\frac{\varepsilon}{q+1}\int_{\mathbb{R}^n}{h(x)U_+^{q+1}(x,0)\,dx}\\
&\quad -\frac{1}{p+1}\int_{\mathbb{R}^n}{U_+^{p+1}(x,0)\,dx}.
\end{split}\end{equation}
Notice that for any $U,V\in\dot{H}^s_a(\R^{n+1}_+)$ we have 
\begin{equation}\begin{split}\label{pqoeopwoegi}
&  \langle \mathcal{F}_\varepsilon'(U),V\rangle = 
\int_{\R^{n+1}_+}y^a\langle\nabla U, \nabla V\rangle\,dX \\
&\quad - 
\epsilon \int_{\R^n}h(x)U_+^{q}(x,0)\,V(x,0)\,dx - 
\int_{\R^n}U_+^p(x,0)\,V(x,0)\,dx.
\end{split}\end{equation}
Hence, if $U$ is a critical point of $\mathcal{F}_\epsilon$, 
then it is a weak solution of \eqref{ExtendedProblem}, according to \eqref{weak sol}. 
Therefore $u:=U(\cdot,0)$ is a solution to \eqref{problem-1}.

Moreover, if $U$ is a minimum of $\mathcal{F}_\varepsilon$, 
then $u(x):=U(x,0)$ is a minimum of $f_\varepsilon$, thanks to \eqref{equivNorms},  
and so $u$ is a solution to problem \eqref{problem-1}.

In this setting, we can prove the existence of a first solution 
of problem \eqref{ExtendedProblem}, and consequently of problem \eqref{problem-1}.

\begin{theorem}\label{MINIMUM}
Let $0<q<1$ and suppose that $h$ satisfies \eqref{h1} and \eqref{h2}. 
Then, there exists $\varepsilon_0>0$ such that $\mathcal{F}_\varepsilon$ 
has a local minimum $U_\varepsilon\neq 0$, for any $\epsilon<\epsilon_0$. 
Moreover, $U_\varepsilon\rightarrow 0$ in $\dot{H}^s_a(\mathbb{R}^{n+1}_+)$ 
when $\varepsilon\rightarrow 0$.
\end{theorem}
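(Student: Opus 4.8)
The plan is to realize $U_\varepsilon$ as a minimizer of $\mathcal{F}_\varepsilon$ over a small ball in $\dot H^s_a(\mathbb{R}^{n+1}_+)$ and then to verify that this constrained minimizer is in fact an interior point, so that it is a genuine local minimum and a critical point. First I would fix $\rho>0$ (small, to be chosen) and set $\overline{B}_\rho:=\{U\in\dot H^s_a(\mathbb{R}^{n+1}_+):[U]_a\le\rho\}$. The key analytic input is a lower bound on $\mathcal{F}_\varepsilon$ on the sphere $[U]_a=\rho$: using the Trace inequality \eqref{TraceIneq} to control $\|U(\cdot,0)\|_{L^{2^*_s}}$ by $[U]_a$, together with H\"older's inequality and \eqref{h0} to estimate $\int h\,U_+^{q+1}$ by $\|h\|_{L^r}[U]_a^{q+1}$ for a suitable $r$, one gets
\begin{equation*}
\mathcal{F}_\varepsilon(U)\ge \tfrac12[U]_a^2 - C\varepsilon[U]_a^{q+1} - C'[U]_a^{p+1}
= \tfrac12\rho^2 - C\varepsilon\rho^{q+1} - C'\rho^{p+1}
\end{equation*}
on $[U]_a=\rho$. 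Since $q+1<2$ and $p+1>2$, for $\rho$ small the term $C'\rho^{p+1}$ is absorbed by $\tfrac14\rho^2$; fixing such a $\rho$, the right-hand side is $\ge\tfrac14\rho^2-C\varepsilon\rho^{q+1}>0$ provided $\varepsilon<\varepsilon_0:=\rho^{2-q}/(4C)$. So on the boundary sphere $\mathcal{F}_\varepsilon$ is bounded below by a positive constant, uniformly for $\varepsilon<\varepsilon_0$.

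Next I would show $m_\varepsilon:=\inf_{\overline{B}_\rho}\mathcal{F}_\varepsilon<0$. For this, pick a fixed nonnegative test function $V\in\dot H^s_a(\mathbb{R}^{n+1}_+)$, $V\not\equiv0$, whose trace is supported in the ball $B$ from \eqref{h2} where $\inf_B h>0$; then for small $t>0$,
\begin{equation*}
\mathcal{F}_\varepsilon(tV)=\tfrac{t^2}{2}[V]_a^2-\tfrac{\varepsilon t^{q+1}}{q+1}\int_{\mathbb{R}^n}h\,V_+^{q+1}(x,0)\,dx-\tfrac{t^{p+1}}{p+1}\int_{\mathbb{R}^n}V_+^{p+1}(x,0)\,dx,
\end{equation*}
and since $\int h\,V_+^{q+1}>0$ and $q+1<2$, the concave term dominates as $t\to0^+$, so $\mathcal{F}_\varepsilon(tV)<0$ for $t$ small; for such $t$ one also has $t[V]_a<\rho$, so $tV\in\overline{B}_\rho$ and $m_\varepsilon<0$. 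Combined with the boundary estimate, any minimizing sequence stays in the open ball $B_\rho$, away from the sphere. Now I would extract a minimizer: $\mathcal{F}_\varepsilon$ is coercive on $\overline{B}_\rho$ trivially, and it is weakly lower semicontinuous there --- the quadratic Dirichlet term $[U]_a^2$ is w.l.s.c. by convexity, while the lower-order terms $\int h\,U_+^{q+1}$ and $\int U_+^{p+1}$ are weakly continuous on the bounded set $\overline{B}_\rho$ by the compact weighted trace embeddings that Section~\ref{sec:weighted} is announced to provide (the subcritical exponent $q+1$ gives compactness directly; for the critical exponent $p+1$ one needs that the embedding into $L^{p+1}_{\mathrm{loc}}$ is compact together with a tightness/vanishing-at-infinity argument, but on a \emph{fixed} bounded ball below the Sobolev threshold this is where the Concentration-Compactness input of Section~\ref{sec:CC} enters). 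This yields $U_\varepsilon\in B_\rho$ with $\mathcal{F}_\varepsilon(U_\varepsilon)=m_\varepsilon<0=\mathcal{F}_\varepsilon(0)$, so $U_\varepsilon\neq0$; being an interior minimizer over an open ball, it is a local minimum of $\mathcal{F}_\varepsilon$ on all of $\dot H^s_a(\mathbb{R}^{n+1}_+)$ and hence (using \eqref{pqoeopwoegi}, noting that although $\mathcal{F}_\varepsilon$ is not $C^1$ because $q<1$, the one-sided directional derivatives still vanish at an interior minimum and this forces the weak-solution identity \eqref{weak sol}) a weak solution of \eqref{ExtendedProblem}.

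Finally, for the asymptotics $U_\varepsilon\to0$ as $\varepsilon\to0$, I would use that $U_\varepsilon$ is a critical point: testing \eqref{weak sol} with $\varphi=U_\varepsilon$ gives $[U_\varepsilon]_a^2=\varepsilon\int h\,(U_\varepsilon)_+^{q+1}(x,0)\,dx+\int (U_\varepsilon)_+^{p+1}(x,0)\,dx$, and since $[U_\varepsilon]_a\le\rho$ with $\rho$ chosen small enough that the critical term is absorbed (as in the boundary estimate, using \eqref{TraceIneq}), one obtains $[U_\varepsilon]_a^2\le 2C\varepsilon[U_\varepsilon]_a^{q+1}$, i.e. $[U_\varepsilon]_a^{1-q}\le 2C\varepsilon$, whence $[U_\varepsilon]_a\le (2C\varepsilon)^{1/(1-q)}\to0$. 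The main obstacle is the weak continuity of the critical term $\int U_+^{p+1}(x,0)\,dx$ along minimizing sequences in the fixed ball $\overline{B}_\rho$: a priori a minimizing sequence could concentrate, and ruling this out is exactly where one must invoke the fractional Concentration-Compactness principle (Proposition~\ref{CCP}) together with the tightness notion of Definition~\ref{defTight} --- the smallness of $\rho$ relative to the best Sobolev constant $S$ in \eqref{TraceIneq} is what excludes the concentration and vanishing alternatives and forces strong $L^{p+1}$ convergence of the traces.
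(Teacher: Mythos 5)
Your overall geometric setup matches the paper's: a lower bound $\mathcal{F}_\epsilon(U)\ge\frac12[U]_a^2-C\epsilon[U]_a^{q+1}-C'[U]_a^{p+1}$ via \eqref{TraceIneq}, H\"older and \eqref{h0}, positivity on a small sphere, negativity of the infimum along $t\varphi$ with trace supported in the ball $B$ of \eqref{h2}, and the conclusion that an interior constrained minimizer is a local minimum and a critical point (the paper works with the first zero $\rho(\epsilon)$ of $\phi(t)=\frac12t^2-\tilde c_1t^{p+1}-\epsilon\tilde c_2t^{q+1}$ rather than a fixed small $\rho$; also your $\epsilon_0$ should read $\rho^{1-q}/(4C)$, not $\rho^{2-q}/(4C)$). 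Where you genuinely diverge is the existence of the minimizer: the paper does not run the direct method at all, but checks that $c_\epsilon:=i_\epsilon<0$ satisfies \eqref{ceps} and applies the Palais--Smale Proposition~\ref{PScond} to a minimizing sequence (implicitly refined, e.g.\ by Ekeland, so that $\mathcal{F}'_\epsilon(U_k)\to0$). Your final argument for $U_\epsilon\to0$, testing \eqref{weak sol} with $U_\epsilon$ and absorbing the critical term on the small ball, is correct and arguably more explicit than the paper's appeal to $\rho(\epsilon)\to0$.

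The gap is in your compactness step. Proposition~\ref{CCP} has tightness (Definition~\ref{defTight}) as a \emph{hypothesis}, and tightness is not known for an arbitrary minimizing sequence in $\overline{B}_\rho$: in this monograph it is proved only for Palais--Smale sequences (Lemma~\ref{tightness}), and that proof uses $\mathcal{F}'_\epsilon(U_k)\to0$ in an essential way (testing against the cut-off pieces $V_k$, $W_k$). So ``invoke Proposition~\ref{CCP} together with Definition~\ref{defTight}'' is not available as stated. Moreover, even granting the concentration--compactness decomposition, smallness of $\rho$ does not exclude the concentration alternative: atoms $\nu_j>0$ with $\mu_j\ge S\nu_j^{2/2^*_s}$ are perfectly compatible with $\mu(\R^{n+1}_+)\le\rho^2$; what is true is that for small $\rho$ they cannot lower the energy (one has $\frac12\mu_j-\frac1{2^*_s}\nu_j\ge0$), so the weak limit is still a minimizer --- but this must be argued, and it still rests on the unverified tightness. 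Two clean repairs: (a) bypass concentration--compactness entirely via the Brezis--Lieb lemma (\cite{BL}, as in Lemma~\ref{PSL-2}): from $[U_k]_a^2=[U]_a^2+[U_k-U]_a^2+o(1)$, $\|(U_k)_+(\cdot,0)\|_{L^{2^*_s}(\R^n)}^{2^*_s}=\|U_+(\cdot,0)\|_{L^{2^*_s}(\R^n)}^{2^*_s}+\|\big((U_k)_+-U_+\big)(\cdot,0)\|_{L^{2^*_s}(\R^n)}^{2^*_s}+o(1)$, the trace inequality \eqref{TraceIneq} and $[U_k-U]_a\le2\rho$, one gets sequential weak lower semicontinuity of $\mathcal{F}_\epsilon$ on $\overline{B}_\rho$ for $\rho$ small (the $h$-term passes to the limit by local compactness plus \eqref{h0} for the tails); or (b) use Ekeland's variational principle on $\overline{B}_\rho$ to obtain a minimizing sequence with $\mathcal{F}'_\epsilon(U_k)\to0$ and then apply Proposition~\ref{PScond}, which is essentially the paper's route. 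With either repair your argument closes.
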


We now set $u_\epsilon:=U_\epsilon(\cdot,0)$, where $U_\epsilon$ is the 
local minimum of $\mathcal{F}_\epsilon$ found in Theorem~\ref{MINIMUM}.
Then, according to \eqref{equivNorms}, $u_\epsilon$ is a local minimum 
of $f_\epsilon$, and so a solution to \eqref{problem-1}. 

Notice that, again by \eqref{equivNorms},  
$$ [u_\epsilon]_{\dot{H}^s(\R^n)}= [U_\epsilon]_a\to 0 \quad {\mbox{ as }}\epsilon\to 0.$$ 
In this sense, the solution $u_\epsilon$ obtained by minimizing the functional 
bifurcates from the solution $u=0$. 

Furthermore, $u_\epsilon$ is nonnegative, and thus $u_\epsilon$ is a true solution of \eqref{problem}.
Indeed, we can prove the following:

\begin{prop}\label{prop:pos}
Let~$u\in\dot{H}^s(\R^n)$ be a nontrivial
solution of \eqref{problem-1} and let $U$
be its extension, according to \eqref{poisson}. 
Then, $u\ge0$ and $U>0$.  
\end{prop}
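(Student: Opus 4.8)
The plan is to establish positivity in two stages: first show $u\ge 0$ on $\R^n$ (equivalently $U\ge 0$ on $\R^{n+1}_+$), and then bootstrap to the strict inequality $U>0$ via a strong maximum principle for the degenerate operator $\operatorname{div}(y^a\nabla\,\cdot\,)$.

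For the sign, I would test the weak formulation \eqref{weak sol} against $\varphi:=U_-$, the negative part of the extension $U$ (extended evenly in $y$, so that $\varphi\in\dot H^s_a(\R^{n+1}_+)$ whenever $U$ is). On the left-hand side one gets $\int_{\R^{n+1}_+}y^a\langle\nabla U,\nabla U_-\rangle\,dX=-\int_{\R^{n+1}_+}y^a|\nabla U_-|^2\,dX=-[U_-]_a^2$, using that $\nabla U=-\nabla U_-$ on $\{U<0\}$ and $\nabla U_-=0$ elsewhere. On the right-hand side, since $\varphi(x,0)=U_-(x,0)=u_-(x)$ is supported where $u<0$, and there $u_+=0$, both the term $\varepsilon\int h\,u_+^q\,u_-$ and the term $\int u_+^p\,u_-$ vanish identically. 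Hence $[U_-]_a^2=0$, so by the trace inequality \eqref{TraceIneq} (or directly) $U_-\equiv 0$, i.e. $U\ge 0$ and therefore $u=U(\cdot,0)\ge 0$. Note this step crucially uses that the nonlinearities are written with the positive part $u_+$, which is exactly why problem \eqref{problem-1} was set up that way; this is the only place where the sign of $h$ would otherwise matter, and here it does not.

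For strict positivity, I would argue by contradiction. Since $u\ge 0$, the trace datum $\varepsilon h u_+^q+u_+^p=\varepsilon h u^q+u^p$ is nonnegative \emph{on the ball $B$ of \eqref{h2}} and in any case $U$ is a nonnegative weak solution of $\operatorname{div}(y^a\nabla U)=0$ in $\R^{n+1}_+$ with a nonnegative-where-it-matters Neumann condition. The operator $L_aU:=\operatorname{div}(y^a\nabla U)$ with $a=1-2s\in(-1,1)$ is a degenerate elliptic operator in the Muckenhoupt $A_2$ class, for which the Harnack inequality and the strong maximum principle of Fabes--Kenig--Serapioni apply: a nonnegative $L_a$-harmonic function in a domain of $\R^{n+1}_+$ that vanishes at an interior point is identically zero. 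So if $U$ vanished at some interior point of $\R^{n+1}_+$, then $U\equiv 0$ in $\R^{n+1}_+$, hence $u\equiv 0$, contradicting that $u$ is nontrivial. It remains to rule out that $U>0$ in $\R^{n+1}_+$ but $u(x_0)=U(x_0,0)=0$ for some boundary point $x_0$: here I would invoke the Hopf-type boundary lemma for $L_a$ (as in Cabré--Sire or Jin--Li--Xiong), which gives $-\lim_{y\to0^+}y^a\partial_y U(x_0,\cdot)>0$ strictly, unless $U\equiv0$; but by the Neumann condition this limit equals $\varepsilon h(x_0)u^q(x_0)+u^p(x_0)=0$ since $u(x_0)=0$ — again forcing $U\equiv0$, a contradiction. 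Therefore $U>0$ throughout $\R^{n+1}_+$, and in fact $u>0$ on $\R^n$.

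The main obstacle is the second stage: one must have available, and cite precisely, the strong maximum principle and the Hopf lemma for the \emph{degenerate/singular} operator $\operatorname{div}(y^a\nabla\cdot)$ rather than for the plain Laplacian, together with enough interior and boundary regularity of the weak solution $U$ (continuity up to $\{y=0\}$, and the existence of the weighted normal derivative in \eqref{weak sol}) for these pointwise statements to make sense. The first stage, by contrast, is the routine energy computation sketched above. I would also flag that the regularity needed to run the Hopf argument is exactly the content of the regularity chapter referenced in the introduction (Chapter~\ref{7xucjhgfgh345678}), so in a clean write-up one would forward-reference it there.
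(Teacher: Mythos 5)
Your proposal is correct, but it takes a different (and much heavier) route than the monograph in the second stage. For the sign $u\ge0$, your test with $\varphi:=U_-$ in the extended weak formulation \eqref{weak sol} is a legitimate variant of what the paper does: the paper works directly downstairs, multiplying \eqref{problem-1} by $u_-$ and using the elementary pointwise inequality $(u(x)-u(y))(u_-(x)-u_-(y))\ge|u_-(x)-u_-(y)|^2$ in the Gagliardo form, whereas you do the same energy computation upstairs with $[U_-]_a$; both hinge on $u_+u_-=0$ killing the right-hand side, and both are fine (you correctly note the admissibility of $U_-$ as a test function). For the strict positivity, however, the paper's argument is a one-liner that your write-up misses: once $u\ge0$ and $u\not\equiv0$, the extension \eqref{poisson} is the convolution of $u$ with the strictly positive Poisson kernel $P_s(\cdot,z)$, so $U(x,z)>0$ for every $z>0$ — no Fabes--Kenig--Serapioni Harnack inequality, no Hopf lemma, and no regularity theory are needed for the claim ``$U>0$'' as stated. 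Your interior maximum-principle argument does reach the same conclusion, so it is not a gap, but it imports machinery (and regularity of $U$ up to $\{y=0\}$) that the monograph only develops later, in Chapter~\ref{7xucjhgfgh345678}, and whose proofs there already use the $u\ge0$ part of this very proposition, so the ordering would have to be handled carefully.

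One further caution: your boundary Hopf argument claims $u>0$ on all of $\R^n$ with no sign assumption on $h$. That is strictly more than the proposition asserts, and more than the paper proves at this stage: strict positivity of $u$ is established only in Section~\ref{sec:positivity}, under the additional hypothesis $h\ge0$, via the strong maximum principle of Proposition~\ref{prop:maxpr} applied downstairs. To run your Hopf-type argument at a boundary zero $x_0$ you would need the weighted normal derivative $\lim_{y\to0^+}y^a\partial_yU(x_0,y)$ to exist pointwise and to coincide with $\varepsilon h(x_0)u^q(x_0)+u^p(x_0)$, which requires boundary Schauder-type regularity for the degenerate equation with Neumann datum that is only H\"older (because of the term $u^q$ with $q\in(0,1)$ near a zero of $u$), plus a precise citation of a Hopf lemma for $\mathrm{div}(y^a\nabla\cdot)$. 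None of this is needed for the statement at hand, so in a clean write-up you should either drop that part or present it separately with full references.
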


\begin{proof}
We set~$u_-(x):=-\min\{u(x),0\}$, namely~$u_-$ is the negative part of~$u$, 
and we claim that 
\begin{equation}\label{u meno}
u_-=0.
\end{equation}
For this, we multiply~\eqref{problem-1} by~$u_-$ and we integrate over~$\R^n$: we obtain
$$ \int_{\R^n}(-\Delta)^su\,u_-\,dx=\int_{\R^n}\left(\epsilon h(x)u_+^q+u_+^p\right)u_-\,dx=0.$$
Hence, by an integration by parts we get 
\begin{equation}\label{negggg-1}
\iint_{\R^{2n}}\frac{(u(x)-u(y))(u_-(x)-u_-(y))}{|x-y|^{n+2s}}\,dx\,dy=0.
\end{equation}
Now, we observe that 
\begin{equation}\label{neggg}
(u(x)-u(y))(u_-(x)-u_-(y))\ge |u_-(x)-u_-(y)|^2. 
\end{equation}
Indeed, if both~$u(x)\ge0$ and~$u(y)\ge0$ and if both~$u(x)\le0$ and~$u(y)\le0$ 
then the claim trivially follows. Therefore, we suppose that~$u(x)\ge0$ 
and~$u(y)\le0$ (the symmetric situation is analogous). In this case 
\begin{eqnarray*}
&& (u(x)-u(y))(u_-(x)-u_-(y))= -(u(x)-u(y))u(y)\\
&&\qquad\quad =-u(x)u(y)+|u(y)|^2\ge |u(y)|^2
=|u_-(x)-u_-(y)|^2, 
\end{eqnarray*}
which implies~\eqref{neggg}. 

From~\eqref{negggg-1} and~\eqref{neggg}, we obtain that 
$$ \iint_{\R^{2n}}\frac{|u_-(x)-u_-(y)|^2}{|x-y|^{n+2s}}\,dx\,dy\le 0,$$
and this implies~\eqref{u meno}, since $u\in \dot{H}^s(\R^n)$. 
Hence~$u\ge0$. 
This implies that $U>0$, being a convolution 
of~$u$ with a positive kernel.  
\end{proof}

We can also prove the existence of a second solution 
of problem \eqref{ExtendedProblem}, and consequently of problem \eqref{problem}.

\begin{theorem}\label{TH:MP}
Let $0<q<1$ and suppose that $h$ satisfies \eqref{h1} and \eqref{h2}. 
Then, there exists $\varepsilon_0>0$ such that $\mathcal{F}_\varepsilon$ 
has a second solution $\overline{U}_{\varepsilon}\neq 0$, for any $\epsilon<\epsilon_0$. 
\end{theorem}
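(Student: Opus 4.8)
The plan is to find the second solution $\overline{U}_\varepsilon$ by applying the Mountain Pass Theorem of \cite{AmRab} (in the form allowing a non-strict local minimum, via \cite{GG}), using the first solution $U_\varepsilon$ from Theorem~\ref{MINIMUM} as the base point $u_0$. The key structural observation is that the subcritical-plus-critical nonlinearity makes $\mathcal{F}_\varepsilon$ unbounded below along rays emanating from $U_\varepsilon$: for a fixed nontrivial $\Psi\ge 0$ and $t\to+\infty$, the term $-\tfrac{1}{p+1}\int (U_\varepsilon+t\Psi)_+^{p+1}$ dominates, since $p+1=\tfrac{2n}{n-2s}>2$, so $\mathcal{F}_\varepsilon(U_\varepsilon+t\Psi)\to -\infty$. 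This gives the far-away low point $u_1:=U_\varepsilon+t_0\Psi$ with $\mathcal{F}_\varepsilon(u_1)\le \mathcal{F}_\varepsilon(U_\varepsilon)$ and $[u_1-U_\varepsilon]_a=t_0[\Psi]_a>r$ for $t_0$ large. The local minimum property of $U_\varepsilon$ supplies the mountain-pass geometry $\inf_{[U-U_\varepsilon]_a=r}\mathcal{F}_\varepsilon(U)\ge\mathcal{F}_\varepsilon(U_\varepsilon)$ for $r$ small, which is exactly hypothesis \eqref{1.5bis}--\eqref{1.6bis}.

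The main obstacle — and the reason the theorem is delicate — is verifying the Palais--Smale condition at the mountain pass level $c$, since we work in the whole of $\R^n$ with a critical exponent, so compactness fails in general. Here I would use the contradiction device already announced in the introduction: \emph{assume} that $\mathcal{F}_\varepsilon$ has no critical point other than $U_\varepsilon$. Under this assumption one can show, via the Concentration-Compactness analysis (Proposition~\ref{CCP}) together with the tightness property (Definition~\ref{defTight}) and the Palais--Smale analysis of Proposition~\ref{PScond}, that any Palais--Smale sequence at level $c$ is precompact, provided $c$ lies below the critical threshold determined by the best Sobolev constant $S$ and the bubble $z$ from \eqref{tale}. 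Thus one first needs the energy estimate $c< \mathcal{F}_\varepsilon(U_\varepsilon)+\tfrac{s}{n}S^{n/2s}$ (the typical ``first level of noncompactness''), obtained by choosing the test direction $\Psi$ to be a suitably rescaled and truncated extension of the Aubin--Talenti bubble concentrated inside the ball $B$ where $\inf_B h>0$; the subcritical term $\varepsilon\int h\,(\cdot)_+^{q+1}$ with $q<1$ contributes a strictly negative lower-order correction that pushes $c$ strictly below the threshold, exactly as in \cite{BN, AGP}.

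With the geometry and the (conditional) Palais--Smale condition in hand, Theorem~\ref{mpt} yields a critical point $\overline{U}_\varepsilon$ at level $c$. Two things remain. First, $\overline{U}_\varepsilon\ne 0$: this follows because $\mathcal{F}_\varepsilon(\overline{U}_\varepsilon)=c\ge\mathcal{F}_\varepsilon(U_\varepsilon)$ and, when $c=\mathcal{F}_\varepsilon(U_\varepsilon)$, one uses the separating-set refinement to locate $\overline{U}_\varepsilon$ on the sphere $[U-U_\varepsilon]_a=r$, hence $\overline{U}_\varepsilon\ne U_\varepsilon$; in either case $\overline{U}_\varepsilon$ is distinct from $U_\varepsilon$, and since $0$ is not a critical point of $\mathcal{F}_\varepsilon$ (the nonlinear terms make the zero function a strict local... actually $0$ is a critical point, so one argues $c>0=\mathcal{F}_\varepsilon(0)$ instead, or simply notes $\overline{U}_\varepsilon\ne U_\varepsilon\ne 0$ and that $\overline{U}_\varepsilon$ cannot be $0$ because $\mathcal{F}_\varepsilon$ near $0$ stays below $c$). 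Finally, the whole argument produces a \emph{genuine} new solution: if $\overline{U}_\varepsilon$ happened to equal $U_\varepsilon$ we would contradict the distinctness just shown, and the standing contradiction hypothesis that no other critical point exists is thereby refuted — which is precisely what proves the existence of a second solution (not necessarily of mountain pass type, since the compactness was only obtained under the contradictory assumption). Positivity of $\overline{U}_\varepsilon$ and the passage to $\overline u_\varepsilon=\overline{U}_\varepsilon(\cdot,0)$ solving \eqref{problem-1}, hence \eqref{problem}, follow from Proposition~\ref{prop:pos} and the isometry \eqref{equivNorms}.
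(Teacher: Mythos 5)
Your overall architecture (mountain pass based at the first solution, Ghoussoub--Preiss geometry for a non-strict minimum, compactness via concentration-compactness below a Sobolev threshold, bubbles concentrated in the ball $B$ of \eqref{h2}) matches the paper's, but two steps as you present them do not hold up. First, the source of the strict level estimate. You claim that the concave term $\varepsilon\int h\,(\cdot)_+^{q+1}$ ``pushes $c$ strictly below the threshold, exactly as in \cite{BN,AGP}.'' That is not the mechanism here: in the paper the concave term is only used through its \emph{sign} (Lemma~\ref{SP}, which needs \eqref{h2} precisely because the bubble is planted in $B$), and the strict inequality $\sup_{t\ge0}\mathcal{I}^\star_\varepsilon(t\bar Z_{\mu,\xi})<\frac{s}{n}S^{\frac{n}{2s}}$ comes from the \emph{interaction of the bubble with the positive first solution} $U_\varepsilon$, via the elementary inequalities of Lemmata~\ref{ABC-2} and~\ref{ABC-3} (the latter because $p<2$ when $n>4s$, forcing the annulus $\Omega$ and the case distinction in \eqref{CJ}--\eqref{CJ-beta}), producing the term $c\,\mu^\beta t^m$ with $\beta<n-2s$ that beats the cut-off error $C\mu^{n-2s}$ of \eqref{8dvsuyuscal-2}. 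The contribution of the concave term at the bubble scale is of order $\varepsilon\,\mu^{\min\{(n-2s)(q+1)/2,\;n-(n-2s)(q+1)/2\}}$, which for part of the admissible range of $(n,s,q)$ is \emph{smaller} than the $\mu^{n-2s}$ error, so your asserted estimate is unsubstantiated and, as stated, insufficient; also your threshold $\mathcal{F}_\varepsilon(U_\varepsilon)+\frac{s}{n}S^{n/2s}$ is not the one appearing in the Palais--Smale statements (compare \eqref{ceps} and \eqref{ceps2}), so the bookkeeping between the minimax level and the compactness threshold needs to be redone.

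Second, the identification of the new critical point. Working directly with $\mathcal{F}_\varepsilon$, the function $0$ \emph{is} a critical point (as you notice mid-sentence), so your contradiction hypothesis ``no critical point other than $U_\varepsilon$'' is never satisfiable, and your two proposed fixes ($c>0=\mathcal{F}_\varepsilon(0)$, or ``$\mathcal{F}_\varepsilon$ near $0$ stays below $c$'') are unproven: since $\mathcal{F}_\varepsilon(U_\varepsilon)<0$, nothing prevents the minimax value from equalling $0$, in which case the critical point produced by Theorem~\ref{mpt} could a priori be the trivial one. The paper avoids exactly this by passing to the translated functional $\mathcal{I}_\varepsilon$ of \eqref{def I} with the nonlinearity \eqref{def g small} truncated at negative values: then $0$ is the base point and a local minimum (Proposition~\ref{prop:zero}), every nonzero critical point is positive (Lemma~\ref{lemma:max}), so $\overline U_\varepsilon=U_\varepsilon+U>U_\varepsilon>0$ is automatically distinct from both $U_\varepsilon$ and $0$; moreover the assumption ``$0$ is the only critical point of $\mathcal{I}_\varepsilon$'' is used in an essential way in the compactness proof (Lemmata~\ref{lemma:weak} and~\ref{lemma:tight}, hence Proposition~\ref{PScond2}), since Proposition~\ref{PScond} does not apply to the translated, non-homogeneous nonlinearity. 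To make your route rigorous you would either have to reproduce this translated/truncated framework, or supply a genuine argument excluding the trivial solution at the minimax level and a conditional Palais--Smale statement adapted to $\mathcal{F}_\varepsilon$ at levels near $\mathcal{F}_\varepsilon(U_\varepsilon)+\frac{s}{n}S^{n/2s}$; neither is present in your sketch.
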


To prove the existence of a second solution of problem \eqref{ExtendedProblem} 
we consider a translated functional. 
Namely, we let~$U_\varepsilon$ be the local minimum of the functional~\eqref{f ext} 
(already found in Theorem \ref{MINIMUM}), 
and we consider the functional 
$\mathcal{I}_\varepsilon:\dot{H}^s_a(\mathbb{R}^{n+1}_+)\rightarrow \mathbb{R}$ defined as
\begin{equation}\label{def I}
\mathcal{I}_\varepsilon(U)=
\frac{1}{2}\int_{\mathbb{R}^{n+1}_+}{y^a|\nabla U|^2\,dX}
-\int_{\mathbb{R}^n}{G(x,U(x,0))\,dx},
\end{equation}
where 
\begin{equation*}
G(x,U):=\int_0^{U}{g(x,t)\,dt},
\end{equation*}
and
\begin{equation}\label{def g small}
g(x,t):=
\begin{cases}
\varepsilon h(x)((U_\varepsilon +t)^q-U_\varepsilon^q )+(U_\varepsilon +t)^p-U_\varepsilon^p,\;\hbox{ if }t\geq 0,\\
0\;\hbox{ if }t< 0.
\end{cases}
\end{equation}
Explicitly, 
\begin{equation}\begin{split}\label{def G}
G(x,U)=&\,\frac{\epsilon\,h(x)}{q+1}
\left((U_\epsilon +U_+)^{q+1}-U_\epsilon^{q+1}\right) 
-\epsilon\,h(x)\,U_\epsilon^q U_+ \\
&\qquad +\frac{1}{p+1}\left((U_\epsilon +U_+)^{p+1}-U_\epsilon^{p+1}\right)
-U_\epsilon^pU_+.
\end{split}\end{equation}
Moreover, for any~$U,V\in\dot{H}^s_a(\R^{n+1}_+)$, we have that
\begin{equation}\label{DER}
\langle \mathcal{I}_\varepsilon'(U),V\rangle = 
\int_{\R^{n+1}_+}y^a\langle\nabla U, \nabla V\rangle\,dX - 
\int_{\R^n}g(x,U(x,0))\,V(x,0)\,dx.
\end{equation}
Notice that a critical point of~\eqref{def I} is a solution to the following problem
\begin{equation}\label{alksjaksfhhf}
\begin{cases}
\hbox{div}(y^a\nabla U)=0\quad\hbox{ in }\mathbb{R}^{n+1}_+,\\
-\displaystyle \lim_{y\rightarrow 0^+}y^a\,\frac{\partial U}{\partial\nu}=g(x,U(x,0)).
\end{cases}
\end{equation}
One can prove that a solution~$U$ to this problem is positive, 
as stated in the forthcoming Lemma~\ref{lemma:max}. 
Therefore,~$\overline{U}:=U_\epsilon+U>0$, thanks to Proposition \ref{prop:pos}. 
Also,~$\overline{U}$ will be the second solution of~\eqref{weak sol}, 
and so~$\overline{u}:=\overline{U}(\cdot,0)$ will be the second solution 
to~\eqref{problem}.

\begin{lemma}\label{lemma:max}
Let~$U\in\dot{H}_a^s(\mathbb{R}^{n+1}_+)$, $U\neq0$, be a solution to~\eqref{alksjaksfhhf}. 
Then~$U$ is positive. 
\end{lemma}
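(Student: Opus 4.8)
The plan is to establish, in order, that $U\ge0$ on $\mathbb{R}^{n+1}_+$, that its trace $u:=U(\cdot,0)$ is not identically zero, and finally that $U$ is strictly positive on $\overline{\mathbb{R}^{n+1}_+}$. Throughout I use that, being a weak solution of~\eqref{alksjaksfhhf}, $U$ satisfies
$$\int_{\mathbb{R}^{n+1}_+}y^a\langle\nabla U,\nabla V\rangle\,dX=\int_{\mathbb{R}^n}g(x,U(x,0))\,V(x,0)\,dx\qquad\text{for every }V\in\dot{H}_a^s(\mathbb{R}^{n+1}_+),$$
which is the obvious analogue of~\eqref{weak sol} and follows from~\eqref{DER}. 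For the first step I would test this identity against $V:=U_-:=\max\{-U,0\}$, which is even in $y$ and satisfies $|\nabla U_-|\le|\nabla U|$ a.e., hence lies in $\dot{H}_a^s(\mathbb{R}^{n+1}_+)$. Since $\langle\nabla U,\nabla U_-\rangle=-|\nabla U_-|^2$ a.e., the left-hand side equals $-[U_-]_a^2$; on the right-hand side, wherever $U(x,0)<0$ the definition~\eqref{def g small} gives $g(x,U(x,0))=0$, while wherever $U(x,0)\ge0$ one has $U_-(x,0)=0$, so the whole integral vanishes. Thus $[U_-]_a=0$, and since $[\,\cdot\,]_a$ is a norm on $\dot{H}_a^s(\mathbb{R}^{n+1}_+)$ we conclude $U_-=0$, i.e. $U\ge0$ and $u\ge0$.

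For the second step, suppose $u\equiv0$. Testing the same identity against $V:=U$ and using $g(x,0)=0$ together with $U(x,0)=u(x)=0$, the right-hand side vanishes and we are left with $[U]_a^2=0$, so $U=0$, against the hypothesis $U\ne0$. Hence $u\not\equiv0$. Moreover, since $U$ solves $\mathrm{div}(y^a\nabla U)=0$ in $\mathbb{R}^{n+1}_+$ with finite weighted Dirichlet energy, it is the Poisson extension of its own trace (cf.\ \eqref{poisson} and~\cite{CS}); therefore $U(x,z)=\int_{\mathbb{R}^n}P_s(x-\xi,z)\,u(\xi)\,d\xi$ with $P_s(\cdot,z)>0$ for $z>0$, and combining this with $u\ge0$, $u\not\equiv0$, we obtain $U(x,z)>0$ for every $x\in\mathbb{R}^n$ and every $z>0$.

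It remains to exclude a zero of $U$ on the boundary $\{y=0\}$, that is a point $x_0\in\mathbb{R}^n$ with $u(x_0)=0$; this is the delicate part, where the boundary regularity theory of the extended equation enters. One way is to use that $u$ is regular enough (cf.\ Chapter~\ref{7xucjhgfgh345678}) for $(-\Delta)^su$ to be evaluated pointwise, so that from $(-\Delta)^su=g(\cdot,u)$, $u(x_0)=0$ and $g(x_0,0)=0$ one gets
$$0=(-\Delta)^su(x_0)=c_{n,s}\,PV\!\!\int_{\mathbb{R}^n}\frac{u(x_0)-u(y)}{|x_0-y|^{n+2s}}\,dy=-c_{n,s}\int_{\mathbb{R}^n}\frac{u(y)}{|x_0-y|^{n+2s}}\,dy,$$
which forces $u\equiv0$, a contradiction. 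Equivalently, since $U>0$ in the interior and $U(x_0,0)=0$, a Hopf-type boundary lemma for the degenerate operator $\mathrm{div}(y^a\nabla\cdot)$ yields $-\lim_{y\to0^+}y^a\partial_yU(x_0,y)<0$, which is incompatible with the boundary condition in~\eqref{alksjaksfhhf}, since the latter equals $g(x_0,0)=0$. Either way, $u>0$ on $\mathbb{R}^n$, and together with the interior bound this gives $U>0$ on $\overline{\mathbb{R}^{n+1}_+}$, as claimed.

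I expect the only real obstacle to be this last step: the admissibility of the truncation $U_-$ as a test function and the Poisson-kernel computations are routine, whereas the strict positivity up to the boundary genuinely requires the Hopf lemma (equivalently, the pointwise strong maximum principle for $(-\Delta)^s$) for the degenerate elliptic extension, which I would quote from the regularity material developed elsewhere in the monograph.
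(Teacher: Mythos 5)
Your first two steps are correct and give the lemma as the paper intends it, but by a slightly different route. The paper's proof passes immediately to the trace: it notes that $u:=U(\cdot,0)$ solves $(-\Delta)^su=g(x,u)$ in $\R^n$, tests this against $u_-$ in the nonlocal quadratic form, and uses the pointwise inequality $(u(x)-u(y))(u_-(x)-u_-(y))\ge|u_-(x)-u_-(y)|^2$ to conclude $u_-=0$; the conclusion $U>0$ is then read off from the Poisson representation, exactly as you do. You instead test the \emph{extended} weak formulation against $U_-$, where the identity $\langle\nabla U,\nabla U_-\rangle=-|\nabla U_-|^2$ and the vanishing of $g(x,t)$ for $t<0$ make the argument purely local and dispense with the elementary inequality; this is a perfectly legitimate and arguably cleaner variant (modulo the routine check that the truncation $U_-$ lies in $\dot H^s_a(\R^{n+1}_+)$, which you note). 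Your intermediate observation that $u\not\equiv0$ (otherwise testing against $U$ gives $[U]_a=0$, contradicting $U\neq0$) is a point the paper leaves implicit, and it is genuinely needed to pass from "positive kernel" to "$U>0$"; including it is an improvement.

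The last part of your argument, however, overshoots the statement and is not justified as written. The lemma (read in parallel with Proposition~\ref{prop:pos}, whose conclusion is "$u\ge0$ and $U>0$") asserts positivity of $U$ in the open half-space $\R^{n+1}_+$, obtained from the Poisson kernel; strict positivity of the trace $u$ on $\{y=0\}$ is not part of the claim, and in the monograph it is only established later, for the original problem and under the additional hypothesis $h\ge0$, via the strong maximum principle of Proposition~\ref{prop:maxpr}. Moreover, your proposed route to it has a genuine gap: the regularity available from Corollary~\ref{coro:conti} is only $C^\alpha$ with $\alpha<\min\{2s,1\}$, which is not enough to evaluate $(-\Delta)^su(x_0)$ pointwise (this is precisely why Proposition~\ref{prop:maxpr} proceeds by harmonic replacement and viscosity-solution regularity rather than by direct pointwise evaluation, and its hypothesis $(-\Delta)^su\ge0$ weakly in a neighborhood need not hold here when $h$ changes sign); likewise a Hopf-type boundary lemma for $\mathrm{div}(y^a\nabla\cdot)$ is nowhere developed in the monograph, so it cannot simply be quoted. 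Since none of this is needed for the lemma, your proof stands once this final step is dropped.
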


\begin{proof} 
We first observe that, if~$U$ is a solution to~\eqref{alksjaksfhhf}, 
then~$u:=U(\cdot,0)$ is a solution of 
\begin{equation}\label{alksjaksfhhf-1}
(-\Delta)^su=g(x,u) \quad {\mbox{ in }}\R^n.
\end{equation}
Now, we set~$u_-(x):=-\min\{u(x),0\}$, namely~$u_-$ is the negative part of~$u$, 
and we claim that 
\begin{equation}\label{u meno-1}
u_-=0.
\end{equation}
For this, we multiply~\eqref{alksjaksfhhf-1} by~$u_-$ and we integrate over~$\R^n$: we obtain
$$ \int_{\R^n}(-\Delta)^su\,u_-\,dx=\int_{\R^n}g(x,u)\,u_-\,dx.$$
Recalling the definition of~$g$ in~\eqref{def g small}, we have that 
$$ \int_{\R^n}g(x,u)\,u_-\,dx=0.$$
Hence, by an integration by parts we get 
\begin{equation}\label{negggg-12}
\iint_{\R^{2n}}\frac{(u(x)-u(y))(u_-(x)-u_-(y))}{|x-y|^{n+2s}}\,dx\,dy=0.
\end{equation}
Now, we observe that 
\begin{equation}\label{neggg00}
(u(x)-u(y))(u_-(x)-u_-(y))\ge |u_-(x)-u_-(y)|^2. 
\end{equation}
Indeed, if both~$u(x)\ge0$ and~$u(y)\ge0$ and if both~$u(x)\le0$ and~$u(y)\le0$ 
then the claim trivially follows. Therefore, we suppose that~$u(x)\ge0$ 
and~$u(y)\le0$ (the symmetric situation is analogous). In this case 
\begin{eqnarray*}
&& (u(x)-u(y))(u_-(x)-u_-(y))= -(u(x)-u(y))u(y)\\
&&\qquad\quad =-u(x)u(y)+|u(y)|^2\ge |u(y)|^2
=|u_-(x)-u_-(y)|^2, 
\end{eqnarray*}
which implies~\eqref{neggg00}. 

From~\eqref{negggg-12} and~\eqref{neggg00}, we obtain that 
$$ \iint_{\R^{2n}}\frac{|u_-(x)-u_-(y)|^2}{|x-y|^{n+2s}}\,dx\,dy\le 0,$$
and this implies~\eqref{u meno-1}, since $u\in H^s(\R^n)$. 
Hence~$u\ge0$. 
This implies that $U>0$, being a convolution 
of~$u$ with a positive kernel. 
\end{proof}

The next sections will be devoted to the proof of 
Theorems \ref{MINIMUM} and \ref{TH:MP}. 

More precisely, for this goal
some preliminary material from functional analysis
is needed. The main analytic tools are contained in Chapter~\ref{FAS}.
Namely,
since we will work with an extended functional (that also 
contains terms with weighted Sobolev norms), 
we devote Section~\ref{sec:weighted} to show some weighted Sobolev embeddings 
and Section~\ref{sec:CC} to prove a suitable
Concentration-Compactness Principle. 

The existence of a minimal solution is discussed in
Chapter~\ref{ECXMII}. In particular,
in Section~\ref{sec:conv} we deal with some convergence results, 
that we need in the subsequent Section~\ref{sec:PS}, where 
we show that under a given level the Palais-Smale condition holds true 
for the extended functional. Then, in Section~\ref{concaveFirstSol} 
we complete the proof of Theorem~\ref{MINIMUM}. 

In Chapter~\ref{7xucjhgfgh345678},
we discuss some regularity and positivity issues
about the solution that we constructed.
More precisely,
in Section \ref{sec:reg} we show some regularity results, 
and in Section \ref{sec:positivity} we prove the positivity of the solutions to \eqref{problem}, 
making use of a strong maximum principle for weak solutions. 

Then, in Chapter~\ref{EMP:CHAP}
we deal with the existence of the mountain pass solution
(under the contradictory assumption that the solution is unique). 
We first show, in Section \ref{sec:exi}, that the translated functional introduced in Section 
\ref{sec:ext} has $U=0$ as a local minimum (notice that this is a consequence of 
the fact that we are translating the original functional with respect to its 
local minimum). 

Sections \ref{sec:prelim} and \ref{sec:prelim2} are devoted to some preliminary 
results. We will exploit these basic lemmata in the subsequent Section \ref{sec:PS MP}, 
where we prove that the above-mentioned translated functional satisfies a Palais-Smale 
condition. 

In Section~\ref{sec:BMMV}
we estimate the minimax value along a suitable path
(roughly speaking, the linear path constructed
along a suitably cut-off minimizer of the fractional Sobolev inequality).
This estimate is needed to exploit the Mountain Pass Theorem
via the convergence of the Palais-Smale sequences at appropriate
energy levels.
With this, in Section \ref{sec:proof} we finish the proof of Theorem \ref{TH:MP}. 

\chapter{Functional analytical setting}\label{FAS}

\section{Weighted Sobolev embeddings}\label{sec:weighted}

For any $r\in(1,+\infty)$, we denote by $L^r(\R^{n+1}_+,y^a)$ the weighted\footnote{Some of the results presented here
are valid for more general families of weights,
in the setting of Muckenhoupt classes.
Nevertheless, we focused on the monomial weights
both for the sake of concreteness and simplicity,
and because some
more general results follow in a straightforward way
from the ones presented here. With this respect,
for further comments that compare monomial
and Muckenhoupt weights,
see the end of Section~1
in~\cite{CR}.}
Lebesgue space, 
endowed with the norm 
$$ \|U\|_{L^r(\R^{n+1}_+,y^a)}:=\left(\int_{\R^{n+1}_+} y^a |U|^r\,dX\right)^{1/r}.$$ 
The following result shows that $\dot{H}^s_a(\mathbb{R}^{n+1}_+)$ 
is continuously embedded in $L^{2\gamma}(\R^{n+1}_+,y^a)$. 

\begin{prop}[Sobolev embedding]\label{WeightedSob}
There exists a constant $\hat{S}>0$ such that for all $U\in \dot{H}_a^s(\mathbb{R}^{n+1}_+)$ it holds
\begin{equation}\label{SobIneq}
\left(\int_{\mathbb{R}^{n+1}_+}{y^a|U|^{2\gamma}\,dX}\right)^{1/2\gamma}\leq 
\hat{S}\left(\int_{\mathbb{R}^{n+1}_+}{y^a|\nabla U|^2\,dX}\right)^{1/2},
\end{equation}
where $\gamma=1+\dfrac{2}{n-2s}$.
\end{prop}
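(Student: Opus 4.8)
The statement is the weighted Sobolev inequality
\[
\left(\int_{\R^{n+1}_+} y^a |U|^{2\gamma}\,dX\right)^{1/2\gamma}\le \hat S\left(\int_{\R^{n+1}_+} y^a|\nabla U|^2\,dX\right)^{1/2},\qquad \gamma=1+\tfrac{2}{n-2s},
\]
which is exactly the Sobolev embedding for the degenerate elliptic operator $\operatorname{div}(y^a\nabla\cdot)$ on the half-space $\R^{n+1}_+$. Since $\dot H^s_a(\R^{n+1}_+)$ is by definition the completion of (symmetric restrictions of) $C_0^\infty$ functions under $[U]_a$, it suffices to prove the inequality for $U\in C_0^\infty(\R^{n+1})$ even in $y$, and then pass to the limit by density. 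The natural exponent $2\gamma$ is precisely the critical exponent $\frac{2(N+a+1)}{N+a-1}$ in the ``effective dimension'' $N+a+1 = n+1+a = n+2-2s$ associated with the monomial weight $y^a$; indeed one checks $\frac{2(n+2-2s)}{(n+2-2s)-2}=\frac{2(n+2-2s)}{n-2s}=2+\frac{4}{n-2s}=2\gamma$, so the homogeneity is exactly right.

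\textbf{Key steps.} First I would recall the weighted Sobolev inequality for monomial weights $|y|^a$ with $a=1-2s\in(-1,1)$ on the whole of $\R^{n+1}$: this is exactly the inequality established by Cabré and Ros-Oton in \cite{CR} (cited in the footnote of the very section), which states that for $V\in C_0^\infty(\R^{n+1})$ one has $\|V\|_{L^{2\gamma}(\R^{n+1},|y|^a)}\le C\,\|\nabla V\|_{L^2(\R^{n+1},|y|^a)}$ with the stated $\gamma$; their proof proceeds either via a weighted isoperimetric inequality plus the coarea formula, or via a Bliss-type one-dimensional reduction after symmetrization. Second, given $U\in \dot H^s_a(\R^{n+1}_+)$, I extend it evenly across $\{y=0\}$ to a function $\tilde U$ on $\R^{n+1}$; by the definition \eqref{D123} of $\dot H^s_a(\R^{n+1}_+)$, $\tilde U\in\dot H^s_a(\R^{n+1})$, i.e.\ $\tilde U$ is a limit in $[\cdot]_a^*$ of $C_0^\infty(\R^{n+1})$ functions, and one has $\int_{\R^{n+1}}|y|^a|\nabla\tilde U|^2 = 2\int_{\R^{n+1}_+}y^a|\nabla U|^2$ and likewise $\int_{\R^{n+1}}|y|^a|\tilde U|^{2\gamma}=2\int_{\R^{n+1}_+}y^a|U|^{2\gamma}$. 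Third, I apply the whole-space inequality to $\tilde U$ (first on the dense class $C_0^\infty$, then by Fatou/density to $\tilde U$ itself), and the factors of $2$ get absorbed into the constant $\hat S$, yielding \eqref{SobIneq}. The constant $\hat S$ thus depends only on $n$ and $s$ through $a$ and $\gamma$.

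\textbf{The main obstacle.} The genuinely nontrivial input is the whole-space weighted Sobolev inequality itself for the Muckenhoupt/monomial weight $|y|^a$ — it does not follow from the classical unweighted Sobolev inequality by an elementary slicing argument because the weight degenerates/blows up at $\{y=0\}$. If I wanted the monograph to be self-contained I would include a proof along the lines of \cite{CR}: establish the weighted isoperimetric inequality $P_a(E)\ge c\,m_a(E)^{(N+a)/(N+a+1)}$ for the measure $dm_a=|y|^a\,dX$ (for which half-balls centered on $\{y=0\}$ are, up to the even reflection, optimal), then feed it into the coarea formula $\int |y|^a|\nabla V| = \int_0^\infty P_a(\{|V|>t\})\,dt$ together with the layer-cake representation and Minkowski's integral inequality to pass from the $L^1$ (gradient) estimate to the $L^{2\gamma}$ estimate via the usual truncation trick applied to powers $|V|^{\beta}$. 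Everything downstream — the even reflection, the bookkeeping of the constant $2$, the density argument — is routine, so I would state the reflection lemma, invoke \cite{CR} for the whole-space estimate, and keep the exposition short.
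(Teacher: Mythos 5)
Your overall architecture (reduce to the even extension $\tilde U$ on $\R^{n+1}$, prove the inequality on the dense class $C_0^\infty(\R^{n+1})$, absorb the factor $2$ from reflection, conclude by density/Fatou) is exactly the scheme of the paper. The gap is in the single step you treat as a black box: you invoke the Cabr\'e--Ros-Oton inequality of \cite{CR} for the weight $|y|^a$ with $a=1-2s\in(-1,1)$, but Theorem~1.3 of \cite{CR} is a statement about \emph{monomial weights with nonnegative exponents}; it covers $a\ge 0$, i.e.\ $s\in(0,1/2]$, and does not apply when $a<0$. The isoperimetric mechanism you sketch (weighted isoperimetry with half-balls as optimizers, coarea, truncation) also genuinely uses $a\ge0$: for $s>1/2$ the weight blows up on $\{y=0\}$ and is no longer a monomial weight in the sense of \cite{CR}, so the whole-space estimate you rely on is simply not available from that reference. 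This is precisely why the paper's proof is split into three regimes: $s\in(0,1/2)$ via \cite{CR}, $s=1/2$ being the classical Sobolev inequality, and $s\in(1/2,1)$ via a different tool.

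For the missing range $s\in(1/2,1)$ the paper uses that $w=|y|^a$ is a Muckenhoupt $A_q$ weight for every $q\in(2-2s,2]$, in particular $A_2$, and applies the local Sobolev--Poincar\'e inequality of Fabes--Kenig--Serapioni (Theorem~1.2 of \cite{FKS}) on balls $B_R$. Two nontrivial checks are then needed, and your proposal would have to supply them: (i) with the specific exponent $\gamma=1+\frac{2}{n-2s}$ the factor $R\,w(B_R)^{\frac{1}{2\gamma}-\frac12}$ is independent of $R$ (since $w(B_R)\simeq R^{\,n+2-2s}$), so one may let $R\to+\infty$; and (ii) this $\gamma$ lies in the admissible range $1\le\gamma\le \frac{n+1}{n+1-2/q}$ of \cite{FKS}, which, after optimizing $q$ close to $2-2s$, amounts to $2-2s<\frac{n-2s+2}{n+1}$ and holds exactly when $s>1/2$. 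Without this (or some substitute argument for $a<0$), your proof only establishes the proposition for $s\le 1/2$.
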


\begin{proof}
Let us first prove the result for $U\in C_0^\infty(\mathbb{R}^{n+1})$. If $s\in(0,1/2)$, inequality \eqref{SobIneq} is easily deduced from Theorem 1.3 of \cite{CR}. By a density argument, we obtain that inequality \eqref{SobIneq} holds 
for any function $U\in\dot{H}^s_a(\mathbb{R}^{n+1}_+)$. 
Indeed, if $U\in\dot{H}^s_a(\mathbb{R}^{n+1}_+)$, then there exists 
a sequence of functions $\{U_k\}_{k\in\mathbb{N}}\in C^\infty_0(\R^{n+1})$ such that 
$U_k$ converges to some $\tilde{U}$ in $\dot{H}^s_a(\mathbb{R}^{n+1})$ as $k\to\infty$, where $U=\tilde{U}$ in $\mathbb{R}^{n+1}_+$ and $\tilde{U}$ is even with respect to the $(n+1)$-th variable. Hence, for any $k$, we have 
\begin{equation}\begin{split}\label{sobUk}
\left(\int_{\mathbb{R}^{n+1}_+}{y^a|U_k|^{2\gamma}\,dX}\right)^{1/2\gamma}&\leq 
\hat{S}\left(\int_{\mathbb{R}^{n+1}_+}{y^a|\nabla U_k|^2\,dX}\right)^{1/2}\\
&\leq 
\hat{S}\left(\int_{\mathbb{R}^{n+1}}{y^a|\nabla U_k|^2\,dX}\right)^{1/2}.
\end{split}\end{equation}
Moreover, given two functions of the approximating sequence, there holds
\begin{equation*}\begin{split}
\left(\int_{\mathbb{R}^{n+1}_+}{y^a|U_k-U_m|^{2\gamma}\,dX}\right)^{1/2\gamma}
&\leq 
\hat{S}\left(\int_{\mathbb{R}^{n+1}}{y^a|\nabla (U_k-U_m)|^2\,dX}\right)^{1/2}\rightarrow 0,
\end{split}\end{equation*}
and thus, up to a subsequence,
\begin{equation*}\begin{split}
U_k\rightarrow \tilde{U}&\hbox{ in }L^{2\gamma}(\mathbb{R}^{n+1}_+,y^a),\\
U_k\rightarrow \tilde{U}&\hbox{ a.e. in }\mathbb{R}^{n+1}_+.
\end{split}\end{equation*}
Hence, by Fatou's Lemma and \eqref{sobUk} we get 
\begin{equation}\begin{split}\label{density}
\left(\int_{\mathbb{R}^{n+1}_+}{y^a|U|^{2\gamma}\,dX}\right)^{1/2\gamma}&=\left(\int_{\mathbb{R}^{n+1}_+}{y^a|\tilde{U}|^{2\gamma}\,dX}\right)^{1/2\gamma}\\
&\leq 
\lim_{k\to+\infty}\left(\int_{\mathbb{R}^{n+1}_+}{y^a|U_k|^{2\gamma}\,dX}\right)^{1/2\gamma}\\ 
&\leq \lim_{k\to+\infty}
\hat{S}\left(\int_{\mathbb{R}^{n+1}_+}{y^a|\nabla U_k|^2\,dX}\right)^{1/2}\\
&\leq \lim_{k\to+\infty}
\hat{S}\left(\int_{\mathbb{R}^{n+1}}{|y|^a|\nabla U_k|^2\,dX}\right)^{1/2}\\
&=\hat{S}\left(\int_{\mathbb{R}^{n+1}}{|y|^a|\nabla \tilde{U}|^2\,dX}\right)^{1/2}\\
&=\hat{S}\left(2\int_{\mathbb{R}^{n+1}_+}{y^a|\nabla U|^2\,dX}\right)^{1/2},
\end{split}\end{equation}
which shows that Proposition~\ref{WeightedSob} holds true
for any function $U\in\dot{H}^s_a(\mathbb{R}^{n+1}_+)$, up to renaming $\hat{S}$.

On the other hand, the case $s=\frac{1}{2}$ corresponds to the classical Sobolev inequality, so we can now concentrate on the range $s\in (1/2,1)$, that can be derived from Theorem 1.2 of \cite{FKS} by arguing as follows. 

Let us denote
$$w(X):=|y|^a.$$
Thus, it can be checked that 
\begin{equation}\label{Muck}
w\in A_q \hbox{ for every }q\in (2-2s,2],
\end{equation}
where $A_q$ denotes the class of Muckenhoupt weights of order $q$.  Since in particular $w\in A_2$, by Theorem 1.2 of \cite{FKS}, we know that there exist positive constants $C$ and $\delta$ such that for all balls $B_R\subset \mathbb{R}^{n+1}$, all $u\in C_0^\infty(B_R)$ and all $\gamma$ satisfying $1\leq \gamma\leq \frac{n+1}{n}+\delta$, one has
\begin{equation}\label{ineqFKS}
\left(\frac{1}{w(B_R)}\int_{B_R}{|U|^{2\gamma}w\,dX}\right)^{1/2\gamma}\leq CR\left(\frac{1}{w(B_R)}\int_{B_R}{|\nabla U|^2w\,dX}\right)^{1/2}.
\end{equation}
In particular, it yields
\begin{eqnarray*}
w(B_R)&=&\int_{B_R}{|y|^a\,dX}=\int_{|x|^2+y^2\leq R^2}{|y|^a\,dx\,dy}\\
&=&\int_{|\eta|^2+\xi^2\leq 1}{|\xi|^a R^{n+1+a}\,d\xi\,d\eta}
= CR^{a+n+1}=CR^{2-2s+n},
\end{eqnarray*}
with $C$ independent of $R$. Thus, 
$$Rw(B_R)^{\frac{1}{2\gamma}-\frac{1}{2}}= CR^{1+\frac{(1-\gamma)(2-2s+n)}{2\gamma}},$$
and plugging this into \eqref{ineqFKS} we get
\begin{equation*}
\left(\int_{B_R}{|U|^{2\gamma}w\,dX}\right)^{1/2\gamma}\leq CR^{1+\frac{(1-\gamma)(2-2s+n)}{2\gamma}}\left(\int_{B_R}{|\nabla U|^2w\,dX}\right)^{1/2},
\end{equation*}
where $C$ is a constant independent of $R$. In particular, if we set $\gamma=1+\frac{2}{n-2s}$, then
$$1+\frac{(1-\gamma)(2-2s+n)}{2\gamma}=0,$$
and the inequality holds for every ball with the same constant. It remains to check that this value of $\gamma$ is under the hypotheses of Theorem 1.2 of \cite{FKS}, that is, $1\leq \gamma\leq \frac{n+1}{n}+\delta$. Keeping track of $\delta$ in \cite{FKS}, this condition actually becomes
$$1\leq \gamma\leq \frac{n+1}{n+1-\frac{2}{q}},$$
for every $q<2$ such that $w\in A_q$. Thus, by \eqref{Muck}, we can choose any $q\in (2-2s,2)$.
Since $\gamma$ is clearly greater than $1$, we have to prove the upper bound, that is,
$$1+\frac{2}{n-2s}\leq  \frac{n+1}{n+1-\frac{2}{q}},$$
but this is equivalent to ask 
$$q\leq \frac{n-2s+2}{n+1}.$$ 
Since we can choose $q$ as close as we want to $2-2s$, this inequality will be true whenever
$$2-2s<\frac{n-2s+2}{n+1},$$
which holds if and only if $s>\frac{1}{2}$. Summarizing, we have that
\begin{equation}
\left(\int_{B_R}{|y|^a|U|^{2\gamma}\,dX}\right)^{1/2\gamma}\leq C\left(\int_{B_R}{|y|^a|\nabla U|^2\,dX}\right)^{1/2},
\end{equation}
where $\gamma=1+\frac{2}{n-2s}$ and $C$ is a constant independent of the domain. Choosing $R$ large enough, it yields
\begin{equation}
\left(\int_{\mathbb{R}^{n+1}}{|y|^a|U|^{2\gamma}\,dX}\right)^{1/2\gamma}\leq C\left(\int_{\mathbb{R}^{n+1}}{|y|^a|\nabla U|^2\,dX}\right)^{1/2},
\end{equation}
Consider now $U\in\dot{H}^s_a(\mathbb{R}^{n+1}_+)$. We perform the same density argument as in the case $s\in (0,1/2)$, with the only difference that instead of \eqref{density} we have
\begin{equation*}\begin{split}
\left(\int_{\mathbb{R}^{n+1}_+}{y^a|U|^{2\gamma}\,dX}\right)^{1/2\gamma}&=\left(\int_{\mathbb{R}^{n+1}_+}{y^a|\tilde{U}|^{2\gamma}\,dX}\right)^{1/2\gamma}\\
&\leq 
\lim_{k\to+\infty}\left(\int_{\mathbb{R}^{n+1}_+}{y^a|U_k|^{2\gamma}\,dX}\right)^{1/2\gamma}\\ 
&\leq 
\lim_{k\to+\infty}\left(\int_{\mathbb{R}^{n+1}}{|y|^a|U_k|^{2\gamma}\,dX}\right)^{1/2\gamma}\\ 
&\leq \lim_{k\to+\infty}
\hat{S}\left(\int_{\mathbb{R}^{n+1}}{|y|^a|\nabla U_k|^2\,dX}\right)^{1/2}\\
&=\hat{S}\left(\int_{\mathbb{R}^{n+1}}{|y|^a|\nabla \tilde{U}|^2\,dX}\right)^{1/2}\\
&=\hat{S}\left(2\int_{\mathbb{R}^{n+1}_+}{y^a|\nabla U|^2\,dX}\right)^{1/2}.\qedhere
\end{split}\end{equation*}
\end{proof}

We also show a compactness result that we will need in the sequel. 
More precisely, we prove that $\dot{H}^s_a(\mathbb{R}^{n+1}_+)$ 
is locally compactly embedded in $L^2(\R^{n+1}_+,y^a)$. 
The precise statement goes as follows: 
 
\begin{lemma}\label{lemma:compact}
Let $R>0$ and let $\mathcal{J}$ be a subset of $\dot{H}^s_a(\R^{n+1}_+)$ such that 
$$ \sup_{U\in\mathcal{J}}\int_{\R^{n+1}_+}y^a|\nabla U|^2\,dX<+\infty.$$
Then $\mathcal{J}$ is precompact in $L^2(B^+_R,y^a)$.
\end{lemma}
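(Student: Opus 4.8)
The plan is to combine the weighted Sobolev inequality of Proposition~\ref{WeightedSob} with the classical Rellich--Kondrachov theorem, the only genuinely delicate point being the behaviour of the degenerate or singular weight~$y^a$ near the hyperplane~$\{y=0\}$. (Alternatively one could invoke known compactness results for Muckenhoupt weights, in the spirit of~\cite{FKS}, but a direct argument is shorter.) The first ingredient I would establish is that on bounded pieces the full weighted $L^2$ norm is controlled by the seminorm, together with a uniform tightness estimate at the boundary: by H\"older's inequality with exponents~$\gamma$ and~$\gamma/(\gamma-1)$ with respect to the measure~$y^a\,dX$ and by~\eqref{SobIneq},
\begin{equation}\label{plan:tight}
\int_{B_R^+\cap\{0<y<\delta\}} y^a |U|^2\,dX
\le \left(\int_{\R^{n+1}_+} y^a |U|^{2\gamma}\,dX\right)^{1/\gamma}
\left(\int_{B_R^+\cap\{0<y<\delta\}} y^a\,dX\right)^{1-\frac1\gamma}
\le C(R)\,\delta^{\,\theta}\,[U]_a^2,
\end{equation}
for a suitable exponent~$\theta=\theta(n,s)>0$, where we used that~$\int_{B_R^+\cap\{0<y<\delta\}} y^a\,dX\le C_n R^{\,n}\int_0^\delta y^a\,dy<+\infty$ because~$a+1=2-2s>0$. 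Taking~$\delta=R$ this shows~$\sup_{U\in\mathcal{J}}\|U\|_{L^2(B_R^+,y^a)}<+\infty$, and letting~$\delta\to0^+$ it shows that~$\omega(\delta):=\sup_{U\in\mathcal{J}}\int_{B_R^+\cap\{0<y<\delta\}} y^a|U|^2\,dX\to0$ as~$\delta\to0^+$.

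Second, I would recover classical compactness away from~$\{y=0\}$. For~$\delta\in(0,R)$ let~$\Omega_\delta:=\{X\in\R^{n+1}_+:\ |X|<R,\ y>\delta\}$; on~$\Omega_\delta$ the weight satisfies~$0<c(\delta,R)\le y^a\le C(\delta,R)$, hence, using the seminorm bound on~$\mathcal{J}$ together with the previous step, the \emph{unweighted} norms~$\|U\|_{H^1(\Omega_\delta)}$ are uniformly bounded on~$\mathcal{J}$. Since~$\Omega_\delta$ is a bounded Lipschitz domain, the Rellich--Kondrachov theorem gives that~$\mathcal{J}$ is precompact in~$L^2(\Omega_\delta)$, and therefore in~$L^2(\Omega_\delta,y^a)$ since the two norms are equivalent on~$\Omega_\delta$.

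Finally I would glue the two pieces by a diagonal argument. Given a sequence~$\{U_k\}\subset\mathcal{J}$, for each~$m\in\N$ extract (by the previous step with~$\delta=1/m$) a subsequence converging in~$L^2(\Omega_{1/m},y^a)$, and pass to a diagonal subsequence~$\{U_{k_j}\}$ converging in~$L^2(\Omega_{1/m},y^a)$ for every~$m$. Splitting~$B_R^+$ into~$\{0<y\le 1/m\}$ and~$\Omega_{1/m}$, for all~$i,j$,
\begin{equation*}
\int_{B_R^+} y^a |U_{k_i}-U_{k_j}|^2\,dX
\le 4\,\omega(1/m)+\int_{\Omega_{1/m}} y^a|U_{k_i}-U_{k_j}|^2\,dX .
\end{equation*}
Given~$\eta>0$, first choose~$m$ with~$4\,\omega(1/m)<\eta/2$ (possible by~\eqref{plan:tight}), then~$j_0$ so that the last integral is~$<\eta/2$ for~$i,j\ge j_0$; hence~$\{U_{k_j}\}$ is Cauchy, and therefore convergent, in~$L^2(B_R^+,y^a)$. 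This proves that~$\mathcal{J}$ is precompact in~$L^2(B_R^+,y^a)$.

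The main obstacle is precisely the boundary layer~$\{y\approx0\}$: when~$s\in(1/2,1)$ the weight~$y^a$ blows up there, so one cannot simply invoke a Rellich theorem up to the boundary; estimate~\eqref{plan:tight} is the device that circumvents this, and it uses in an essential way both the gain of integrability provided by Proposition~\ref{WeightedSob} (the exponent~$2\gamma>2$) and the integrability~$\int_0^\delta y^a\,dy<\infty$, i.e.\ $a+1=2-2s>0$. A minor technical care is that~$\Omega_\delta$ be regular enough for classical Rellich--Kondrachov; should one prefer to avoid any discussion of the edge where~$\{|X|=R\}$ meets~$\{y=\delta\}$, it suffices to cover the compact set~$\overline{B_R^+}\cap\{y\ge\delta\}$ by finitely many balls whose doubles are contained in~$\R^{n+1}_+$ and run the compactness argument on each of them.
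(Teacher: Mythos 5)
Your proof is correct and follows essentially the same strategy as the paper's: the boundary strip $\{0<y<\delta\}$ is controlled by the H\"older inequality with exponents $\gamma$ and $\gamma/(\gamma-1)$ combined with Proposition~\ref{WeightedSob} (using $a+1=2-2s>0$), and away from $\{y=0\}$ the weight is comparable to a constant so the classical Rellich--Kondrachov theorem applies. The only difference is bookkeeping: the paper proves total boundedness by building an explicit $\epsilon$-net (extending the interior net functions by zero below $y=\eta$), whereas you extract a diagonal subsequence over the layers $\delta=1/m$ and show it is Cauchy; the two are equivalent.
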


\begin{proof}
We will prove that $\mathcal{J}$ is totally bounded in $L^2(B^+_R,y^a)$, 
i.e. for any $\epsilon>0$ there exist $M$ and $U_1,\ldots,U_M\in L^2(B^+_R,y^a)$ 
such that for any $U\in\mathcal{J}$ there exists $i\in\{1,\ldots,M\}$ such that
\begin{equation}\label{alewqewgew}
\|U_i-U\|_{L^2(B^+_R,y^a)}\le\epsilon.
\end{equation}
For this, we fix $\epsilon>0$, we set 
\begin{equation}\label{pwqrtt000}
A:=\sup_{U\in\mathcal{J}}\int_{\R^{n+1}_+}y^a|\nabla U|^2\,dX<+\infty 
\end{equation}
and we let 
\begin{equation}\label{eta def}
\eta:=\left[\frac{\epsilon^2}{2\hat{S}^2A}\left(\frac{a+1}{|B_R|}\right)^{\frac{\gamma-1}{\gamma}} 
\right]^{\frac{\gamma}{(\gamma-1)(a+1)}},
\end{equation}
where $\gamma$ and $\hat{S}$ are the constants introduced in the statement 
of Proposition \ref{WeightedSob}, and $|B_R|$ is the Lebesgue measure of the ball $B_R$ in $\R^n$. 

Now, notice that 
\begin{equation}\label{lskfjgrekge}
{\mbox{if $X\in B_R^+\cap\{y\ge\eta\}$ then 
$y^a\ge\min\{\eta^a,R^a\}$}}. \end{equation}
Indeed, if $a\ge0$ (that is $s\in(0,1/2]$) then $y^a\ge\eta^a$, 
while if $a<0$ (that is $s\in(1/2,1)$) then we use that $y\le R$, and so $y^a\ge R^a$, 
thus proving \eqref{lskfjgrekge}. Analogously, one can prove that 
\begin{equation}\label{lskfjgrekge-1}
{\mbox{if $X\in B_R^+\cap\{y\ge\eta\}$ then 
$y^a\le\max\{\eta^a,R^a\}$}}. \end{equation}
Therefore, using \eqref{lskfjgrekge}, we have that, for any $U\in\mathcal{J}$,
$$ A\ge \int_{B_R^+\cap\{y\ge\eta\}}y^a|\nabla U|^2\,dX\ge  
\min\{\eta^a,R^a\}\int_{B_R^+\cap\{y\ge\eta\}}|\nabla U|^2\,dX.$$ 
Hence, 
$$ \int_{B_R^+\cap\{y\ge\eta\}}|\nabla U|^2\,dX<+\infty$$
for any $U\in\mathcal{J}$. So by the Rellich-Kondrachov theorem we have that $\mathcal{J}$ is totally 
bounded in $L^2(B_R^+\cap\{y\ge\eta\})$. Namely, 
there exist $\tilde{U}_1,\ldots,\tilde{U}_M\in L^2(B_R^+\cap\{y\ge\eta\})$ 
such that for any $U\in\mathcal{J}$ there exists $i\in\{1,\ldots,M\}$ such that
\begin{equation}\label{qeqwptoepyrekh}
\|U_i-U\|_{L^2(B_R^+\cap\{y\ge\eta\})}\le\frac{\epsilon^2}{2\max\{\eta^a,R^a\}}.
\end{equation}
Now for any $i\in\{1,\ldots,M\}$ we set 
$$ U_i:= 
\begin{cases}
\tilde{U}_i & {\mbox{ if }} y\ge\eta,\\
0 & {\mbox{ if }} y<\eta. 
\end{cases}$$
Notice that $U_i\in L^2(B^+_R,y^a)$ for any $i\in\{1,\ldots,M\}$. 
Indeed, fixed $i\in\{1,\ldots,M\}$, we have that
\begin{eqnarray*}
\int_{B_R^+}y^a|U_i|^2\,dX &=& \int_{B_R^+\cap \{y<\eta\}}y^a|U_i|^2\,dX +
\int_{B_R^+\cap \{y\ge\eta\}}y^a|U_i|^2\,dX \\
&=& 0 + \int_{B_R^+\cap \{y\ge\eta\}}y^a|\tilde{U}_i|^2\,dX \\
&\le & \max\{\eta^a,R^a\}\int_{B_R^+\cap \{y\ge\eta\}}|U_i|^2\,dX <+\infty,
\end{eqnarray*}
thanks to \eqref{lskfjgrekge-1} and the fact that $\tilde{U}_i\in L^2(B_R^+\cap\{y\ge\eta\})$ 
for any $i\in\{1,\ldots,M\}$. 

It remains to show \eqref{alewqewgew}. For this, we first observe that 
\begin{equation}\label{alewqewgew-3}
\|U_i-U\|_{L^2(B^+_R,y^a)}^2 = \int_{B_R^+\cap\{y<\eta\}}y^a |U|^2\,dX + 
\int_{B_R^+\cap\{y\ge \eta\}}y^a |\tilde{U}_i-U|^2\,dX.\end{equation} 
Using the
H\"older inequality with exponents $\gamma$ and $\frac{\gamma}{\gamma-1}$ 
and Proposition \ref{WeightedSob} and recalling \eqref{pwqrtt000} and \eqref{eta def}, we obtain that
\begin{eqnarray*}
\int_{B_R^+\cap\{y<\eta\}}y^a |U|^2\,dX &=& 
\int_{B_R^+\cap\{y<\eta\}}y^{\frac{a}{\gamma}} |U|^2 y^{\frac{a(\gamma-1)}{\gamma}}\,dX \\
& \le & 
\left(\int_{B_R^+\cap\{y<\eta\}}y^a |U|^{2\gamma}\,dX\right)^{\frac{1}{\gamma}} 
\left(\int_{B_R^+\cap\{y<\eta\}}y^a \,dX\right)^{\frac{\gamma-1}{\gamma}} \\
&\le & \hat{S}^2\int_{\R^{n+1}_+}y^a |\nabla U|^2\,dX \, 
\left(\frac{|B_R|}{a+1}\right)^{\frac{\gamma-1}{\gamma}}\eta^{\frac{(a+1)(\gamma-1)}{\gamma}}\\
&\le& \hat{S}^2 A \left(\frac{|B_R|}{a+1}\right)^{\frac{\gamma-1}{\gamma}}
\eta^{\frac{(a+1)(\gamma-1)}{\gamma}}\\
&=& \frac{\epsilon^2}{2}.
\end{eqnarray*}
Moreover, making use of \eqref{lskfjgrekge-1} and \eqref{qeqwptoepyrekh}, we have that 
$$ \int_{B_R^+\cap\{y\ge \eta\}}y^a |\tilde{U}_i-U|^2\,dX \le \max\{\eta^a,R^a\}
\int_{B_R^+\cap\{y\ge \eta\}}|\tilde{U}_i-U|^2\,dX \le \frac{\epsilon^2}{2}.
$$
Plugging the last two formulas into \eqref{alewqewgew-3}, we get 
$$ \|U_i-U\|_{L^2(B^+_R,y^a)}^2\le \frac{\epsilon^2}{2} +\frac{\epsilon^2}{2}=\epsilon^2,$$
which implies \eqref{alewqewgew} and thus concludes the proof of Lemma \ref{lemma:compact}.  
\end{proof}

\section{A Concentration-Compactness Principle}\label{sec:CC}

In this section we show a Concentration-Compactness Principle, 
in the spirit of the original result proved by P. L. Lions in \cite{L1} and \cite{L2}. 
In particular, we want to adapt Lemma 2.3 of \cite{L2}. 
See also, \cite{AGP, giampiero}, where this principle was proved 
for different problems. 

For this, we recall the following definitions: 
\begin{defn} \label{defTight}
We say that a sequence $\{U_k\}_{k\in\mathbb{N}}$ is tight 
if for every $\eta>0$ there exists $\rho>0$ such that
\begin{equation*}
\int_{\mathbb{R}^{n+1}_+\setminus B_\rho^+}{y^a|\nabla U_k|^2\,dX}\leq \eta \quad  {\mbox{ for any }}k.
\end{equation*}
\end{defn}

\begin{defn}\label{convMeasures}
Let $\{\mu_k\}_{k\in\mathbb{N}}$ be a sequence of measures on a topological space $X$. We say that $\mu_k$ converges to $\mu$ in $X$ if and only if
$$\lim_{k\to+\infty}\int_X{\varphi\,d\mu_k}= \int_X{\varphi\,d\mu},$$
for every $\varphi\in C_0(X)$.
\end{defn}

This definition is standard, see for instance Definition~1.1.2 in~\cite{evans}. 
In particular, we will consider measures on $\mathbb{R}^n$ and $\mathbb{R}^{n+1}_+$.

\begin{prop}[Concentration-Compactness Principle]\label{CCP}
Let $\{U_k\}_{k\in\mathbb{N}}$ be a bounded tight sequence in $\dot{H}^s_a(\mathbb{R}^{n+1}_+)$, such that $U_k$ converges weakly to $U$ in $\dot{H}^s_a(\mathbb{R}^{n+1}_+)$. Let $\mu,\nu$ be two nonnegative measures on $\mathbb{R}^{n+1}_+$ and $\mathbb{R}^n$ respectively and such that
\begin{equation}\label{first conv}
\lim_{k\to+\infty}y^a|\nabla U_k|^2=\mu
\end{equation}
and 
\begin{equation}\label{second conv}
\lim_{k\to+\infty}|U_k(x,0)|^{2^*_s}=\nu
\end{equation}
in the sense of Definition \ref{convMeasures}. 

Then, there exist an at most countable set $J$ and three families 
$\{x_j\}_{j\in J}\in \mathbb{R}^n$, $\{\nu_j\}_{j\in J}$, $\{\mu_j\}_{j\in J}$, $\nu_j,\mu_j\geq0$ such that
\begin{itemize}
\item[(i)] $\displaystyle \nu = |U(x,0)|^{2^*_s}+\sum_{j\in J}{\nu_j\delta_{x_j}}$,
\item[(ii)] $\displaystyle \mu \geq y^a|\nabla U|^2+\sum_{j\in J}{\mu_j\delta_{(x_j,0)}}$,
\item[(iii)] $\mu_j\geq S\nu_j^{2/2^*_s}$ for all $j\in J$.
\end{itemize}
\end{prop}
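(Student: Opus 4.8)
The plan is to adapt Lions' classical concentration-compactness argument to the weighted extension setting, exploiting the trace inequality from Proposition~\ref{traceIneq} and the weighted Sobolev embedding from Proposition~\ref{WeightedSob}. First I would reduce to the case $U=0$ by a standard translation argument: set $V_k := U_k - U$, so that $V_k \rightharpoonup 0$ weakly in $\dot H^s_a(\mathbb{R}^{n+1}_+)$, and the tightness of $\{U_k\}$ plus the fact that $U\in\dot H^s_a(\mathbb{R}^{n+1}_+)$ gives tightness of $\{V_k\}$. Using the Brezis--Lieb lemma on $\mathbb{R}^n$ for the traces (together with the compact embedding of Lemma~\ref{lemma:compact}, which forces $V_k(\cdot,0)\to 0$ in $L^2_{\mathrm{loc}}$ and hence, up to a subsequence, a.e.), one splits $|U_k(\cdot,0)|^{2^*_s} = |U(\cdot,0)|^{2^*_s} + |V_k(\cdot,0)|^{2^*_s} + o(1)$ in the sense of measures, so that $\tilde\nu := \lim |V_k(\cdot,0)|^{2^*_s}$ satisfies $\nu = |U(\cdot,0)|^{2^*_s} + \tilde\nu$. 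Similarly the weak convergence in the gradient norm gives $\mu \geq y^a|\nabla U|^2 + \tilde\mu$ where $\tilde\mu := \lim y^a|\nabla V_k|^2$, using that $\int y^a \langle \nabla V_k, \nabla U\rangle \to 0$.

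The core of the argument is then the "reverse Hölder" or "Sobolev" inequality relating $\tilde\mu$ and $\tilde\nu$. For any $\varphi \in C_0^\infty(\mathbb{R}^{n+1})$ even in $y$, apply the trace inequality~\eqref{TraceIneq} to the function $\varphi V_k \in \dot H^s_a(\mathbb{R}^{n+1}_+)$:
\begin{equation*}
S\left(\int_{\mathbb{R}^n} |\varphi(x,0)|^{2^*_s}|V_k(x,0)|^{2^*_s}\,dx\right)^{2/2^*_s} \leq \int_{\mathbb{R}^{n+1}_+} y^a |\nabla(\varphi V_k)|^2\,dX.
\end{equation*}
Expanding $\nabla(\varphi V_k) = \varphi\nabla V_k + V_k\nabla\varphi$, the cross term and the $|V_k|^2|\nabla\varphi|^2$ term both go to zero: the latter because $V_k \to 0$ in $L^2(B_R^+, y^a)$ on the (bounded) support of $\varphi$ by Lemma~\ref{lemma:compact}, and the cross term by Cauchy--Schwarz. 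Passing to the limit $k\to\infty$ yields $S\big(\int_{\mathbb{R}^n}|\varphi(x,0)|^{2^*_s}\,d\tilde\nu\big)^{2/2^*_s} \leq \int_{\mathbb{R}^{n+1}_+} \varphi^2\,d\tilde\mu$. A standard measure-theoretic lemma (the one Lions uses: if two nonnegative measures satisfy such a reverse-Hölder inequality for all test functions, then the first is atomic and supported where the second has atoms, with the stated mass comparison) then gives that $\tilde\nu = \sum_{j\in J}\nu_j\delta_{x_j}$ for an at most countable set $J$, that $\tilde\mu \geq \sum_{j\in J}\mu_j\delta_{(x_j,0)}$ with $\mu_j := \tilde\mu(\{(x_j,0)\})$, and that $\mu_j \geq S\nu_j^{2/2^*_s}$. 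Combining with the decompositions of the previous paragraph gives (i), (ii), (iii).

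The main obstacle I anticipate is the careful bookkeeping of the weight $y^a$ near $y=0$, since the trace inequality lives on the boundary $\{y=0\}$ while the Dirichlet energy lives in the bulk with a weight that degenerates or blows up there depending on the sign of $a=1-2s$. Concretely, one must make sure that $\varphi V_k$ genuinely belongs to $\dot H^s_a(\mathbb{R}^{n+1}_+)$ with the extension being even in $y$, so that~\eqref{TraceIneq} applies, and that the localization does not interact badly with the singular weight; this is where Lemma~\ref{lemma:compact} and the weighted Sobolev embedding of Proposition~\ref{WeightedSob} do the essential work in controlling the error terms. A secondary technical point is establishing a.e.\ convergence of the traces $V_k(\cdot,0)$ from the $L^2_{\mathrm{loc}}(y^a)$-compactness in the extended ball — one combines the interior compactness with the trace operator's continuity, passing to a diagonal subsequence over an exhaustion of $\mathbb{R}^n$ by balls. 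Once these points are handled, the concentration-compactness dichotomy and the final atomic decomposition follow exactly as in the classical local case.
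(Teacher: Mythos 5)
Your proposal is correct and follows essentially the same route as the paper: reduce to the translated sequence $V_k=U_k-U$, use the Brezis--Lieb lemma and the vanishing of the gradient cross term to split $\nu$ and $\mu$, derive the reverse H\"older inequality by applying the trace inequality of Proposition~\ref{traceIneq} to $\varphi V_k$ with the error terms killed via Lemma~\ref{lemma:compact}, and conclude with Lions' measure-theoretic lemma (Lemma~1.2 in~\cite{L1}). The only cosmetic difference is the order (the paper first treats the case $U\equiv 0$ and then translates), which does not change the argument.
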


\begin{proof}
We first suppose that $U\equiv 0$. 
We claim that 
\begin{equation}\label{lemma lions}
\left(\int_{\R^n} |\varphi(x,0)|^{2^*_s}\, d\nu\right)^{2/2^*_s}\le C\int_{\R^{n+1}_+} \varphi^2 \, d\mu, \quad {\mbox{ for any }} \varphi\in C^\infty_0(\R^{n+1}_+),
\end{equation}
for some~$C>0$. For this, let $\varphi\in C_0^\infty (\mathbb{R}^{n+1}_+)$ and $K:=\,$supp$(\varphi)$. 
By Proposition \ref{traceIneq}, we have that
\begin{equation}\label{traceVarphi}
\left(\int_{\mathbb{R}^n}{|(\varphi U_k)(x,0)|^{2^*_s}\,dx}\right)^{2/2^*_s}\leq C
\int_{\mathbb{R}^{n+1}_+}{y^a|\nabla (\varphi U_k)|^2\,dX},
\end{equation}
for a suitable positive constant $C$. By~\eqref{second conv}, we deduce 
\begin{equation}\label{owqurwqhgf}
\lim_{k\to+\infty} \int_{\mathbb{R}^n}{|(\varphi U_k)(x,0)|^{2^*_s}\,dx}=
\int_{\mathbb{R}^n}{|\varphi(x,0)|^{2^*_s}\,d\nu}.
\end{equation}
On the other hand, the right hand side in~\eqref{traceVarphi} can be written as
\begin{eqnarray}\label{convRHS}
\int_{\mathbb{R}^{n+1}_+}{y^a|\nabla (\varphi U_k)|^2\,dX}&=& \int_{\mathbb{R}^{n+1}_+}{y^a\varphi^2|\nabla U_k|^2\,dX}+\int_{\mathbb{R}^{n+1}_+}{y^aU_k^2|\nabla \varphi|^2\,dX}\nonumber\\
&&\qquad +2\int_{\mathbb{R}^{n+1}_+}{y^a\varphi\,U_k\,\langle\nabla \varphi,\nabla U_k\rangle\,dX}.
\end{eqnarray}

Now we observe that 
\begin{equation}\label{pqotugeohgvg}
[U_k]_a\leq C \end{equation}
for some~$C>0$ independent of~$k$, 
and so, 
by Lemma \ref{lemma:compact}, we have that, up to a subsequence,
\begin{equation}\label{wjdehqwjpfognjbvn}
{\mbox{$U_k$ converges to~$U=0$ in~$L^2_{\rm loc}(\R^{n+1}_+,y^a)$ as~$k\to+\infty$.}}\end{equation} 
Therefore, 
\begin{equation}\label{laksjkashgj}
\lim_{k\to+\infty}\int_{\mathbb{R}^{n+1}_+}{y^aU_k^2|\nabla \varphi|^2\,dX}\leq C \lim_{k\to+\infty}\int_{K}{y^a U_k^2\,dX}=0.
\end{equation} 
Also, by the H\"older inequality and~\eqref{pqotugeohgvg}, 
\begin{eqnarray*}
&&\left|\int_{\mathbb{R}^{n+1}_+}{y^a\varphi\,U_k\,\langle\nabla \varphi,\nabla U_k\rangle\,dX}\right|\\ &\le & 
\left(\int_{\mathbb{R}^{n+1}_+}{y^a|\varphi|^2\,|\nabla U_k|^2\,dX}\right)^{1/2}\, 
\left(\int_{\mathbb{R}^{n+1}_+}{y^a|\nabla\varphi|^2\,|U_k|^2\,dX}\right)^{1/2}\\
&\le & C\, \left(\int_{\mathbb{R}^{n+1}_+}{y^a|\nabla U_k|^2\,dX}\right)^{1/2}\, 
\left(\int_{K}{y^a |U_k|^2\,dX}\right)^{1/2}\\
&\le & C\,\left(\int_{K}{y^a |U_k|^2\,dX}\right)^{1/2},
\end{eqnarray*}
where~$C$ may change from line to line. 
Hence, from~\eqref{wjdehqwjpfognjbvn} we have that 
$$ \lim_{k\to+\infty}\int_{\mathbb{R}^{n+1}_+}{y^a\varphi\,U_k\,\langle\nabla \varphi,\nabla U_k\rangle\,dX}=0.$$
Thus, plugging this and~\eqref{laksjkashgj} into \eqref{convRHS}, 
and using~\eqref{first conv}, 
we obtain
\begin{equation*}
\lim_{k\to+\infty}\int_{\mathbb{R}^{n+1}_+}{y^a|\nabla (\varphi U_k)|^2\,dX}= 
\int_{\mathbb{R}^{n+1}_+}{\varphi^2\,d\mu}.
\end{equation*}
Therefore, taking the limit in \eqref{traceVarphi} as~$k\to+\infty$, 
and using~\eqref{owqurwqhgf}, we get
\begin{equation*}
\left(\int_{\mathbb{R}^n}{|\varphi(x,0)|^{2^*_s}\,d\nu}\right)^{2/2^*_s}\leq 
C \int_{\mathbb{R}^{n+1}_+}{\varphi^2\,d\mu},\qquad\hbox{for all }\varphi\in C_0^\infty(\mathbb{R}^{n+1}_+), 
\end{equation*}
which shows~\eqref{lemma lions} in the case~$U\equiv 0$.

Let us consider now the case $U\not\equiv 0$. 
First, we define a function $V_k:=U_k-U$, and we observe that~$V_k\in\dot{H}^s_a(\mathbb{R}^{n+1}_+)$, 
and 
\begin{equation}\label{2.3base}
{\mbox{$V_k$ converges weakly to~0 in~$\dot{H}^s_a(\mathbb{R}^{n+1}_+)$
as~$k\to+\infty$.}}
\end{equation}
Also, we denote by
\begin{equation}\label{2.3bis}
\tilde{\nu}:=\lim_{k\rightarrow\infty}{|V_k(x,0)|^{2^*_s}}\qquad\hbox{ and }\qquad \tilde{\mu}:=\lim_{k\rightarrow\infty}{y^a|\nabla V_k|^2},
\end{equation}
where both limits are understood in the sense of Definition \ref{convMeasures}.
Then, we are in the previous case, and so we can apply~\eqref{lemma lions}, 
that is
\begin{equation}\label{traceVarphiVk}
\left(\int_{\mathbb{R}^n}{|\varphi(x,0)|^{2^*_s}\,d\tilde{\nu}}\right)^{2/2^*_s}\leq 
C \int_{\mathbb{R}^{n+1}_+}{\varphi^2\,d\tilde{\mu}},\qquad\hbox{for all }\varphi\in C_0^\infty(\mathbb{R}^{n+1}_+).
\end{equation}
Furthermore, by \cite{BL}, we know that
\begin{equation*}
\lim_{k\rightarrow\infty}{\int_{\mathbb{R}^n}{|(\varphi V_k)(x,0)|^{2^*_s}\,dx}}
=\lim_{k\rightarrow\infty}{\int_{\mathbb{R}^n}{|(\varphi U_k)(x,0)|^{2^*_s}\,dx}}-
\int_{\mathbb{R}^n}{|(\varphi U)(x,0)|^{2^*_s}\,dx},
\end{equation*}
that is, recalling~\eqref{2.3bis},
\begin{equation*}
\int_{\mathbb{R}^n}{|\varphi(x,0)|^{2^*_s}\,d\tilde{\nu}}=\int_{\mathbb{R}^n}{|\varphi (x,0)|^{2^*_s}\,d\nu}-
\int_{\mathbb{R}^n}{|(\varphi U)(x,0)|^{2^*_s}\,dx}.
\end{equation*}
Therefore 
\begin{equation}\label{2.3ter}
\nu=\tilde{\nu}+|U(x,0)|^{2^*_s}.
\end{equation}

On the other hand,
\begin{eqnarray*}
\int_{\mathbb{R}^{n+1}_+}{y^a\varphi^2|\nabla U_k|^2\,dX}&=& \int_{\mathbb{R}^{n+1}_+}{y^a\varphi^2|\nabla U|^2\,dX}+\int_{\mathbb{R}^{n+1}_+}{y^a\varphi^2|\nabla V_k|^2\,dX}\\
&&\qquad +2\int_{\mathbb{R}^{n+1}_+}{y^a\varphi^2\langle\nabla V_k,\nabla U\rangle\,dX}.
\end{eqnarray*}
Now we take the limit as~$k\to+\infty$, we use~\eqref{second conv}, \eqref{2.3bis}
and~\eqref{2.3base}, and we obtain 
\begin{equation*}
\int_{\mathbb{R}^{n+1}_+}{\varphi^2\,d\mu} =\int_{\mathbb{R}^{n+1}_+}{y^a\varphi^2|\nabla U|^2\,dX}+ \int_{\mathbb{R}^{n+1}_+}{\varphi^2\,d\tilde{\mu}},
\end{equation*}
i.e., 
\begin{equation}\label{2.45}
\mu=\tilde{\mu}+y^a|\nabla U|^2.
\end{equation} 

Now, since inequality \eqref{traceVarphiVk} is satisfied, 
we can apply Lemma~1.2 in \cite{L1} to~$\tilde{\nu}$ and~$\tilde{\mu}$
(see also Lemma 2.3 in~\cite{L2}). 
Therefore, there exist an at most countable set $J$ and families $\{x_j\}_{j\in J}\in \mathbb{R}^n$, $\{\nu_j\}_{j\in J}$, $\{\mu_j\}_{j\in J}$, with~$\nu_j\ge0$ and~$\mu_j>0$, such that
$$\tilde{\nu}=\sum_{j\in J}{\nu_j\delta_{x_j}} \quad{\mbox{ and }}\quad \tilde{\mu}\geq\sum_{j\in J}{\mu_j\delta_{(x_j,0)}}.$$
So the proof is finished, thanks to~\eqref{2.3ter} and~\eqref{2.45}.
\end{proof}

\chapter{Existence of a minimal solution and proof of Theorem~\ref{MINIMUM}}\label{ECXMII} 

\section{Some convergence results in view of Theorem~\ref{MINIMUM}}\label{sec:conv}

In this section we collect some results about the convergence 
of sequences of functions in suitable~$L^r(\R^n)$ spaces. 
We will exploit the following lemmata in the forthcoming Section~\ref{sec:PS}, 
see in particular the proof of Proposition~\ref{PScond}. 

The first result that we prove is the following:

\begin{lemma}\label{PSL-1}
Let~$v_k\in L^{2^*_s}(\R^n,[0,+\infty))$ be a sequence converging to some~$v$
in~$L^{2^*_s}(\R^n)$. Then
\begin{eqnarray}
\label{R45-2}
&&
\lim_{k\to+\infty}\int_{\R^n} |v_k^q(x)-v^q(x)|^{\frac{2^*_s}{q}}\,dx=0
\\ 
\label{R45-1}
{\mbox{and }}&&
\lim_{k\to+\infty}\int_{\R^n} |v_k^p(x)-v^p(x)|^{\frac{2n}{n+2s}}\,dx=0.
\end{eqnarray}
\end{lemma}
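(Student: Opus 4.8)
The plan is to deduce both limits from the convergence $v_k\to v$ in $L^{2^*_s}(\R^n)$ by treating the maps $t\mapsto t^q$ and $t\mapsto t^p$ as Nemytskii (superposition) operators between suitable Lebesgue spaces. First I would record the elementary pointwise inequalities that handle the two exponents. For the concave power, since $0<q<1$, one has $|t^q-\sigma^q|\le |t-\sigma|^q$ for all $t,\sigma\ge 0$, hence
\[
\int_{\R^n}|v_k^q-v^q|^{\frac{2^*_s}{q}}\,dx\le \int_{\R^n}|v_k-v|^{2^*_s}\,dx\to 0,
\]
which is exactly \eqref{R45-2}, with no subsequence argument needed. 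For the critical power $p=\frac{n+2s}{n-2s}$, note that $\frac{2n}{n+2s}\cdot p=\frac{2n}{n-2s}=2^*_s$, so the target integrand is $|v_k^p-v^p|^{2^*_s/p}$, and one uses the convexity inequality $|t^p-\sigma^p|\le C\,|t-\sigma|\,(t^{p-1}+\sigma^{p-1})$ for $t,\sigma\ge 0$.

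The core of the argument is therefore \eqref{R45-1}. The plan is to argue by contradiction and extract subsequences: suppose $\int_{\R^n}|v_k^p-v^p|^{2^*_s/p}\,dx\not\to 0$; passing to a subsequence (not relabelled) we may assume $v_k\to v$ a.e.\ in $\R^n$ and that there is a dominating function, i.e.\ (after a further subsequence) a function $w\in L^{2^*_s}(\R^n)$ with $|v_k|\le w$ a.e.\ for all $k$; this is the standard consequence of $L^{2^*_s}$-convergence. Then $|v_k^p-v^p|^{2^*_s/p}\to 0$ a.e., and it is dominated: using $|v_k^p-v^p|^{2^*_s/p}\le C\big(|v_k|^{2^*_s}+|v|^{2^*_s}\big)\le C\big(w^{2^*_s}+|v|^{2^*_s}\big)\in L^1(\R^n)$. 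Hence by the Dominated Convergence Theorem the integral tends to $0$ along this subsequence, contradicting the assumption. Since every subsequence has a further subsequence along which the integral converges to $0$, the whole sequence converges to $0$, giving \eqref{R45-1}.

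The main obstacle — really the only subtle point — is the passage to an a.e.-convergent subsequence with an integrable majorant: $L^{2^*_s}$-convergence does not give a.e.\ convergence of the full sequence, so the contradiction/subsequence device above is essential, and one must invoke the version of the Riesz–Fischer argument that produces, from a convergent sequence in $L^{2^*_s}$, a subsequence converging a.e.\ and dominated by a fixed $L^{2^*_s}$ function. Everything else is bookkeeping: checking the exponent identities ($\frac{2^*_s}{q}$ and $\frac{2n}{n+2s}\cdot p = 2^*_s$), and the two elementary numerical inequalities for $t\mapsto t^q$ and $t\mapsto t^p$ on $[0,+\infty)$. I would also remark that \eqref{R45-2} can alternatively be obtained by the same Dominated Convergence scheme, but the inequality $|t^q-\sigma^q|\le|t-\sigma|^q$ makes it immediate and worth stating separately.
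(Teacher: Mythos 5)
Your proposal is correct. For \eqref{R45-2} you take essentially the paper's route: the paper establishes $|a^q-b^q|\le L\,|a-b|^q$ for $a,b\ge0$ (with a constant $L$ obtained by studying $f(t)=|(1+t)^q-1|/|t|^q$), while you quote the sharp subadditivity $|t^q-\sigma^q|\le|t-\sigma|^q$; either way the claim follows immediately from $\|v_k-v\|_{L^{2^*_s}}\to0$. For \eqref{R45-1}, however, your argument is genuinely different. The paper stays quantitative: from $|a^p-b^p|\le p\,(a+b)^{p-1}|a-b|$ it raises to the power $\tfrac{2n}{n+2s}$ and applies the H\"older inequality with exponents $\tfrac{n+2s}{4s}$ and $\tfrac{n+2s}{n-2s}$, obtaining
\begin{equation*}
\int_{\R^n}|v_k^p-v^p|^{\frac{2n}{n+2s}}\,dx\le p^{\frac{2n}{n+2s}}\,\|v_k+v\|_{L^{2^*_s}(\R^n)}^{\frac{8sn}{(n-2s)(n+2s)}}\,\|v_k-v\|_{L^{2^*_s}(\R^n)}^{\frac{2n}{n+2s}},
\end{equation*}
so the limit follows at once from boundedness of $\|v_k+v\|_{L^{2^*_s}}$, with no subsequence extraction and with an explicit modulus of continuity for the map $v\mapsto v^p$ from $L^{2^*_s}$ into $L^{\frac{2n}{n+2s}}$. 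You instead use the soft route: from any subsequence extract a further subsequence converging a.e.\ and dominated by a fixed $w\in L^{2^*_s}$ (the standard Riesz--Fischer/dominated-subsequence fact, i.e.\ Theorem 4.9 in the Brezis book the paper cites elsewhere), apply dominated convergence using $|v_k^p-v^p|^{\frac{2n}{n+2s}}\le C\,(w^{2^*_s}+v^{2^*_s})\in L^1(\R^n)$, and conclude for the whole sequence by the subsequence principle. Both arguments are complete and correct; the paper's buys an explicit estimate (not actually needed later, where only the qualitative statement is used), yours avoids the H\"older bookkeeping at the price of the a.e./domination machinery. One small remark: the convexity inequality $|t^p-\sigma^p|\le C|t-\sigma|(t^{p-1}+\sigma^{p-1})$ that you record is the key inequality of the paper's H\"older argument, but it plays no role in your dominated-convergence step, where the crude bound $|v_k^p-v^p|^{\frac{2n}{n+2s}}\le C(v_k^{2^*_s}+v^{2^*_s})$ suffices.
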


\begin{proof} For any~$t\ge-1$, let
$$ f(t):=\frac{|(1+t)^q-1|}{|t|^q}.$$
We have that~$(1+t)^q =1+qt+o(t)$ for~$t$ close to~$0$,
and therefore
$$ f(t)= \frac{|qt+o(t)|}{|t|^q}\to 0$$
as~$t\to0$. In addition, $f(-1)=1$ and
$$\lim_{t\to+\infty} f(t)=1.$$
As a consequence, we can define
$$ L:=\sup_{t\ge-1} f(t)$$
and we have that~$L\in[1,+\infty)$.
Now we show that
\begin{equation}\label{fdcvbpfp11}
|a^q-b^q|\le L|a-b|^q
\end{equation}
for any~$a$, $b\ge0$. To prove this, we can suppose that~$b\ne0$,
otherwise we are done, and we write~$t:=\frac{a}{b}-1$. Then we have that
$$ |a^q-b^q| = b^q |(1+t)^q -1|\le L b^q |t|^q=L|a-b|^q,$$
which proves~\eqref{fdcvbpfp11}.

As a consequence of this and of the convergence of~$v_k$, we have that
\begin{equation*}
\int_{\R^n} |v_k^q(x)-v^q(x)|^{\frac{2^*_s}{q}}\,dx
\le L\int_{\R^n} |v_k(x)-v(x)|^{2^*_s}\,dx \to0,
\end{equation*}
as~$k\to+\infty$,
which establishes~\eqref{R45-2}.
Now we prove~\eqref{R45-1}.
For this, given~$a\ge b\ge0$, we notice that
$$ a^p-b^p=
p\int_b^a t^{p-1}\,dt\le pa^{p-1}(a-b)\le
p(a+b)^{p-1}(a-b).$$
By possibly exchanging the roles of~$a$ and~$b$, we conclude that,
for any~$a$, $b\ge0$,
$$ |a^p-b^p|\le p(a+b)^{p-1}|a-b|.$$
Accordingly, for any~$a$, $b\ge0$,
$$ |a^p-b^p|^{\frac{2n}{n+2s}}
\le p^{\frac{2n}{n+2s}} (a+b)^{\frac{2n(p-1)}{n+2s}}
|a-b|^{\frac{2n}{n+2s}} =
p^{\frac{2n}{n+2s}} (a+b)^{\frac{8sn}{(n-2s)(n+2s)}}
|a-b|^{\frac{2n}{n+2s}}.$$
We use this and the H\"older inequality with exponents~$\frac{n+2s}{4s}$
and~$\frac{n+2s}{n-2s}$ to deduce that
\begin{eqnarray*}
&& \int_{\R^n} \big|v_k^p(x)-v^p(x)\big|^{\frac{2n}{n+2s}}\,dx\\
&\le & p^{\frac{2n}{n+2s}}
\int_{\R^n} \big( 
v_k(x)+v(x) \big)^{\frac{8sn}{(n-2s)(n+2s)}}
\big|
v_k(x)-v(x) \big|^{\frac{2n}{n+2s}}\,dx\\
&\le &
p^{\frac{2n}{n+2s}} \left(
\int_{\R^n} \big( 
v_k(x)+v(x) \big)^{\frac{2n}{n-2s}}
\,dx \right)^{ \frac{4s}{n+2s} }
\left( \int_{\R^n} 
\big|v_k(x)-v(x) \big|^{\frac{2n}{n-2s}}\,dx\right)^{\frac{n-2s}{n+2s}}\\
&= & p^{\frac{2n}{n+2s}}
\| v_k+v\|_{L^{2^*_s}(\R^n)}^{ \frac{8sn}{(n-2s)(n+2s)} }
\| v_k-v\|_{L^{2^*_s}(\R^n)}^{\frac{2n}{n+2s}}.
\end{eqnarray*}
{F}rom the convergence of~$v_k$, we have that~$\| v_k+v\|_{L^{2^*_s}(\R^n)}
\le\| v_k\|_{L^{2^*_s}(\R^n)}
+\| v\|_{L^{2^*_s}(\R^n)}$ is bounded uniformly in~$k$,
while~$\| v_k-v\|_{L^{2^*_s}(\R^n)}$ in infinitesimal as~$k\to+\infty$,
therefore~\eqref{R45-1}
now plainly follows.
\end{proof}

Next result shows that we can deduce strong convergence 
in~$L^{2^*_s}(\R^n)$ from the convergence in the sense of Definition~\ref{convMeasures}. 

\begin{lemma}\label{PSL-2}
Let~$v_k\in L^{2^*_s}(\R^n,[0,+\infty))$
be a sequence converging to some~$v$ a.e. in~$\R^n$.
Assume also that~$v_k^{2^*_s}$ converges to~$v^{2^*_s}$ in the
measure sense given in Definition~\ref{convMeasures}, i.e.
\begin{equation}\label{09ngjhgfnmxxu}
\lim_{k\to+\infty} \int_{\R^n}v_k^{2^*_s} \varphi\,dx=
\int_{\R^n}v^{2^*_s} \varphi\,dx
\end{equation}
for any~$\varphi\in C_0(\R^n)$.

In addition, assume that for any~$\eta>0$ there exists~$\rho>0$
such that
\begin{equation}\label{TI67}
\int_{\R^n\setminus B_\rho }v_k^{2^*_s} (x)\,dx <\eta.
\end{equation}
Then, $v_k\to v$ in~$L^{2^*_s}(\R^n,[0,+\infty))$
as~$k\to+\infty$.
\end{lemma}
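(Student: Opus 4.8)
The plan is to combine the Brezis--Lieb lemma with the tightness hypothesis~\eqref{TI67} and the measure convergence~\eqref{09ngjhgfnmxxu} to upgrade the almost everywhere convergence to strong $L^{2^*_s}$ convergence. First I would recall that, by the Brezis--Lieb lemma (see~\cite{BL}), since $v_k\to v$ a.e.\ and $\{v_k\}$ is bounded in $L^{2^*_s}(\R^n)$ (the boundedness follows from \eqref{09ngjhgfnmxxu} tested against a suitable sequence of cutoffs, together with~\eqref{TI67}), one has
$$ \lim_{k\to+\infty}\left(\|v_k\|_{L^{2^*_s}(\R^n)}^{2^*_s} - \|v_k-v\|_{L^{2^*_s}(\R^n)}^{2^*_s}\right) = \|v\|_{L^{2^*_s}(\R^n)}^{2^*_s}. $$
Hence it suffices to show that $\|v_k\|_{L^{2^*_s}(\R^n)}^{2^*_s}\to\|v\|_{L^{2^*_s}(\R^n)}^{2^*_s}$; indeed, combining this with the displayed identity gives $\|v_k-v\|_{L^{2^*_s}(\R^n)}^{2^*_s}\to 0$, which is the conclusion.

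To prove the convergence of the total masses, I would fix $\eta>0$ and use~\eqref{TI67} to pick $\rho>0$ with $\int_{\R^n\setminus B_\rho}v_k^{2^*_s}\,dx<\eta$ for all $k$; enlarging $\rho$ and using Fatou's lemma (applicable since $v_k\to v$ a.e.) we may also assume $\int_{\R^n\setminus B_\rho}v^{2^*_s}\,dx\le\eta$. Then I would choose a cutoff $\varphi\in C_0(\R^n)$ with $0\le\varphi\le1$, $\varphi\equiv1$ on $B_\rho$ and $\operatorname{supp}\varphi\subset B_{\rho+1}$. Writing
$$ \left|\int_{\R^n}v_k^{2^*_s}\,dx - \int_{\R^n}v^{2^*_s}\,dx\right| \le \left|\int_{\R^n}(v_k^{2^*_s}-v^{2^*_s})\varphi\,dx\right| + \int_{\R^n}v_k^{2^*_s}(1-\varphi)\,dx + \int_{\R^n}v^{2^*_s}(1-\varphi)\,dx, $$
the first term on the right tends to $0$ as $k\to+\infty$ by~\eqref{09ngjhgfnmxxu}, while the second and third are each bounded by $\eta$ since $1-\varphi$ is supported outside $B_\rho$. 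Therefore $\limsup_{k\to+\infty}\big|\int v_k^{2^*_s} - \int v^{2^*_s}\big|\le 2\eta$, and since $\eta>0$ is arbitrary the masses converge.

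The only genuinely delicate point is justifying the use of the Brezis--Lieb lemma, which requires the uniform $L^{2^*_s}$-boundedness of $\{v_k\}$: I would obtain this by testing~\eqref{09ngjhgfnmxxu} against an increasing sequence of cutoffs $\varphi_m\uparrow 1$ to get $\liminf_k\int_{B_m}v_k^{2^*_s}\,dx \ge$ something controlled, and more directly by combining the mass-splitting above (which only used~\eqref{09ngjhgfnmxxu} and~\eqref{TI67}, not boundedness) to see that $\int_{\R^n}v_k^{2^*_s}\,dx$ is in fact a bounded sequence converging to $\int_{\R^n}v^{2^*_s}\,dx<+\infty$; one must be slightly careful to run this argument in an order that does not implicitly assume what is being proved, but a standard exhaustion argument handles it. Everything else is routine manipulation of integrals.
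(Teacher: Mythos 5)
Your proposal is correct and takes essentially the same route as the paper: you prove $\|v_k\|_{L^{2^*_s}(\R^n)}\to\|v\|_{L^{2^*_s}(\R^n)}$ by combining the measure convergence~\eqref{09ngjhgfnmxxu} against a cutoff equal to $1$ on $B_\rho$ with the tightness~\eqref{TI67} (and Fatou to handle the limit function), and then conclude by the Brezis--Lieb lemma, which is exactly the paper's argument. The only cosmetic difference is that the paper organizes the norm convergence as a Fatou lower bound plus a cutoff upper bound, whereas you estimate the difference of the masses directly with a three-term splitting; your remark that the mass argument does not use the $L^{2^*_s}$-boundedness needed for Brezis--Lieb correctly avoids any circularity.
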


\begin{proof} First of all, by Fatou's lemma,
\begin{equation}\label{FA6ichofhav}
\lim_{k\to+\infty} \int_{\R^n}v_k^{2^*_s} \,dx\geq
\int_{\R^n}v^{2^*_s}\,dx.
\end{equation}
Now we fix~$\eta>0$ and we take~$\rho=\rho(\eta)$ such that~\eqref{TI67}
holds true. Let~$
\varphi_\rho\in C^\infty_0(B_{\rho+1},[0,1])$
such that~$\varphi_\rho=1$ in~$B_\rho$. Then, by~\eqref{TI67}
$$ \int_{\R^n}v_k^{2^*_s} \,dx <
\int_{B_\rho }v_k^{2^*_s} \,dx +\eta
\le \int_{\R^n}v_k^{2^*_s}\varphi_\rho \,dx +\eta .$$
Hence, exploiting~\eqref{09ngjhgfnmxxu},
$$ \lim_{k\to+\infty} \int_{\R^n}v_k^{2^*_s} \,dx
\le
\lim_{k\to+\infty} 
\int_{\R^n}v_k^{2^*_s}\varphi_\rho \,dx +\eta =
\int_{\R^n}v^{2^*_s}\varphi_\rho \,dx +\eta.$$
Since~$\varphi_\rho\le 1$, this gives that
$$ \lim_{k\to+\infty} \int_{\R^n}v_k^{2^*_s} \,dx  
\le   
\int_{\R^n}v^{2^*_s}\,dx +\eta.$$
Since~$\eta$ can be taken arbitrarily small,
we obtain that
$$ \lim_{k\to+\infty} \int_{\R^n}v_k^{2^*_s} \,dx
\le
\int_{\R^n}v^{2^*_s}\,dx .$$
This, together with~\eqref{FA6ichofhav}, proves that
$$ \lim_{k\to+\infty}
\|v_k\|_{L^{2^*_s}(\R^n)}^{2^*_s}=
\lim_{k\to+\infty} \int_{\R^n}v_k^{2^*_s} \,dx=
\int_{\R^n}v^{2^*_s}\,dx=\|v\|_{L^{2^*_s}(\R^n)}^{2^*_s}.$$
This and the Brezis-Lieb lemma (see e.g. formula~(1)
in~\cite{BL}) implies the desired result.
\end{proof}

\section{Palais-Smale condition for~${\mathcal{F}}_\epsilon$}\label{sec:PS}

In this section we show that the functional~$\mathcal{F}_\epsilon$ 
introduced in \eqref{f ext} satisfies a Palais-Smale condition. 
The precise statement is contained in the following proposition. 

\begin{prop}[Palais-Smale condition]\label{PScond}
There exists~$\bar C, c_1>0$, depending on~$h$, $q$, $n$
and~$s$, such that the following statement
holds true.

Let $\{U_k\}_{k\in\mathbb{N}}\subset \dot{H}^s_a(\mathbb{R}^{n+1}_+)$ 
be a sequence satisfying
\begin{enumerate}
\item[(i)]$\displaystyle\lim_{k\to+\infty}\mathcal{F}_\epsilon(U_k)= 
c_\epsilon$, with 
\begin{equation}\label{ceps}
c_\epsilon+c_1\varepsilon^{1/\gamma}+\bar C \epsilon^{\frac{p+1}{p-q}}<
\dfrac{s}{n}S^{\frac{n}{2s}},\end{equation} 
where $\gamma=1+\frac{2}{n-2s}$ and $S$ is the Sobolev constant appearing in Proposition~\ref{traceIneq},
\item[(ii)]$\displaystyle\lim_{k\to+\infty}\mathcal{F}'_\epsilon(U_k)= 0.$
\end{enumerate}
Then there exists a subsequence, still denoted by~$\{U_k\}_{k\in\mathbb{N}}$, 
which is strongly convergent in $\dot{H}^s_a(\mathbb{R}^{n+1}_+)$ as~$k\to+\infty$.
\end{prop}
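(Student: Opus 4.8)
The plan is to follow the classical Br\'ezis--Nirenberg scheme, adapted to the fractional/extended setting via the Concentration--Compactness Principle of Proposition~\ref{CCP}. First I would show that a Palais--Smale sequence $\{U_k\}$ at level $c_\epsilon$ is bounded in $\dot H^s_a(\mathbb R^{n+1}_+)$: combining $\mathcal F_\epsilon(U_k)=c_\epsilon+o(1)$ with $\langle \mathcal F_\epsilon'(U_k),U_k\rangle=o(1)\,[U_k]_a$, a standard computation of $\mathcal F_\epsilon(U_k)-\tfrac1{p+1}\langle\mathcal F_\epsilon'(U_k),U_k\rangle$ produces
$$
\Big(\tfrac12-\tfrac1{p+1}\Big)[U_k]_a^2 \le c_\epsilon + o(1) + C\epsilon\,\|h\|_{L^{2^*_s/(2^*_s-q-1)}}\,\|U_k(\cdot,0)\|_{L^{2^*_s}}^{q+1},
$$
and by the trace inequality \eqref{TraceIneq} and Young's inequality the $(q+1)$-power term (with $q+1<2$) is absorbed, giving $[U_k]_a\le C$. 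Passing to a subsequence, $U_k\rightharpoonup U$ weakly in $\dot H^s_a(\mathbb R^{n+1}_+)$, $U_k(\cdot,0)\rightharpoonup U(\cdot,0)$ in $L^{2^*_s}(\mathbb R^n)$, and $U_k(\cdot,0)\to U(\cdot,0)$ a.e.; moreover $U$ is a weak solution of \eqref{ExtendedProblem}.

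Next I would establish \emph{tightness} in the sense of Definition~\ref{defTight}. This is the step I expect to be the main obstacle, since the nonlocal nature of the problem (encoded in the boundary reaction term together with $h\in L^1\cap L^\infty$) could in principle send mass to infinity. The idea is to test $\mathcal F_\epsilon'(U_k)$ against $\psi_\rho^2 U_k$, where $\psi_\rho$ is a cutoff vanishing on $B_\rho^+$ and equal to $1$ outside $B_{2\rho}^+$; using $\|h\|_{L^1\cap L^\infty}$ to make $\int_{\mathbb R^n\setminus B_\rho}h U_{k,+}^{q+1}$ small uniformly in $k$, the subcritical Sobolev embedding on annuli for the lower-order term, and controlling $\int U_{k,+}^{p+1}\psi_\rho^2$ via \eqref{TraceIneq} restricted to the exterior region (whose Sobolev constant contribution is small once $\rho$ is large, because $[U_k]_a$ is bounded), one closes the estimate
$$
\int_{\mathbb R^{n+1}_+\setminus B_{2\rho}^+} y^a|\nabla U_k|^2\,dX \le \eta
$$
for all $k$, possibly after also exploiting that $U_k\rightharpoonup U$ with $U$ itself tight. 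Granted tightness, Proposition~\ref{CCP} applies: there are measures $\mu\ge y^a|\nabla U|^2+\sum_j \mu_j\delta_{(x_j,0)}$, $\nu=|U(\cdot,0)|^{2^*_s}+\sum_j\nu_j\delta_{x_j}$ with $\mu_j\ge S\nu_j^{2/2^*_s}$, where $y^a|\nabla U_k|^2\to\mu$ and $|U_k(\cdot,0)|^{2^*_s}\to\nu$.

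Then I would rule out concentration, i.e.\ show $J=\emptyset$. Fix $j\in J$ and a cutoff $\varphi_{j,\delta}$ supported near $(x_j,0)$, equal to $1$ on a small ball. Testing $\langle\mathcal F_\epsilon'(U_k),\varphi_{j,\delta}U_k\rangle=o(1)$ and letting first $k\to+\infty$ and then $\delta\to 0$, the gradient-cross terms vanish (they are $O(\|U_k(\cdot,0)\|_{L^{2^*_s}(\mathrm{supp}\,\varphi_{j,\delta})})$, which is small), the term $\epsilon\int h U_{k,+}^q\varphi_{j,\delta}U_k$ vanishes because $h\in L^{n/2s}$ makes it absolutely continuous with no atoms, and one is left with $\mu_j=\nu_j$. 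Combined with $\mu_j\ge S\nu_j^{2/2^*_s}$ this forces either $\nu_j=0$ or $\nu_j\ge S^{n/2s}$. To exclude the latter, I would use the energy: from $\mathcal F_\epsilon(U_k)\to c_\epsilon$, $\langle\mathcal F_\epsilon'(U_k),U_k\rangle\to0$ and the splitting of $\nu$ into the atomic and diffuse parts, one gets
$$
c_\epsilon = \mathcal F_\epsilon(U_k)-\tfrac1{2}\langle\mathcal F_\epsilon'(U_k),U_k\rangle + o(1) \ge \tfrac s n \sum_j \nu_j - C\epsilon\|h\|_*\,[U]_a^{q+1} - C\epsilon^{\frac{p+1}{p-q}},
$$
using again Young's inequality on the $h$-term to bring out a term $c_1\epsilon^{1/\gamma}$ and $\bar C\epsilon^{(p+1)/(p-q)}$ (matching the constants in the statement). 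If some $\nu_j\ge S^{n/2s}$, the right side is at least $\tfrac s n S^{n/2s}-c_1\epsilon^{1/\gamma}-\bar C\epsilon^{(p+1)/(p-q)}$, contradicting hypothesis \eqref{ceps}. Hence $J=\emptyset$, so $|U_k(\cdot,0)|^{2^*_s}\to|U(\cdot,0)|^{2^*_s}$ tightly and in the measure sense; by Lemma~\ref{PSL-2}, $U_k(\cdot,0)\to U(\cdot,0)$ strongly in $L^{2^*_s}(\mathbb R^n)$.

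Finally I would upgrade to strong convergence in $\dot H^s_a(\mathbb R^{n+1}_+)$. With strong $L^{2^*_s}$ convergence on the trace and Lemma~\ref{PSL-1}, both nonlinear terms in $\langle\mathcal F_\epsilon'(U_k)-\mathcal F_\epsilon'(U),U_k-U\rangle$ tend to zero (for the $h$-term use $h\in L^r$ from \eqref{h0} and H\"older; for the critical term use the $L^{2^*_s}$ convergence and \eqref{R45-1} paired with the bounded $L^\beta$ factor). Since $\langle\mathcal F_\epsilon'(U_k),U_k-U\rangle=o(1)$ by (ii) and boundedness, and $\langle\mathcal F_\epsilon'(U),U_k-U\rangle\to0$ by weak convergence, we deduce $\int_{\mathbb R^{n+1}_+}y^a|\nabla(U_k-U)|^2\,dX=[U_k-U]_a^2\to0$, i.e.\ $U_k\to U$ strongly in $\dot H^s_a(\mathbb R^{n+1}_+)$, which is the assertion.
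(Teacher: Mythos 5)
Your overall architecture (boundedness of the PS sequence, tightness, application of Proposition~\ref{CCP}, exclusion of the atoms via the energy level, then strong convergence via Lemmata~\ref{PSL-1} and~\ref{PSL-2} and a duality/Cauchy argument) is the same as the paper's, and the boundedness, atom-exclusion and final upgrade steps are essentially correct as sketched. The genuine gap is the tightness step, which you yourself flag as the main obstacle but then propose to close by a direct cutoff estimate. That estimate cannot work as described: the claim that the trace/Sobolev ``constant contribution'' on $\R^{n+1}_+\setminus B_\rho^+$ becomes small for large $\rho$ is false, because the critical trace inequality \eqref{TraceIneq} is invariant under translations and dilations, so the exterior region carries the same constant $S$ for every $\rho$. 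Concretely, after testing with $\psi_\rho^2 U_k$ the critical term is controlled by
$$\Big(\int_{\R^n\setminus B_\rho}(U_k)_+^{2^*_s}(x,0)\,dx\Big)^{2s/n} S^{-1}\,[\psi_\rho U_k]_a^2,$$
and absorbing it into the gradient term requires exactly the exterior smallness you are trying to prove. Worse, tightness is simply not true at this level of generality: translated bubbles (extensions of $z_{\mu,\xi_k}$ with $|\xi_k|\to+\infty$) form a non-tight PS sequence at level $\tfrac{s}{n}S^{n/2s}$ for the unperturbed functional, so any tightness proof must use the level restriction \eqref{ceps} — which your tightness sketch never invokes (you only use it later to kill the atoms).

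The paper's Lemma~\ref{tightness} does precisely this, by contradiction: assuming an amount $\eta_0$ of mass stays outside every ball, it pigeonholes an annulus $I_{\bar l}$ where the weighted energy of $U_k$ is at most $\epsilon$, splits $U_k=V_k+W_k$ with a cutoff supported across that annulus, shows $\langle\mathcal F'_\epsilon(W_k),W_k\rangle=O(\epsilon^{1/\gamma})+o_k(1)$, and then, since $W_k$ carries the escaping mass, a normalized Sobolev-quotient argument forces $\|(W_k)_+(\cdot,0)\|_{L^{p+1}(\R^n)}^{p+1}\ge S^{n/2s}-C\epsilon^{1/\gamma}+o_k(1)$: escaping mass is quantized at a full bubble, not at $\eta_0$. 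Together with Lemma~\ref{lemma basic} for $V_k$ and the superadditivity estimate $\mathcal F_\epsilon(U_k)\ge\mathcal F_\epsilon(V_k)+\mathcal F_\epsilon(W_k)-C\epsilon^{1/\gamma}$, this gives $c_\epsilon\ge\tfrac{s}{n}S^{n/2s}-c_1\epsilon^{1/\gamma}-\bar C\epsilon^{\frac{p+1}{p-q}}$, contradicting (i). Note that this is where the term $c_1\epsilon^{1/\gamma}$ in \eqref{ceps} actually originates, not in the $h$-term of the atom-exclusion step as your sketch suggests. Unless you replace your direct cutoff estimate by an argument of this type (or an equivalent analysis of concentration at infinity), the application of Proposition~\ref{CCP}, which assumes tightness, is unjustified.
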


\begin{rem} \label{rem:3.3-1}
The limit in ii) is intended in the following way 
\begin{eqnarray*}
&& \lim_{k\to+\infty}\|\mathcal{F}'_\epsilon(U_k)\|_
{\mathcal{L}(\dot{H}^s_a(\mathbb{R}^{n+1}_+),\dot{H}^s_a(\mathbb{R}^{n+1}_+))} 
\\ &&\qquad = \lim_{k\to+\infty}\sup_{{V\in \dot{H}^s_a(\R^{n+1}_+)}\atop{ [V]_a =1 }} 
\left|\langle \mathcal{F}'_\epsilon(U_k), V\rangle\right|
=0,
\end{eqnarray*}
where $\mathcal{L}(\dot{H}^s_a(\mathbb{R}^{n+1}_+),\dot{H}^s_a(\mathbb{R}^{n+1}_+))$ 
consists of all the linear functional from $\dot{H}^s_a(\mathbb{R}^{n+1}_+)$ 
in $\dot{H}^s_a(\mathbb{R}^{n+1}_+)$.
\end{rem}

First we show that a sequence that satisfies the assumptions in Proposition \ref{PScond} is bounded. 

\begin{lemma}\label{lemma bound}
Let~$\epsilon$, $\kappa>0$.
Let $\{U_k\}_{k\in\mathbb{N}}\subset \dot{H}^s_a(\mathbb{R}^{n+1}_+)$ be a sequence satisfying
\begin{equation}\label{9sd45678trdfghbvcrtyujbv}
|{\mathcal{F}}_\epsilon (U_k)| +
\sup_{{V\in \dot{H}^s_a(\R^{n+1}_+)}\atop{ [V]_a =1 }}
\big|\langle {\mathcal{F}}_\epsilon'(U_k),V\rangle\big|\le\kappa,\end{equation}
for any~$k\in\N$.

Then there exists $M>0$ such that
\begin{equation}\label{bound}
[U_k]_a\leq M.
\end{equation}
\end{lemma}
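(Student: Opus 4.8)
The plan is to test the functional against $U_k$ itself and combine the resulting identity with the energy bound in \eqref{9sd45678trdfghbvcrtyujbv}, exploiting the fact that $p+1=2^*_s>2$ while $q+1<2$, so that the subcritical term cannot compete with the Dirichlet term as $[U_k]_a\to+\infty$.

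First I would record the two identities coming from \eqref{f ext} and \eqref{pqoeopwoegi}, namely
$$\mathcal{F}_\epsilon(U_k)=\frac{1}{2}[U_k]_a^2-\frac{\epsilon}{q+1}\int_{\R^n}h\,(U_k)_+^{q+1}(x,0)\,dx-\frac{1}{p+1}\int_{\R^n}(U_k)_+^{p+1}(x,0)\,dx$$
and
$$\langle\mathcal{F}_\epsilon'(U_k),U_k\rangle=[U_k]_a^2-\epsilon\int_{\R^n}h\,(U_k)_+^{q+1}(x,0)\,dx-\int_{\R^n}(U_k)_+^{p+1}(x,0)\,dx.$$
Subtracting $\frac{1}{p+1}$ times the second from the first, the critical term cancels, and since $p+1=2^*_s=\frac{2n}{n-2s}$ one has $\frac12-\frac{1}{p+1}=\frac{s}{n}$, so
$$\frac{s}{n}[U_k]_a^2=\mathcal{F}_\epsilon(U_k)-\frac{1}{p+1}\langle\mathcal{F}_\epsilon'(U_k),U_k\rangle+\epsilon\Big(\frac{1}{q+1}-\frac{1}{p+1}\Big)\int_{\R^n}h\,(U_k)_+^{q+1}(x,0)\,dx.$$

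Next I would estimate the three terms on the right. By \eqref{9sd45678trdfghbvcrtyujbv} the first is bounded by $\kappa$; testing \eqref{9sd45678trdfghbvcrtyujbv} with $V=U_k/[U_k]_a$ (the case $U_k=0$ being trivial) bounds the second by $\frac{\kappa}{p+1}[U_k]_a$. For the last term, since $0<q+1<2<2^*_s$ the exponent $r:=\frac{2^*_s}{2^*_s-q-1}$ lies in $(1,+\infty)$, so by \eqref{h0} and the H\"older inequality with exponents $\frac{2^*_s}{q+1}$ and $r$,
$$\left|\int_{\R^n}h\,(U_k)_+^{q+1}(x,0)\,dx\right|\le\|h\|_{L^r(\R^n)}\,\|U_k(\cdot,0)\|_{L^{2^*_s}(\R^n)}^{q+1}\le\|h\|_{L^r(\R^n)}\,S^{-\frac{q+1}{2}}[U_k]_a^{q+1},$$
the last step being the trace inequality \eqref{TraceIneq}. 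Collecting everything, there is $C=C(h,q,n,s)>0$ such that
$$\frac{s}{n}[U_k]_a^2\le\kappa+\frac{\kappa}{p+1}[U_k]_a+\epsilon\,C\,[U_k]_a^{q+1}.$$

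Finally, since $1<2$ and $q+1<2$, the right-hand side grows strictly slower than $[U_k]_a^2$; absorbing the subquadratic terms (e.g.\ by Young's inequality, $\epsilon C t^{q+1}\le\frac{s}{2n}t^2+C'$ and similarly for the linear term) yields a bound $[U_k]_a\le M$ with $M$ depending only on $\kappa$, $\epsilon$, $h$, $q$, $n$, $s$, which is \eqref{bound}. I do not anticipate a real obstacle here; the only points needing a little care are checking that the H\"older exponent $r$ is admissible (this is exactly where $q<1$, hence \eqref{h0}, is used) and the bookkeeping to absorb the lower-order powers of $[U_k]_a$.
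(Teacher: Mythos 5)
Your proposal is correct and follows essentially the same route as the paper: both form the combination $\mathcal{F}_\epsilon(U_k)-\frac{1}{p+1}\langle\mathcal{F}_\epsilon'(U_k),U_k\rangle$ so that the critical term cancels, bound it by $\kappa(1+[U_k]_a)$ using \eqref{9sd45678trdfghbvcrtyujbv} with $V=U_k/[U_k]_a$, and control the concave term via H\"older (your exponent $\frac{2^*_s}{2^*_s-q-1}$ coincides with the paper's $\frac{p+1}{p-q}$) together with the trace inequality, concluding from $q+1<2$. The only cosmetic difference is that you rearrange the identity to isolate $\frac{s}{n}[U_k]_a^2$ and spell out the absorption via Young's inequality, which the paper leaves implicit.
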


\begin{proof} If~$[U_k]_a=0$ we are done. So we can suppose that~$[U_k]_a\ne0$
and use~\eqref{9sd45678trdfghbvcrtyujbv} to obtain
$$
\left|\mathcal{F}_\epsilon(U_k)\right|\le \kappa ,\hbox{ and } 
\left|\langle \mathcal{F}'_\epsilon(U_k), U_k/[U_k]_{a}\rangle\right| \le \kappa.$$
Therefore, we have that 
\begin{equation}\label{lakjdkfeowpt}
\mathcal{F}_\epsilon(U_k)-\frac{1}{p+1}\langle \mathcal{F}'_\epsilon(U_k), U_k\rangle 
\le \kappa\,\left(1+ [U_k]_a\right). 
\end{equation}
On the other hand, by the H\"older inequality and Proposition \ref{traceIneq}, we obtain 
\begin{eqnarray*}
&& \mathcal{F}_\epsilon(U_k)-\frac{1}{p+1}\langle \mathcal{F}'_\epsilon(U_k), U_k\rangle\\
& =&\left(\frac{1}{2}-\frac{1}{p+1}\right)\int_{\mathbb{R}^{n+1}}{y^a|\nabla U_k|^2\,dX}
-\varepsilon\left(\frac{1}{q+1}-\frac{1}{p+1}\right)\int_{\mathbb{R}^n}{h(x)(U_k)_+^{q+1}(x,0)\,dx}\\
&\geq & \left(\frac{1}{2}-\frac{1}{p+1}\right)[U_k]_a^2
-\varepsilon C\left(\frac{1}{q+1}-\frac{1}{p+1}\right)
\|h\|_{L^{\frac{p+1}{p-q}}(\mathbb{R}^n)}[U_k]_a^{q+1}.
\end{eqnarray*}
From this and \eqref{lakjdkfeowpt} we conclude that $[U_k]_a$ 
must be bounded (recall also \eqref{h0} and that $q+1<2$). 
So we obtain the desired result. 
\end{proof}

In order to prove that $\mathcal{F}_\epsilon$ satisfies 
the Palais-Smale condition, we need to show that the sequence of functions 
satisfying the hypotheses of Proposition \ref{PScond} is tight, 
according to Definition \ref{defTight}. 

First we make the following preliminary observation:
\begin{lemma}\label{lemma basic}
Let $m:=\frac{p+1}{p-q}$. 
Then there exists a constant $\bar{C}=\bar{C}(n,s,p,q,\|h\|_{L^m(\mathbb{R}^n)})>0$ such that, 
for any $\alpha>0$, 
\begin{equation*}
\frac{s}{n}\alpha^{p+1}-\epsilon\left(\frac{1}{q+1}-
\frac{1}{2}\right)\|h\|_{L^m(\mathbb{R}^n)}\alpha^{q+1}
\geq -\bar{C}\epsilon^{\frac{p+1}{p-q}}.
\end{equation*}
\end{lemma}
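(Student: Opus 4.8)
The plan is to regard the left-hand side as a function of the single variable $\alpha>0$ and to bound its global minimum from below. Write $A:=\frac{s}{n}>0$ and $B:=\left(\frac{1}{q+1}-\frac{1}{2}\right)\|h\|_{L^m(\R^n)}$; since $0<q<1$ gives $q+1<2$, hence $\frac{1}{q+1}>\frac{1}{2}$, we have $B\ge 0$, and if $B=0$ the inequality is trivial, so we may assume $B>0$. Thus the quantity to be estimated is $\phi(\alpha):=A\,\alpha^{p+1}-\epsilon B\,\alpha^{q+1}$. Because $p>q$ we have $p+1>q+1$, so $\phi(\alpha)<0$ for $\alpha$ small and $\phi(\alpha)\to+\infty$ as $\alpha\to+\infty$; hence $\phi$ attains a global minimum on $(0,+\infty)$ at an interior critical point.

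First I would compute $\phi'(\alpha)=\alpha^{q}\big((p+1)A\,\alpha^{p-q}-(q+1)\epsilon B\big)$, whose unique positive zero is
\[
\alpha_\star:=\left(\frac{(q+1)\,\epsilon B}{(p+1)A}\right)^{\frac{1}{p-q}}.
\]
Using $A\,\alpha_\star^{p-q}=\frac{(q+1)\epsilon B}{p+1}$ in the identity $\phi(\alpha_\star)=\alpha_\star^{q+1}\big(A\,\alpha_\star^{p-q}-\epsilon B\big)$ gives $\phi(\alpha_\star)=-\frac{p-q}{p+1}\,\epsilon B\,\alpha_\star^{q+1}$, and since $\alpha_\star^{q+1}=\left(\frac{(q+1)\epsilon B}{(p+1)A}\right)^{\frac{q+1}{p-q}}$ the powers of $\epsilon$ add up to $1+\frac{q+1}{p-q}=\frac{p+1}{p-q}$. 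Therefore $\phi(\alpha)\ge\phi(\alpha_\star)=-\bar C\,\epsilon^{\frac{p+1}{p-q}}$ with
\[
\bar C:=\frac{p-q}{p+1}\;B\left(\frac{(q+1)B}{(p+1)A}\right)^{\frac{q+1}{p-q}},
\]
a constant depending only on $n$, $s$, $p$, $q$ and $\|h\|_{L^m(\R^n)}$, which is exactly the claim.

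Alternatively, and perhaps more transparently, the same bound follows from Young's inequality: writing $\epsilon B\,\alpha^{q+1}=(\epsilon B)\cdot\alpha^{q+1}$ and applying Young's inequality with a parameter $\delta>0$ and the conjugate exponents $r=\frac{p+1}{q+1}$ and $r'=\frac{p+1}{p-q}$ (note $\frac1r+\frac1{r'}=\frac{q+1}{p+1}+\frac{p-q}{p+1}=1$), one obtains $\epsilon B\,\alpha^{q+1}\le \delta\,\alpha^{(q+1)r}+C(\delta,r)\,(\epsilon B)^{r'}=\delta\,\alpha^{p+1}+C(\delta,r)\,B^{\frac{p+1}{p-q}}\,\epsilon^{\frac{p+1}{p-q}}$; choosing $\delta=\frac{s}{n}$ and moving the $\alpha^{p+1}$ term to the left yields the statement. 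There is no genuine obstacle here: the estimate is an elementary one-variable bound, and the only points needing a little care are the sign $\frac{1}{q+1}-\frac{1}{2}>0$ (which uses $q<1$) and the verification that the exponents above are conjugate, so that the power $\frac{p+1}{p-q}$ of $\epsilon$ is reproduced correctly.
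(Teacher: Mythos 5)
Your proof is correct and follows essentially the same route as the paper: one identifies the unique critical point of $\alpha\mapsto \frac{s}{n}\alpha^{p+1}-\epsilon\big(\frac{1}{q+1}-\frac12\big)\|h\|_{L^m(\R^n)}\alpha^{q+1}$, evaluates there, and checks that the minimum value scales like $-\bar C\,\epsilon^{\frac{p+1}{p-q}}$. The alternative Young's-inequality argument you sketch is a valid (and equivalent) shortcut, but the main computation matches the paper's proof.
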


\begin{proof}
Let us define the function $f:(0,+\infty)\rightarrow\mathbb{R}$ as
$$f(\alpha):=c_1\alpha^{p+1}-\varepsilon c_2 \alpha^{q+1}, \;\quad c_1:=\frac{s}{n},\;\quad c_2:=\left(\frac{1}{q+1}-
\frac{1}{2}\right)\|h\|_{L^m(\mathbb{R}^n)}.$$
Differentiating, we obtain that
$$f'(\alpha)=\alpha^q((p+1)c_1\alpha^{p-q}-\varepsilon (q+1)c_2),$$
and thus, $f$ has a local minimum at the point 
$$\overline{\alpha}:= c_3\varepsilon^{\frac{1}{p-q}},\quad c_3=c_3(n,s,p,q,\|h\|_{L^m(\mathbb{R}^n)}):=\left(\frac{c_2(q+1)}{c_1(p+1)}\right)^{\frac{1}{p-q}}.$$
Evaluating $f$ at $\overline{\alpha}$, we obtain that the minimum value that $f$ will reach is
$$f(\overline{\alpha})=c_4\varepsilon^{\frac{p+1}{p-q}},$$
with $c_4$ a constant depending on $n$, $s$, $p$, $q$ and $\|h\|_{L^m(\mathbb{R}^n)}$. Therefore, there exists $\bar{C}=\bar{C}(n,s,p,q,\|h\|_{L^m(\mathbb{R}^n)})>0$ such that
$$f(\alpha)\geq f(\overline{\alpha})\geq -\bar{C}\varepsilon^{\frac{p+1}{p-q}},$$
for any $\alpha>0$, and this concludes the proof.
\end{proof}

The tightness of the sequence in Proposition \ref{PScond} is 
contained in the following lemma: 

\begin{lemma}[Tightness]\label{tightness}
Let $\{U_k\}_{k\in\mathbb{N}}\subset \dot{H}^s_a(\mathbb{R}^{n+1}_+)$ be a sequence satisfying
the hypotheses of Proposition \ref{PScond}. 

Then for all $\eta>0$ there exists $\rho>0$ such that for every $k\in\mathbb{N}$ it holds
$$\int_{\mathbb{R}^{n+1}_+\setminus B_\rho^+}{y^a|\nabla U_k|^2\,dX}
+\int_{\mathbb{R}^n\setminus\{B_\rho\cap\{y=0\}\}}{(U_k)_+^{2^*_s}(x,0)\,dx}
<\eta.$$
In particular, the sequence $\{U_k\}_{k\in\mathbb{N}}$ is tight. 
\end{lemma}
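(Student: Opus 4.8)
\emph{Strategy.} The plan is a localization argument of Brezis--Nirenberg type: if a nontrivial amount of the mass of $\{U_k\}$ escaped to spatial infinity, the trace inequality of Proposition~\ref{traceIneq} would force that mass to be at least $S^{n/2s}$, and then, in the limit, the energy $c_\epsilon$ would exceed the threshold imposed in hypothesis~(i) of Proposition~\ref{PScond}, a contradiction.

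\emph{Setup and the localization identity.} By Lemma~\ref{lemma bound} the sequence is bounded, $[U_k]_a\le M$, so (up to a subsequence) $U_k\rightharpoonup U$ in $\dot H^s_a(\R^{n+1}_+)$ and, by Lemma~\ref{lemma:compact}, $U_k\to U$ in $L^2_{\rm loc}(\R^{n+1}_+,y^a)$. Fix a radial $\psi_R\in C^\infty(\R^{n+1})$ with $\psi_R\equiv0$ on $B^+_R$, $\psi_R\equiv1$ outside $B^+_{2R}$, $0\le\psi_R\le1$, $|\nabla\psi_R|\le C/R$. Using \eqref{pqoeopwoegi} with $V=\psi_R^2U_k$ and the product rule to absorb the cross term into $[\psi_RU_k]_a^2$, one gets
$$
[\psi_RU_k]_a^2=\epsilon\int_{\R^n}h\,(U_k)_+^{q+1}\psi_R^2\,dx+\int_{\R^n}(U_k)_+^{2^*_s}\psi_R^2\,dx+\int_{\R^{n+1}_+}y^aU_k^2\,|\nabla\psi_R|^2\,dX+o_k(1),
$$
where the remainder $\to0$ as $k\to\infty$, for each fixed $R$: indeed $\|\mathcal F'_\epsilon(U_k)\|_\ast\to0$ while $[\psi_R^2U_k]_a$ stays bounded, by Proposition~\ref{WeightedSob} and the fact that the $y^a$-measure of $B^+_{2R}$ is $\sim R^{\,n+1+a}$ with $(n+1+a)(1-\tfrac1\gamma)=2$. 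Next I would show that, after letting first $k\to\infty$ and then $R\to\infty$, the first and third terms on the right are negligible. The first is bounded by $\epsilon\|h\|_{L^m(\R^n\setminus B_R)}\|(U_k)_+(\cdot,0)\|_{L^{2^*_s}}^{q+1}\le C\epsilon\,\|h\|_{L^m(\R^n\setminus B_R)}$, which vanishes as $R\to\infty$ since $h\in L^m(\R^n)$, $m=\tfrac{p+1}{p-q}$, and the conjugate exponent satisfies $(q+1)m'=2^*_s$ (cf.\ \eqref{h0}). For the third term, by the local $L^2(y^a)$-compactness it converges, for fixed $R$, to $\int y^aU^2|\nabla\psi_R|^2\,dX\le\frac{C}{R^2}\int_{B^+_{2R}\setminus B^+_R}y^aU^2\,dX$; by H\"older and Proposition~\ref{WeightedSob}, $\int_{B^+_{2R}\setminus B^+_R}y^aU^2\,dX\le\|U\|_{L^{2\gamma}(\R^{n+1}_+\setminus B^+_R,y^a)}^2\big(\int_{B^+_{2R}}y^a\,dX\big)^{1-\frac1\gamma}\le CR^2\|U\|_{L^{2\gamma}(\R^{n+1}_+\setminus B^+_R,y^a)}^2$, and since $U\in L^{2\gamma}(\R^{n+1}_+,y^a)$ this tends to $0$ as $R\to\infty$.

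\emph{Dichotomy and conclusion.} Set
$$
\mu_\infty:=\lim_{R\to\infty}\limsup_{k\to\infty}\int_{\R^{n+1}_+\setminus B^+_R}y^a|\nabla U_k|^2\,dX,\qquad
\sigma:=\lim_{R\to\infty}\limsup_{k\to\infty}\int_{\R^n\setminus B_R}(U_k)_+^{2^*_s}\,dx.
$$
Passing to the limit in the localization identity (using $|\nabla(\psi_RU_k)|^2\le(1+\tau)\psi_R^2|\nabla U_k|^2+C_\tau U_k^2|\nabla\psi_R|^2$ on both sides and then letting $\tau\to0$) yields $\mu_\infty=\sigma$, while Proposition~\ref{traceIneq} applied to $\psi_RU_k$ gives $S\sigma^{2/2^*_s}\le\sigma$; hence $\sigma\in\{0\}\cup[S^{n/2s},+\infty)$. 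To rule out $\sigma\ge S^{n/2s}$ I would pass to a subsequence (diagonal extraction) realizing both $\limsup$'s and along which $[U_k]_a^2$ and $\int_{\R^n}h(U_k)_+^{q+1}\,dx$ converge. Then $\alpha^{p+1}:=\lim_k\int_{\R^n}(U_k)_+^{2^*_s}\,dx\ge\sigma\ge S^{n/2s}$, because the full integral dominates its tail integrals; combining $\mathcal F_\epsilon(U_k)\to c_\epsilon$ with $\langle\mathcal F'_\epsilon(U_k),U_k\rangle\to0$, and using $|\int_{\R^n}h(U_k)_+^{q+1}\,dx|\le\|h\|_{L^m}\alpha^{q+1}$, gives $c_\epsilon\ge\frac sn\,\alpha^{p+1}-C\epsilon\,\alpha^{q+1}$ for some $C=C(n,s,q,\|h\|_{L^m})$. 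Since the function $\alpha\mapsto\frac sn\alpha^{p+1}-C\epsilon\alpha^{q+1}$ is increasing on $[(S^{n/2s})^{1/(p+1)},+\infty)$ for $\epsilon$ small (this is precisely the mechanism behind Lemma~\ref{lemma basic}), we obtain $c_\epsilon\ge\frac sn S^{n/2s}-C_0\epsilon$, which contradicts (i) once $\epsilon_0$ is so small that $c_1\epsilon^{1/\gamma}>C_0\epsilon$ for $\epsilon<\epsilon_0$ (possible since $\gamma>1$). Therefore $\sigma=\mu_\infty=0$, which gives the claimed bound for all large $k$; for the remaining finitely many indices one just enlarges $\rho$, since each $U_k\in\dot H^s_a(\R^{n+1}_+)$ satisfies $\int_{\R^{n+1}_+\setminus B^+_\rho}y^a|\nabla U_k|^2\,dX\to0$ and $U_k(\cdot,0)\in L^{2^*_s}(\R^n)$. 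The main obstacle is the localization identity together with the control of the weighted cutoff error $\int y^aU_k^2|\nabla\psi_R|^2\,dX$: unlike the classical case, the singular or degenerate weight $y^a$ makes this term only \emph{conditionally} small, and its decay rests on the precise balance between $|\nabla\psi_R|^2\sim R^{-2}$ and the $R^{\,n+1+a}$ growth of the $y^a$-measure of the transition shell, which is where Proposition~\ref{WeightedSob} enters.
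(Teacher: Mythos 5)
Your proposal is correct in substance, but it follows a genuinely different route from the paper. The paper argues by contradiction and then performs an annulus selection: using the uniform bound it pigeonholes a shell $I_{\bar l}$ where the weighted energy is at most $\varepsilon$, cuts $U_k=V_k+W_k$ there, proves separately $\mathcal{F}_\epsilon(V_k)\ge -c_0\epsilon^{1/\gamma}-\bar C\epsilon^{\frac{p+1}{p-q}}+o_k(1)$ (via Lemma \ref{lemma basic}) and $\mathcal{F}_\epsilon(W_k)\ge \frac{s}{n}S^{n/2s}-C\epsilon^{1/\gamma}+o_k(1)$ (via the normalization $\psi_k=\alpha_kW_k$ and the trace inequality), and then uses superadditivity of $\mathcal{F}_\epsilon$ up to $C\epsilon^{1/\gamma}$ to contradict \eqref{ceps}. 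You instead run a concentration--compactness ``at infinity'' argument: testing $\mathcal{F}'_\epsilon(U_k)$ against $\psi_R^2U_k$ with an exterior cutoff, exploiting the exact scaling balance $(n+1+a)\bigl(1-\tfrac1\gamma\bigr)=2$ together with Proposition \ref{WeightedSob} and Lemma \ref{lemma:compact} to kill the cutoff error, deducing $\mu_\infty=\sigma$ and, via Proposition \ref{traceIneq}, the dichotomy $\sigma\in\{0\}\cup[S^{n/2s},+\infty)$, and finally excluding the second alternative through the identity $\mathcal{F}_\epsilon(U_k)-\tfrac12\langle\mathcal{F}'_\epsilon(U_k),U_k\rangle\to c_\epsilon$ applied to the whole $U_k$. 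Your route avoids the pigeonhole step and the delicate cross-term estimates $A_1$--$A_4$, and produces a cleaner penalty of size $C_0\epsilon$ with $C_0$ depending only on $n,s,q,\|h\|_{L^m}$; the paper's route, on the other hand, yields errors uniform in $k$ without any limsup bookkeeping and directly produces the constants $c_1,\bar C$ appearing in \eqref{ceps}. Two points in your write-up deserve tightening, though neither is a genuine gap: first, since the lemma asserts the tail bound for \emph{every} $k$ of the given sequence while your evaluation of $\int y^aU_k^2|\nabla\psi_R|^2$ and your definition of $\alpha$ use subsequential limits, you should frame the argument as the paper does (assume failure, extract the bad indices $k(\rho_j)\to+\infty$, pass to a subsequence along which weak and local $L^2(y^a)$ convergence hold, and derive $\sigma\ge\eta_0$ there), the finitely many remaining indices being handled by enlarging $\rho$ as you say; second, your final contradiction ``$c_\epsilon\ge\frac{s}{n}S^{n/2s}-C_0\epsilon$ versus \eqref{ceps}'' invokes smallness of $\epsilon$, which is not among the hypotheses -- this is harmless because one may either choose the constants $c_1,\bar C$ of Proposition \ref{PScond} so that $c_1\epsilon^{1/\gamma}+\bar C\epsilon^{\frac{p+1}{p-q}}\ge C_0\epsilon$ for all $\epsilon>0$, or replace the monotonicity step by a Lemma \ref{lemma basic}--type splitting of $\frac{s}{n}\alpha^{p+1}-C\epsilon\alpha^{q+1}$ around $\alpha^{p+1}=S^{n/2s}$, but it should be said.
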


\begin{proof}
First we notice that~\eqref{9sd45678trdfghbvcrtyujbv}
holds in this case, due to conditions~(i) and~(ii) in
Proposition~\ref{PScond}. Hence,
Lemma \ref{lemma bound} 
gives that the sequence 
$\{U_k\}_{k\in\mathbb{N}}$ is bounded in $\dot{H}^s_a(\mathbb{R}^{n+1}_+)$, that is~$[U_k]_a\leq M$. Thus,
\begin{equation}\begin{split}\label{weak convergence-1}
& U_k\rightharpoonup U \quad \hbox{ in }\dot{H}^s_a(\mathbb{R}^{n+1}_+) \quad {\mbox{ as }}k\to+\infty \\
{\mbox{and }}& U_k\rightarrow U\quad  \hbox{ a.e. in }\mathbb{R}^{n+1}_+\quad {\mbox{ as }}k\to+\infty.
\end{split}\end{equation}

Now, we proceed by contradiction. Suppose that there exists $\eta_0>0$ 
such that for all $\rho>0$ there exists~$k=k(\rho)\in\N$ such that
\begin{equation}\label{contrad}
\int_{\mathbb{R}^{n+1}_+\setminus B_\rho^+}{y^a|\nabla U_k|^2\,dX}
+\int_{\mathbb{R}^n\setminus\{B_\rho\cap\{y=0\}\}}{(U_k)_+^{2^*_s}(x,0)\,dx}
\geq \eta_0.
\end{equation}
We observe that 
\begin{equation}\label{forse0} 
k\to+\infty \quad {\mbox{ as }}\rho\to+\infty.
\end{equation}
Indeed, let us take a sequence $\{\rho_i\}_{i\in\mathbb{N}}$ such that $\rho_i\rightarrow +\infty$ as $i\rightarrow +\infty$, and suppose that $k_i:=k(\rho_i)$ given by \eqref{contrad} is a bounded sequence. That is, the set $F:=\{k_i:\;i\in\N\}$ is a finite set of integers.

Hence, there exists an integer $k^\star$ so that we can extract a subsequence $\{k_{i_j}\}_{j\in\N}$ satisfying $k_{i_j}=k^\star$ for any $j\in\N$. Therefore,
\begin{equation}\label{contrad2}
\int_{\mathbb{R}^{n+1}_+\setminus B_{\rho_{i_j}}^+}{y^a|\nabla U_{k^\star}|^2\,dX}
+\int_{\mathbb{R}^n\setminus\{B_{\rho_{i_j}}\cap\{y=0\}\}}{(U_{k^\star})_+^{2^*_s}(x,0)\,dx}
\geq \eta_0,
\end{equation}
for any $j\in \N$. 

But on the other hand, since~$U_{k^\star}$
belongs to~$\dot{H}^s_a(\mathbb{R}^{n+1}_+)$ 
(and so $U_{k^\star}(\cdot,0)\in L^{2^*_s}(\R^n)$ 
thanks to Proposition \ref{traceIneq}), for $j$ large enough there holds
\begin{equation*}
\int_{\mathbb{R}^{n+1}_+\setminus B_{\rho_{i_j}}^+}{y^a|\nabla U_{k^\star}|^2\,dX}
+\int_{\mathbb{R}^n\setminus\{B_{\rho_{i_j}}\cap\{y=0\}\}}{(U_{k^\star})_+^{2^*_s}(x,0)\,dx}
\leq \frac{\eta_0}{2},
\end{equation*}
which is a contradiction with \eqref{contrad2}.
This shows~\eqref{forse0}. 

Now, since $U$ given in~\eqref{weak convergence-1} belongs to~$\in\dot{H}^s_a(\mathbb{R}^{n+1}_+)$, by Propositions~\ref{WeightedSob} 
and~\ref{traceIneq}, 
we have that, for a fixed $\varepsilon>0$, there exists $r_\epsilon>0$ such that
$$\int_{\mathbb{R}^{n+1}_+\setminus B_{r_\epsilon}^+}{y^a|\nabla U|^2\,dX}
+\int_{\mathbb{R}^{n+1}_+\setminus B_{r_\epsilon}^+}{y^a|U|^{2\gamma}\,dX}
+\int_{\mathbb{R}^n\setminus\{B_{r_\epsilon}\cap\{y=0\}\}}{|U(x,0)|^{2^*_s}\,dx}<\varepsilon.$$
Notice that 
\begin{equation}\label{eps to zero}
{\mbox{$r_\epsilon\to +\infty$ as $\epsilon\to 0$.}} 
\end{equation}

Moreover, by \eqref{bound} and again by Propositions~\ref{WeightedSob} 
and~\ref{traceIneq}, we obtain that there exists $\tilde{M}>0$ such that
\begin{equation}\label{boundk}
\int_{\mathbb{R}^{n+1}_+}{y^a|\nabla U_k|^2\,dX}+\int_{\mathbb{R}^{n+1}_+}{y^a|U_k|^{2\gamma}\,dX}
+\int_{\mathbb{R}^n}{|U_k(x,0)|^{2^*_s}\,dx}\leq \tilde{M}.
\end{equation}

Now let $j_\epsilon\in\mathbb{N}$ be the integer part of $\frac{\tilde{M}}{\varepsilon}$. Notice that~$j_\epsilon$ tends to~$+\infty$ as~$\epsilon$ 
tends to~0. We also set
$$ I_l:=\{(x,y)\in\mathbb{R}^{n+1}_+:r+l\leq |(x,y)|\leq r+(l+1)\},\;l=0,1,\cdots,j_\epsilon.$$
Thus, from~\eqref{boundk} we get
\begin{eqnarray*}
(j_\epsilon+1)\varepsilon &\geq &
\frac{\tilde{M}}{\epsilon}\epsilon\\&\ge &
 \sum_{l=0}^{j_\epsilon}\left({\int_{I_l}{y^a|\nabla U_k|^2\,dX}
 +\int_{I_l}{y^a|U_k|^{2\gamma}\,dX}
+\int_{I_l\cap\{y=0\}}{|U_k(x,0)|^{2^*_s}\,dx}}\right).
\end{eqnarray*}
This implies that there exists $\bar{l}\in\{0,1,\cdots, j_\epsilon\}$ such that, 
up to a subsequence, 
\begin{equation}\label{epsBound}
\int_{I_{\bar{l}}}{y^a|\nabla U_k|^2\,dX}+\int_{I_{\bar{l}}}{y^a|U_k|^{2\gamma}\,dX}
+\int_{I_{\bar{l}}\cap\{y=0\}}{|U_k(x,0)|^{2^*_s}\,dx}\leq \varepsilon.
\end{equation}

Now we take a cut-off function~$\chi\in C^\infty_0(\R^{n+1}_+,[0,1])$, 
such that
\begin{equation}\label{3.4bis}
\chi(x,y)=\begin{cases}
1,\quad |(x,y)|\leq r+\bar{l}\\
0,\quad |(x,y)|\geq r+(\bar{l}+1),
\end{cases}
\end{equation}
and 
\begin{equation}\label{3.4bisbis}
|\nabla \chi|\leq 2.
\end{equation} 
We also define 
\begin{equation}\label{3.4ter}
V_k:=\chi U_k \quad {\mbox{ and }}\quad W_k:=(1-\chi)U_k.
\end{equation}
We estimate
\begin{equation}\begin{split}\label{math F}
&|\langle \mathcal{F}'_\epsilon(U_k)-\mathcal{F}'_\epsilon(V_k),V_k\rangle |\\
&\quad = \bigg|\int_{\mathbb{R}^{n+1}_+}{y^a\langle\nabla U_k,\nabla V_k\rangle\,dX}
-\epsilon \int_{\mathbb{R}^{n}}{h(x) (U_k)_+^q(x,0)\,V_k(x,0)\,dx}\\
&\qquad\quad -\int_{\mathbb{R}^{n}}{(U_k)_+^p(x,0)\,V_k(x,0)\,dx}
-\int_{\mathbb{R}^{n+1}_+}{y^a\langle\nabla V_k,\nabla V_k\rangle\,dX}\\
&\qquad\quad +\epsilon \int_{\mathbb{R}^{n}}{h(x) (V_k)_+^{q+1}(x,0)\,dx}
+\int_{\mathbb{R}^{n}}{(V_k)_+^{p+1}(x,0)\,dx}\bigg|.
\end{split}\end{equation}
First, we observe that
\begin{equation}\begin{split}\label{AA}
&\bigg|\int_{\mathbb{R}^{n+1}_+}{y^a\langle\nabla U_k,\nabla V_k\rangle\,dX}-\int_{\mathbb{R}^{n+1}_+}{y^a\langle\nabla V_k,\nabla V_k\rangle\,dX}\bigg|\\
&\qquad\leq \int_{I_{\overline{l}}}{y^a|\nabla U_k|^2|\chi||1-\chi|\,dX}+\int_{I_{\overline{l}}}{y^a|\nabla U_k||U_k||\nabla\chi|\,dX}\\
&\qquad\qquad +2\int_{I_{\overline{l}}}{y^a|U_k||\nabla U_k||\nabla\chi||\chi|\,dX}+\int_{I_{\overline{l}}}{y^a|U_k|^2|\nabla \chi|^2\,dX}\\
&\qquad =:A_1+A_2+A_3+A_4.
\end{split}\end{equation}
By \eqref{epsBound}, we have that $A_1\leq C\varepsilon$, for some $C>0$. 
Furthermore, by the H\"older inequality, \eqref{3.4bisbis} and \eqref{epsBound}, we obtain
\begin{eqnarray*}
A_2 &\leq& 2\int_{I_{\overline{l}}}{y^a|\nabla U_k||U_k|\,dX}\leq 2\left(\int_{I_{\overline{l}}}{y^a|\nabla U_k|^2\,dX}\right)^{1/2}\left(\int_{I_{\overline{l}}}{y^a| U_k|^2\,dX}\right)^{1/2}\\
&\leq& 2\varepsilon^{1/2} \left(\int_{I_{\overline{l}}}{y^a|U_k|^{2\gamma}\,dX}\right)^{1/{2\gamma}}
\left(\int_{I_{\overline{l}}}{y^{(a-\frac{a}{\gamma})m}\,dX}\right)^{1/2m},
\end{eqnarray*}
where $m=\dfrac{n+2-2s}{2}$. Since 
$\left(a-\dfrac{a}{\gamma}\right)m=a=(1-2s)>-1$,  we have that the second integral is finite, 
and therefore, for $\varepsilon<1$,
\begin{equation*}
A_2\leq \tilde{C}\varepsilon^{1/2} \left(\int_{ I_{\overline{l} }}{y^a|U_k|^{2\gamma}\,
dX}\right)^{1/{2\gamma}}\leq C\varepsilon^{1/2}\epsilon^{1/2\gamma}\le C\epsilon^{1/\gamma},
\end{equation*}
where \eqref{epsBound} was used once again. 
In the same way, we get that $A_3\leq C\varepsilon^{1/\gamma}$. Finally, by \eqref{epsBound},
\begin{equation*}
A_4\leq C\left(\int_{I_{ \overline{l} }}{y^a|U_k|^{2\gamma}\,dX}\right)^{1/{\gamma}}
\left(\int_{I_{\overline{l}}}{y^{(a-\frac{a}{\gamma})m}\,dX}\right)^{1/m}\leq C\varepsilon^{1/\gamma}.
\end{equation*}
Using these informations in \eqref{AA}, we obtain that 
$$ \bigg|\int_{\mathbb{R}^{n+1}_+}{y^a\langle\nabla U_k,\nabla V_k\rangle\,dX}
-\int_{\mathbb{R}^{n+1}_+}{y^a\langle\nabla V_k,\nabla V_k\rangle\,dX}\bigg|
\le C\epsilon^{1/\gamma}, $$
up to renaming the constant $C$. 

On the other hand, since $p+1=2^*_s$, by \eqref{3.4ter} and \eqref{epsBound}, 
\begin{eqnarray*}
\bigg|\int_{\mathbb{R}^n}{( (U_k)_+^p(x,0)\,V_k(x,0)-(V_k)_+^{p+1}(x,0))\,dx}\bigg|
&\le &\int_{\mathbb{R}^n}{|1-\chi^p||\chi| |U_k(x,0)|^{p+1}\,dx}\\
&\leq& C\int_{I_{\overline{l}}\cap\{y=0\}}{|U_k(x,0)|^{2^*_s}\,dx}\leq C\varepsilon.
\end{eqnarray*}
In the same way, applying the H\"older inequality, one obtains
\begin{eqnarray*}
&&\bigg|\int_{\mathbb{R}^n}{h(x)\,((U_k)_+^q(x,0)\,V_k(x,0)-(V_k)_+^{q+1}(x,0))\,dx}\bigg|
\\&&\qquad \le \int_{\mathbb{R}^n}{|h(x)|\,|1-\chi^q||\chi| |U_k(x,0)|^{q+1}\,dx}\\
&&\qquad \leq C\, \|h\|_{L^\infty(\R^n)}\,
\int_{I_{\overline{l}}\cap\{y=0\}}{|U_k(x,0)|^{2^*_s}\,dx}\leq C\varepsilon.
\end{eqnarray*}

All in all, plugging these observations in \eqref{math F}, we obtain that 
\begin{equation}\label{boundV}
|\langle \mathcal{F}'_\epsilon(U_k)-\mathcal{F}'_\epsilon(V_k),V_k\rangle|
\leq C\varepsilon^{1/\gamma}.
\end{equation}
Likewise, one can see that
\begin{equation}\label{boundW}
|\langle \mathcal{F}'_\epsilon(U_k)-\mathcal{F}'_\epsilon(W_k),W_k\rangle|
\leq C\varepsilon^{1/\gamma}.
\end{equation}

Now we claim that 
\begin{equation}\label{fprimeV}
|\langle \mathcal{F}'_\epsilon(V_k),V_k\rangle|\leq C\varepsilon^{1/\gamma}+o_k(1),
\end{equation}
where $o_k(1)$ denotes (here and in the rest of this monograph)
a quantity that tends to 0 as $k$ tends to $+\infty$. 
For this, we first observe that 
\begin{equation}\label{bbbb}
[V_k]_a\le C,
\end{equation}
for some $C>0$. Indeed, recalling \eqref{3.4ter} and using \eqref{3.4bis} 
and \eqref{3.4bisbis}, we have 
\begin{eqnarray*}
[V_k]_a^2 &=& \int_{\R^{n+1}_+}y^a|\nabla V_k|^2\,dX \\ 
&=& \int_{\R^{n+1}_+}y^a|\nabla\chi|^2|U_k|^2\,dX + 
\int_{\R^{n+1}_+}y^a\,\chi^2|\nabla U_k|^2\,dX + 2\int_{\R^{n+1}_+}y^a\,\chi\,U_k
\ \langle \nabla U_k, \nabla\chi\rangle\,dX\\
&\le & 4 \int_{I_{\overline{l}} }y^a| U_k|^2\,dX + [U_k]_a^2 +
C\left(\int_{ I_{\overline{l}}}y^a|\nabla U_k|^2\,dX\right)^{1/2}\, 
\left(\int_{I_{\overline{l}}}y^a |U_k|^2\,dX\right)^{1/2}\\
&\le & C \left(\int_{I_{\overline{l}} }y^a| U_k|^{2\gamma}\,dX\right)^{1/\gamma} 
+ [U_k]_a^2 +
C\, [U_k]_a\, \left(\int_{I_{\overline{l}}}y^a |U_k|^{2\gamma}\,dX\right)^{1/2\gamma}, 
\end{eqnarray*}
where the H\"older inequality was used in the last two lines.  
Hence, from Proposition \ref{WeightedSob} and \eqref{bound}, we obtain \eqref{bbbb}.

Now, we notice that 
\begin{eqnarray*}
|\langle \mathcal{F}'_\epsilon(V_k),V_k\rangle| \le 
|\langle \mathcal{F}'_\epsilon(V_k)-\mathcal{F}'_\epsilon(U_k),V_k\rangle| + 
|\langle \mathcal{F}'_\epsilon(U_k),V_k\rangle| \le  
C\,\epsilon^{1/\gamma} +|\langle \mathcal{F}'_\epsilon(U_k),V_k\rangle|,
\end{eqnarray*}
thanks to \eqref{boundV}. 
Thus, from \eqref{bbbb} and assumption (ii) in Proposition \ref{PScond} 
we get the desired claim in \eqref{fprimeV}. 

Analogously (but making use of \eqref{boundW}), one can see that  
\begin{equation}\label{fprimeW}
|\langle \mathcal{F}'_\epsilon(W_k),W_k\rangle|\leq C\varepsilon^{1/\gamma}+o_k(1),
\end{equation}

From now on, we divide the proof in three main steps: 
we first show lower bounds for $\mathcal{F}_\epsilon(V_k)$ 
and $\mathcal{F}_\epsilon(W_k)$ (see Step 1 and Step 2, respectively), 
then in Step 3 we obtain a lower bound for $\mathcal{F}_\epsilon(U_k)$, 
which will give a contradiction with the hypotheses on $\mathcal{F}_\epsilon$, 
and so the conclusion of Lemma \ref{tightness}.

\medskip

\noindent {\it Step 1: Lower bound for $\mathcal{F}_\epsilon(V_k)$.} 
By \eqref{fprimeV} we obtain that 
\begin{equation}\label{skajgfoew}
\mathcal{F}_\epsilon(V_k)\geq \mathcal{F}_\epsilon(V_k)
-\frac{1}{2}\langle \mathcal{F}'_\epsilon(V_k), V_k\rangle-\frac{1}{2}C\varepsilon^{1/\gamma}
+o_k(1).
\end{equation}
Using the H\"older inequality, it yields
\begin{equation*}\begin{split}
\mathcal{F}_\epsilon(V_k)&-\frac{1}{2}\langle \mathcal{F}'_\epsilon(V_k), V_k\rangle 
= \left(\frac{1}{2}-\frac{1}{p+1}\right)\|(V_k)_+(\cdot,0)\|_{L^{p+1}(\mathbb{R}^n)}^{p+1}\\
&-\epsilon\left(\frac{1}{q+1}-\frac{1}{2}\right)\int_{\mathbb{R}^n}{h(x)(V_k)_+^{q+1}(x,0)\,dx}\\
&\geq\frac{s}{n}\|(V_k)_+(\cdot,0)\|_{L^{p+1}(\mathbb{R}^n)}^{p+1}
-\epsilon\left(\frac{1}{q+1}-\frac{1}{2}\right)\|h\|_{L^m(\mathbb{R}^n)}\|(V_k)_+(\cdot,0)\|_{L^{p+1}(\mathbb{R}^n)}^{q+1},
\end{split}\end{equation*}
with $m=\dfrac{p+1}{p-q}$ 
(recall \eqref{h0}).  
Therefore, from Lemma \ref{lemma basic} 
(applied here with $\alpha:=\|(V_k)_+(\cdot,0)\|_{L^{p+1}(\mathbb{R}^n)}$) we deduce that 
$$ \mathcal{F}_\epsilon(V_k)-\frac{1}{2}\langle \mathcal{F}'_\epsilon(V_k), V_k\rangle 
\ge -\bar{C}\,\epsilon^{\frac{p+1}{p-q}}.$$
Going back to \eqref{skajgfoew}, this implies that 
\begin{equation}\label{LowerBoundV}
\mathcal{F}_\varepsilon(V_k)\geq -c_0\epsilon^{1/\gamma}
-\bar{C}\,\epsilon^{\frac{p+1}{p-q}}+o_k(1).
\end{equation}
\\

\noindent {\it Step 2: Lower bound for $\mathcal{F}_\epsilon(W_k)$.} 
First of all, by the definition of $W_k$ in \eqref{3.4ter}, Proposition \ref{traceIneq} 
and \eqref{bound}, we have that 
\begin{equation}\begin{split}\label{upBoundWq}
\bigg|\epsilon\int_{\mathbb{R}^n}{ h(x)(W_k)_+^{q+1}(x,0)\,dx }\bigg|
\leq\,&\epsilon\, \|h\|_{L^m(\mathbb{R}^n)}
\|(W_k)_+(\cdot,0)\|_{L^{2^*_s}(\mathbb{R}^n)}^{q+1}\\
\leq\,& \epsilon\, C\,\|h\|_{L^m(\mathbb{R}^n)}\|(U_k)_+(\cdot,0)\|_{L^{2^*_s}(\mathbb{R}^n)}^{q+1}\\
\leq\,& \epsilon\, C\, \|h\|_{L^m(\mathbb{R}^n)}[U_k]_a^{q+1}\leq C\epsilon.
\end{split}\end{equation}
Thus, from \eqref{boundW} we get 
\begin{equation}\begin{split}\label{Wbound}
&\bigg|\int_{\mathbb{R}^{n+1}_+} {y^a|\nabla W_k|^2\,dX}
-\int_{\mathbb{R}^n}  {(W_k)_+^{p+1} (x,0)\,dx} \bigg|
\\ &\qquad\le \left|\langle \mathcal{F}'_\epsilon(W_k),W_k\rangle\right| + 
\left|\epsilon\int_{\mathbb{R}^n}{ h(x) (W_k)_+^{q+1}(x,0)\,dx}\right|\\
&\qquad \leq C\varepsilon^{1/\gamma} + o_k(1),
\end{split}\end{equation}
where~\eqref{fprimeW} was also used in the last passage. 
Moreover, notice that $W_k=U_k$ in $\R^{n+1}_+\setminus B_{r+\overline{l}+1}$ 
(recall \eqref{3.4bis} and \eqref{3.4ter}). 
Hence, using \eqref{contrad} with $\rho:=r+\overline{l}+1$, we get 
\begin{equation}\begin{split}\label{espero}
&\int_{\mathbb{R}^{n+1}_+\setminus B^+_{r+\bar{l}+1}}{y^a|\nabla W_k|^2\,dX}
+\int_{\mathbb{R}^n\setminus\{B_{r+\bar{l}+1}\cap\{y=0\}\}}{(W_k)_+^{2^*_s}(x,0)\,dx}\\
&\qquad =\int_{\mathbb{R}^{n+1}_+\setminus B^+_{r+\bar{l}+1}}{y^a|\nabla U_k|^2\,dX}
+\int_{\mathbb{R}^n\setminus\{B_{r+\bar{l}+1}\cap\{y=0\}\}}
{(U_k)_+^{2^*_s}(x,0)\,dx}
\geq \eta_0,
\end{split}\end{equation}
for $k=k(\rho)$. 
We observe that $k$ tends to $+\infty$ as $\epsilon\to 0$, 
thanks to \eqref{forse0} and \eqref{eps to zero}. 

From \eqref{espero} we obtain that either 
$$ \int_{\mathbb{R}^n\setminus\{ B_{r+\bar{l}+1} \cap\{y=0\}\} }
{(W_k)_+^{2^*_s}(x,0)\,dx}
\ge\frac{\eta_0}{2}$$
or
$$ \int_{\mathbb{R}^{n+1}_+\setminus B^+_{r+\bar{l}+1}}{y^a|\nabla W_k|^2\,dX}
\ge\frac{\eta_0}{2}.
$$
In the first case, we get that 
$$\int_{\mathbb{R}^n} { (W_k)_+^{2^*_s}(x,0)\,dx}(x,0)\ge \int_{\mathbb{R}^n\setminus\{B_{r+\bar{l}+1}\cap\{y=0\}\}}
{(W_k)_+^{2^*_s}(x,0)\,dx}(x,0)\ge 
\frac{\eta_0}{2}.$$
In the second case, taking $\varepsilon$ small (and so $k$ large enough), by \eqref{Wbound} 
we obtain that
\begin{equation*}\begin{split}
\int_{\mathbb{R}^n}{(W_k)_+^{p+1}(x,0)\,dx} & \geq 
\int_{\mathbb{R}^{n+1}_+}{y^a|\nabla W_k|^2\,dX}-C\varepsilon^{1/\gamma}-o_k(1)
\\ &\geq \int_{\mathbb{R}^{n+1}_+\setminus B^+_{r+\bar{l}+1}}
{y^a|\nabla W_k|^2\,dX}-C\varepsilon^{1/\gamma}-o_k(1)
\\ & >\frac{\eta_0}{4}.
\end{split}\end{equation*}
Hence, in both the cases we have that 
\begin{equation}\label{lowBoundWp}
\int_{\mathbb{R}^n}{(W_k)_+^{p+1}(x,0)\,dx} >\frac{\eta_0}{4}.
\end{equation}

Now we define $\psi_k:=\alpha_kW_k$, with
$$ \alpha_k^{p-1}:=\frac{[W_k]_a^2}{\|(W_k)_+(\cdot,0)\|_{L^{p+1}(\mathbb{R}^n)}^{p+1}}.$$
Notice that from \eqref{fprimeW} we have that
\begin{eqnarray*}
[W_k]_a^2 &\le & \|(W_k)_+(\cdot,0)\|_{L^{p+1}(\mathbb{R}^n)}^{p+1}
+\left|\epsilon\int_{\R^n}h(x)(W_k)_+^{q+1}(x,0)\,dx\right|
+C\,\epsilon^{1/\gamma} +o_k(1)\\
&\le & \|(W_k)_+(\cdot,0)\|_{L^{p+1}(\mathbb{R}^n)}^{p+1}
+C\,\epsilon^{1/\gamma} +o_k(1)
\end{eqnarray*}
where \eqref{upBoundWq} was used in the last line. 
Hence, thanks to \eqref{lowBoundWp}, we get that 
\begin{equation}\label{star-1}
\alpha_k^{p-1}\leq 1+C\varepsilon^{1/2\gamma} +o_k(1).\end{equation}
Also, we notice that for this value of $\alpha_k$, we have the following: 
$$[\psi_k]^2_a=\alpha_k^2[W_k]_a^2=\alpha^{p+1}_k
\|(W_k)_+(\cdot,0)\|_{L^{p+1}(\mathbb{R}^n)}^{p+1}
=\|(\psi_k)_+(\cdot,0)\|_{L^{p+1}(\mathbb{R}^n)}^{p+1}.$$
Thus, by~\eqref{equivNorms} and Proposition \ref{traceIneq}, we obtain 
\begin{eqnarray*}
&& S\leq \frac{[\psi_k(\cdot,0)]^2_{\dot{H}^s(\mathbb{R}^n)}}
{\|(\psi_k)_+(\cdot,0)\|_{L^{p+1}(\mathbb{R}^n)}^{2}}
=\frac{[\psi_k]_a^2}{\|(\psi_k)_+(\cdot,0)\|_{L^{p+1}(\mathbb{R}^n)}^{2}}
\\&&\qquad\qquad =\frac{\|(\psi_k)_+(\cdot,0)\|^{p+1}_{L^{p+1}(\mathbb{R}^n)}}
{\|(\psi_k)_+(\cdot,0)\|_{L^{p+1}(\mathbb{R}^n)}^{2}}
=\|(\psi_k)_+(\cdot,0)\|_{L^{p+1}(\mathbb{R}^n)}^{\frac{4s}{n-2s}}.
\end{eqnarray*}
In the last equality we have used the fact that $p+1=2^*_s$. Consequently,
$$\|(W_k)_+(\cdot,0)\|_{L^{p+1}(\mathbb{R}^n)}^{p+1}
=\frac{\|(\psi_k)_+(\cdot,0)\|_{L^{p+1}(\mathbb{R}^n)}^{p+1}}{\alpha_k^{p+1}}
\geq S^{n/2s}\frac{1}{\alpha_k^{p+1}}.$$
This together with \eqref{star-1} give that 
\begin{eqnarray*}
S^{n/2s}&\leq&(1+C\varepsilon^{1/\gamma}+o_k(1))^{\frac{p+1}{p-1}}
\|(W_k)_+(\cdot,0)\|_{L^{p+1}(\mathbb{R}^n)}^{p+1}\\
&\leq& \|(W_k)_+(\cdot,0)\|_{L^{p+1}(\mathbb{R}^n)}^{p+1}+C\varepsilon^{1/\gamma}+o_k(1).
\end{eqnarray*}
Also, we observe that 
$$ \frac12-\frac{1}{p+1}=\frac{s}{n}. $$
Hence, 
\begin{eqnarray*}
\mathcal{F}_\epsilon(W_k)-\frac{1}{2}\langle \mathcal{F}'_\epsilon(W_k),W_k\rangle
&=&\frac{s}{n}\|(W_k)_+(\cdot,0)\|_{L^{p+1}(\mathbb{R}^n)}^{p+1}\\
&&\qquad -\epsilon\left(\frac{1}{q+1}-\frac{1}{2}\right)
\int_{\mathbb{R}^n}{h(x)(W_k)_+^{q+1}(x,0)\,dx}\\
&\geq&\frac{s}{n}S^{n/2s}-C\varepsilon^{1/\gamma}+o_k(1).
\end{eqnarray*}
Finally, using also \eqref{fprimeW}, we get 
\begin{equation}\label{LowBoundFW}
\mathcal{F}_\epsilon(W_k)\geq \frac{s}{n}S^{n/2s}-C\varepsilon^{1/\gamma}+o_k(1).
\end{equation}
\\

\noindent {\it Step 3: Lower bound for $\mathcal{F}_\epsilon(U_k)$.} 
We first observe that, thanks to \eqref{3.4ter}, 
we can write 
\begin{equation}\label{adwetperigyrejh}
U_k=(1-\chi)U_k+\chi U_k=W_k+V_k.\end{equation} 
Therefore
\begin{equation}\begin{split}\label{sumF}
\mathcal{F}_\epsilon(U_k) = &\, \mathcal{F}_\epsilon(V_k)+\mathcal{F}_\epsilon(W_k)
+\int_{\mathbb{R}^{n+1}_+}{y^a\langle\nabla V_k,\nabla W_k\rangle\,dX} \\
&\quad +\frac{1}{p+1}\int_{\mathbb{R}^n}{(V_k)_+^{p+1}(x,0)\,dx}
\\&\quad
+\frac{\epsilon}{q+1}\int_{\mathbb{R}^n}{h(x)(V_k)_+^{q+1}(x,0)\,dx}\\
&\quad +\frac{1}{p+1}\int_{\mathbb{R}^n}{(W_k)_+^{p+1}(x,0)\,dx}
\\&\quad
+\frac{\epsilon}{q+1}\int_{\mathbb{R}^n}{h(x)(W_k)_+^{q+1}(x,0)\,dx}\\
&\quad -\frac{1}{p+1}\int_{\mathbb{R}^n}{(U_k)_+^{p+1}(x,0)\,dx}
\\&\quad
-\frac{\epsilon}{q+1}\int_{\mathbb{R}^n}{h(x)(U_k)_+^{q+1}(x,0)\,dx}.
\end{split}\end{equation}
On the other hand,
\begin{eqnarray*}
&& \int_{\mathbb{R}^{n+1}_+}{y^a\langle\nabla V_k,\nabla W_k\rangle\,dX} \\
&&\qquad = \frac{1}{2}\int_{\mathbb{R}^{n+1}_+}{y^a\langle\nabla U_k-\nabla V_k,\nabla V_k\rangle\,dX} +\frac{1}{2}\int_{\mathbb{R}^{n+1}_+}{y^a\langle\nabla U_k-\nabla W_k,\nabla W_k\rangle\,dX}.
\end{eqnarray*}
Recall also that, according to~\eqref{pqoeopwoegi},
\begin{eqnarray*}
&&\langle \mathcal{F}_\varepsilon'(U_k)-\mathcal{F}_\varepsilon'(V_k),V_k
\rangle \\&=& 
\int_{\R^{n+1}_+}y^a\langle\nabla U_k-\nabla V_k, \nabla V_k\rangle\,dX \\
&&\qquad - \epsilon \int_{\R^n}h(x)(U_k)_+^q(x,0)\,V_k(x,0)\,dx - 
\int_{\R^n}(U_k(x,0))_+^p\,V_k(x,0)\,dx\\
&&\qquad +\epsilon\int_{\R^n}h(x)(V_k)_+^{q+1}(x,0)\,dx + 
\int_{\R^n}(V_k)_+^{p+1}(x,0)\,dx,
\end{eqnarray*}
and 
\begin{eqnarray*}
&&\langle \mathcal{F}_\varepsilon'(U_k)-\mathcal{F}_\varepsilon'(W_k),W_k
\rangle \\&=& 
\int_{\R^{n+1}_+}y^a\langle\nabla U_k-\nabla W_k, \nabla W_k\rangle\,dX \\
&&\qquad - \epsilon \int_{\R^n}h(x)(U_k)_+^q(x,0)\,W_k(x,0)\,dx - 
\int_{\R^n}(U_k)_+^p(x,0)\,W_k(x,0)\,dx\\
&&\qquad +\epsilon\int_{\R^n}h(x)(W_k)_+^{q+1}(x,0)\,dx + 
\int_{\R^n}(W_k)_+^{p+1}(x,0)\,dx.
\end{eqnarray*}
Hence, plugging the three formulas above into \eqref{sumF} we get
\begin{equation*}\begin{split}
\mathcal{F}_\epsilon(U_k) = &\, \mathcal{F}_\epsilon(V_k)+\mathcal{F}_\epsilon(W_k)+\frac12
\langle \mathcal{F}_\varepsilon'(U_k)-\mathcal{F}_\varepsilon'(V_k),V_k
\rangle +\frac12\langle \mathcal{F}_\varepsilon'(U_k)-\mathcal{F}_\varepsilon'(W_k),W_k
\rangle \\
&\quad +\frac{1}{p+1}\int_{\mathbb{R}^n}{(V_k)_+^{p+1}(x,0)\,dx}
+\frac{\epsilon}{q+1}\int_{\mathbb{R}^n}{h(x)(V_k)_+^{q+1}(x,0)\,dx}\\
&\quad +\frac{1}{p+1}\int_{\mathbb{R}^n}{(W_k)_+^{p+1}(x,0)\,dx}
+\frac{\epsilon}{q+1}\int_{\mathbb{R}^n}{h(x)(W_k)_+^{q+1}(x,0)\,dx}\\
&\quad -\frac{1}{p+1}\int_{\mathbb{R}^n}{(U_k)_+^{p+1}(x,0)\,dx}
-\frac{\epsilon}{q+1}\int_{\mathbb{R}^n}{h(x)(U_k)_+^{q+1}(x,0)\,dx}\\
&\quad +\frac{\epsilon}{2}
\int_{\R^n}h(x)(U_k)_+^q(x,0)\,V_k(x,0)\,dx + 
\frac12 \int_{\R^n}(U_k)_+^p(x,0)\,V_k(x,0)\,dx\\
&\quad -\frac{\epsilon}{2}\int_{\R^n}h(x)(V_k)_+^{q+1}(x,0)\,dx - 
\frac12\int_{\R^n}(V_k)_+^{p+1}(x,0)\,dx\\
&\quad + \frac{\epsilon}{2} \int_{\R^n}h(x)(U_k)_+^q(x,0)
\,W_k(x,0)\,dx 
+\frac12 \int_{\R^n}(U_k)_+^{p}(x,0)\,W_k(x,0)\,dx\\
&\quad -\frac{\epsilon}{2}\int_{\R^n}h(x)(W_k)_+^{q+1}(x,0)\,dx - \frac12
\int_{\R^n}(W_k)_+^{p+1}(x,0)\,dx.
\end{split}\end{equation*}
Notice that all the integrals with~$\epsilon$ in front are bounded. 
Therefore, using this and~\eqref{boundV} and~\eqref{boundW} we obtain that 
\begin{equation}\begin{split}\label{qwqweteyryhg}
\mathcal{F}_\epsilon(U_k) \ge &\, \mathcal{F}_\epsilon(V_k)+\mathcal{F}_\epsilon(W_k)\\
&\quad +\frac{1}{p+1}\int_{\mathbb{R}^n}{(V_k)_+^{p+1}(x,0)\,dx}
+\frac{1}{p+1}\int_{\mathbb{R}^n}{(W_k)_+^{p+1}(x,0)\,dx} \\
&\quad -\frac{1}{p+1}\int_{\mathbb{R}^n}{(U_k)_+^{p+1}(x,0)\,dx}
+\frac12 \int_{\R^n}(U_k)_+^{p}(x,0)\,V_k(x,0)\,dx\\
&\quad -
\frac12\int_{\R^n}(V_k)_+^{p+1}(x,0)\,dx
 +\frac12 \int_{\R^n}(U_k)_+^p(x,0)\,W_k(x,0)\,dx\\
&\quad - \frac12
\int_{\R^n}(W_k)_+^{p+1}(x,0)\,dx -C\epsilon^{1/\gamma},
\end{split}\end{equation}
for some positive~$C$. We observe that, thanks to~\eqref{adwetperigyrejh}, 
\begin{eqnarray*}
&&\int_{\R^n}(U_k)_+^p(x,0)\,V_k(x,0)\,dx + \int_{\R^n}(U_k)_+^p(x,0)\,W_k(x,0)\,dx\\
&=& \int_{\R^n}(U_k)_+^p(x,0)\,\big(V_k(x,0)+W_k(x,0)\big)\,dx\\
&=&\int_{\R^n}(U_k)_+^{p+1}(x,0)\,dx.
\end{eqnarray*}
Therefore, \eqref{qwqweteyryhg} becomes
\begin{equation*}\begin{split}
\mathcal{F}_\epsilon(U_k) \ge &\, \mathcal{F}_\epsilon(V_k)+\mathcal{F}_\epsilon(W_k)\\
&\quad +\frac{1}{p+1}\int_{\mathbb{R}^n}{(V_k)_+^{p+1}(x,0)\,dx}
+\frac{1}{p+1}\int_{\mathbb{R}^n}{(W_k)_+^{p+1}(x,0)\,dx} \\
&\quad -\frac{1}{p+1}\int_{\mathbb{R}^n}{(U_k)_+^{p+1}(x,0)\,dx}
- \frac12 \int_{\R^n}(V_k)_+^{p+1}(x,0)\,dx\\
&\quad +\frac12 \int_{\R^n}(U_k)_+^{p+1}(x,0)\,dx- \frac12
\int_{\R^n}(W_k)_+^{p+1}(x,0)^{p+1}\,dx -C\epsilon^{1/\gamma}\\
= &\, \mathcal{F}_\epsilon(V_k)+\mathcal{F}_\epsilon(W_k)\\
&\quad +\left(\frac12-\frac{1}{p+1}\right)\int_{\mathbb{R}^n}\left(
(U_k)_+^{p+1}(x,0)-(V_k)_+^{p+1}(x,0)-(W_k)_+^{p+1}(x,0)\right)\,dx-C\epsilon^{1/\gamma}\\
= &\, \mathcal{F}_\epsilon(V_k)+\mathcal{F}_\epsilon(W_k)\\
&\quad +\left(\frac12-\frac{1}{p+1}\right)\int_{\mathbb{R}^n}(U_k)_+^{p+1}(x,0)\left(1-\chi^{p+1}(x,0)-(1-\chi(x,0))^{p+1}\right)\,dx-C\epsilon^{1/\gamma},
\end{split}\end{equation*}
where~\eqref{3.4ter} was used in the last line. 
Since $p+1>2$ and 
$$ 1-\chi^{p+1}(x,0)-(1-\chi(x,0))^{p+1}\ge0 \quad {\mbox{ for any }}x\in\R^n,$$
this implies that 
\begin{equation*}
\mathcal{F}_\epsilon(U_k) \ge  \mathcal{F}_\epsilon(V_k)+\mathcal{F}_\epsilon(W_k)-C\epsilon^{1/\gamma}.
\end{equation*}
This, \eqref{LowerBoundV} and \eqref{LowBoundFW} imply that 
\begin{equation*}
\mathcal{F}_\epsilon(U_k)\geq 
\frac{s}{n}S^{n/2s}-c_1\varepsilon^{1/\gamma}-\bar{C}\varepsilon^{\frac{p+1}{p-q}}+o_k(1).
\end{equation*}
Hence, taking the limit as $k\to+\infty$ we obtain that 
$$ c_\epsilon=\lim_{k\to+\infty}\mathcal{F}_\epsilon(U_k)\geq 
\frac{s}{n}S^{n/2s}-c_1\varepsilon^{1/\gamma}-\bar{C}\varepsilon^{\frac{p+1}{p-q}},$$
which is a contradiction with assumption (i) of Proposition \ref{PScond}. 
This concludes the proof of Lemma \ref{tightness}.
\end{proof}

\begin{proof}[Proof of Proposition \ref{PScond}]
By Lemma \ref{tightness}, we know that $\{U_k\}_{k\in\mathbb{N}}$ is a tight sequence. 
Moreover, from Lemma \ref{lemma bound} we have that $[U_k]_a\leq M$, for $M>0$. 
Hence, also~$\{(U_k)_+\}_{k\in\N}$ is a bounded tight sequence in~$\dot{H}^s_a(\R^{n+1}_+)$. Therefore, there exists $\overline{U}\in \dot{H}^s_a(\mathbb{R}^{n+1}_+)$ such that
$$(U_k)_+\rightharpoonup \overline U \qquad\hbox{ in }\dot{H}^s_a(\mathbb{R}^{n+1}_+).$$

Also, we observe that Theorem 1.1.4 in \cite{evans} implies that 
there exist two measures on $\R^n$ and $\R^{n+1}_+$, $\nu$ and $\mu$ respectively, such that 
$(U_k)_+^{2^*_s}(x,0)$ converges to $\nu$ and 
$y^a|\nabla (U_k)_+|^2$ converges to $\mu$ 
as $k\to+\infty$, according to Definition \ref{convMeasures}.  

Hence, we can apply Proposition \ref{CCP} and we obtain that
\begin{equation}\label{point a}
{\mbox{$\displaystyle (U_k)_+^{2^*_s}(\cdot,0)$ converges to 
$\nu = \overline U^{2^*_s}(\cdot,0) +\sum_{j\in J}{\nu_j\delta_{x_j}}$ as $k\to+\infty$, with $\nu_j\geq 0$,}}
\end{equation}
\begin{equation}\label{point b}
{\mbox{$\displaystyle y^a|\nabla (U_k)_+|^2$ converges to 
$\mu\geq y^a|\nabla\overline U|^2+\sum_{j\in J}{\mu_j\delta_{(x_j,0)}}$ as $k\to+\infty$, 
with $\mu_j\geq 0$,}}
\end{equation}
and 
\begin{equation}\label{point c}
\mu_j\geq S \nu_j^{2/2^*_s} \ {\mbox{ for all }} j\in J,
\end{equation}
where $J$ is an at most countable set. 

We want to prove that $\mu_j=\nu_j=0$ for any $j\in J$. 
For this, we suppose by contradiction that there exists $j\in J$ such that $\mu_j\neq 0$. 
We denote $X_j:=(x_j,0)$. Moreover, we fix $\delta>0$ and we 
consider a cut-off function $\phi_\delta\in C^\infty(\mathbb{R}^{n+1}_+,[0,1])$, defined as
\begin{equation*}
\phi_\delta(X)=
\begin{cases}
1,\qquad\hbox{ if }X\in B_{\delta/2}^+(X_j),\\
0,\qquad\hbox{ if }X\in (B_{\delta}^+(X_j))^c,
\end{cases}
\end{equation*}
with $|\nabla \phi_\delta|\leq \frac{C}{\delta}$. 

We claim that there exists a constant $C>0$ such that
\begin{equation}\label{fi delta bounded}
[\phi_\delta\,(U_k)_+]_a\le C.
\end{equation}
Indeed, we compute 
\begin{eqnarray*}
[\phi_\delta\,(U_k)_+]_a^2 
&=&\int_{\R^{n+1}_+}y^a|\nabla(\phi_\delta (U_k)_+)|^2\,dX \\
&=& \int_{B^+_\delta(X_j)}y^a|\nabla\phi_\delta|^2(U_k)_+^2\,dX 
+\int_{B^+_\delta(X_j)}y^a\,\phi_\delta^2|\nabla (U_k)_+|^2\,dX\\&&\qquad  
+2\int_{B^+_\delta(X_j)}y^a\,\phi_\delta\,(U_k)_+
\langle\nabla\phi_\delta,\nabla (U_k)_+\rangle\,dX\\
&\le & \frac{C^2}{\delta^2}\int_{B^+_\delta(X_j)}y^a|U_k|^2\,dX 
+\int_{B^+_\delta(X_j)}y^a|\nabla U_k|^2\,dX \\&&\qquad
+\frac{2C}{\delta}\int_{B^+_\delta(X_j)}y^a|U_k|\,|\nabla U_k|\,dX\\
& \le & C\,\left(\int_{B^+_\delta(X_j)}y^a|U_k|^{2\gamma}\,dX\right)^{1/\gamma} 
+\int_{B^+_\delta(X_j)}y^a|\nabla U_k|^2\,dX \\
&& \qquad +C\,\left(\int_{B^+_\delta(X_j)}y^a|U_k|^{2\gamma}\,dX\right)^{1/2\gamma}
\left(\int_{B^+_\delta(X_j)}y^a|\nabla U_k|^2\,dX\right)^{1/2}\\
&\le & C\,M^2,
\end{eqnarray*}
up to renaming $C$, where we have used Proposition \ref{WeightedSob} 
and Lemma \ref{lemma bound} in the last step. This shows \eqref{fi delta bounded}. 

Hence, from~\eqref{pqoeopwoegi}, \eqref{fi delta bounded} and (ii) in Proposition \ref{PScond} we deduce that
\begin{equation}\begin{split}\label{prima esp}
0 =&\,\lim_{k\rightarrow\infty}{\langle \mathcal{F}'_\varepsilon(U_k),\phi_\delta (U_k)_+\rangle}\\
=&\,\lim_{k\rightarrow\infty}\left(\int_{\mathbb{R}^{n+1}_+}{y^a\langle\nabla U_k,\nabla (\phi_\delta (U_k)_+)\rangle\,dX}\right.\\
&\qquad -\left.\varepsilon\int_{\mathbb{R}^n}{h(x)\phi_\delta(x,0)
(U_k)_+^{q+1}(x,0)\,dx}
-\int_{\mathbb{R}^n}{\phi_\delta(x,0)(U_k)_+^{p+1}(x,0)\,dx}\right)\\
=&\,\lim_{k\rightarrow\infty}\left(\int_{\mathbb{R}^{n+1}_+}{y^a|\nabla (U_k)_+|^2\phi_\delta\,dX}
+\int_{\mathbb{R}^{n+1}_+}{y^a\langle\nabla (U_k)_+,\nabla \phi_\delta \rangle (U_k)_+\,dX}\right.\\
&\qquad -\left.\varepsilon\int_{\mathbb{R}^n}{h(x)\phi_\delta(x,0)(
U_k)_+^{q+1}(x,0)\,dx}
-\int_{\mathbb{R}^n}{\phi_\delta(x,0)(U_k)_+^{p+1}(x,0)\,dx}\right).
\end{split}\end{equation}
Now we recall that $p+1=2^*_s$, and so, using \eqref{point a} and \eqref{point b}, we have that 
\begin{eqnarray}
\label{conv12}
\lim_{k\to+\infty}
\int_{\mathbb{R}^n}{\phi_\delta(x,0)(U_k)_+^{p+1}(x,0)\,dx}&=&
\int_{\mathbb{R}^n}{\phi_\delta(x,0)\,d\nu}\\
\label{conv11}
{\mbox{and }}\ \lim_{k\to+\infty}
\int_{\mathbb{R}^{n+1}_+}{y^a|\nabla (U_k)_+|^2\phi_\delta\,dX}&=& 
\int_{\mathbb{R}^{n+1}_+}{\phi_\delta\,d\mu}.
\end{eqnarray}
Also, we observe that supp$(\phi_\delta)\subseteq B_{\delta}^+(X_j)$. 
Moreover $[(U_k)_+(\cdot,0)]_{\dot{H}^s(\R^n)}=[(U_k)_+]_a\leq M$, 
thanks to \eqref{equivNorms} and Lemma \ref{lemma bound}. 
Finally, the H\"older inequality and Proposition \ref{traceIneq} imply that 
$\|(U_k)_+(\cdot,0)\|_{L^2(B_{\delta}^+(x_j))}\le C$, for a suitable positive constant $C$. 
Hence, we can apply Theorem 7.1 in \cite{DPV} and we obtain that 
\begin{equation}\label{q conv}
{\mbox{$(U_k)_+(\cdot,0)$ converges to $\overline U(\cdot,0)$ strongly 
in $L^{r}(B_{\delta}^+(x_j))$ as $k\to+\infty$, for any $r\in[1,2]$.}}
\end{equation}
Therefore, 
\begin{eqnarray*}
&&\left| \int_{\mathbb{R}^n}{h(x)\phi_\delta(x,0)(U_k)_+^{q+1}(x,0)\,dx} - 
\int_{\mathbb{R}^n} {h(x)\phi_\delta(x,0)\overline U^{q+1}(x,0)\,dx} \right| 
\\
&&\qquad = \left|\int_{  B_{\delta}^+(X_j) \cap \{y=0\} }  h(x)\,\phi_\delta(x,0)
((U_k)_+^{q+1}(x,0)-\overline U^{q+1}(x,0))\,dx\right| 
\\
&&\qquad \le  \|h\|_{L^\infty(\R^n)}
\left|\int_{   B_{\delta}^+(X_j)\cap \{y=0\} } ((U_k)_+^{q+1}(x,0)-\overline U^{q+1}(x,0))\,dx\right|,
\end{eqnarray*}
which together with \eqref{q conv} implies that
\begin{equation}\label{conv2}
\lim_{k\to+\infty}
\int_{\mathbb{R}^n}{h(x)\phi_\delta(x,0)(U_k)_+^{q+1}(x,0)\,dx}=
\int_{B_\delta^+(X_j)\cap\{y=0\}}{h(x)\phi_\delta(x,0)\,\overline{U}^{q+1}(x,0)\,dx}.
\end{equation}
Finally, taking the limit as $\delta\to 0$ we get 
\begin{equation}\begin{split}\label{conv222}
&\lim_{\delta\to 0}\lim_{k\to+\infty}
\int_{\mathbb{R}^n}{h(x)\phi_\delta(x,0)(U_k)_+^{q+1}(x,0)\,dx}\\
&\qquad =\,
\lim_{\delta \to 0}\int_{B_\delta^+(X_j)\cap\{y=0\}}{h(x)\phi_\delta(x,0)\,\overline U^{q+1}(x,0)\,dx}=0. 
\end{split}
\end{equation}

Also, by the H\"older inequality and Lemma \ref{lemma bound} we obtain that 
\begin{equation}\begin{split}\label{conv3}
&\bigg|\int_{\mathbb{R}^{n+1}_+}y^a\langle\nabla (U_k)_+,
\nabla \phi_\delta \rangle (U_k)_+\,dX\bigg|
= \bigg|\int_{B_{\delta}^+(X_j) }y^a\langle\nabla (U_k)_+,\nabla \phi_\delta \rangle (U_k)_+\,dX\bigg|\\
&\qquad\leq \left(\int_{B_{\delta}^+(X_j)}{y^a|\nabla U_k|^2\,dX}\right)^{1/2}
\left(\int_{B_{\delta}^+(X_j)}{y^a(U_k)_+^2|\nabla \phi_\delta|^2\,dX}\right)^{1/2}\\
&\qquad\leq M\,\left(\int_{B_{\delta}^+(X_j)}{y^a(U_k)_+^2|\nabla \phi_\delta|^2\,dX}\right)^{1/2}.
\end{split}
\end{equation}
Notice that, since $\{(U_k)_+\}$ is a bounded sequence in~$\dot{H}^s_a(\R^{n+1}_+)$, using Lemma~\ref{lemma:compact}, we have
\begin{equation}\label{wfrewphyrtyrjh}
\lim_{k\to+\infty}\int_{B_{\delta}^+(X_j)}{y^a(U_k)_+^2|\nabla \phi_\delta|^2\,dX}= 
\int_{B_{\delta}^+(X_j)}{y^a\overline U^2|\nabla \phi_\delta|^2\,dX}.\end{equation}
Moreover, by the H\"{o}lder inequality,
\begin{equation}\label{conv4}
\int_{B_{\delta}^+(X_j)}y^a\overline U^2|\nabla \phi_\delta|^2\,dX\leq \left(\int_{B_{\delta}^+(X_j)}{y^a\overline U^{2\gamma}\,dX}\right)^{1/\gamma}\left(\int_{B_{\delta}^+(X_j)}{y^a|\nabla \phi_\delta|^{2\gamma'}\,dX}\right)^{1/\gamma'},
\end{equation}
where  
\begin{equation}\label{gamma primo}
\gamma'=\frac{n-2s+2}{2}.\end{equation} 
Thus, taking into account that $|\nabla\phi_\delta|\leq\frac{C}{\delta}$, we have
\begin{eqnarray*}
\left(\int_{B_{\delta}^+(X_j)}{y^a|\nabla\phi_\delta|^{2\gamma'}\,dX}\right)^{1/\gamma'} 
&\leq & \frac{C^2}{\delta^2}
\left(\int_{B_{\delta}^+(X_j)}{y^a\,dX}\right)^{1/\gamma'} \\
&\le & \frac{C^2}{\delta^2}\,\delta^{\frac{n+1+a}{\gamma'}}.
\end{eqnarray*}
We recall \eqref{gamma primo} and that $a=1-2s$, and we obtain that 
$$ \frac{n+1+a}{\gamma'}-2= 0,$$
and so 
$$\left(\int_{B_{\delta}^+(X_j)}{y^a|\nabla\phi_\delta|^{2\gamma'}\,dX}\right)^{1/\gamma'} 
\le C^2.$$
This and \eqref{conv4} give that 
$$ \int_{B_{\delta}^+(X_j)}y^a\overline U^2|\nabla \phi_\delta|^2\,dX\leq C^2
\left(\int_{B_{\delta}^+(X_j)}{y^a\overline U^{2\gamma}\,dX}\right)^{1/\gamma},$$
Hence 
$$ \lim_{\delta\to 0}\int_{B_{\delta}^+(X_j)}y^a\overline U^2|\nabla \phi_\delta|^2\,dX
\leq C^2\lim_{\delta\to 0} 
\left(\int_{B_{\delta}^+(X_j)}{y^a\overline U^{2\gamma}\,dX}\right)^{1/\gamma}= 0.
$$
From this and \eqref{wfrewphyrtyrjh} we obtain 
\begin{equation}\label{conv5}
\lim_{\delta\to 0}\lim_{k\to+\infty}\int_{B_{\delta}^+(X_j)}{y^a(U_k)_+^2
|\nabla \phi_\delta|^2\,dX}  = 
\lim_{\delta\to 0}\int_{B_{\delta}^+(X_j)}{y^a\overline U^2|\nabla \phi_\delta|^2\,dX}=0.\end{equation}

Using \eqref{conv12}, \eqref{conv11}, \eqref{conv222} and \eqref{conv5} in \eqref{prima esp}, 
we obtain that 
\begin{equation}\begin{split}\label{limk}
 0 =&\,\lim_{\delta\to 0}\lim_{k\rightarrow\infty}{\langle \mathcal{F}'_\varepsilon(U_k),
\phi_\delta (U_k)_+\rangle}
\\=&\, \lim_{\delta\to 0}\left(\int_{\mathbb{R}^{n+1}_+}{\phi_\delta\,d\mu}
- \int_{\mathbb{R}^n}{\phi_\delta(x,0)\,d\nu}\right)
\\=&\, \lim_{\delta\to 0}\left(\int_{B_\delta^+(X_j)}{\phi_\delta\,d\mu}
- \int_{B_\delta^+(X_j)\cap \{y=0\}}{\phi_\delta(x,0)\,d\nu}\right)\\
\ge &\, \mu_j-\nu_j,
\end{split}\end{equation}
thanks to \eqref{point a} and \eqref{point b}. 
Therefore, recalling \eqref{point c}, we obtain that 
$$ \nu_j\ge \mu_j\ge S\, \nu_j^{2/2^*_s}.$$
Hence, either $\nu_j=\mu_j=0$ or $\nu_j^{1-2/2^*}\ge S$. 
Since we are assuming that $\mu_j\neq 0$, the first possibility cannot occur, 
and so, from the second one, we have that 
\begin{equation}
\label{maggiore-1} \nu_j\ge S^{n/2s}. \end{equation}  

Now, from Lemma \ref{lemma bound} we know that $[(U_k)_+(\cdot,0)]_a\leq M$. 
Moreover we observe that $q+1<2<2^*_s$. Hence Proposition \ref{traceIneq} 
and the compact embedding in Theorem 7.1 in \cite{DPV} imply that
\begin{equation*}\begin{split}
&\|(U_k)_+(\cdot,0)-\overline U(\cdot,0)\|_{L^{2^*_s}(\mathbb{R}^n)}\leq 2M\\
\hbox{ and }\quad & (U_k)_+(\cdot,0)\rightarrow \overline U(\cdot,0)\hbox{ in }L^{q+1}_{loc}(\mathbb{R}^n)
{\mbox{ as }}k\to+\infty.
\end{split}\end{equation*}
Therefore, recalling \eqref{h0} and~\eqref{h1},
and using the H\"older inequality, we obtain
\begin{eqnarray*}
&&\left|\int_{\mathbb{R}^{n}}h(x)\left((U_k)_+(x,0)-\overline U(x,0)\right)^{q+1}\,dx\right|\\
&&\qquad \leq \int_{B_R}|h(x)|\,|(U_k)_+(x,0)-\overline U(x,0)|^{q+1}\,dx
+\int_{\mathbb{R}^{n}\setminus B_R}|h(x)|\,
|(U_k)_+(x,0)- \overline U(x,0)|^{q+1}\,dx\\
&&\qquad \leq \|h\|_{L^\infty(\mathbb{R}^n)}\|((U_k)_+-\overline U)(\cdot,0)\|_{L^{q+1}(B_R)}
+\|h\|_{L^\alpha(\mathbb{R}^n\setminus B_R)}\|((U_k)_+-\overline U)(\cdot,0)\|_{L^{2^*_s}(\mathbb{R}^n)}^{q+1}\\
&&\qquad \le C\, \|((U_k)_+-\overline U)(\cdot,0)\|_{L^{q+1}(B_R)} 
+(2M)^{q+1}\|h\|_{L^\alpha(\mathbb{R}^n\setminus B_R)}, 
\end{eqnarray*}
where $\alpha$ satisfies $\displaystyle \frac{1}{\alpha}=1-\frac{q+1}{2^*_s}$. 
Hence, letting first $k\to+\infty$ and then $R\to+\infty$, we conclude that
\begin{equation}\label{doppiabis}
\lim_{k\to+\infty}\int_{\mathbb{R}^{n}}h(x){(U_k)_+^{q+1}(x,0)\,dx}= 
\int_{\mathbb{R}^n}{h(x)\overline U^{q+1}(x,0)\,dx}.
\end{equation}

On the other hand, let $\{\varphi_m\}_{m\in\mathbb{N}}\in C_0^\infty(\mathbb{R}^n)$ 
be a sequence such that $0\leq \varphi_m\leq 1$ and 
$\displaystyle \lim_{m\rightarrow\infty}\varphi_m(x)=1$ for all $x\in\mathbb{R}^n$. 
Thus, by \eqref{point a}, we have that 
\begin{equation*}
\lim_{k\to+\infty}\int_{\mathbb{R}^n}{(U_k)_+^{p+1}(x,0)\,dx}\geq 
\lim_{k\to +\infty}\int_{\mathbb{R}^n}{(U_k)^{p+1}_+(x,0)\varphi_m\,dx}
=\int_{\mathbb{R}^n}{\varphi_m\,d\nu}.
\end{equation*}
Furthermore, by Fatou's lemma and \eqref{maggiore-1},
\begin{equation*}
\lim_{m\rightarrow\infty}{\int_{\mathbb{R}^n}{\varphi_m\,d\nu}}\geq 
\int_{\mathbb{R}^n}{\,d\nu}\geq S^{n/2s}+\int_{\mathbb{R}^n}{\overline U^{p+1}(x,0)\,dx}.
\end{equation*}
So, using the last two formulas we get 
\begin{equation}\begin{split}\label{pwqotpytiyoi}
\lim_{k\rightarrow\infty}{\int_{\mathbb{R}^n}{(U_k)_+^{p+1}(x,0)\,dx}} =&\, 
\lim_{m\to+\infty}\lim_{k\rightarrow\infty}{\int_{\mathbb{R}^n}{(U_k)_+^{p+1}(x,0)\,dx}}	\\
\geq &\, \lim_{m\to+\infty} {\int_{\mathbb{R}^n}{\varphi_m\,d\nu}}\\
\ge &\, S^{n/2s}+\int_{\mathbb{R}^n}{\overline U^{p+1}(x,0)\,dx}.
\end{split}\end{equation}

Now, since $[U_k]_a\le M$ (thanks to Lemma \ref{lemma bound}), 
from (ii) in Proposition \ref{PScond} we have that 
$$ \lim_{k\to+\infty}\langle \mathcal{F}_\varepsilon'(U_k),U_k\rangle=0,$$
and so, by hypothesis (i) we get 
\begin{equation}\label{alklasfhhsgdkjg}
\lim_{k\rightarrow\infty}{\left(\mathcal{F}_\varepsilon(U_k)
-\frac{1}{2}\langle \mathcal{F}_\varepsilon'(U_k),U_k\rangle\right)}=c_\epsilon.
\end{equation}
On the other hand, 
\begin{eqnarray*}
&&\lim_{k\rightarrow\infty}{\left(\mathcal{F}_\varepsilon(U_k)
-\frac{1}{2}\langle \mathcal{F}_\varepsilon'(U_k),U_k\rangle\right)}\\
&=& \lim_{k\to+\infty}\left(\left(\frac12-\frac{1}{p+1}\right)\int_{\R^n}(U_k)_+^{p+1}(x,0)\,dx
-\epsilon\left(\frac{1}{q+1}-\frac12\right)\int_{\R^n}h(x)(U_k)_+^{q+1}(x,0)\,dx\right).
\end{eqnarray*}
We notice that 
$$ \frac12-\frac{1}{p+1}=\frac{s}{n},$$ 
and so from~\eqref{doppiabis} and~\eqref{pwqotpytiyoi} we obtain that 
\begin{eqnarray*}
&&\lim_{k\rightarrow\infty}{\left(\mathcal{F}_\varepsilon(U_k)
-\frac{1}{2}\langle \mathcal{F}_\varepsilon'(U_k),U_k\rangle\right)}\\
&\geq &\frac{s}{n}S^{n/2s}+\frac{s}{n}\int_{\mathbb{R}^n}{\overline U^{p+1}(x,0)\,dx}
-\varepsilon\left(\frac{1}{q+1}-\frac{1}{2}\right)\int_{\mathbb{R}^n}{h(x)\overline U^{q+1}(x,0)\,dx}\\
&\geq &\frac{s}{n}S^{n/2s}+\frac{s}{n}\|\overline U(\cdot,0)\|^{p+1}_{L^{p+1}(\R^n)}
-\varepsilon\left(\frac{1}{q+1}-\frac{1}{2}\right)\|h\|_{L^m(\R^n)}
\|\overline U(\cdot,0)\|^{q+1}_{L^{p+1}(\R^n)}\\
&\geq& \frac{s}{n}S^{n/2s}-\bar{C}\epsilon^{\frac{p+1}{p-q}},
\end{eqnarray*}
where we have applied Lemma \ref{lemma basic} with $\alpha:=\|\overline U(\cdot,0)\|_{L^{p+1}(\R^n)}$ 
in the last line. This and \eqref{alklasfhhsgdkjg} imply that 
$$ c_\epsilon \ge \frac{s}{n}S^{n/2s}-\bar{C}\epsilon^{\frac{p+1}{p-q}},$$
which gives a contradiction with~\eqref{ceps}.

Therefore, necessarily $\mu_j=\nu_j=0$. 
Repeating this argument for every $j\in J$, 
we obtain that  $\mu_j=\nu_j=0$ for any $j\in J$. 
Hence, by~\eqref{point a},
\begin{equation}\label{chiama-1}
\lim_{k\to+\infty}\int_{\R^n}(U_k)_+^{2^*_s}(x,0)\varphi\,dx =
\int_{\R^n}\overline U^{2^*_s}(x,0)\varphi\,dx,
\end{equation}
for any~$\varphi\in C_0(\R^n)$. 

Then the desired result will follow. Indeed, 
we use Lemmata~\ref{PSL-1} and~\ref{PSL-2}, with~$v_k(x):=(U_k)_+(x,0)$
and~$v(x):=\overline U(x,0)$.
More precisely, 
condition~\eqref{09ngjhgfnmxxu} is guaranteed by~\eqref{chiama-1},
while condition~\eqref{TI67} follows from Lemma~\ref{tightness}.
This says that we can use
Lemma~\ref{PSL-2} and obtain that
$(U_k)_+(\cdot,0)\to \overline U(\cdot,0)$ in~$L^{2^*_s}(\R^n,[0,+\infty))$.
With this, the assumptions of Lemma~\ref{PSL-1}
are satisfied, which in turn gives that
\begin{eqnarray*}
&&
\lim_{k\to+\infty}\int_{\R^n} |(U_k)_+^q(x,0)
-\overline U^q(x,0)|^{\frac{2^*_s}{q}}\,dx=0
\\
{\mbox{and }}&&
\lim_{k\to+\infty}\int_{\R^n} |(U_k)_+^p(x,0)
-\overline U^p(x,0)|^{\frac{2n}{n+2s}}\,dx=0.
\end{eqnarray*}
Therefore, we can fix~$\delta\in(0,1)$, to be taken arbitrarily small
in the sequel, and say that
\begin{equation}\label{0vjewrsjn029}
\begin{split}
& \int_{\R^n} |(U_k)_+^q(x,0)
-(U_m)_+^q(x,0)|^{\frac{2^*_s}{q}}\,dx
\\ &\qquad+ \int_{\R^n} |(U_k)_+^p(x,0)
-(U_m)_+^p(x,0)|^{\frac{2n}{n+2s}}\,dx\le\delta
\end{split}\end{equation}
for any~$k$, $m$ large enough, say larger than some~$k_\star(\delta)$.

Let us now take~$\Phi\in \dot H^s_a(\R^{n+1}_+)$ with
\begin{equation}\label{11-0dvf67dd}
[\Phi]_a=1.\end{equation}
By assumption~(ii) in Proposition~\ref{PScond} we know that
for large~$k$ (again, say, up to renaming quantities,
that~$k\ge k_\star(\delta)$),
$$ |\langle {\mathcal{F}}'_\epsilon(U_k),\Phi\rangle|\le \delta.$$
This and~\eqref{pqoeopwoegi}
say that
\begin{eqnarray*}
&&\Big| \int_{\R^{n+1}_+} y^a \langle\nabla U_k(X),\nabla \Phi(X)\rangle\,dX\\
&&\qquad -\epsilon\int_{\R^n} h(x)\,(U_k)_+^{q}(x,0) \phi(x)\,dx
-\int_{\R^n} (U_k)_+^p(x,0)\phi(x)\,dx
\Big|\le\delta,
\end{eqnarray*}
where~$\phi(x):=\Phi(x,0)$. 
In particular, for~$k$, $m\ge k_\star(\delta)$,
\begin{eqnarray*}
&&\Big| \int_{\R^{n+1}_+} y^a \langle\nabla (U_k(X)-U_m(X)),\nabla \Phi(X)\rangle\,dX\\
&&\qquad\qquad
-\epsilon\int_{\R^n} h(x)\,\big( (U_k)_+^q(x,0)-(U_m)_+^q(x,0)\big) \phi(x)\,dx
\\&&\qquad\qquad -\int_{\R^n} \big((U_k)_+^p(x,0)-(U_m)_+^p(x,0)\big)\phi(x)\,dx
\Big|\le2\delta.
\end{eqnarray*}
So, using the H\"older inequality
with exponents~$\frac{2n}{n+2s-q(n-2s)}$
$\frac{2^*_s}{q}=\frac{2n}{q(n-2s)}$ and~$2^*_s=\frac{2n}{n-2s}$,
and with exponents~$\frac{2^*_s}{p}=\frac{2n}{n+2s}$
and~$2^*_s$, we obtain
\begin{eqnarray*}
&& \left| \int_{\R^{n+1}_+} y^a \langle\nabla (U_k(X)-U_m(X)),\nabla \Phi(X)\rangle\,dX\right|
\\&\le&
\left|\int_{\R^n} h(x)\,\big( (U_k)_+^q(x,0)-(U_m)_+^q(x,0)\big) 
\phi(x)\,dx\right| \\&&\qquad +
\left|\int_{\R^n} \big((U_k)_+^p(x,0)-(U_m)_+^p(x,0)\big)\phi(x)\,dx
\right|+2\delta\\
&\le&
\left[ \int_{\R^n} |h(x)|^{\frac{2n}{n+2s-q(n-2s)}} \,dx \right]^{
\frac{n+2s-q(n-2s)}{2n} }\\ &&\qquad\qquad \cdot
\left[ \int_{\R^n} \big| (U_k)_+^q(x,0)-(U_m)_+^q(x,0)\big|^{\frac{2^*_s}{q}}
\,dx \right]^{
\frac{q(n-2s)}{2n} }
\left[ \int_{\R^n} |\phi(x)|^{2^*_s}\,dx \right]^{\frac{1}{2^*_s}}
\\ &&\qquad+
\left[
\int_{\R^n} \big|(U_k)_+^p(x,0)-(U_m)_+^p(x,0)\big|^{
\frac{2n}{n+2s} }\,dx
\right]^{ \frac{n+2s}{2n} }
\left[ \int_{\R^n} |\phi(x)|^{2^*_s}\,dx \right]^{\frac{1}{2^*_s}}
+2\delta.
\end{eqnarray*}
Thus, by~\eqref{h0} and~\eqref{0vjewrsjn029},
$$ \left| \int_{\R^{n+1}_+} y^a \langle\nabla (U_k(X)-U_m(X)),\nabla \Phi(X)\rangle\,dX\right|\le 
C\, \delta^{ \frac{q(n-2s)}{2n} }\,\|\phi\|_{L^{2^*_s}(\R^n)}
+ C\,\delta^{ \frac{n+2s}{2n} } \,\|\phi\|_{L^{2^*_s}(\R^n)}
+2\delta,
$$
for some~$C>0$.
Now, by~\eqref{TraceIneq} and~\eqref{11-0dvf67dd},
we have that~$\|\phi\|_{L^{2^*_s}(\R^n)}\le S^{-1/2}
[\Phi]_a=S^{-1/2}$, therefore, up to renaming constants,
$$ \left| \int_{\R^{n+1}_+} y^a \langle\nabla (U_k(X)-U_m(X)),\nabla \Phi(X)\rangle\,dX\right|\le
C\delta^\gamma,$$
for some~$C$, $\gamma>0$, as long as~$k$, $m\ge
k_\star(\delta)$. Since this inequality is valid for
any~$\Phi$ satisfying~\eqref{11-0dvf67dd}, we have proved that
$$ [ U_k-U_m]_a\le C\delta^\gamma,$$
that says that~$U_k$ is a Cauchy sequence in~$\dot H^s_a(\R^{n+1}_+)$,
and then the desired result follows.
\end{proof}

\section{Proof of Theorem \ref{MINIMUM}}\label{concaveFirstSol}

With all this, we are in the position to prove Theorem \ref{MINIMUM}. 

\begin{proof}[Proof of Theorem \ref{MINIMUM}]
We recall that \eqref{h0} holds true. 
Thus, applying the H\"older inequality and Proposition \ref{traceIneq}, 
for $U\in\dot{H}^s_a(\mathbb{R}^{n+1}_+)$ we have 
\begin{eqnarray*}
\mathcal{F}_\varepsilon(U)&\geq& 
\frac{1}{2}[U]_a^2-c_1\|U_+(\cdot,0)\|_{L^{p+1}(\mathbb{R}^n)}^{p+1}
- c_2\epsilon\,\|h\|_{L^m(\R^n)}\|U_+(\cdot,0)\|_{L^{p+1}(\mathbb{R}^n)}^{q+1}\\
&\geq&\frac{1}{2}[U]_a^2-\tilde{c}_1[U]_a^{p+1}-\varepsilon \tilde{c}_2[U]_a^{q+1}.
\end{eqnarray*}
We consider the function 
$$\phi(t)=\frac{1}{2}t^2-\tilde{c}_1 t^{p+1}-\varepsilon\tilde{c}_2 t^{q+1}, \qquad t\geq 0.$$
Since $q+1<2<p+1$, we have that for every $\varepsilon>0$ 
we can find $\rho=\rho(\varepsilon)>0$ satisfying $\phi(\rho)=0$ 
and $\phi(t)<0$ for any $t\in(0,\rho)$. 
As a matter of fact, $\rho$ is the first zero of the function $\phi$. 

Furthermore, it is not difficult to see that
\begin{equation}\label{rho0}
\rho(\varepsilon)\rightarrow 0\quad \hbox{ as }\varepsilon\rightarrow 0.
\end{equation}
Thus, there exist $c_0>0$ and $\epsilon_0>0$ 
such that for all $\varepsilon<\varepsilon_0$,
\begin{equation}\label{behavF}\begin{cases}
\mathcal{F}_\varepsilon(U)\geq -c_0\qquad \hbox{ if }[U]_a<\rho(\epsilon_0),\\
\mathcal{F}_\varepsilon(U)>0\qquad \hbox{ if }[U]_a=\rho(\epsilon_0).
\end{cases}
\end{equation}
Now we take $\varphi\in C_0^\infty(\mathbb{R}^{n+1}_+)$, 
$\varphi\geq 0$, $[\varphi]_a=1$, 
and such that supp$(\varphi(\cdot,0))\subset B$,
where~$B$ is given in condition~\eqref{h2}.
Hence, for any~$t>0$,
\begin{eqnarray*} \mathcal{F}_\varepsilon(t\varphi)&=&
\frac{1}{2}t^2-\frac{\varepsilon}{q+1}t^{q+1}
\int_{\mathbb{R}^n}{h(x)\varphi^{q+1}(x,0)\,dx}-
\frac{t^{p+1}}{p+1}\int_{\mathbb{R}^n}{\varphi^{p+1}(x,0)\,dx}
\\&\le&
\frac{1}{2}t^2-\frac{\varepsilon}{q+1}t^{q+1}\inf_B h
\int_{B}{\varphi^{q+1}(x,0)\,dx}-
\frac{t^{p+1}}{p+1}\int_{B}{\varphi^{p+1}(x,0)\,dx}
.\end{eqnarray*}
This inequality and condition~\eqref{h2} give that,
for any~$\epsilon<\epsilon_0$ (possibly taking $\epsilon_0$ smaller) 
there exists $t_0<\rho(\epsilon_0)$ such that, for any $t<t_0$, we have 
\begin{equation*}
\mathcal{F}_\varepsilon{(t\varphi)}<0.
\end{equation*}
This implies that 
$$ i_\varepsilon:=\inf_{U\in\dot{H}^s_a(\mathbb{R}^{n+1}_+), [U]_a< \rho(\epsilon_0)}
{\mathcal{F}_\varepsilon(U)}<0.$$
This and \eqref{behavF} give that, for $0<\varepsilon<\varepsilon_0$,
$$-\infty<-c_0\leq i_\varepsilon<0.$$

Now we take a minimizing sequence $\{U_k\}$ and we observe that 
$$ \lim_{\epsilon\to 0}\lim_{k\to+\infty} {\mathcal{F}_\varepsilon(U_k)}
=\lim_{\epsilon\to 0}i_\epsilon\le 0<\dfrac{s}{n}S^{\frac{n}{2s}}.$$
Hence, condition~\eqref{ceps}
is satisfied
with $c_\epsilon:=i_\epsilon$,
and so
we can apply Proposition \ref{PScond} 
and we conclude that $i_\varepsilon$ is attained at some minimum $U_\varepsilon$.

Finally, since $[U_\varepsilon]_a\leq \rho(\varepsilon_0)$, 
\eqref{rho0} implies that $U_\epsilon$ converges to 0 in $\dot{H}^s_a(\R^{n+1}_+)$
as $\epsilon$ tends to 0. This concludes the proof
of Theorem \ref{MINIMUM}.
\end{proof}

\chapter{Regularity and positivity of the solution}\label{7xucjhgfgh345678}

\section{A regularity result}\label{sec:reg}

In this section we show a regularity result\footnote{We remark
that many of the techniques presented in this part have a very general
flavor and do not rely on variational principles, so they can be
possibly applied to integrodifferential operators with general kernels
and to nonlinear operators with suitable growth conditions,
and the method can interplay with tools for viscosity solutions. Nevertheless,
rather than trying to exhaust the many possible applications
of this theory in different frameworks,
which would require a detailed list of cases and different assumptions,
we presented here the basic theory just for the fractional
Laplacian, both for the sake of simplicity and because in the 
problem considered in the main results of this monograph
(such as Theorems~\ref{TH1}, \ref{MINIMUM} and~\ref{TH:MP})
we treat an equation arising from the variational
energy introduced in~\eqref{f giu}
(hence the general fully nonlinear case cannot be comprised
at that level)
and extension methods will be exploited in Chapter~\ref{EMP:CHAP}
(recall Section~\ref{sec:ext}: the case of
general integrodifferentiable kernels is not comprised
the variational principle discussed there).}
that allows us to say that a nonnegative solution to~\eqref{problem} is bounded. 

\begin{prop}\label{prop:bound}
Let $u\in \dot{H}^s(\R^n)$ be a nonnegative solution to the problem 
$$(-\Delta)^su=f(x,u)\qquad\hbox{ in }\R^n,$$
and assume that 
$|f(x,t)|\leq C(1+|t|^p)$, for some $1\leq p\leq 2^*_s-1$ and $C>0$. Then $u\in L^\infty(\R^n)$.
\end{prop}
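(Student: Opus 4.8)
The plan is to run a Moser-type iteration (Stampacchia's truncation method adapted to the fractional setting). First I would localize: the growth hypothesis $|f(x,u)| \le C(1+|u|^p)$ with $1 \le p \le 2^*_s - 1$ and the fact that $u \in \dot H^s(\R^n) \subset L^{2^*_s}(\R^n)$ mean that $f(\cdot, u) \in L^{2n/(n+2s)}_{\mathrm{loc}}$ at the very least, so $u$ is a genuine weak solution to which we may plug in admissible test functions. The key device is to test the equation against suitable truncated and positively-shifted powers of $u$. Concretely, for $\beta \ge 1$, $T > 0$, set $u_T := \min\{u, T\}$ (the truncation keeps everything in $\dot H^s$, so it is a legitimate test object), and test against $\varphi := u\, u_T^{2(\beta-1)}$. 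Using the standard fractional inequality
\begin{equation*}
\big(a - b\big)\big(a w(a)^{2(\beta-1)} - b w(b)^{2(\beta-1)}\big) \ge \frac{1}{\beta}\big|a w(a)^{\beta-1} - b w(b)^{\beta-1}\big|^2,
\end{equation*}
valid for truncations $w(\cdot) = \min\{\cdot, T\}$, one bounds $[u u_T^{\beta-1}]_{\dot H^s}^2$ from below by the left-hand bilinear form of $(-\Delta)^s u$ tested against $\varphi$, hence by $\int_{\R^n} f(x,u)\, u\, u_T^{2(\beta-1)}\,dx$.

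Next I would estimate the right-hand side. Using $|f(x,u)| \le C(1 + |u|^p)$ and splitting $\{u \le 1\}$ from $\{u > 1\}$, the dangerous term is $\int u^{p+1} u_T^{2(\beta-1)}\,dx$; since $p+1 \le 2^*_s$, one writes $u^{p+1} = u^{p+1-2^*_s}\cdot u^{2^*_s}$ and on the set where $u$ is large one uses that $u^{p+1-2^*_s} \le 1$ when $p+1 = 2^*_s$, or, in the strictly subcritical-in-the-exponent-but-still-critical sense, one uses the absolute continuity of the integral: for any $\delta > 0$ there is $M = M(\delta)$ with $\int_{\{u > M\}} u^{2^*_s}\,dx < \delta$. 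Splitting the integral at level $M$ and using the fractional Sobolev inequality $S\, \|u u_T^{\beta-1}\|_{L^{2^*_s}}^2 \le [u u_T^{\beta-1}]_{\dot H^s}^2$ from Proposition \ref{traceIneq} (in its $\R^n$ form), one absorbs the $\{u > M\}$ piece into the left-hand side by choosing $\delta$ small (this step is where the criticality $p \le 2^*_s - 1$ is exactly what is needed), and the $\{u \le M\}$ piece is controlled by $M^{\,\cdot}\, \|u u_T^{\beta-1}\|_{L^2(\mathrm{ball})}$ type quantities or directly by $\|u\|_{L^{2^*_s}}$ bounds. The outcome is an estimate of the form
\begin{equation*}
\| u\, u_T^{\beta - 1}\|_{L^{2^*_s}(\R^n)}^2 \le C\,\beta\, \big(1 + \| u\|_{L^{2^*_s \beta}(\{u \le M\})}^{2\beta} + \dots\big),
\end{equation*}
and letting $T \to +\infty$ by monotone convergence upgrades $u_T$ to $u$, giving $\|u\|_{L^{2^*_s \beta}} $ controlled by $\|u\|_{L^{2\beta}}$-type norms with a constant growing polynomially in $\beta$.

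Then I would iterate: starting from $u \in L^{2^*_s}(\R^n)$ (i.e. $\beta_0$ with $2\beta_0 = 2^*_s$), the recursion $\beta_{k+1} = \tfrac{2^*_s}{2}\beta_k = (\tfrac{n}{n-2s})^{k+1}\beta_0$ pushes integrability up through all $L^q$, $q < \infty$, and a careful bookkeeping of the constants (each step multiplies by $(C\beta_k)^{1/(2\beta_k)}$, whose infinite product converges since $\sum \log(C\beta_k)/\beta_k < \infty$ as $\beta_k$ grows geometrically) yields $\|u\|_{L^\infty(\R^n)} \le C\,\|u\|_{L^{2^*_s}(\R^n)}^{\theta}$ for suitable $\theta$, hence $u \in L^\infty(\R^n)$. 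The main obstacle I anticipate is the bookkeeping at the critical exponent: because $p+1$ can equal $2^*_s$, one cannot brutally estimate the nonlinear term by a fixed lower norm, and must instead use the smallness-of-tails trick (absolute continuity of $\int u^{2^*_s}$) on each iteration step to close the estimate; making this uniform across the infinitely many iteration steps — i.e. keeping the truncation level $M$ and the absorbed constant under control as $\beta \to \infty$ — is the delicate point. A clean alternative, which I would keep in reserve, is to first prove $u \in L^q$ for one $q > 2^*_s$ via a single De Giorgi-type level-set estimate (exploiting that $f$ is subcritical \emph{relative to} this improved integrability), and then the remaining iteration is purely subcritical and routine.
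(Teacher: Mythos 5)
Your overall strategy (testing against truncated powers, using the fractional Sobolev inequality, and iterating $\beta_{k+1}=\tfrac{2^*_s}{2}\beta_k$) is the same as the paper's, and your choice of test function $u\,u_T^{2(\beta-1)}$ is essentially equivalent to the paper's device of a convex, Lipschitz truncated power $\varphi$ together with $(-\Delta)^s\varphi(u)\le\varphi'(u)(-\Delta)^su$. The genuine gap is in how you handle the critical term across the iteration. Your main plan invokes the tail-smallness (absolute continuity) trick at \emph{every} step: at level $\beta_k$ you must choose $M=M(\beta_k)$ so that $\bigl(\int_{\{u>M\}}u^{2^*_s}\,dx\bigr)^{(2^*_s-2)/2^*_s}\lesssim \beta_k^{-1}$, and the absorbed remainder then carries a factor like $M(\beta_k)^{2^*_s-2}$ into the constant of the $k$-th step. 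Since the only information available is $u\in L^{2^*_s}(\R^n)$, there is no quantitative control on how fast the tails $\int_{\{u>M\}}u^{2^*_s}$ decay, hence no control on $M(\beta_k)$; the convergence of the infinite product then requires $\sum_k \log M(\beta_k)/\beta_k<\infty$, which simply need not hold. So the step you flag as ``delicate'' is not merely delicate: as stated it cannot be closed with the data at hand.

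The correct repair is exactly the alternative you keep ``in reserve,'' and it is what the paper does. The smallness/absorption argument is used \emph{once only}, at the single exponent $\beta_1=\tfrac{2^*_s+1}{2}$ (so that $2\beta_1-1=2^*_s$), splitting $\int(\varphi(u))^2u^{2^*_s-2}$ at a level $R$ chosen so that $\bigl(\int_{\{u>R\}}u^{2^*_s}\bigr)^{(2^*_s-2)/2^*_s}\le\tfrac1{2C\beta_1}$; this yields $u\in L^{2^*_s\beta_1}(\R^n)$. For $\beta>\beta_1$ no absorption is attempted: one bounds $(\varphi(u))^2u^{2^*_s-2}\le u^{2\beta+2^*_s-2}$ outright and chooses the recursion $2\beta_{m+1}+2^*_s-2=2^*_s\beta_m$, so the ``dangerous'' integral on the right is precisely the norm controlled at the previous stage, and the step constants $(C\beta_m)^{\frac1{2(\beta_m-1)}}$ form a convergent product because $\beta_m-1=(2^*_s/2)^m(\beta_1-1)$ grows geometrically. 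In short: promote your fallback to the main argument — one critical boost past $2^*_s$, then a purely subcritical-relative-to-the-improved-integrability iteration — and the proof closes; the every-step absorption scheme does not.
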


\begin{proof}
Let $\beta\geq 1$ and $T>0$, and let us define
\begin{equation*}
\varphi(t)=\begin{cases} 0, \hbox{ if }t\leq 0,\\
t^\beta, \hbox{ if }0<t<T,\\
\beta T^{\beta-1}(t-T)+T^\beta,\hbox{ if }t\geq T.
\end{cases}
\end{equation*}
Since $\varphi$ is convex and Lipschitz, 
\begin{equation}\label{unoqqq}
\varphi(u)\in \dot{H}^s(\R^n)\end{equation} 
and
\begin{equation}\label{dueqqq}
(-\Delta)^s\varphi(u)\leq \varphi'(u)(-\Delta)^su
\end{equation}
in the weak sense. 

We recall that~\eqref{equivNorms} and Proposition~\ref{traceIneq} imply that, 
for any~$u\in\dot{H}^s(\R^n)$,
$$ \|u\|_{L^{2^*_s}(\R^n)}\le S^{-2}[u]_{\dot{H}^s(\R^n)}.$$
Moreover, by Proposition~3.6 in~\cite{DPV} we have that 
$$ [u]_{\dot{H}^s(\R^n)}= \|(-\Delta)^{s/2}u\|_{L^2(\R^n)}.$$
Hence, from~\eqref{unoqqq}, an integration by parts 
and~\eqref{dueqqq} we deduce that
\begin{eqnarray*}
&&\|\varphi(u)\|_{L^{2^*_s}(\R^n)}^2\le S^{-1}\int_{\R^n}|(-\Delta)^{s/2}
\varphi(u)|^2\,dx \\&&\qquad\quad =  
S^{-1}\int_{\R^n}\varphi(u)(-\Delta)^{s}
\varphi(u)\,dx  \le S^{-1}\int_{\R^n}\varphi(u)\,\varphi'(u)(-\Delta)^{s}u\,dx. 
\end{eqnarray*}
Therefore, from the assumption on~$u$ and~$f$ we obtain 
\begin{eqnarray*}
\|\varphi(u)\|_{L^{2^*_s}(\R^n)}^2&\le & S^{-1}\int_{\R^n}\varphi(u)\,\varphi'(u)(1+u^{2^*_s-1})\,dx \\&=& S^{-1}\left(\int_{\R^n}\varphi(u)\,\varphi'(u)\,dx
+\int_{\R^n}\varphi(u)\,\varphi'(u)\,u^{2^*_s-1}\,dx\right).
\end{eqnarray*}
Using that $\varphi(u)\varphi'(u)\leq \beta u^{2\beta-1}$ and $u\varphi'(u)\leq \beta \varphi(u)$, we have
\begin{equation}\label{boundBeta}
\left(\int_{\R^n}(\varphi(u))^{2^*_s}\right)^{2/2^*_s}\leq C\beta\left(\int_{\R^n}u^{2\beta-1}\,dx+\int_{\R^n}(\varphi(u))^2u^{2^*_s-2}\,dx\right),
\end{equation}
where $C$ is a positive constant that does not depend on $\beta$. Notice that the last integral is well defined for every $T$ in the definition of $\varphi$. Indeed,
\begin{equation*}\begin{split}
\int_{\R^n}(\varphi(u))^2u^{2^*_s-2}\,dx&\,=\int_{\{u\leq T\}}(\varphi(u))^2u^{2^*_s-2}\,dx+\int_{\{u>T\}}(\varphi(u))^2u^{2^*_s-2}\,dx\\
&\leq T^{2\beta-2}\int_{\R^n}u^{2^*_s}\,dx+C\int_{\R^n}u^{2^*_s}\,dx<+\infty,
\end{split}\end{equation*}
where we have used that $\beta>1$ and that $\varphi(u)$ is linear when $u\geq T$. We choose now $\beta$ in \eqref{boundBeta} such that $2\beta-1=2^*_s$, and we name it $\beta_1$, that is, 
\begin{equation}\label{scelta beta}
\beta_1:=\frac{2^*_s+1}{2}.
\end{equation} 

Let $R>0$ to be fixed later. Attending to the last integral in \eqref{boundBeta} and applying the H\"older's inequality with exponents $r:=2^*_s/2$ and $r':=2^*_s/(2^*_s-2)$,
\begin{equation}\begin{split}\label{boundBeta2}
\int_{\R^n}(\varphi(u))^2&u^{2^*_s-2}\,dx=\int_{\{u\leq R\}}(\varphi(u))^2u^{2^*_s-2}\,dx+\int_{\{u>R\}}(\varphi(u))^2u^{2^*_s-2}\,dx\\
&\leq \int_{\{u\leq R\}}\frac{(\varphi(u))^2}{u}R^{2^*_s-1}\,dx+\left(\int_{\R^n}(\varphi(u))^{2^*_s}\,dx\right)^{2/2^*_s}\left(\int_{\{u>R\}}u^{2^*_s}\,dx\right)^{\frac{2^*_s-2}{2^*_s}}.
\end{split}\end{equation}
By the Monotone Convergence Theorem, we can choose $R$ large enough so that
$$\left(\int_{\{u>R\}}u^{2^*_s}\,dx\right)^{\frac{2^*_s-2}{2^*_s}}\leq \frac{1}{2C\beta_1},$$
where $C$ is the constant appearing in \eqref{boundBeta}. Therefore, we can absorb the last term in \eqref{boundBeta2} by the left hand side of \eqref{boundBeta} to get
\begin{equation*}
\left(\int_{\R^n}(\varphi(u))^{2^*_s}\,dx\right)^{2/2^*_s}\leq 2C\beta_1\left(\int_{\R^n}u^{2^*_s}\,dx+R^{2^*_s-1}\int_{\R^n}\frac{(\varphi(u))^2}{u}\,dx\right),
\end{equation*}
where~\eqref{scelta beta} is also used. 
Now we use that~$\varphi(u)\le u^{\beta_1}$ and~\eqref{scelta beta} once again
in the right hand side and we
take~$T\rightarrow\infty$: we obtain $$\left(\int_{\R^n}u^{2^*_s\beta_1}\,dx\right)^{2/2^*_s}\leq 2C\beta_1\left(\int_{\R^n}u^{2^*_s}\,dx+R^{2^*_s-1}\int_{\R^n}u^{2^*_s}\,dx\right)<+\infty,$$
and therefore 
\begin{equation}\label{alkdhghuhu}
u\in L^{2^*_s\beta_1}(\R^n).
\end{equation}

Let us suppose now $\beta>\beta_1$. Thus, using that $\varphi(u)\leq u^\beta$ in the right hand side of \eqref{boundBeta} and letting $T\rightarrow\infty$ we get
\begin{equation}\label{boundBeta3}
\left(\int_{\R^n}u^{2^*_s\beta}\,dx\right)^{2/2^*_s}\leq C\beta \left(\int_{\R^n}u^{2\beta-1}\,dx+\int_{\R^n}u^{2\beta+2^*_s-2}\,dx\right).
\end{equation}
Furthermore, we can write
$$u^{2\beta-1}=u^au^b,$$
with $a:=\frac{2^*_s(2^*_s-1)}{2(\beta-1)}$ and $b:=2\beta-1-a.$ Notice that, since $\beta>\beta_1$, then $0<a,b<2^*_s$. Hence, applying Young's inequality with exponents 
$$r:=2^*_s/a \quad \hbox{ and }\quad r':=2^*_s/(2^*_s-a),$$
there holds
\begin{equation*}\begin{split}
\int_{\R^n}u^{2\beta-1}\,dx &\leq \frac{a}{2^*_s}\int_{\R^n}u^{2^*_s}\,dx+\frac{2^*_s-a}{2^*_s}\int_{\R^n}u^{\frac{2^*_sb}{2^*_s-a}}\,dx\\
&\leq \int_{\R^n}u^{2^*_s}\,dx+\int_{\R^n}u^{2\beta+2^*_s-2}\,dx\\
&\leq C\left(1+\int_{\R^n}u^{2\beta+2^*_s-2}\,dx\right),
\end{split}\end{equation*}
with~$C>0$ independent of $\beta$. Plugging this into \eqref{boundBeta3},
\begin{equation*}
\left(\int_{\R^n}u^{2^*_s\beta}\,dx\right)^{2/2^*_s}\leq C\beta \left(1+\int_{\R^n}u^{2\beta+2^*_s-2}\,dx\right),
\end{equation*}
with $C$ changing from line to line, but remaining independent of $\beta$. Therefore,
\begin{equation}\label{boundBeta4}
\left(1+\int_{\R^n}u^{2^*_s\beta}\,dx\right)^{\frac{1}{2^*_s(\beta-1)}}\leq (C\beta)^{\frac{1}{2(\beta-1)}} \left(1+\int_{\R^n}u^{2\beta+2^*_s-2}\,dx\right)^{\frac{1}{2(\beta-1)}},
\end{equation}
that is (2.6) in \cite[Proposition 2.2]{bcss}. From now on, we follow exactly their iterative argument. That is, we define $\beta_{m+1}$, $m\geq 1$, so that
$$2\beta_{m+1}+2^*_s-2=2^*_s\beta_m.$$
Thus,
$$\beta_{m+1}-1=\left(\frac{2^*_s}{2}\right)^m(\beta_1-1),$$
and replacing in \eqref{boundBeta4} it yields
\begin{equation*}
\left(1+\int_{\R^n}u^{2^*_s\beta_{m+1}}\,dx\right)^{\frac{1}{2^*_s(\beta_{m+1}-1)}}\leq (C\beta_{m+1})^{\frac{1}{2(\beta_{m+1}-1)}} \left(1+\int_{\R^n}u^{2^*_s\beta_m}\,dx\right)^{\frac{1}{2^*_s(\beta_m-1)}}.
\end{equation*}
Defining $C_{m+1}:=C\beta_{m+1}$ and
$$A_m:=\left(1+\int_{\R^n}u^{2^*_s\beta_m}\,dx\right)^{\frac{1}{2^*_s(\beta_m-1)}},$$
we conclude that there exists a constant $C_0>0$ independent of~$m$, 
such that
$$A_{m+1}\leq \prod_{k=2}^{m+1}C_k^{\frac{1}{2(\beta_k-1)}}A_1\leq C_0A_1.$$
Thus, 
$$\|u\|_{L^\infty(\R^n)}\leq C_0A_1<+\infty,$$
thanks to~\eqref{alkdhghuhu}. This finishes the proof of Proposition~\ref{prop:bound}.
\end{proof}

\begin{corollary}\label{coro:bound}
Let~$u\in\dot{H}^s(\R^n)$ be a solution of \eqref{problem-1} and let $U$ 
be its extension, according to \eqref{poisson}. 

Then~$u\in L^\infty(\R^n)$, and~$U\in L^\infty(\R^{n+1}_+)$.
\end{corollary}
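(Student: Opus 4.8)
The plan is to reduce the statement to Proposition~\ref{prop:bound}, which applies once we know that $u$ is nonnegative and that the right-hand side of the equation has at most critical growth in $u$. First I would dispose of the trivial case $u\equiv 0$, which is bounded together with its (vanishing) extension. Otherwise, by Proposition~\ref{prop:pos}, every solution of \eqref{problem-1} satisfies $u\ge 0$ in $\R^n$ and its extension $U>0$ in $\R^{n+1}_+$; in particular $u_+^q=u^q$ and $u_+^p=u^p$, so \eqref{problem-1} becomes
\begin{equation*}
(-\Delta)^s u = f(x,u), \qquad f(x,t):=\epsilon\,h(x)\,t_+^q + t_+^p .
\end{equation*}

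Next I would verify the growth hypothesis of Proposition~\ref{prop:bound}. Since $h\in L^\infty(\R^n)$ by \eqref{h1}, and since $0<q<1<p=\tfrac{n+2s}{n-2s}=2^*_s-1$, for $t\ge 0$ one has $t^q\le 1+t^p$, whence
\begin{equation*}
|f(x,t)|\le \epsilon\,\|h\|_{L^\infty(\R^n)}\,|t|^q + |t|^p \le C\,(1+|t|^p),
\end{equation*}
with $C=C(\epsilon,\|h\|_{L^\infty(\R^n)})$ and exponent $p=2^*_s-1$, which is precisely the borderline exponent allowed in Proposition~\ref{prop:bound}. Applying that proposition (whose proof uses a truncation vanishing on $\{t\le 0\}$, hence the need for $u\ge 0$) yields $u\in L^\infty(\R^n)$.

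Finally, to pass from $u$ to its extension, I would use the representation \eqref{poisson}, namely $U(\cdot,z)=u*P_s(\cdot,z)$, together with the fact that for every $z>0$ the Poisson kernel $P_s(\cdot,z)$ is nonnegative and has unit mass, $\int_{\R^n}P_s(x,z)\,dx=1$ (this is how the normalizing constant $c_{n,s}$ in \eqref{poisson} is fixed; one checks it by the rescaling $x=z\xi$). By Young's inequality this gives
\begin{equation*}
\|U(\cdot,z)\|_{L^\infty(\R^n)}\le \|u\|_{L^\infty(\R^n)}\,\|P_s(\cdot,z)\|_{L^1(\R^n)} = \|u\|_{L^\infty(\R^n)}
\end{equation*}
uniformly in $z>0$, hence $U\in L^\infty(\R^{n+1}_+)$. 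There is no serious obstacle here: the only points requiring care are invoking Proposition~\ref{prop:pos} to ensure $u\ge 0$ (so that Proposition~\ref{prop:bound} is applicable) and observing that the concave term $\epsilon h u^q$ is harmless for the growth bound because $h\in L^\infty(\R^n)$ and $q<1<p$.
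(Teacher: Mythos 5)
Your proposal is correct and follows essentially the same route as the paper: reduce to Proposition~\ref{prop:bound} after using Proposition~\ref{prop:pos} to get $u\ge0$ and checking the growth bound $|f(x,t)|\le C(1+|t|^{2^*_s-1})$ with $f(x,t)=\epsilon h(x)t_+^q+t_+^p$, then bound $U$ by the convolution representation \eqref{poisson} and the unit mass of the Poisson kernel. Your extra remarks (handling $u\equiv0$ separately and spelling out why $q<1<p$ makes the concave term harmless) are just slightly more explicit versions of steps the paper leaves implicit.
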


\begin{proof}
First we observe that $u\ge0$, thanks to Proposition \ref{prop:pos}. 
Moreover, since $u$ is a solution to~\eqref{problem}, it solves
$$ (-\Delta)^su=f(x,u) \quad {\mbox{ in }}\R^n,$$
where~$f(x,t):=\epsilon h(x)t^q_++t^p_+$. 
It is easy to check that $f$ satisfies the hypotheses of Proposition~\ref{prop:bound}.
Hence the boundedness of~$u$ simply follows from Proposition~\ref{prop:bound}. 

Let us now show the $L^\infty$ estimate for $U$. 
According to~\eqref{poisson}, for any~$(x,z)\in\R^{n+1}_+$,
$$ U(x,z)=\int_{\R^n}u(x-y)\,P_s(y,z)\,dy.$$
Therefore, 
$$ |U(x,z)|\le \|u\|_{L^\infty(\R^n)}\int_{\R^n}P_s(y,z)\,dy=\|u\|_{L^\infty(\R^n)},$$
for any~$(x,z)\in\R^{n+1}_+$, which implies the~$L^\infty$-bound for~$U$, 
and concludes the proof of the corollary.
\end{proof}

Finally, we can prove that a solution to~\eqref{problem-1} 
is continuous, as stated in the following:

\begin{corollary}\label{coro:conti}
Let~$u\in\dot{H}^s(\R^n)$ be a solution of \eqref{problem-1} and let $U$ 
be its extension, according to \eqref{poisson}. 

Then~$u\in C^\alpha(\R^n)$, 
for any~$\alpha\in(0,\min\{2s,1\})$, and~$U\in C(\overline{\R^{n+1}_+})$.
\end{corollary}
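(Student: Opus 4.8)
The plan is to bootstrap from the $L^\infty$-bound already established in Corollary~\ref{coro:bound}. Since $u\in\dot H^s(\R^n)\cap L^\infty(\R^n)$ and $u\ge 0$, the right-hand side $f(x,u)=\epsilon h(x)u^q+u^p$ lies in $L^\infty(\R^n)$: indeed $0\le u^q\le\|u\|_{L^\infty}^q$, $0\le u^p\le\|u\|_{L^\infty}^p$, and $h\in L^\infty(\R^n)$ by~\eqref{h1}. Thus $u$ solves $(-\Delta)^s u=g$ in $\R^n$ with $g:=f(\cdot,u)\in L^\infty(\R^n)$ (and also $g\in L^1(\R^n)$ since $h\in L^1$, $u\in L^{2^*_s}\subset L^{p+1}$, etc., though $L^\infty$ is what we need for interior regularity). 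First I would invoke the standard interior Hölder regularity for the fractional Laplacian with bounded right-hand side: if $(-\Delta)^s u = g$ with $g\in L^\infty$, then $u\in C^{\alpha}_{\mathrm{loc}}(\R^n)$ for every $\alpha<\min\{2s,1\}$ (see e.g. the regularity theory in~\cite{Silv} or~\cite{DPV}); combined with the global bound $u\in L^\infty(\R^n)$ this upgrades to $u\in C^\alpha(\R^n)$ with a uniform modulus, for any $\alpha\in(0,\min\{2s,1\})$.

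Next I would transfer the continuity to the extension $U$. Recall from~\eqref{poisson} that $U(x,z)=(u*P_s(\cdot,z))(x)$, where $P_s(x,z)=c_{n,s}z^{2s}(|x|^2+z^2)^{-(n+2s)/2}$ is the Poisson kernel, which is positive, smooth in $\{z>0\}$, and satisfies $\int_{\R^n}P_s(y,z)\,dy=1$ for every $z>0$. For $z>0$ fixed, $U(\cdot,z)$ is a convolution of a bounded continuous function with an $L^1$ kernel, hence continuous; and $U$ is smooth in the open half-space $\R^{n+1}_+$ because it is $y^a$-harmonic there (it solves $\mathrm{div}(y^a\nabla U)=0$). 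The only delicate point is continuity up to the boundary $\{z=0\}$, i.e. showing $U(x,z)\to u(x_0)$ as $(x,z)\to(x_0,0)$. This is where I would use the uniform continuity of $u$: writing
\begin{equation*}
U(x,z)-u(x_0)=\int_{\R^n}\big(u(x-y)-u(x_0)\big)P_s(y,z)\,dy,
\end{equation*}
I split the integral into $|y|<\delta$ and $|y|\ge\delta$. On $|y|<\delta$ the integrand is bounded by $\sup_{|x-x_0-y|<\delta+|x-x_0|}|u(x-y)-u(x_0)|$, which is small by uniform continuity of $u$ once $\delta$ and $|x-x_0|$ are small; on $|y|\ge\delta$ we bound by $2\|u\|_{L^\infty}\int_{|y|\ge\delta}P_s(y,z)\,dy$, and this tail tends to $0$ as $z\to 0^+$ because $P_s(\cdot,z)$ is an approximate identity (by scaling, $\int_{|y|\ge\delta}P_s(y,z)\,dy=\int_{|w|\ge\delta/z}P_s(w,1)\,dw\to 0$). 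Hence $U$ extends continuously to $\overline{\R^{n+1}_+}$ with boundary trace $u$, i.e. $U\in C(\overline{\R^{n+1}_+})$.

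The main obstacle is the interior Hölder estimate $u\in C^\alpha_{\mathrm{loc}}$ from $g\in L^\infty$ — this is the one genuinely nontrivial ingredient, and one must be a little careful about the exponent: the gain is $2s$ derivatives, capped at $1$ (Lipschitz) when $2s\ge 1$, which is exactly why the statement restricts to $\alpha\in(0,\min\{2s,1\})$. This estimate is standard and is quoted from the literature (Section~\ref{sec:reg} of the excerpt explicitly develops a regularity theory of this flavor, and~\cite{Silv,DPV} contain the precise statements), so I would simply cite it rather than reprove it. Everything else — the $L^\infty$-to-$C^\alpha$ globalization, the convolution continuity, and the approximate-identity argument at the boundary — is elementary given the tools already in the paper (Corollary~\ref{coro:bound} and the properties of $P_s$). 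One technical remark worth making explicit: to apply the interior estimate one should know $u$ is, say, locally integrable against the tail weight $(1+|y|)^{-n-2s}$, which holds since $u\in L^{2^*_s}(\R^n)\cap L^\infty(\R^n)$; this guarantees $(-\Delta)^s u$ is well-defined pointwise and the equation holds in the classical sense after the first regularity gain.
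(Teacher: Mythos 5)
Your proof is correct and essentially the paper's: the paper likewise deduces $u\in C^\alpha(\R^n)$, $\alpha<\min\{2s,1\}$, from the $L^\infty$ bound of Corollary~\ref{coro:bound} together with the standard H\"older estimate for $(-\Delta)^s u=g$ with $g\in L^\infty(\R^n)$ (it cites Proposition~5 of \cite{SV:weak}, where you cite \cite{Silv,DPV}). The only deviation is at the extension step: the paper simply quotes Lemma~4.4 of \cite{CabS} to get $U\in C(\overline{\R^{n+1}_+})$, whereas you reprove that fact directly from \eqref{poisson} via the approximate-identity property of the Poisson kernel $P_s$ and the uniform continuity of $u$, which is a correct and self-contained replacement.
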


\begin{proof}
The regularity of~$u$ follows from Corollary~\ref{coro:bound} 
and Proposition~5 in~\cite{SV:weak}, being~$u$ a solution to~\eqref{problem-1}. 
The continuity of~$U$ follows from Lemma~4.4 in~\cite{CabS}.
\end{proof}

\section{A strong maximum principle and positivity of the solutions} \label{sec:positivity}

In this section we deal with the problem of the positivity of the solutions to \eqref{problem-1}. 
We have shown in Proposition \ref{prop:pos} that a solution of \eqref{problem-1} is nonnegative. 
Here we prove that if $h\ge0$ then the solution is strictly positive. 

Following is the strong maximum principle for weak solutions
needed for our purposes:

\begin{prop}\label{prop:maxpr}
Let~$u$ be a bounded, continuous function,
with~$u\ge0$ in~$\R^n$
and~$(-\Delta)^s u\ge0$ in the weak sense in~$\Omega$.
If there exists~$x_\star\in\Omega$ such that~$u(x_\star)=0$, then~$u$
vanishes identically in~$\R^n$.
\end{prop}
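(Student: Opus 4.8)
The plan is to \emph{localise} the problem around $x_\star$ by means of a harmonic replacement, and then to exploit the pointwise integral formula for $(-\Delta)^s$ at a minimum point, where no principal value is needed because the sign of the integrand is constant.

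Fix $r>0$ so small that $\overline{B_r(x_\star)}\subset\Omega$, and let $v$ be the solution of the exterior Dirichlet problem
$$
\begin{cases}
(-\Delta)^s v = 0 & \text{in } B_r(x_\star),\\
v = u & \text{in } \R^n\setminus B_r(x_\star),
\end{cases}
$$
which exists, is bounded, continuous on $\R^n$, and of class $C^\infty$ inside $B_r(x_\star)$ by the standard theory of $s$-harmonic functions (see e.g.~\cite{Bucur, Silv, DPV}). Since $u$ is a weak supersolution of $(-\Delta)^s w=0$ in $B_r(x_\star)$ with the same exterior datum as $v$, the weak comparison principle gives $u\ge v$ in $B_r(x_\star)$ (with $u=v$ outside); and since $v=u\ge0$ in $\R^n\setminus B_r(x_\star)$, the weak minimum principle for $s$-harmonic functions yields $v\ge0$ in $\R^n$. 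Hence $0\le v(x_\star)\le u(x_\star)=0$, so $v(x_\star)=0$.

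Now $v$ is smooth near $x_\star$, $v\ge0$, and attains its minimum $0$ at $x_\star$; in particular $\nabla v(x_\star)=0$, the integral $\int_{\R^n}\frac{v(y)}{|x_\star-y|^{n+2s}}\,dy$ converges absolutely, and
$$
0=(-\Delta)^s v(x_\star)=c_{n,s}\int_{\R^n}\frac{v(x_\star)-v(y)}{|x_\star-y|^{n+2s}}\,dy=-c_{n,s}\int_{\R^n}\frac{v(y)}{|x_\star-y|^{n+2s}}\,dy.
$$
Since $v\ge0$ and the kernel is positive a.e., this forces $v\equiv0$ a.e., hence $v\equiv0$ on $\R^n$ by continuity. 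In particular $u=v=0$ in $\R^n\setminus B_r(x_\star)$. Letting $r\to0^+$ we conclude that $u\equiv0$ in $\R^n\setminus\{x_\star\}$, and therefore $u\equiv0$ in $\R^n$ since $u(x_\star)=0$.

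The main technical point is the \emph{rigorous} justification of the comparison step and of the pointwise identity for $(-\Delta)^s v(x_\star)$ when $u$ is merely bounded and continuous: the identity is legitimate because the replacement $v$ is smooth near $x_\star$ and bounded, hence in the natural pointwise domain of $(-\Delta)^s$, while the comparison is handled by a mollification argument (mollify $w:=u-v$, which is continuous, vanishes outside $B_r(x_\star)$, and satisfies $(-\Delta)^s w\ge0$ in the distributional sense in $B_r(x_\star)$; then test the resulting classical inequality at an interior negative minimum of the mollified function to get a contradiction, and pass to the limit). An alternative, closer to the extension framework of Section~\ref{sec:ext}, would be to pass to the $s$-harmonic extension $U\ge0$ of $u$, note that $\mathrm{div}(y^a\nabla U)=0$ with $U(x_\star,0)=0$ a global minimum, and derive a contradiction from a Hopf-type boundary lemma for the degenerate operator $\mathrm{div}(y^a\nabla\,\cdot\,)$ against the sign condition $-\lim_{y\to0^+}y^a\partial_\nu U=(-\Delta)^s u\ge0$; this requires a weighted Hopf lemma compatible with the measure $(-\Delta)^s u$, which is why the harmonic-replacement argument is preferable here.
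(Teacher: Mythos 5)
Your proof is correct and follows essentially the same route as the paper's: replace $u$ in a small ball $B_r(x_\star)$ by its $s$-harmonic replacement $v$, use the weak comparison/maximum principles to get $0\le v\le u$ so that $v(x_\star)=0$ is a global minimum, evaluate $(-\Delta)^s v$ pointwise there to conclude $v\equiv 0$ (hence $u\equiv 0$ outside $B_r(x_\star)$), and finally let $r\to 0^+$. The only cosmetic differences are that the paper cites the weak/viscosity-solution results of Servadei--Valdinoci directly instead of a mollification argument, and writes the pointwise identity with the symmetrized second-difference kernel; the substance is identical.
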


\begin{proof} 
Let~$R>0$ such that~$B_R(x_\star)\subset\Omega$.
For any~$r\in(0,R)$, we consider the solution of
\begin{equation}\label{0dvmnb7whhh12} \left\{
\begin{matrix}
(-\Delta)^s v_r = 0 & {\mbox{ in }} B_r(x_\star),\\
v_r =u & {\mbox{ in }} \R^n\setminus B_r(x_\star)
\end{matrix}
\right. \end{equation}
Notice that~$v_r$ may be obtained by direct minimization
and~$v_r$ is continuous in the whole of~$\R^n$ (see e.g. Theorem~2
in \cite{SV:weak}).
Moreover, if~$w_r:= v_r-u$, we have that~$(-\Delta)^s w_r\le0$
in the weak sense in~$B_r(x_\star)$, and~$w_r$ vanishes outside~$B_r(x_\star)$.
Accordingly, by the weak maximum principle for weak solutions
(see e.g. Lemma~6 in \cite{SV:weak}), we have that~$w_r\le0$
in the whole of~$\R^n$, which gives that~$v_r\le u$.
In particular,
\begin{equation}\label{89990}
v_r(x_\star)\le u(x_\star)=0.\end{equation}
The weak maximum principle for weak solutions
and the fact that~$v_r=u\ge0$ outside~$B_r(x_\star)$
also imply that~$v_r\ge0$ in~$\R^n$. This and~\eqref{89990}
say that
\begin{equation}\label{89991}
\min_{\R^n} v_r=v_r(x_\star)=0.\end{equation}
In addition, $v_r$ is also a solution of~\eqref{0dvmnb7whhh12}
in the viscosity sense (see e.g. Theorem~1 in \cite{SV:weak}),
hence it is smooth in the interior of~$B_r(x_\star)$,
and we can compute~$(-\Delta)^s v_r(x_\star)$ and obtain from~\eqref{89991}
that
$$ 0=\int_{\R^n} \frac{ v_r(x_\star+y)+v_r(x_\star-y)-2v_r(x_\star)}{
|y|^{n+2s}}\,dy \ge 0.$$
This implies that~$v_r$ is constant in~$\R^n$, that is
$v_r(x)=v_r(x_\star)=0$ for any~$x\in\R^n$.
In particular~$0=v_r(x)=u(x)$ for any~$x\in \R^n\setminus B_r(x_\star)$.
By taking~$r$ arbitrarily small, we obtain that~$u(x)=0$
for any~$x\in \R^n\setminus\{x_\star\}$, and the desired result
plainly follows.
\end{proof}

Thanks to Proposition \ref{prop:maxpr} we now show the positivity of solutions of \eqref{problem-1}. 

\begin{corollary}
Let~$u\in\dot{H}^s(\R^n)$, $u\neq0$, be a solution of \eqref{problem-1}. Suppose also that $h\ge0$. Then, $u>0$.
\end{corollary}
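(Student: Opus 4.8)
The idea is to reduce the statement to the strong maximum principle already established in Proposition~\ref{prop:maxpr}, so the whole argument is a matter of checking that the three hypotheses of that proposition are met: nonnegativity, boundedness and continuity, and the weak supersolution property. First I would recall that, by Proposition~\ref{prop:pos}, the nontrivial solution $u$ is automatically nonnegative in $\R^n$ (and its extension $U$ is positive), so we already have $u\ge 0$. Next I would invoke Corollary~\ref{coro:bound} to conclude that $u\in L^\infty(\R^n)$, and Corollary~\ref{coro:conti} to conclude that $u\in C^\alpha(\R^n)$ for $\alpha\in(0,\min\{2s,1\})$; in particular $u$ is a bounded, continuous function, as required in the hypotheses of Proposition~\ref{prop:maxpr}.

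The key new input coming from the assumption $h\ge0$ is the sign of the right-hand side. Since $u\ge0$ we have $u_+=u$, hence $u$ solves
$$(-\Delta)^s u=\epsilon\,h(x)\,u^q+u^p\quad\text{in }\R^n,$$
and because $h\ge0$, $u\ge0$, $q>0$ and $p>0$, the right-hand side is nonnegative. Therefore $(-\Delta)^s u\ge 0$ in the weak sense in all of $\R^n$, and in particular in any open set $\Omega\subset\R^n$. This is precisely the supersolution hypothesis in Proposition~\ref{prop:maxpr}.

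Now I would argue by contradiction: suppose $u$ vanishes at some point $x_\star\in\R^n$. Applying Proposition~\ref{prop:maxpr} with $\Omega:=\R^n$ (or any bounded open set containing $x_\star$, noting the conclusion there is that $u$ vanishes identically on all of $\R^n$), we would deduce $u\equiv 0$ in $\R^n$, contradicting the assumption $u\neq 0$. Hence $u$ has no zero in $\R^n$, i.e. $u>0$ everywhere, which is the desired conclusion.

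\textbf{Main obstacle.} There is essentially no deep difficulty left once Propositions~\ref{prop:pos} and~\ref{prop:maxpr} and Corollaries~\ref{coro:bound}--\ref{coro:conti} are in place; the only point requiring a little care is to make sure that the notion of ``$(-\Delta)^s u\ge0$ in the weak sense'' used to derive the nonnegativity of the right-hand side matches the one in the statement of Proposition~\ref{prop:maxpr}, and that the nonlinearity $\epsilon h u^q+u^p$ is genuinely nonnegative under $h\ge0$ (this is where the sign hypothesis on $h$ is used, and it is the sole place it enters). Everything else is a direct citation of the previously proved results.
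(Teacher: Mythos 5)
Your proposal is correct and follows essentially the same route as the paper: regularity from Corollaries~\ref{coro:bound} and~\ref{coro:conti}, nonnegativity from Proposition~\ref{prop:pos}, the sign hypothesis $h\ge0$ giving $(-\Delta)^s u\ge0$ in the weak sense, and then Proposition~\ref{prop:maxpr} to rule out an interior zero since $u\neq0$. No gaps to report.
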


\begin{proof}
First we observe that $u\in C^{\alpha}(\R^n)\cap L^\infty(\R^n)$, for some $\alpha\in(0,1)$, 
thanks to Corollaries \ref{coro:bound} and \ref{coro:conti}. Also, by Proposition \ref{prop:pos} 
we have that $u\ge0$. 
Moreover, since $u$ is a solution to \eqref{problem-1} with $h\ge0$, then $u$ satisfies 
$$ (-\Delta)^s u \ge 0 \quad {\mbox{ in }}\R^n.$$
This means that the hypotheses of Proposition \ref{prop:maxpr} are satisfied, 
and so if $u$ is equal to zero at some point then $u$ must be identically zero in $\R^n$. 
This contradicts the fact that $u\neq0$, and thus implies the desired result. 
\end{proof}

\chapter{Existence of a second
solution and proof of Theorem~\ref{TH:MP}}\label{EMP:CHAP}

In this chapter, we complete the proof of
Theorem~\ref{TH:MP}.
The computations needed for this
are delicate and somehow technical. Many calculations
are based on general
ideas of Taylor expansions and can be adapted to other types of nonlinearities
(though other estimates do take into account the precise growth conditions
of the main term of the nonlinearity and of its perturbation). Rather
than trying to list abstract conditions on the nonlinearity
which would allow similar techniques to work (possibly at the price
of more careful Taylor expansions), we remark that the
case treated here is somehow classical and motivated from geometry.

Namely, the power-like nonlinearity~$u^p$ is inherited by
Riemannian geometry
and the critical exponent~$p=\frac{n+2s}{n-2s}$
comes from conformal invariance in the classical case
(see also the recent fractional contributions in~\cite{NEW2a, NEW2b}
in which the same term is taken into account).
The perturbative term~$u^q$ is somehow more ``arbitrary''
and it could be generalized: we stick to this choice
both to compare with the classical cases in~\cite{AGP} and references
therein and in order to emphasize the role of such perturbation,
which is to produce a small, but not negligible, subcritical growth.
One of the features of this perturbation is indeed to modify the geometry
of the energy functional near the origin, without spoiling the
energy properties at infinity. Indeed, roughly speaking,
since~$p>1$, the unperturbed energy term~$u^{p+1}$ has
a higher order of homogeneity with respect to the diffusive part of
the energy, which is quadratic. Conversely, terms which behave
like~$u^q$ with~$q\in(0,1)$ near the origin 
induce a negative energy
term which may produce (and indeed produces) nonzero
local minima (the advantage of having a pure power
in the perturbation is that the inclusion in classical Lebesgue
spaces becomes explicit, but of course more general terms
can be taken into account, at a price of more involved computations).

Once a critical point~$U_\eps$ of minimal type is produced near the origin
(as given by Theorem~\ref{MINIMUM}, whose proof has been completed in
Chapter~\ref{ECXMII}), an additional critical point
is created by the behavior of the functional
at a large scale. Indeed, while the $(q+1)$-power
becomes dominant near zero and the $(p+1)$-power leads the
profile of the energy at infinity towards negative values (recall that~$q+1<2<p+1$),
in an intermediate regime the quadratic part of the energy that comes
from fractional diffusion endows the functional
with a new critical point along the path joining~$U_\eps$ to infinity.
In Figures~\ref{mafig} and~\ref{mafig2} 
we try to depict this phenomenon with a one dimensional
picture, by plotting the graphs of~$y=x^2 - |x|^{p+1}-\epsilon \,|x|^{q+1}$,
with~$p=2$, $q=1/2$ and~$\epsilon\in
\left\{0, \,\frac{1}{4},\,\frac{1}{3},\,\frac{5}{14}\right\}$.
Of course, the infinite dimensional analysis that follows
is much harder than the elementary twodimensional picture, which does not
even take into account the possible saddle properties of the critical
points ``in other directions'' and only 
serves to favor a basic intuition.

\begin{figure}
    \centering
    \includegraphics[width=12.4cm]{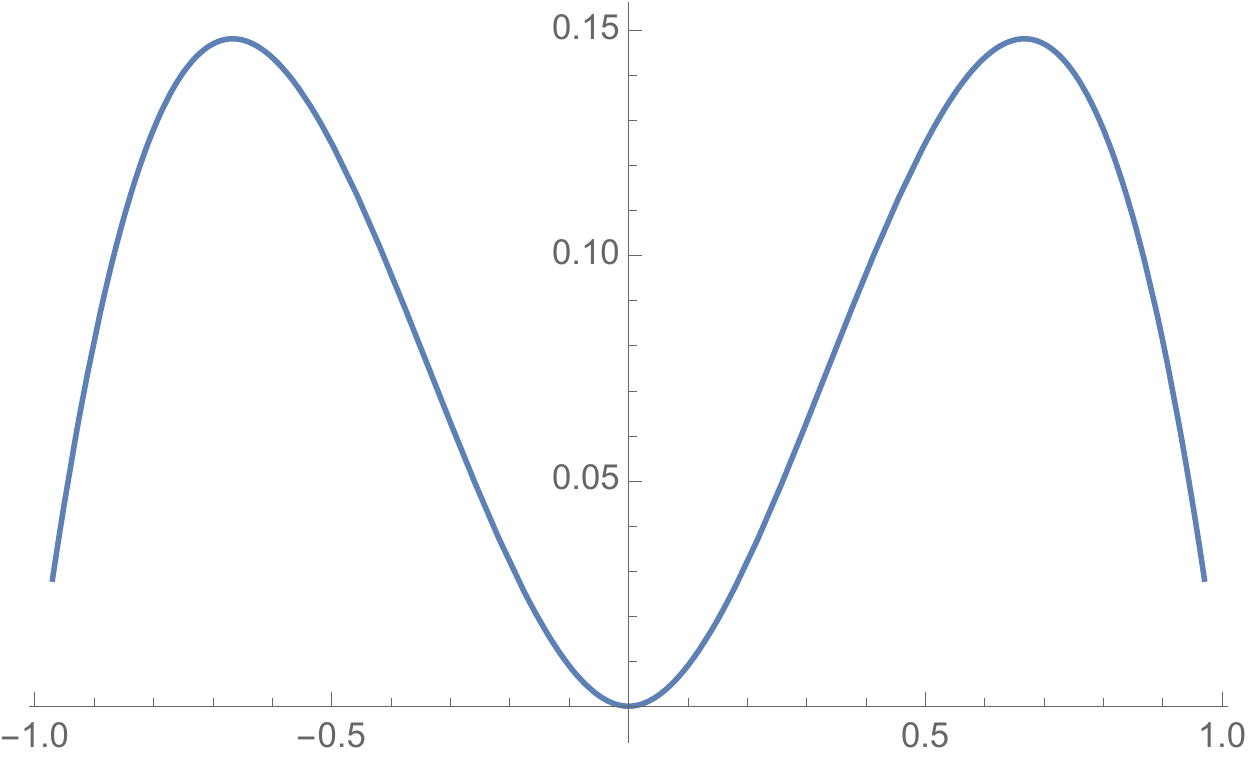}
    \caption{The function~$y=x^2 - |x|^{3}-\epsilon \,|x|^{3/2}$,
with $\epsilon
=0$.}
    \label{mafig2}
\end{figure}

\begin{figure}
    \centering
    \includegraphics[width=12.4cm]{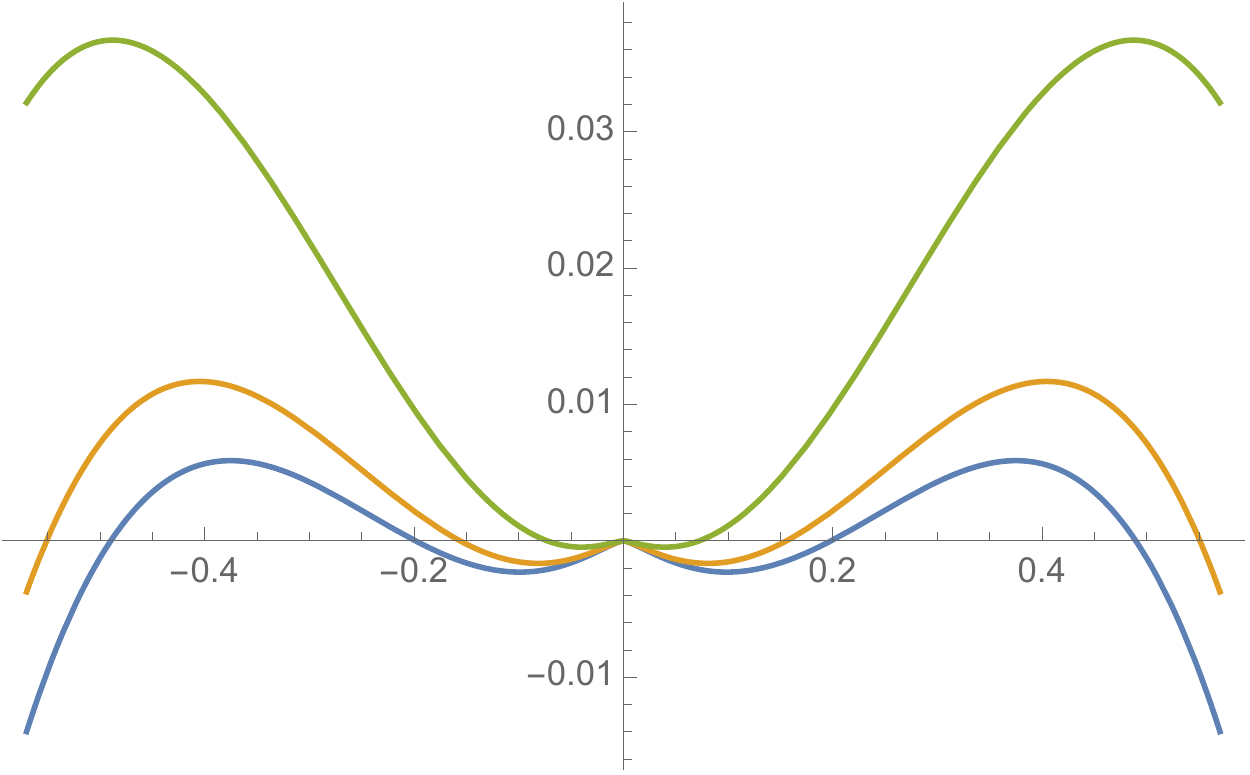} 
    \caption{The function~$y=x^2 - |x|^{3}-\epsilon \,|x|^{3/2}$,
with $\epsilon\in
\left\{\frac{1}{4},\,\frac{1}{3},\,\frac{5}{14}\right\}$. }
    \label{mafig}
\end{figure}

\section{Existence of a local minimum for~$\mathcal{I}_\epsilon$} \label{sec:exi}

In this section we show that~$U=0$ is a local minimum
for~$\mathcal{I}_\varepsilon$.

\begin{prop}\label{prop:zero}
Let $U_\varepsilon$ be a local positive minimum of~$\mathcal{F}_\varepsilon$ in $\dot{H}_a^s(\mathbb{R}^{n+1}_+)$. 
Then $U=0$ is a local minimum of $\mathcal{I}_\varepsilon$
in  $\dot{H}_a^s(\mathbb{R}^{n+1}_+)$.
\end{prop}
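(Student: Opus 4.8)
The plan is to recognize that, near the origin and for nonnegative competitors, $\mathcal{I}_\varepsilon$ is simply a translate of $\mathcal{F}_\varepsilon$ by the minimizer $U_\varepsilon$, and then to transfer the local minimality. Concretely, the goal is the identity
\[
\mathcal{I}_\varepsilon(U)=\mathcal{F}_\varepsilon(U_\varepsilon+U)-\mathcal{F}_\varepsilon(U_\varepsilon)\qquad\text{for every }U\in\dot{H}^s_a(\mathbb{R}^{n+1}_+)\text{ with }U\ge0 ,
\]
after which the conclusion is immediate.

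First I would reduce to nonnegative $U$. Since $g(x,t)=0$ for $t\le0$ in~\eqref{def g small}, the density $G(x,U(x,0))$ depends only on $U_+(x,0)$; moreover $\int_{\mathbb{R}^{n+1}_+}y^a|\nabla U|^2\,dX\ge\int_{\mathbb{R}^{n+1}_+}y^a|\nabla U_+|^2\,dX$, because $\nabla U_+$ and $\nabla U_-$ are supported on disjoint sets. Hence $\mathcal{I}_\varepsilon(U)\ge\mathcal{I}_\varepsilon(U_+)$, with $U_+\in\dot{H}^s_a(\mathbb{R}^{n+1}_+)$ and $[U_+]_a\le[U]_a$. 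So it suffices to prove $\mathcal{I}_\varepsilon(U)\ge0=\mathcal{I}_\varepsilon(0)$ for nonnegative $U$ of small norm.

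Next I would establish the displayed identity. Using that $U_\varepsilon>0$ (this is the standing hypothesis, and also follows from Proposition~\ref{prop:pos}), for $U\ge0$ one has $(U_\varepsilon+U)_+=U_\varepsilon+U$ and $(U_\varepsilon)_+=U_\varepsilon$, so expanding $\tfrac12\int y^a|\nabla(U_\varepsilon+U)|^2=\tfrac12\int y^a|\nabla U_\varepsilon|^2+\int y^a\langle\nabla U_\varepsilon,\nabla U\rangle+\tfrac12\int y^a|\nabla U|^2$ gives a formula for $\mathcal{F}_\varepsilon(U_\varepsilon+U)-\mathcal{F}_\varepsilon(U_\varepsilon)$ in terms of $[U]_a$ and the differences $(U_\varepsilon+U)^{q+1}-U_\varepsilon^{q+1}$, $(U_\varepsilon+U)^{p+1}-U_\varepsilon^{p+1}$ and the cross term $\int y^a\langle\nabla U_\varepsilon,\nabla U\rangle$. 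Since $U_\varepsilon$ is a local minimum, it is a critical point of $\mathcal{F}_\varepsilon$, so by~\eqref{pqoeopwoegi} (tested with $V=U$) one has $\int y^a\langle\nabla U_\varepsilon,\nabla U\rangle=\varepsilon\int h\,U_\varepsilon^{q}\,U(\cdot,0)+\int U_\varepsilon^{p}\,U(\cdot,0)$. Substituting this and comparing with the explicit expression~\eqref{def G} for $G$, every term matches and one obtains $\mathcal{I}_\varepsilon(U)=\mathcal{F}_\varepsilon(U_\varepsilon+U)-\mathcal{F}_\varepsilon(U_\varepsilon)$.

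Finally, local minimality of $U_\varepsilon$ furnishes $\rho>0$ such that $\mathcal{F}_\varepsilon(V)\ge\mathcal{F}_\varepsilon(U_\varepsilon)$ whenever $[V-U_\varepsilon]_a\le\rho$; taking $V=U_\varepsilon+U$ with $U\ge0$ and $[U]_a\le\rho$ yields $\mathcal{I}_\varepsilon(U)\ge0$, and combining with the first reduction gives $\mathcal{I}_\varepsilon(U)\ge\mathcal{I}_\varepsilon(U_+)\ge0=\mathcal{I}_\varepsilon(0)$ for all $U$ with $[U]_a\le\rho$, which is the assertion. There is no serious obstacle here; the only point demanding care is the bookkeeping in the translation identity — in particular using the Euler–Lagrange equation for $U_\varepsilon$ to cancel the cross term $\int y^a\langle\nabla U_\varepsilon,\nabla U\rangle$ against the linear-in-$U$ pieces $-\varepsilon h\,U_\varepsilon^q\,U-U_\varepsilon^p\,U$ coming from $-G$, and handling the positive-part truncations in $g$ and in $\mathcal{F}_\varepsilon$ consistently, which is precisely why the reduction to $U\ge0$ is done first.
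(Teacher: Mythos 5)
Your proposal is correct and follows essentially the same argument as the paper: the translation identity $\mathcal{I}_\varepsilon(U)=\mathcal{F}_\varepsilon(U_\varepsilon+U_+)-\mathcal{F}_\varepsilon(U_\varepsilon)$ (up to the nonnegative term in $\nabla U_-$), obtained by cancelling the cross term via the Euler--Lagrange equation \eqref{UepsSol} tested with $U_+$, together with $[U_+]_a\le[U]_a$ and the local minimality of $U_\varepsilon$ for $\mathcal{F}_\varepsilon$. The only difference is presentational: you reduce to $U\ge0$ first via $\mathcal{I}_\varepsilon(U)\ge\mathcal{I}_\varepsilon(U_+)$, whereas the paper keeps general $U$ and carries the term $\tfrac12\int_{\mathbb{R}^{n+1}_+}y^a|\nabla U_-|^2\,dX\ge0$ along.
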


\begin{proof}
Let $U_\varepsilon$ be a local minimum of $\mathcal{F}_\varepsilon$ in 
$\dot{H}_a^s(\mathbb{R}^{n+1}_+)$. Then, there exists $\eta>0$ such that
\begin{equation}\label{UepsMin}
\mathcal{F}_\varepsilon(U_\varepsilon+U)\geq \mathcal{F}_\varepsilon(U_\varepsilon),
\ \hbox{ if }\;u\in\dot{H}^s_a(\R^{n+1}_+)\  {\mbox{ s.t. }} [U]_a\leq \eta.
\end{equation}
Moreover, since $U_\epsilon$ is a positive critical point of $\mathcal{F}_\varepsilon$, 
we have that, for every $V\in \dot{H}^s_a(\mathbb{R}^{n+1}_+)$, 
\begin{equation}\begin{split}\label{UepsSol}
\int_{\mathbb{R}^{n+1}_+}y^a\langle\nabla U_\varepsilon&,\nabla V\rangle\,dX-\int_{\mathbb{R}^n}{U_\varepsilon(x,0)^{p}V(x,0)\,dx}\\
&-\varepsilon\int_{\mathbb{R}^n}{h(x)U_\varepsilon(x,0)^{q}V(x,0)\,dx}=0.
\end{split}\end{equation}
Now, we take $U\in\dot{H}_a^s(\mathbb{R}^{n+1}_+)$ such 
that
\begin{equation}\label{wqsdhjykp}
[U]_a\leq \eta.
\end{equation} 
From~\eqref{def I} and~\eqref{def G}, we have that 
\begin{eqnarray*}
\mathcal{I}_\varepsilon(U) &=& \frac{1}{2}\int_{\mathbb{R}^{n+1}_+}{y^a|\nabla U|^2\,dX} \\
&&\qquad -\frac{\epsilon}{q+1}\int_{\R^n}h(x)\,\left((U_\epsilon +U_+)^{q+1}-U_\epsilon^{q+1}\right)\,dx +\epsilon\int_{\R^n}h(x)\,U_\epsilon^q U_+\,dx \\
&&\qquad -\frac{1}{p+1}\int_{\R^n}\left((U_\epsilon +U_+)^{p+1}-U_\epsilon^{p+1}\right)\,dx +\int_{\R^n}U_\epsilon^pU_+\,dx.
\end{eqnarray*}
On the other hand, recalling the definition of~$\mathcal{F}_\epsilon$ in~\eqref{f ext}, we
have that 
\begin{eqnarray*}
&&\mathcal{F}_\varepsilon(U_\epsilon+U_+) -\mathcal{F}_\epsilon(U_\epsilon)\\
&=& \frac{1}{2}\int_{\mathbb{R}^{n+1}_+}{y^a|\nabla U_+|^2\,dX} +
\int_{\mathbb{R}^{n+1}_+}{y^a\langle\nabla U_\epsilon,\nabla U_+\rangle\,dX} \\
&&\qquad -\frac{\epsilon}{q+1}\int_{\R^n}h(x)\,\left((U_\epsilon +U_+)^{q+1}-U_\epsilon^{q+1}\right)\,dx \\
&&\qquad -\frac{1}{p+1}\int_{\R^n}\left((U_\epsilon +U_+)^{p+1}-U_\epsilon^{p+1}\right)\,dx \\
&=& \frac{1}{2}\int_{\mathbb{R}^{n+1}_+}{y^a|\nabla U_+|^2\,dX} +
\int_{\mathbb{R}^{n+1}_+}{y^a\langle\nabla U_\epsilon,\nabla U_+\rangle\,dX} \\
&&\qquad -\frac{\epsilon}{q+1}\int_{\R^n}h(x)\,\left((U_\epsilon +U_+)^{q+1}-U_\epsilon^{q+1}\right)\,dx \\
&&\qquad -\frac{1}{p+1}\int_{\R^n}\left((U_\epsilon +U_+)^{p+1}-U_\epsilon
^{p+1}\right)\,dx,
\end{eqnarray*}
where in the last equality we have used the fact that both~$U_\epsilon$ 
and~$U_\epsilon+U_+$ are positive. Hence, the last two formulas give that 
\begin{eqnarray*}
\mathcal{I}_\varepsilon(U)&=&\frac12\int_{\mathbb{R}^{n+1}_+}{y^a|\nabla U_-|^2\,dX}+ \mathcal{F}_\varepsilon(U_\varepsilon +U_+)-\mathcal{F}_\varepsilon(U_\varepsilon)\\
&&\quad -\int_{\mathbb{R}^{n+1}_+}{y^a\langle\nabla U_\varepsilon,\nabla U_+\rangle\,dX}\\
&&\quad+\varepsilon\int_{\mathbb{R}^n}{h(x)U_\varepsilon^{q}U_+\,dx}
+\int_{\mathbb{R}^n}{U_\varepsilon^{p}U_+\,dx}.
\end{eqnarray*}
Using \eqref{UepsSol} with~$V:=U_+$, we obtain 
$$
\mathcal{I}_\varepsilon(U)=\frac12\int_{\mathbb{R}^{n+1}_+}{y^a|\nabla U_-|^2\,dX}+ \mathcal{F}_\varepsilon(U_\varepsilon +U_+)-\mathcal{F}_\varepsilon(U_\varepsilon).$$
Moreover, we observe that~$[U_+]_a\leq \eta$, 
thanks to~\eqref{wqsdhjykp}. Hence, from~\eqref{UepsMin} we deduce that 
$$\mathcal{I}_\varepsilon(U)\ge \frac12\int_{\mathbb{R}^{n+1}_+}{y^a|\nabla U_-|^2\,dX}\geq 0=\mathcal{I}_\varepsilon(0).$$
This shows the desired result. 
\end{proof}

\section{Some preliminary lemmata towards the proof of Theorem~\ref{TH:MP}}\label{sec:prelim}

In this section we show some preliminary lemmata, 
that we will use in the sequel to prove that
a Palais-Smale sequence is bounded.  

We start with a basic inequality.

\begin{lemma}
For every~$\delta>0$ there exists~$M_\delta>0$ such that
the following inequality holds true for every~$\alpha$, $\beta\ge0$
and~$m>0$:
\begin{equation}\label{9sdf5568gg}
(\alpha+\beta)^{m+1} - \alpha^{m+1}-(m+1)\alpha^m \beta-
\beta\big( (\alpha+\beta)^m-\alpha^m\big)\le
\delta \beta^{m+1} +M_\delta \alpha^{m+1}.\end{equation}
\end{lemma}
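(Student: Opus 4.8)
The plan is to use the homogeneity of the inequality to reduce it to a single real variable, and then to split that variable's range into a neighbourhood of the origin (handled by continuity) and its complement (handled by a crude pointwise bound).

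First I would observe that both sides of \eqref{9sdf5568gg} are positively homogeneous of degree $m+1$ in $(\alpha,\beta)$, and that when $\beta=0$ the left-hand side equals $\alpha^{m+1}-\alpha^{m+1}=0$, so there is nothing to prove. Assuming $\beta>0$, I would divide through by $\beta^{m+1}$ and set $t:=\alpha/\beta\ge0$, reducing the claim to: for each fixed $m>0$ and each $\delta>0$ there is a constant $M_\delta>0$ (allowed to depend on $m$) with
\[
F(t):=(1+t)^{m+1}-t^{m+1}-(m+1)t^m-(1+t)^m+t^m\ \le\ \delta+M_\delta\,t^{m+1}\qquad(t\ge0).
\]
Since $m>0$, each of $t^m,\,t^{m+1},\,(1+t)^m,\,(1+t)^{m+1}$ is continuous on $[0,\infty)$, so $F$ is continuous, and $F(0)=1-1=0$. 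Hence there is $t_0=t_0(\delta,m)>0$ with $F(t)\le\delta$ on $[0,t_0]$, and there the desired inequality holds for any $M_\delta\ge0$.

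For $t\ge t_0$ I would use that, for $t>0$ and $m>0$, the three quantities $t^{m+1}$, $(m+1)t^m$ and $(1+t)^m-t^m$ are all nonnegative (the last one because $s\mapsto s^m$ is nondecreasing), whence
\[
F(t)=(1+t)^{m+1}-\bigl(t^{m+1}+(m+1)t^m+(1+t)^m-t^m\bigr)\le(1+t)^{m+1}=t^{m+1}\Bigl(1+\tfrac1t\Bigr)^{m+1}\le\Bigl(1+\tfrac1{t_0}\Bigr)^{m+1}t^{m+1}.
\]
Taking $M_\delta:=(1+1/t_0)^{m+1}$ then gives $F(t)\le M_\delta t^{m+1}\le\delta+M_\delta t^{m+1}$ on $[t_0,\infty)$. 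Combining the two ranges establishes the reduced inequality for every $t\ge0$; multiplying back by $\beta^{m+1}$ and putting $t=\alpha/\beta$ recovers \eqref{9sdf5568gg} for $\beta>0$, and the case $\beta=0$ was already settled.

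I do not expect any genuine obstacle: the argument is elementary. The only points deserving attention are that the hypothesis $m>0$ is exactly what makes $F$ continuous at the origin (so that the identity $F(0)=0$ can be used), and that $M_\delta$ must be allowed to depend on $m$ — indeed, taking $\alpha=\beta=1$ makes the left-hand side of \eqref{9sdf5568gg} equal to $2^{m}-(m+1)$, which is unbounded as $m\to\infty$, so no $m$-independent choice of $M_\delta$ can work.
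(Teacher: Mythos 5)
Your proof is correct and follows essentially the same route as the paper's: both reduce by homogeneity to a one-variable inequality on $[0,\infty)$ and split the half-line into a regime handled by continuity/compactness and a regime handled by comparison with the power, the only difference being that you normalize by $\beta$ (so $t=\alpha/\beta$, with the $\delta$-term coming from small $t$ and the crude bound $(1+t)^{m+1}\le(1+1/t_0)^{m+1}t^{m+1}$ from large $t$), whereas the paper sets $\tau=\beta/\alpha$ and uses $f(\tau)=o(\tau^{m+1})$ at infinity together with the maximum of $f$ on $[0,\tau_\delta]$. Your closing remark that $M_\delta$ must be allowed to depend on $m$ is also accurate and consistent with the paper, whose constant likewise depends on $m$ (in the application $m=p$ is fixed).
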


\begin{proof} First of all, we observe that
the left hand side of~\eqref{9sdf5568gg}
vanishes when~$\alpha=0$, therefore we can suppose that
\begin{equation}\label{9ref1wfgfds}
\alpha\ne0.
\end{equation}
For any~$\tau\ge0$, let
$$ f(\tau):= 
(1+\tau)^{m+1} - 1-(m+1)\tau-
\tau\big( (1+\tau)^m-1\big).$$
We observe that
$$ \lim_{\tau\to+\infty} \frac{f(\tau)}{\tau^{m+1}}=0$$
therefore there exists~$\tau_\delta>0$ such that~$\frac{f(\tau)}{\tau^{m+1}}
\le\delta$ for any~$\tau\ge\tau_\delta$.
Let also
$$ M_\delta:= \max_{\tau\in[0,\tau_\delta]} f(\tau).$$
Then, by looking separately at the cases~$\tau\in[0,\tau_\delta]$
and~$\tau\in [\tau_\delta,+\infty)$, we see that
$$ f(\tau)\le \delta \tau^{m+1}+M_\delta.$$
As a consequence, recalling~\eqref{9ref1wfgfds}
and taking~$\tau:=\beta/\alpha$,
\begin{equation*}\begin{split}
& (\alpha+\beta)^{m+1} - \alpha^{m+1}-(m+1)\alpha^m \beta-
\beta\big( (\alpha+\beta)^m-\alpha^m\big) \\
&\qquad= \alpha^{m+1} \Big[
(1+\tau)^{m+1} - 1-(m+1)\tau-
\tau\big( (1+\tau)^m-1\big)\Big] \\
&\qquad= \alpha^{m+1} f(\tau)\\
&\qquad\le \alpha^{m+1} \big( \delta \tau^{m+1}+M_\delta\big)
\\&\qquad= \delta \beta^{m+1}+M_\delta\alpha^{m+1}
.\qedhere\end{split}\end{equation*}
\end{proof}

We recall~\eqref{def g small} and~\eqref{def G}, and we have the following 
estimates.

\begin{corollary}
For any~$U\in H^s_a(\R^{n+1}_+)$ and any~$\delta\in(0,1)$, we have that
\begin{equation}\label{LA1-00}
\int_{\R^n} G(x,U(x,0))-\frac{1}{p+1} g(x,U(x,0))\,U(x,0)\,dx\\
\le
C (\epsilon+\delta) \| U_+(\cdot,0)\|_{L^{2^*_s}(\R^n)}^{2^*_s}
+ C_{\delta,\epsilon},\end{equation}
for suitable~$C$, $C_{\delta,\epsilon}>0$.
Moreover,
\begin{equation}\label{LA2-00}
\int_{\R^n} g(x,U(x,0))\,U(x,0)\,dx\ge 
\frac{\|U_+(\cdot,0)\|^{2^*_s}_{L^{2^*_s}(\R^n)}}{8}-C_\epsilon,
\end{equation}
for a suitable~$C_\epsilon>0$.
\end{corollary}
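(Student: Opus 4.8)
The plan is to establish both estimates by expanding the integrands coming from \eqref{def g small} and \eqref{def G}, splitting each into a critical ``$p$-part'' and a subcritical ``$q$-part'', treating the former by the elementary inequality \eqref{9sdf5568gg} and the latter by the H\"older inequality (with exponent $m:=\frac{p+1}{p-q}$), \eqref{h0}, and the Young inequality. Throughout I would abbreviate $a:=U_\varepsilon(x,0)\ge0$ and $b:=U_+(x,0)\ge0$, and recall that, since $g(x,t)=0$ for $t<0$, one has $G(x,U(x,0))=G(x,U_+(x,0))$ and $g(x,U(x,0))\,U(x,0)=g(x,U_+(x,0))\,U_+(x,0)$ pointwise, so every integral below may be computed with $U$ replaced by $U_+$. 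Moreover $a=U_\varepsilon(\cdot,0)\in L^{2^*_s}(\R^n)$ by \eqref{equivNorms} and Proposition \ref{traceIneq}, so $\int_{\R^n}|h|\,a^{q+1}\,dx$ and $\int_{\R^n}a^{p+1}\,dx$ are finite and depend only on $\varepsilon$ (and on $h,n,s$); these are the source of the constants $C_{\delta,\varepsilon}$ and $C_\varepsilon$.

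For \eqref{LA1-00}, I would first observe that the ``$p$-part'' of $G(x,U_+(x,0))-\frac{1}{p+1}g(x,U_+(x,0))U_+(x,0)$ is precisely $\frac{1}{p+1}$ times the left-hand side of \eqref{9sdf5568gg} with $m:=p$, so that
$$\tfrac{1}{p+1}\Bigl[(a+b)^{p+1}-a^{p+1}-(p+1)a^p b-b\bigl((a+b)^p-a^p\bigr)\Bigr]\le\tfrac{1}{p+1}\bigl(\delta\,b^{p+1}+M_\delta\,a^{p+1}\bigr),$$
whereas the ``$q$-part'' equals $\epsilon h(x)$ times a combination of $(a+b)^{q+1}-a^{q+1}$, $a^q b$ and $b\bigl((a+b)^q-a^q\bigr)$. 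Using that $0<q<1$ forces $(a+b)^q-a^q\le b^q$ and $(a+b)^{q+1}-a^{q+1}\le C(a^{q+1}+b^{q+1})$, the Young inequality bounds this whole $q$-part by $C\epsilon|h(x)|(a^{q+1}+b^{q+1})$. Integrating, recalling $p+1=2^*_s$, applying H\"older with exponent $m$ together with \eqref{h0}, and absorbing the subcritical term via $\|U_+(\cdot,0)\|_{L^{2^*_s}(\R^n)}^{q+1}\le C+C\|U_+(\cdot,0)\|_{L^{2^*_s}(\R^n)}^{2^*_s}$, the $\delta$- and $\epsilon$-contributions combine into the right-hand side of \eqref{LA1-00}.

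For \eqref{LA2-00}, the integrand is $\bigl((a+b)^p-a^p\bigr)b+\epsilon h(x)\bigl((a+b)^q-a^q\bigr)b$. Since $p>1$, monotonicity of $t\mapsto t^{p-1}$ gives $(a+b)^p-a^p=p\int_a^{a+b}t^{p-1}\,dt\ge p\int_0^b t^{p-1}\,dt=b^p$, so the first term is $\ge b^{p+1}$, while the second is $\ge-\epsilon|h(x)|\,b^{q+1}$ by subadditivity of $t\mapsto t^q$. Integrating and using H\"older with exponent $m$ and $p+1=2^*_s$, one obtains
$$\int_{\R^n}g(x,U(x,0))\,U(x,0)\,dx\ge\|U_+(\cdot,0)\|_{L^{2^*_s}(\R^n)}^{2^*_s}-\epsilon\,\|h\|_{L^m(\R^n)}\,\|U_+(\cdot,0)\|_{L^{2^*_s}(\R^n)}^{q+1},$$
and since $q+1<2^*_s$, a last application of the Young inequality, $\epsilon\|h\|_{L^m(\R^n)}\|U_+(\cdot,0)\|_{L^{2^*_s}(\R^n)}^{q+1}\le\frac{7}{8}\|U_+(\cdot,0)\|_{L^{2^*_s}(\R^n)}^{2^*_s}+C_\varepsilon$, gives \eqref{LA2-00}.

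There is no substantial analytic obstacle here: everything reduces to the elementary inequality \eqref{9sdf5568gg} and to the H\"older and Young inequalities. The only point requiring care is the bookkeeping of constants --- checking that every term carrying the power $U_+(\cdot,0)^{q+1}$ appears with a coefficient $O(\epsilon)$ (so that, combined with the $O(\delta)$ produced by \eqref{9sdf5568gg}, it assembles into the factor $C(\epsilon+\delta)$ of \eqref{LA1-00}), and that the residual ``$a$-only'' contributions depend only on the data and on $\varepsilon,\delta$, hence may be hidden in $C_{\delta,\varepsilon}$, respectively $C_\varepsilon$.
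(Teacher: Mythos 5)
Your argument is correct. For \eqref{LA1-00} it is essentially the paper's proof: the same splitting of $g$ and $G$ coming from \eqref{def g small}--\eqref{def G} into a critical part and an $\epsilon h$-part, with the critical part handled exactly as in the paper by \eqref{9sdf5568gg} with $m:=p$, $\alpha:=U_\epsilon$, $\beta:=t_+$; you only differ in how the $q$-terms are integrated (H\"older against $h\in L^{\frac{p+1}{p-q}}$ plus Young, rather than the paper's pointwise bound $|G_1|+|g_1 t|\le C\epsilon|h|(1+t_+^{p+1})$ in \eqref{9dkjcc9gg-2}, which uses the $L^\infty$-bound on $U_\epsilon$ from Corollary~\ref{coro:bound}), and both routes yield the same $C(\epsilon+\delta)$ structure. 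For \eqref{LA2-00} your route is genuinely different and in fact a bit cleaner: you use the elementary inequality $(a+b)^p\ge a^p+b^p$ (cf.\ \eqref{9sghjtrf} in Lemma~\ref{ABC-1}) to get $g_2(x,U)U_+\ge U_+^{p+1}$ with constant $1$, and you estimate the $g_1$-contribution subcritically, $|g_1 U|\le\epsilon|h|U_+^{q+1}$, absorbing it by H\"older (exponent $\frac{p+1}{p-q}$, using \eqref{h0}) and Young into $\frac78\|U_+(\cdot,0)\|_{L^{2^*_s}}^{2^*_s}+C_\epsilon$. The paper instead works with $\ell(\tau)=\frac{\tau^p}{2}-(1+\tau)^p+1$ to get only the coefficient $\frac12$, bounds the $g_1$-term by $C\epsilon\|h\|_{L^\infty}\|U_+(\cdot,0)\|_{L^{p+1}}^{p+1}$, and therefore must take $\epsilon$ small to reach the constant $\frac18$; your version needs neither the smallness of $\epsilon$ nor the boundedness of $U_\epsilon$, relying only on $U_\epsilon(\cdot,0)\in L^{2^*_s}(\R^n)$, at the negligible cost of one extra application of Young's inequality.
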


\begin{proof} By~\eqref{def g small} and~\eqref{def G},
we can write~$g=g_1+g_2$ and~$G=G_1+G_2$, where
\begin{eqnarray*}
&& g_1(x,t) :={\epsilon h(x)} 
\big((U_\epsilon(x,0)+t_+)^q-U_\epsilon^q(x,0)\big),\\
&& g_2(x,t) := (U_\epsilon(x,0)+t_+)^p-U_\epsilon^p(x,0),\\
&& G_1(x,t) :=\frac{\epsilon h(x)}{q+1}
\big((U_\epsilon(x,0)+t_+)^{q+1}-U_\epsilon^{q+1}(x,0)\big)-\epsilon h(x)
U_\epsilon^q(x,0) \,t_+\\
{\mbox{and }}&& G_2(x,t):=
\frac{1}{p+1} \big((U_\epsilon(x,0)+t_+)^{p+1}-U_\epsilon^{p+1}(x,0)\big)
-U_\epsilon^p(x,0) \,t_+
.\end{eqnarray*}
We observe that for any~$\tau\ge0$,
\begin{eqnarray*}
&& (1+\tau)^q-1 =q\int_0^\tau (1+\theta)^{q-1} \,d\theta
\le q\int_0^\tau \theta^{q-1} \,d\theta=
\tau^q\\
{\mbox{and }}&&(1+\tau)^{q+1}-1 =(q+1)
\int_0^\tau (1+\theta)^{q} \,d\theta\le (q+1)(1+\tau)^q\tau,
\end{eqnarray*}
since~$q\in(0,1)$.
Therefore, taking~$\tau:=t_+/U_\epsilon$,
$$ (U_\epsilon+t_+)^q-U_\epsilon^q=
U_\epsilon^q \big( (1+\tau)^q-1\big)\le U_\epsilon^q \tau^q=
t_+^q$$
and
\begin{eqnarray*}
&& (U_\epsilon+t_+)^{q+1}-U_\epsilon^{q+1}=U_\epsilon^{q+1}\big(
(1+\tau)^{q+1}-1\big)\\&&\qquad \qquad \le (q+1)U_\epsilon^{q+1}(1+\tau)^q\tau
=(q+1) (U_\epsilon+t_+)^q t_+.
\end{eqnarray*}
As a consequence,
\begin{eqnarray*}
&&|g_1|\le \epsilon |h| t_+^q \\{\mbox{and }}&& 
|G_1|\le \epsilon |h|(U_\epsilon+t_+)^q t_+ 
+\epsilon |h|U_\epsilon^q t_+ \le 
2\epsilon|h|(U_\epsilon+t_+)^q t_+.
\end{eqnarray*}
Thus we obtain
\begin{equation*}
|G_1(x,t)|+|g_1(x,t) t|\le
2\epsilon|h|(U_\epsilon+t_+)^q t_++\epsilon |h| t_+^{q+1}
\le 3\epsilon|h|(U_\epsilon+t_+)^q t_+. 
\end{equation*}
Since~$U_\epsilon$ is bounded (recall Corollary~\ref{coro:bound}), we obtain that
\begin{equation}\label{9dkjcc9gg-1}
|G_1(x,t)|+|g_1(x,t) t|\le C\epsilon|h| \,(1+t_+^{q+1}),
\end{equation}
for some~$C>0$.
By considering the cases~$t_+\in [0,1]$
and~$t_+\in [1,+\infty)$, we see that
$$ t_+^{q+1} \le t_+^{p+1} +1,$$
since~$q<p$.
This and~\eqref{9dkjcc9gg-1}
give that
\begin{equation}\label{9dkjcc9gg-2}
|G_1(x,t)|+|g_1(x,t) t|\le C\epsilon|h| \,(1+t_+^{p+1}),
\end{equation}
up to changing the constants.
Now we fix~$\delta\in(0,1)$.
Using~\eqref{9sdf5568gg} with~$\alpha:=U_\epsilon$,
$\beta:=t_+$ and~$m:=p$, we have that
\begin{eqnarray*}
&& G_2(x,t)-\frac{1}{p+1}g_2(x,t)t\\
&=&
\frac{1}{p+1} \big((U_\epsilon+t_+)^{p+1}-U_\epsilon^{p+1}\big)
-U_\epsilon^p \,t_+
-(U_\epsilon+t_+)^p+U_\epsilon^p
\\ &=& \frac{1}{p+1}\left(
(\alpha+\beta)^{m+1} - \alpha^{m+1}-(m+1)\alpha^m \beta-
\beta\big( (\alpha+\beta)^m-\alpha^m\big)\right)\\
&\le& \frac{1}{p+1}\left(
\delta \beta^{m+1} +M_\delta \alpha^{m+1} \right)\\
&\le& \delta\,t_+^{p+1} +M_\delta U_\epsilon^{p+1}.
\end{eqnarray*}
This, together with~\eqref{9dkjcc9gg-2}, implies that
\begin{eqnarray*}
&& G(x,t)-\frac{1}{p+1} g(x,t)t \\
&=& G_1(x,t)-\frac{1}{p+1}g_1(x,t)t + G_2(x,t)-\frac{1}{p+1}g_2(x,t)t\\
&\le& C\epsilon|h| \,(1+t_+^{p+1}) +
\delta\,t_+^{p+1}  +M_\delta U_\epsilon^{p+1}\\
&\le& C (\epsilon+\delta)t_+^{p+1} + C|h| + M_\delta U_\epsilon^{p+1}.
\end{eqnarray*}
As a consequence, and recalling that~$p+1=2^*_s$, we obtain
$$ \int_{\R^n} G(x,U(x,0))-\frac{1}{p+1} g(x,U(x,0))\,U(x,0)\,dx\\
\le
C (\epsilon+\delta) \| U_+(\cdot,0)\|_{L^{2^*_s}(\R^n)}^{2^*_s}
+ C_{\delta,\epsilon}$$
for some~$C$, $C_{\delta,\epsilon}>0$. This proves~\eqref{LA1-00}
and we now focus on the proof of~\eqref{LA2-00}.
To this goal, for any~$\tau\ge0$, we set~$\ell(\tau):=\frac{\tau^p}{2}
-(1+\tau)^p+1$. We observe that~$\ell(0)=0$ and
$$\lim_{\tau\to+\infty} \ell(\tau)=-\infty,$$
therefore
$$ L:=\sup_{\tau\ge0} \ell(\tau) \in [0,+\infty).$$
As a consequence,
$$ (1+\tau)^p-1 = \frac{\tau^p}{2}-\ell(\tau)\ge \frac{\tau^p}{2}-L.$$
By taking~$\tau:= \frac{U_+}{U_\epsilon}$, this implies that
\begin{eqnarray*}
&& g_2(x,U)=(U_\epsilon+U_+)^p-U_\epsilon^p
= U_\epsilon^p \big( (1+\tau)^p-1\big)\\
&&\qquad\ge U_\epsilon^p \left( \frac{\tau^p}{2}-L \right)=
\frac{U_+^p}{2}-L U_\epsilon^p.
\end{eqnarray*}
Integrating this formula and using the Young inequality, we obtain
\begin{equation}\label{9cbnmrasuifg}
\int_{\R^n} g_2(x,U(x,0))\,U(x,0)\,dx\ge
\frac{\|U_+(\cdot,0)\|^{p+1}_{L^{p+1}(\R^n)}}{4}-C_\epsilon,
\end{equation}
for some~$C_\epsilon>0$.
On the other hand, by~\eqref{9dkjcc9gg-2},
we have that
\begin{eqnarray*}
\left|\int_{\R^n} g_1(x,U(x,0))\,U(x,0)\,dx\right|
&\le & C\epsilon \int_{\R^n} |h(x)| \,(1+U_+^{p+1}(x,0))\,dx\\&\le &
C+C\epsilon\|U_+(\cdot,0)\|^{p+1}_{L^{p+1}(\R^n)}.
\end{eqnarray*}
By combining this and~\eqref{9cbnmrasuifg},
we get
\begin{eqnarray*}
&&\int_{\R^n} g(x,U(x,0))\,U(x,0)\,dx \\&=&
\int_{\R^n} g_1(x,U(x,0))\,U(x,0)\,dx+
\int_{\R^n} g_2(x,U(x,0))\,U(x,0)\,dx\\&\ge &
\frac{\|U_+(\cdot,0)\|^{p+1}_{L^{p+1}(\R^n)}}{8}-C_\epsilon,
\end{eqnarray*}
if~$\epsilon$ is small enough, up to renaming constants.
Recalling that~$p+1=2^*_s$, the formula above gives the proof of~\eqref{LA2-00}.
\end{proof}

Finally, we recall~\eqref{def I} and~\eqref{DER} and we show the following:

\begin{corollary}\label{cor:bound}
Let~$\epsilon$, $\kappa>0$. There
exists~$M>0$, possibly depending on~$\kappa$, $\epsilon$,
$n$ and~$s$, such that the following statement holds true.

For any~$U\in H^s_a(\R^{n+1}_+)$ 
such that
$$ |{\mathcal{I}}_\epsilon (U)| +
\sup_{{V\in H^s_a(\R^{n+1}_+)}\atop{ [V]_a =1 }}
\big|\langle {\mathcal{I}}_\epsilon'(U),V\rangle\big|\le\kappa$$
one has that
$$ [U]_a \le M.$$
\end{corollary}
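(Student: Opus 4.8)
The plan is to mimic the argument of Lemma~\ref{lemma bound}, using the two estimates \eqref{LA1-00} and \eqref{LA2-00} in place of the H\"older bound on the subcritical term. First I would dispose of the trivial case $[U]_a=0$, in which $U\equiv 0$ and there is nothing to prove, so from now on I assume $[U]_a\neq 0$. Testing the hypothesis with $V:=U/[U]_a$, which satisfies $[V]_a=1$, gives $|\langle \mathcal{I}_\epsilon'(U),U\rangle|\le \kappa[U]_a$, while of course $|\mathcal{I}_\epsilon(U)|\le \kappa$ directly.

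The next step is to control the critical Lebesgue norm of the trace in terms of $[U]_a$. From \eqref{DER} with $V:=U$ we have $\langle \mathcal{I}_\epsilon'(U),U\rangle=[U]_a^2-\int_{\R^n}g(x,U(x,0))\,U(x,0)\,dx$, whence
$$\int_{\R^n}g(x,U(x,0))\,U(x,0)\,dx=[U]_a^2-\langle \mathcal{I}_\epsilon'(U),U\rangle\le [U]_a^2+\kappa[U]_a.$$
Combining this with the lower bound \eqref{LA2-00} yields
$$\|U_+(\cdot,0)\|_{L^{2^*_s}(\R^n)}^{2^*_s}\le 8\big([U]_a^2+\kappa[U]_a+C_\epsilon\big).$$

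Now I would form the combination $\mathcal{I}_\epsilon(U)-\frac{1}{p+1}\langle \mathcal{I}_\epsilon'(U),U\rangle$. By \eqref{def I} and \eqref{DER}, and using $\frac12-\frac{1}{p+1}=\frac{s}{n}$, this equals $\frac{s}{n}[U]_a^2-\int_{\R^n}\big(G(x,U(x,0))-\frac{1}{p+1}g(x,U(x,0))\,U(x,0)\big)\,dx$. The left-hand side is bounded in absolute value by $\kappa+\frac{\kappa}{p+1}[U]_a\le \kappa(1+[U]_a)$, while the integral is estimated from above by \eqref{LA1-00}. Hence, after fixing $\delta\in(0,1)$ and inserting the bound on $\|U_+(\cdot,0)\|_{L^{2^*_s}}^{2^*_s}$ obtained in the previous step,
$$\frac{s}{n}[U]_a^2\le \kappa(1+[U]_a)+8C(\epsilon+\delta)\big([U]_a^2+\kappa[U]_a+C_\epsilon\big)+C_{\delta,\epsilon}.$$

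The crux of the argument — and essentially the only delicate point — is the absorption of the quadratic term on the right: choosing $\delta$ small and keeping $\epsilon$ small enough (as is assumed throughout the monograph) so that $8C(\epsilon+\delta)\le \frac{s}{2n}$, one can move $8C(\epsilon+\delta)[U]_a^2$ to the left, obtaining $\frac{s}{2n}[U]_a^2\le A[U]_a+B$ with $A,B$ depending only on $\kappa,\epsilon,n,s$. Solving this quadratic inequality gives $[U]_a\le M$ for a suitable $M=M(\kappa,\epsilon,n,s)$, which is the assertion. I expect no serious obstacle beyond bookkeeping the constants and checking that the power of $[U]_a$ produced by \eqref{LA2-00} is exactly the critical one $2^*_s$, so that it matches the term coming from \eqref{LA1-00} and the two estimates genuinely close the loop; the smallness of $\epsilon$ needed here is the same kind of restriction already present in \eqref{LA1-00}--\eqref{LA2-00}.
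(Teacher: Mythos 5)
Your argument is correct and rests on the same ingredients as the paper's own proof: testing with $V=U/[U]_a$, combining $\mathcal{I}_\epsilon(U)$ with $\langle\mathcal{I}_\epsilon'(U),U\rangle$, and invoking \eqref{LA1-00}--\eqref{LA2-00}, with the same smallness requirement on $\epsilon$ (and $\delta$) that the paper itself uses at this point. The only difference is bookkeeping: the paper first forms the $\tfrac12$-combination, which cancels the $[U]_a^2$ term and yields the linear bound \eqref{stima su U}, namely $\|U_+(\cdot,0)\|_{L^{2^*_s}(\R^n)}^{2^*_s}\le M_1([U]_a+1)$, before closing with the $\tfrac{1}{p+1}$-combination, whereas you insert the quadratic bound coming directly from $\langle\mathcal{I}_\epsilon'(U),U\rangle$ and \eqref{LA2-00} into the $\tfrac{1}{p+1}$-combination and absorb the quadratic term using the smallness of $C(\epsilon+\delta)$; both routes close correctly.
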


\begin{proof} If~$[U]_a=0$ we are done, so we suppose that~$[U]_a\ne0$
and we obtain that
$$ \left|
\langle {\mathcal{I}}_\epsilon'(U),\frac{U}{[U]_a}\rangle\right|
\le\kappa.$$
This and~\eqref{DER} give that
\begin{equation}\label{Rgj67}
\left| [U]_a^2 -\int_{\R^n} g(x,U(x,0)) U(x,0)\,dx \right|
\le \kappa\,[U]_a.\end{equation}
Therefore
\begin{eqnarray*}
\kappa +\frac{\kappa\,[U]_a}{2} &\ge&
{\mathcal{I}}_\epsilon (U)-\frac{1}{2}\left(
[U]_a^2 -\int_{\R^n} g(x,U(x,0)) U(x,0)\,dx\right)\\
&=& -\int_{\R^n} G(x,U(x,0)) \,dx+\frac{1}{2}
\int_{\R^n} g(x,U(x,0)) U(x,0)\,dx \\
&=& \frac{1}{p+1}
\int_{\R^n} g(x,U(x,0)) U(x,0)\,dx
-\int_{\R^n} G(x,U(x,0)) \,dx\\
&&\qquad +
\left(\frac{1}{2}-\frac{1}{p+1}\right)
\int_{\R^n} g(x,U(x,0)) U(x,0)\,dx.
\end{eqnarray*}
Consequently, by fixing~$\delta\in(0,1)$, to be taken conveniently small in the sequel,
and using~\eqref{LA1-00}
and~\eqref{LA2-00},
\begin{eqnarray*}
\kappa +\frac{\kappa\,[U]_a}{2} &\ge &
-C (\epsilon+\delta) \| U_+(\cdot,0)\|_{L^{2^*_s}(\R^n)}^{2^*_s}
- C_{\delta,\epsilon}\\&&\qquad 
+ \left(\frac{1}{2}-\frac{1}{p+1}\right)
\left(\frac{\|U_+(\cdot,0)\|^{2^*_s}_{L^{2^*_s}(\R^n)}}{8}-C_\epsilon\right),
\end{eqnarray*}
for suitable~$C$, $C_{\delta,\epsilon}$ and~$C_\epsilon>0$.
By taking~$\delta$ and~$\epsilon$ appropriately small,
we thus obtain that
$$ \kappa +\frac{\kappa\,[U]_a}{2}\ge
\left(\frac{1}{2}-\frac{1}{p+1}\right)
\frac{\|U_+(\cdot,0)\|^{2^*_s}_{L^{2^*_s}(\R^n)}}{16}-C_{\epsilon},$$
up to renaming the latter constant (this fixes $\delta$ once and for all).
That is
\begin{equation} \label{stima su U}
\|U_+(\cdot,0)\|^{2^*_s}_{L^{2^*_s}(\R^n)} \le 
M_1 ( [U]_a +1),\end{equation}
for a suitable~$M_1$, possibly depending
on~$\kappa$, $\epsilon$,
$n$ and~$s$.

Now we recall~\eqref{Rgj67} and~\eqref{LA1-00}
(used here with~$\delta:=1$),
and we see that
\begin{eqnarray*}
&&\kappa+\frac{\kappa\,[U]_a}{p+1} \\
&\ge&
{\mathcal{I}}_\epsilon (U)-\frac{1}{p+1}\left(
[U]_a^2 -\int_{\R^n} g(x,U(x,0)) U(x,0)\,dx\right)\\
&=& \left( \frac{1}{2}-\frac{1}{p+1}\right)\,[U]_a^2
+ \frac{1}{p+1} \int_{\R^n} g(x,U(x,0)) U(x,0)\,dx\\
&&\qquad -\int_{\R^n} G(x,U(x,0)) \,dx\\
&\ge& \left( \frac{1}{2}-\frac{1}{p+1}\right)\,[U]_a^2
-
C  \| U_+(\cdot,0)\|_{L^{2^*_s}(\R^n)}^{2^*_s}
- C_{\epsilon},
\end{eqnarray*}
for suitable~$C$, $C_\epsilon>0$. As a consequence,
$$ [U]_a^2
\le M_2\big( [U]_a + \| U_+(\cdot,0)\|_{L^{2^*_s}(\R^n)}^{2^*_s}
+1\big),$$
for a suitable~$M_2$, possibly depending
on~$\kappa$, $\epsilon$,
$n$ and~$s$. Hence, from~\eqref{stima su U},
$$ [U]_a^2
\le M_3\big( [U]_a + 
+1\big),$$
for some~$M_3$, possibly depending
on~$\kappa$, $\epsilon$,
$n$ and~$s$.
This implies the desired result.
\end{proof}

\section{Some convergence results in view of Theorem~\ref{TH:MP}}\label{sec:prelim2}

In this section we collect two convergence results 
that we will need in the sequel. 

The first one shows that weak convergence to~0 in~$\dot H^s_a(\R^{n+1}_+)$ 
implies a suitable integral convergence.

\begin{lemma}\label{lemma:conv}
Let~$\alpha$, $\beta>0$ with~$\alpha+\beta\le 2^*_s$.
Let~$V_k\in\dot H^s_a(\R^{n+1}_+)$ be a sequence
such that~$V_k$ converges to~$0$ weakly in~$\dot H^s_a(\R^{n+1}_+)$.
Let~$U_o\in \dot H^s_a(\R^{n+1}_+)$, with~$U_o(\cdot,0)\in
L^\infty(\R^{n})$, and~$\psi\in L^{\rm{a}}(\R^n)\cap L^{\rm{b}}(\R^n)$, where 
$$ \rm{a}:=\begin{cases}
\frac{2^*_s}{2^*_s-\alpha-\beta} & {\mbox{ if }}\alpha +\beta <2^*_s,\\
+\infty &{\mbox{ if }}\alpha +\beta=2^*_s,
\end{cases}$$
and 
$$ \rm{b}:=\begin{cases}
\frac{2^*_s+\alpha}{2^*_s-\alpha-\beta} & {\mbox{ if }}\alpha +\beta <2^*_s,\\
+\infty &{\mbox{ if }}\alpha +\beta=2^*_s,
\end{cases}$$
Then,
up to a subsequence,
$$ \lim_{k\to+\infty} \int_{\R^n} \big|\psi(x)\big|\,\big|(V_k)_+ (x,0)\big|^\alpha
\,\big|U_o (x,0)\big|^\beta\,dx =0.$$
\end{lemma}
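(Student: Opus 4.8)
The strategy is a standard splitting of the integral over a large ball $B_R\subset\R^n$ and its complement: the tail $\R^n\setminus B_R$ will be made small \emph{uniformly in $k$} using the integrability of $\psi$ together with uniform $L^{2^*_s}$ bounds, while on $B_R$ one exploits the local compactness of the fractional Sobolev embedding to let $k\to+\infty$. Since $\alpha>0$, $\beta>0$ and $\alpha+\beta\le 2^*_s$, we note once and for all that $\alpha<\alpha+\beta\le 2^*_s$.

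First I would record the basic bounds. Since $V_k$ converges weakly in $\dot H^s_a(\R^{n+1}_+)$, the sequence is bounded there, so by \eqref{equivNorms} and Proposition~\ref{traceIneq} the traces $v_k:=V_k(\cdot,0)$ are bounded in $\dot H^s(\R^n)$ and in $L^{2^*_s}(\R^n)$, and $v_k\rightharpoonup 0$ in $\dot H^s(\R^n)$; in particular $\|(V_k)_+(\cdot,0)\|_{L^{2^*_s}(\R^n)}\le C$ for all $k$, where we use $0\le (V_k)_+(\cdot,0)\le|v_k|$. For each fixed $R$, $v_k$ is bounded in $H^s(B_R)$ (combining the seminorm bound with the $L^{2^*_s}(B_R)$ bound via H\"older on the bounded set $B_R$), so by the compact embedding $H^s(B_R)$ into $L^r(B_R)$ for $r\in[1,2^*_s)$ (Theorem~7.1 in \cite{DPV}) and a diagonal extraction over $R\in\N$, up to a subsequence $(V_k)_+(\cdot,0)\to 0$ strongly in $L^r_{\rm loc}(\R^n)$ for every $r\in[1,2^*_s)$. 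Finally set $v:=U_o(\cdot,0)\in L^\infty(\R^n)\cap L^{2^*_s}(\R^n)$.

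Then, for fixed $R$, on $B_R$ I would bound $|v|^\beta\le\|U_o(\cdot,0)\|_{L^\infty(\R^n)}^\beta$ and apply H\"older's inequality pairing the factor $(V_k)_+(\cdot,0)^\alpha$ with $L^{r/\alpha}(B_R)$, for some $r<2^*_s$ chosen so close to $2^*_s$ that the conjugate exponent $\tfrac{r}{r-\alpha}$ of $r/\alpha$ does not exceed ${\rm b}$ (possible since $\tfrac{r}{r-\alpha}\to\tfrac{2^*_s}{2^*_s-\alpha}<{\rm b}$ as $r\uparrow 2^*_s$, and ${\rm b}=+\infty$ in the borderline case); since $\psi\in L^{\rm b}(\R^n)\subset L^{r/(r-\alpha)}(B_R)$, the strong $L^r(B_R)$-convergence of $(V_k)_+(\cdot,0)$ to $0$ gives that $\int_{B_R}|\psi|\,|(V_k)_+(\cdot,0)|^\alpha\,|v|^\beta\,dx\to 0$ as $k\to+\infty$. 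On $\R^n\setminus B_R$ I would instead keep the full $L^{2^*_s}$ bounds, applying H\"older with exponents $\tfrac{2^*_s}{\alpha}$, $\tfrac{2^*_s}{\beta}$ and ${\rm a}$ (respectively $+\infty$ when $\alpha+\beta=2^*_s$, where $\psi\in L^\infty$), so that
$$\int_{\R^n\setminus B_R}|\psi|\,|(V_k)_+(\cdot,0)|^\alpha\,|v|^\beta\,dx\le \|\psi\|_{L^{\rm a}(\R^n\setminus B_R)}\,\|(V_k)_+(\cdot,0)\|_{L^{2^*_s}(\R^n)}^{\alpha}\,\|U_o(\cdot,0)\|_{L^{2^*_s}(\R^n)}^{\beta}\le C\,\|\psi\|_{L^{\rm a}(\R^n\setminus B_R)},$$
which is independent of $k$ and tends to $0$ as $R\to+\infty$ because $\psi\in L^{\rm a}(\R^n)$. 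Combining the two pieces, $\limsup_{k\to+\infty}\int_{\R^n}|\psi|\,|(V_k)_+(\cdot,0)|^\alpha\,|v|^\beta\,dx\le C\,\|\psi\|_{L^{\rm a}(\R^n\setminus B_R)}$ for every $R>0$, and letting $R\to+\infty$ concludes the proof.

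The genuinely delicate point is not any single estimate but the bookkeeping of the H\"older exponents: one must arrange that the decay of $\psi$ at infinity is used exactly at the critical exponent ${\rm a}$ (where the $L^{2^*_s}$ bounds on $(V_k)_+(\cdot,0)$ and $U_o(\cdot,0)$ are optimal and no compactness is available), while the \emph{local} integrability of $\psi$ is used only against the subcritically convergent factor, which forces the slightly larger exponent ${\rm b}$. A secondary technical care is the correct passage to a subsequence in the local compactness of the trace embedding, and the separate (easier) treatment of the borderline case $\alpha+\beta=2^*_s$ with $\psi\in L^\infty(\R^n)$ throughout.
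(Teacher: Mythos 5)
Your overall architecture (split into a ball and its complement, local compactness of the trace embedding inside, three-exponent H\"older with the exponent ${\rm a}$ outside) is the same as the paper's, and your inside-the-ball estimate is fine. However, there is a genuine gap in the tail estimate: you make the exterior integral small by letting $\|\psi\|_{L^{\rm a}(\R^n\setminus B_R)}\to 0$ as $R\to+\infty$, and this fails precisely in the borderline case $\alpha+\beta=2^*_s$, where ${\rm a}=+\infty$ and the $L^\infty$ norm of $\psi$ on $\R^n\setminus B_R$ has no reason to decay (indeed the lemma is later applied with $\psi\equiv 1$, $\alpha=1$, $\beta=p$, so this case cannot be discarded). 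The paper avoids this by extracting the smallness at infinity from $U_o$ rather than from $\psi$: since $U_o(\cdot,0)\in L^{2^*_s}(\R^n)$ by Proposition~\ref{traceIneq}, one fixes $R_\epsilon$ with $\int_{\R^n\setminus B_{R_\epsilon}}|U_o(x,0)|^{2^*_s}\,dx\le\epsilon$, and then the same H\"older inequality with exponents ${\rm a}$, $2^*_s/\alpha$, $2^*_s/\beta$ bounds the exterior term by $\|\psi\|_{L^{\rm a}(\R^n)}\,C^{\alpha}\,\epsilon^{\beta/2^*_s}$, which works uniformly in both cases because $\beta>0$. Your proof is repaired by simply making this switch; as written, the statement is only proved for $\alpha+\beta<2^*_s$.

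Two further remarks, both minor. First, inside the ball you use $\|U_o(\cdot,0)\|_{L^\infty}$ crudely and pair $\psi$ with the exponent $r/(r-\alpha)$ for $r$ close to $2^*_s$, checking $r/(r-\alpha)\le{\rm b}$; the paper instead fixes $\gamma=\frac{\alpha+2^*_s}{2}$ and uses exactly the exponents ${\rm b}$, $\gamma/\alpha$, $(2^*_s+\alpha)/\beta$ together with $U_o(\cdot,0)\in L^{2^*_s+\alpha}$ — both routes are correct and the hypotheses on ${\rm b}$ are used in the same spirit. Second, you assert $v_k:=V_k(\cdot,0)\rightharpoonup 0$ in $\dot H^s(\R^n)$ directly (via boundedness, hence weak continuity, of the trace map) in order to identify the local strong limit of $(V_k)_+(\cdot,0)$ as $0$; the paper instead identifies the limit by an explicit duality argument, solving $(-\Delta)^s\psi=\eta$ for $\eta\in C^\infty_0(\R^n)$, extending, and using the divergence theorem. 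Your shortcut is legitimate, but it does rest on the fact that the trace is bounded with respect to the homogeneous seminorms (the minimality of the Poisson extension), which deserves at least a sentence.
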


\begin{proof} Since weakly convergent sequences are bounded,
we have that~$[V_k]_a\le C_o$, for every~$k\in\N$
and a suitable~$C_o>0$. Accordingly, by~\eqref{equivNorms},
we obtain that~$[v_k]_{\dot{H}^s(\R^n)}\le C_o$, where~$v_k(x):=V_k(x,0)$.
As a consequence, by Theorem~7.1 of~\cite{DPV}, we know that, up to a subsequence,
$v_k$ converges to some~$v$ in~$L^\gamma_{\rm loc}(\R^n)$ for any~$\gamma\in
[1,2^*_s)$, and a.e.: we claim that
\begin{equation}\label{zer}
v=0.
\end{equation}
To prove this, let~$\eta\in C^\infty_0(\R^n)$
and~$\psi$ be the solution of
\begin{equation}\label{90567}
{\mbox{$(-\Delta)^s \psi=\eta$ in~$\R^n$.}}\end{equation}
Also, let~$\Psi$ be the extension of~$\psi$ according to~\eqref{poisson}.
In particular, ${\rm div}(y^a \nabla\Psi)=0$ in~$\R^{n+1}_+$, therefore
$$ \int_{\R^{n+1}_+} {\rm div}(y^a V_k\nabla\Psi)\,dX
=\int_{\R^{n+1}_+} y^a \langle\nabla V_k,\nabla\Psi\rangle\,dX.$$
The latter term is infinitesimal as~$k\to+\infty$, thanks to
the weak convergence of~$V_k$ in~$\dot H^s_a(\R^{n+1}_+)$.
Thus, using the Divergence Theorem in the left hand side
of the identity above, we obtain
$$ \lim_{k\to+\infty} \int_{\R^n} v_k(x) \partial_y \Phi(x)\,dx=0.$$
That is, recalling~\eqref{90567} and the convergence of~$v_k$,
$$ \int_{\R^n} v(x) \eta(x)\,dx=
\lim_{k\to+\infty} \int_{\R^n} v_k(x) \eta(x)\,dx=0.$$
Since~$\eta$ is arbitrary, we have established~\eqref{zer}.

Now we set~$u_o(x):=U_o(x,0)$ and we observe that~$u_o\in L^{2^*_s}(\R^n)$,
thanks to Proposition~\ref{traceIneq}.
Therefore, we can fix~$\epsilon>0$ and find~$R_\epsilon>0$ such that
$$ \int_{\R^n\setminus B_{R_\epsilon}} |u_o(x)|^{2^*_s}\,dx\le \epsilon.$$
In virtue of~\eqref{TraceIneq}, $\| v_k\|_{L^{2^*_s(\R^n)}}\le
S^{-1/2} C_o$.
Consequently, using the H\"older inequality with exponents~$\rm{a}$, 
$2^*_s/\alpha$ and~$2^*_s/\beta$, we deduce that
\begin{equation}\label{679ujj9}
\begin{split}
& \int_{\R^n\setminus B_{R_\epsilon}} \big|\psi(x)\big|
\,\big|(V_k)_+ (x,0)\big|^\alpha
\,\big|U_o (x,0)\big|^\beta\,dx
\\ &\qquad\le \|\psi\|_{L^{\rm{a}}(\R^n)}\,
\left[
\int_{\R^n\setminus B_{R_\epsilon}} 
\big|V_k (x,0)\big|^{2^*_s} \,dx
\right]^{\frac{\alpha}{2^*_s}}\,
\left[
\int_{\R^n\setminus B_{R_\epsilon}} 
\big|U_o (x,0)\big|^{2^*_s}
\,dx
\right]^{\frac{\beta}{2^*_s}}\\
&\qquad\le
\|\psi\|_{L^{\rm{a}}(\R^n)}\, 
\big(S^{-1/2} C_o\big)^{\frac{\alpha}{2^*_s}}\,
\epsilon^{\frac{\beta}{2^*_s}}.
\end{split}
\end{equation}
Now we fix~$\gamma:=\frac{\alpha+2^*_s}{2}$.
Notice that~$\gamma\in(1,2^*_s)$, thus, using the
convergence of~$v_k$ and~\eqref{zer}, we see that
$$ \lim_{k\to+\infty} \|v_k\|_{L^\gamma(B_{R_\epsilon})}=0.$$
In addition,
$$ \int_{\R^n} |u_o(x)|^{2^*_s+\alpha}\,dx
\le \|u_o\|_{L^\infty(\R^n)}^{\alpha}
\int_{\R^n} |u_o(x)|^{2^*_s}
\,dx\le C_*,$$
for some~$C_*>0$.
Therefore we use the H\"older inequality 
with exponents $\rm{b}$, $\frac{\gamma}{\alpha}
=\frac{\alpha+2^*_s}{2\alpha}$ and~$\frac{2^*_s+\alpha}{\beta}$, and we
obtain
\begin{eqnarray*}
&& \lim_{k\to+\infty}
\int_{B_{R_\epsilon}} \big|\psi(x)\big|
\,\big|(V_k)_+ (x,0)\big|^\alpha
\,\big|U_o (x,0)\big|^\beta\,dx \\
&\le& \|\psi\|_{L^{\rm{b}}(\R^n)}\,
\lim_{k\to+\infty}
\left[ \int_{B_{R_\epsilon}} 
\,\big|v_k(x)\big|^\gamma\right]^{\frac{\alpha}{\gamma}}\,
\left[ \int_{\R^n}
\big|u_o (x)\big|^{2^*_s+\alpha}\,dx
\right]^{\frac{\beta}{2^*_s+\alpha}}\\
&=&0.\end{eqnarray*}
{F}rom this and~\eqref{679ujj9}, we see that
$$
\lim_{k\to+\infty}
\int_{\R^n} \big|\psi(x)\big|
\,\big|(V_k)_+ (x,0)\big|^\alpha
\,\big|U_o (x,0)\big|^\beta\,dx\le
\|\psi\|_{L^{\rm{a}}(\R^n)}\,
\big(S^{-1/2} C_o\big)^{\frac{\alpha}{2^*_s}}\,
\epsilon^{\frac{\beta}{2^*_s}}.$$
The desired result then follows by taking~$\epsilon$
as small as we wish.
\end{proof}

As a corollary we have 
\begin{corollary}
Let $V_k$, $U_o$ and $\psi$ as in Lemma \ref{lemma:conv}. Then 
\begin{equation}\label{second}
\| (U_o + (V_k)_+)(\cdot,0)\|_{L^{p+1}(\R^n)}^{p+1}
= \| U_o \|_{L^{p+1}(\R^n)}^{p+1}
+ \|(V_k)_+(\cdot,0)\|_{L^{p+1}(\R^n)}^{p+1} +o_k(1), 
\end{equation}
and 
\begin{equation}\label{usare-1}
\left| \int_{\mathbb{R}^n}\psi(x)\left(  (U_o+(V_{k})_+)^{q+1}(x,0)
-U_o^{q+1}(x,0)\right)\,dx\right| \le C+o_k(1),
\end{equation}
for some $C>0$.
\end{corollary}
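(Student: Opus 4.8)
The statement to prove is the Corollary with displays~\eqref{second} and~\eqref{usare-1}, which extracts from Lemma~\ref{lemma:conv} two concrete ``Brezis--Lieb type'' decompositions adapted to the translated functional~$\mathcal{I}_\varepsilon$.

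\textbf{Plan.} The whole point is that both identities are immediate consequences of Lemma~\ref{lemma:conv} once one expands the relevant powers by elementary convexity/Taylor estimates. For~\eqref{second}, I would set~$u_o:=U_o(\cdot,0)$ and~$v_k:=(V_k)_+(\cdot,0)$ and use the pointwise algebraic inequality
$$ \bigl| (a+b)^{p+1}-a^{p+1}-b^{p+1}\bigr|\le C\bigl( a^{p}b+a\,b^{p}\bigr)\qquad\text{for all }a,b\ge 0,$$
which follows by writing the left-hand side as~$(p+1)\int_0^{b}\bigl((a+t)^p-t^p\bigr)\,dt$ and using~$(a+t)^p-t^p\le C(a^p+a t^{p-1})$ for~$p=\frac{n+2s}{n-2s}$ (here $p>1$ since $n>2s$). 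Integrating over~$\R^n$ and applying Lemma~\ref{lemma:conv} with the two choices~$(\alpha,\beta)=(1,p)$ and~$(\alpha,\beta)=(p,1)$ — note that in both cases $\alpha+\beta=p+1=2^*_s$, so the exponents~$\rm a=\rm b=+\infty$ and the hypothesis~$\psi\in L^\infty(\R^n)$ is met by taking~$\psi\equiv 1$ (which is trivially in~$L^\infty$), while~$U_o(\cdot,0)\in L^\infty(\R^n)$ holds because $U_o$ plays the role of $U_\varepsilon$, bounded by Corollary~\ref{coro:bound} — one gets that~$\int_{\R^n}\bigl|(a+b)^{p+1}-a^{p+1}-b^{p+1}\bigr|\,dx = o_k(1)$, which is exactly~\eqref{second}. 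I should be slightly careful that Lemma~\ref{lemma:conv} is stated for~$(V_k)_+$ and $U_o$ each raised to a power, but $u_o^p v_k + u_o v_k^p$ matches precisely those two instances, so no extra work is needed.

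\textbf{For~\eqref{usare-1},} I would argue similarly but only need an upper bound, not convergence to a limit. Using~$q\in(0,1)$ and the estimate derived in the preceding corollary (taking~$\tau=\frac{t_+}{U_\varepsilon}$ there), namely
$$ (U_o+t_+)^{q+1}-U_o^{q+1}\le (q+1)(U_o+t_+)^q\,t_+\le C\bigl(U_o^q t_++t_+^{q+1}\bigr),$$
together with the pointwise bound~$t_+^{q+1}\le t_+^{p+1}+1$ (valid since $q<p$), I obtain
$$ \Bigl|\int_{\R^n}\psi(x)\bigl((U_o+(V_k)_+)^{q+1}-U_o^{q+1}\bigr)\,dx\Bigr|
\le C\int_{\R^n}|\psi|\,|(V_k)_+|^{q}\,|U_o|\,dx + C\int_{\R^n}|\psi|\,|(V_k)_+|^{p+1}\,dx + C\int_{\R^n}|\psi|\,dx.$$
The last two terms are bounded: the middle one because~$[V_k]_a$ is bounded (weakly convergent sequences are bounded) so~$\|(V_k)_+(\cdot,0)\|_{L^{2^*_s}}\le C$ by the trace inequality~\eqref{TraceIneq}, and~$\psi\in L^{\rm a}(\R^n)$ with the exponent pairing up to $1$; the first term is~$o_k(1)$ by Lemma~\ref{lemma:conv} applied with~$\alpha=q$, $\beta=1$ (here $\alpha+\beta=q+1<2^*_s$, so one checks that~$\rm a=\frac{2^*_s}{2^*_s-q-1}$ and~$\rm b=\frac{2^*_s+q}{2^*_s-q-1}$, which are exactly the integrability hypotheses imposed on~$\psi$ in the statement). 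Collecting these bounds yields~\eqref{usare-1} with a constant~$C$ independent of~$k$.

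\textbf{Main obstacle.} There is no deep difficulty here; the statement is designed to package Lemma~\ref{lemma:conv} in usable form. The only points requiring a little care are: (i) verifying that the pointwise algebraic inequalities above hold with the specific exponent~$p=\frac{n+2s}{n-2s}$ and~$q\in(0,1)$, and in particular getting the ``cross terms'' in exactly the shape~$|(V_k)_+|^\alpha|U_o|^\beta$ that Lemma~\ref{lemma:conv} can absorb; and (ii) matching the integrability exponents~$\rm a$, $\rm b$ in the cases~$\alpha+\beta=2^*_s$ versus~$\alpha+\beta<2^*_s$ so that the hypotheses of Lemma~\ref{lemma:conv} are literally satisfied for the choices~$(\alpha,\beta)\in\{(1,p),(p,1),(q,1)\}$ and~$\psi\in\{1\}\cup L^{\rm a}\cap L^{\rm b}$. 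Both are routine bookkeeping, so the corollary follows directly.
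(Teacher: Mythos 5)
Your treatment of \eqref{second} is fine and is essentially what the paper does (the paper dismisses it in one line: apply Lemma~\ref{lemma:conv} with $\psi:=1$, using $p+1=2^*_s$; your convexity estimate and the two applications with $(\alpha,\beta)=(1,p),(p,1)$ are exactly the missing details). For \eqref{usare-1}, however, your argument has a genuine gap. After the expansion $(U_o+t_+)^{q+1}-U_o^{q+1}\le C(U_o^q t_+ + t_+^{q+1})$ you replace $t_+^{q+1}$ by $t_+^{p+1}+1$, which produces the terms $\int_{\R^n}|\psi|\,(V_k)_+^{p+1}(x,0)\,dx$ and $\int_{\R^n}|\psi|\,dx$. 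Neither can be controlled under the hypotheses of Lemma~\ref{lemma:conv}: in the relevant subcritical case $\alpha+\beta=q+1<2^*_s$ one only assumes $\psi\in L^{\rm a}\cap L^{\rm b}$ with the \emph{finite} exponents $\rm a=\frac{2^*_s}{2^*_s-q-1}$, $\rm b=\frac{2^*_s+\alpha}{2^*_s-q-1}$, so $\psi$ need be neither in $L^1$ (which the constant term requires) nor in $L^\infty$ (which the term $\int|\psi|(V_k)_+^{p+1}$ requires, since the only available bound on the trace is $\|(V_k)_+(\cdot,0)\|_{L^{2^*_s}}\le C$, forcing the H\"older exponent on $\psi$ to be $\infty$ — ``pairing up to $1$'' cannot work with a finite $\rm a$). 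Your proof would only establish \eqref{usare-1} under the extra assumption $\psi\in L^1\cap L^\infty$, which happens to hold for $\psi=h$ in the applications but is not what the statement asserts.

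The fix, which is the paper's route, is not to degrade the exponent: keep the term $\int_{\R^n}|\psi|\,(V_k)_+^{q+1}(x,0)\,dx$ as it is and bound it by H\"older with exponents $\frac{2^*_s}{2^*_s-q-1}$ and $\frac{2^*_s}{q+1}$, together with Proposition~\ref{traceIneq} and the uniform bound on $[V_k]_a$ (weakly convergent sequences are bounded); this is precisely the place where the hypothesis $\psi\in L^{\frac{2^*_s}{2^*_s-q-1}}(\R^n)$ is designed to be used, and it yields a $k$-independent constant. Concretely, the paper first writes $\int\psi\bigl((U_o+(V_k)_+)^{q+1}-U_o^{q+1}\bigr)\,dx=\int\psi\,(V_k)_+^{q+1}\,dx+o_k(1)$ (the $o_k(1)$ coming from Lemma~\ref{lemma:conv} applied to the cross terms, as in your first step) and then applies the H\"older bound above. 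A minor further remark: your own pointwise estimate produces the cross term $(V_k)_+\,U_o^q$, i.e.\ $(\alpha,\beta)=(1,q)$, while you invoke the lemma with $(\alpha,\beta)=(q,1)$ and write $|(V_k)_+|^q|U_o|$; either choice is admissible, but the bookkeeping should be made consistent.
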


\begin{proof}
Formula \ref{second} plainly follows from Lemma~\ref{lemma:conv}, 
by taking~$\psi:=1$ (notice that $p+1=2^*_s$).

To prove \eqref{usare-1}, we use Lemma~\ref{lemma:conv} to see that
\begin{equation}\label{first}\begin{split}
& \int_{\mathbb{R}^n}\psi(x)
\left(  (U_o+(V_{k})_+)^{q+1}(x,0)-U_o^{q+1}(x,0)\right)\,dx\\
 =\,& \int_{\mathbb{R}^n}\psi(x)(U_o+(V_{k})_+)^{q+1}(x,0)\,dx
-\int_{\mathbb{R}^n}\psi(x)U_o^{q+1}(x,0)\,dx\\
=\,&\int_{\mathbb{R}^n}\psi(x)U_o^{q+1}(x,0)\,dx+
\int_{\mathbb{R}^n}\psi(x)(V_{k})_+^{q+1}(x,0)\,dx +o_k(1)
-\int_{\mathbb{R}^n}\psi(x)U_o^{q+1}(x,0)\,dx\\
=\,&\int_{\mathbb{R}^n}\psi(x)(V_{k})_+^{q+1}(x,0)\,dx +o_k(1).
\end{split}\end{equation}
By H\"older inequality with exponents $\frac{2^*_s}{2^*_s-q-1}$ and $\frac{2^*_s}{q+1}$ 
and by Proposition \ref{traceIneq} we have that 
\begin{eqnarray*}
&& \int_{\mathbb{R}^n}\psi(x)(V_{k})_+^{q+1}(x,0)\,dx \\
&&\qquad \le \left(\int_{\mathbb{R}^n}|\psi(x)|^{\frac{2^*_s}{2^*_s-q-1}}(x,0)\,dx\right)^{\frac{2^*_s-q-1}{2^*_s}}
\left(\int_{\mathbb{R}^n}(V_{k})_+^{2^*_s}(x,0)\,dx\right)^{\frac{q+1}{2^*_s}}\\
&&\qquad \le \|\psi\|_{  L^{ \frac{2^*_s}{2^*_s-q-1}}(\R^n)}
\left(  \int_{\mathbb{R}^n}(V_k)_+^{2^*_s}(x,0)\,dx\right)^{ \frac{q+1}{2^*_s} }\\
&&\qquad \le S^{-\frac{q+1}{2}} \|\psi\|_{  L^{\frac{2^*_s}{2^*_s-q-1} }(\R^n) }[V_k]_a.
\end{eqnarray*}
Now notice that in this case $\alpha+\beta=q+1<2^*_s$, and so $\psi\in L^{\rm{a}}(\R^n)$, 
with $\rm{a}=\frac{2^*_s}{2^*_s-q-1}$, by hypothesis. 
Moreover, since $V_k$ is a weakly convergent sequence in $\dot{H}^s_a(\R^{n+1}_+)$, 
then $V_k$ is uniformly bounded in $\dot{H}^s_a(\R^{n+1}_+)$. Hence
$$ \int_{\mathbb{R}^n}\psi(x)(V_{k})_+^{q+1}(x,0)\,dx\le C,$$ 
for a suitable $C>0$. Plugging this information into \eqref{first}, we obtain that 
$$ \left| \int_{\mathbb{R}^n}\psi(x)
\left(  (U_o+(V_{k})_+)^{q+1}(x,0)-U_o^{q+1}(x,0)\right)\,dx\right| \le C+o_k(1),$$
as desired. 
\end{proof}

Now we show that, under an assumption on the positivity 
of the limit function, weak convergence in~$\dot{H}^s_a(\R^{n+1})$ 
implies weak convergence of the positive part.

\begin{lemma}\label{POS}
Assume that~$W_m$ is a sequence of functions in~$\dot H^s_a(\R^{n+1}_+)$
that converges weakly in~$\dot H^s(\R^{n+1}_+)$ to~$
W\in \dot H^s_a(\R^{n+1}_+)$.

Suppose also that for any bounded set~$K\subset\R^{n+1}_+$, we have that
$$ \inf_K W>0.$$
Then, up to a subsequence,~$(W_m)_+\in H^s_a(\R^{n+1}_+)$
and it also
converges weakly in~$\dot H^s_a(\R^{n+1}_+)$ to~$W$.
\end{lemma}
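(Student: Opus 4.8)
The plan is to identify the weak limit of $(W_m)_+$ in $\dot H^s_a(\R^{n+1}_+)$ and check that it equals $W$, using the pointwise positivity hypothesis on $W$ to force the negative parts of $W_m$ to vanish in the limit. Since $W_m\rightharpoonup W$ in $\dot H^s_a(\R^{n+1}_+)$, the sequence $[W_m]_a$ is bounded; by Lemma~\ref{lemma:compact} (and the isometry \eqref{equivNorms} together with the compact trace embedding, Theorem~7.1 in \cite{DPV}) we may pass to a subsequence so that $W_m\to W$ a.e.\ in $\R^{n+1}_+$ and $w_m:=W_m(\cdot,0)\to w:=W(\cdot,0)$ a.e.\ in $\R^n$ and in $L^r_{\rm loc}$. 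The map $t\mapsto t_+$ is $1$-Lipschitz, hence $[(W_m)_+]_a\le [W_m]_a\le C$, so $(W_m)_+\in\dot H^s_a(\R^{n+1}_+)$ and, up to a further subsequence, $(W_m)_+\rightharpoonup Z$ for some $Z\in\dot H^s_a(\R^{n+1}_+)$. It remains to show $Z=W$.

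\textbf{Identifying $Z$.} First, $(W_m)_+\to W_+$ a.e.\ (by continuity of $t\mapsto t_+$ and the a.e.\ convergence of $W_m$), and a.e.\ convergence plus boundedness in the reflexive space $\dot H^s_a(\R^{n+1}_+)$ forces the weak limit to coincide with the a.e.\ limit, i.e.\ $Z=W_+$. Indeed, testing the weak convergence against an arbitrary $\varphi\in C_0^\infty(\R^{n+1})$, one has $\int y^a\langle\nabla(W_m)_+,\nabla\varphi\rangle\,dX\to\int y^a\langle\nabla Z,\nabla\varphi\rangle\,dX$; on the other hand, since $\varphi$ has compact support and $(W_m)_+$ is bounded in $\dot H^s_a$, by Lemma~\ref{lemma:compact} $(W_m)_+\to W_+$ in $L^2_{\rm loc}(\R^{n+1}_+,y^a)$, which together with the a.e.\ convergence of the gradients along a further subsequence and the uniform $L^2(y^a)$-bound on $\nabla(W_m)_+$ pins down $\nabla Z=\nabla W_+$ as an $L^2(y^a)$ function on every bounded set. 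Hence $Z=W_+$.

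\textbf{Using positivity.} Now invoke the hypothesis: for every bounded $K\subset\R^{n+1}_+$ we have $\inf_K W>0$, so $W>0$ a.e.\ in $\R^{n+1}_+$, and therefore $W_+=W$ a.e. Thus $Z=W_+=W$, which is precisely the claim: $(W_m)_+\rightharpoonup W$ in $\dot H^s_a(\R^{n+1}_+)$ along the chosen subsequence. One small point to verify carefully is that $W_+\in\dot H^s_a(\R^{n+1}_+)$ is \emph{even} in the $(n+1)$-th variable, as required by the definition \eqref{D123}: this holds because $W$ is even (it lies in $\dot H^s_a(\R^{n+1}_+)$) and $t\mapsto t_+$ preserves evenness; more to the point, once we know $W>0$ a.e., the equality $W_+=W$ makes the membership trivial.

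\textbf{Main obstacle.} The only genuinely delicate step is the identification $Z=W_+$, i.e.\ that weak $\dot H^s_a$-limits are compatible with a.e.\ limits of the functions themselves. The subtlety is that $\dot H^s_a(\R^{n+1}_+)$ controls only the gradient, not $L^2$ of the function, so one cannot directly quote ``a.e.\ limit $=$ weak limit'' for the functions; instead one argues at the level of gradients, using the local compactness (Lemma~\ref{lemma:compact}) to upgrade to strong $L^2_{\rm loc}(y^a)$ convergence of $(W_m)_+$ and then matching the distributional gradients. Everything after that is immediate from the positivity assumption. I expect this to be short; the hypothesis $\inf_K W>0$ is doing essentially all the work, since without it $(W_m)_+$ would only converge weakly to $W_+$, not to $W$.
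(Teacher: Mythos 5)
Your proof is correct, but it follows a genuinely different route from the paper's. The paper proves the weak convergence of $(W_m)_+$ to $W$ directly: for a test function $\Phi\in C^\infty_0$ with support $K$ it applies Egorov's theorem to the a.e.\ convergent (sub)sequence $W_m\to W$ and uses the hypothesis $\inf_K W>0$ to get $W_m\ge\frac12\inf_K W>0$, hence $\nabla (W_m)_+=\nabla W_m$, on a set $K_\epsilon$ with $|K\setminus K_\epsilon|\le\epsilon$ for all large $m$; the contribution of $K\setminus K_\epsilon$ is absorbed by absolute continuity of the integral and Cauchy--Schwarz against the uniform bound, and a density argument passes from $C^\infty_0$ to all test functions in $\dot H^s_a(\R^{n+1}_+)$. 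You instead extract a weak limit $Z$ of the bounded sequence $(W_m)_+$, identify $Z$ with $W_+$ through Lemma \ref{lemma:compact} (strong $L^2_{\rm loc}(\R^{n+1}_+,y^a)$ convergence to $W_+$) together with uniqueness of distributional limits of the gradients, and invoke positivity only at the very end, to write $W_+=W$; so in your scheme positivity is a final triviality, whereas in the paper it powers the central Egorov step. What your route buys is the more general fact that truncation is weakly continuous ($(W_m)_+\rightharpoonup W_+$), at the price of using the compactness lemma and of two small points you should tighten: (i) the gradients $\nabla (W_m)_+$ need not converge a.e.\ along any subsequence, so that phrase should be dropped — what actually does the work is exactly the mechanism you state afterwards, namely that strong $L^2_{\rm loc}(y^a)$ convergence of $(W_m)_+$ gives convergence of the gradients in the sense of distributions, which must then coincide with their weak $L^2(y^a)$ limit $\nabla Z$; and (ii) passing from $\nabla Z=\nabla W_+$ to the conclusion deserves one more word: either observe that weak convergence in $\dot H^s_a(\R^{n+1}_+)$ is tested only through gradients, so equality of gradients already yields $(W_m)_+\rightharpoonup W$, or rule out a nonzero additive constant via the Sobolev inequality of Proposition \ref{WeightedSob}. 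Neither point is a gap in substance.
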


\begin{proof} Notice that~$|\nabla (W_m)_+|=
|\nabla W_m|\chi_{\{W_m>0\}}\le |\nabla W_m|$ a.e., which shows
that~$(W_m)_+\in H^s_a(\R^{n+1}_+)$.

We also recall that, since weakly convergent sequences are bounded,
\begin{equation}\label{C0}
\sup_{m\in\N} \int_{\R^{n+1}_+} y^a |\nabla W_m|^2\,dX\le C_o,\end{equation}
for some~$C_o>0$.

Now we claim that
\begin{equation}\label{DF780}\begin{split}
&{\mbox{for any~$\Phi\in C^\infty_0(\overline{\R^{n+1}_+})$, }}\\&
\lim_{m\to+\infty} \int_{\R^{n+1}_+} y^a \langle\nabla (W_m)_+,\nabla\Phi\rangle\,dX
= \int_{\R^{n+1}_+} y^a \langle\nabla W, \nabla\Phi\rangle\,dX.
\end{split}\end{equation}
For this, we let~$K$ be the support of~$\Phi$.
Up to a subsequence, we know that~$W_m$
converges a.e. to~$W$. Therefore, by Egorov Theorem, fixed~$\epsilon>0$,
there exists~$K_\epsilon\subseteq K$ such that~$W_m$ converges to~$W$
uniformly in~$K_\epsilon$ and~$|K\setminus K_\epsilon|\le\epsilon$.
Then, for any~$X\in K_\epsilon$,
$$ W_m(X) \ge W(X) -\| W_m-W\|_{L^\infty(K_\epsilon)}
\ge \inf_K W -\| W_m-W\|_{L^\infty(K_\epsilon)}
\ge\frac{1}{2}\inf_K W >0,$$
as long as~$m$ is large enough, say $m\ge m_\star(K,\epsilon)$.

Accordingly,~$\nabla (W_m)_+=\nabla W_m$ a.e. in $K_\epsilon$
if~$m\ge m_\star(K,\epsilon)$ and therefore
\begin{equation}\label{9sdvfg898}
\lim_{m\to+\infty} \int_{K_\epsilon} y^a \langle\nabla (W_m)_+, \nabla\Phi\rangle\,dX
= \int_{K_\epsilon} y^a \langle\nabla W, \nabla\Phi\rangle\,dX.\end{equation}
Moreover, for any~$\eta>0$, the absolute continuity of
the integral gives that
$$ \int_{K\setminus K_\epsilon} y^a |\nabla W|^2\,dX
+\int_{K\setminus K_\epsilon} y^a |\nabla \Phi|^2\,dX\le \eta,$$
provided that~$\epsilon$ is small enough, say~$\epsilon\in(0,\epsilon_\star(\eta))$,
for a suitable~$\epsilon_\star(\eta)$.
As a consequence, recalling~\eqref{C0},
\begin{eqnarray*}
&& \left|\int_{K\setminus K_\epsilon} y^a \langle\nabla (W_m)_+,\nabla\Phi\rangle\,dX\right|+
\left|\int_{K\setminus K_\epsilon} y^a \langle\nabla W,\nabla\Phi\rangle\,dX\right|
\\ &\le&
\sqrt{\int_{K\setminus K_\epsilon} y^a |\nabla (W_m)_+|^2\,dX}
\cdot\sqrt{\int_{K\setminus K_\epsilon} y^a |\nabla \Phi|^2\,dX}
\\&&\qquad +
\sqrt{\int_{K\setminus K_\epsilon} y^a |\nabla W|^2\,dX}
\cdot\sqrt{\int_{K\setminus K_\epsilon} y^a |\nabla \Phi|^2\,dX}
\\ &\le& \sqrt{C_o \eta} +\eta.
\end{eqnarray*}
Using this and~\eqref{9sdvfg898}, we obtain that
\begin{eqnarray*}
&& \left| 
\lim_{m\to+\infty} \int_{\R^{n+1}_+} y^a \langle\nabla (W_m)_+, \nabla\Phi\rangle\,dX
-\int_{\R^{n+1}_+} y^a\langle \nabla W, \nabla\Phi\rangle\,dX
\right| \\
&=& \left|
\lim_{m\to+\infty} \int_{K} y^a \langle\nabla (W_m)_+,\nabla\Phi\rangle\,dX
-\int_{K} y^a \langle\nabla W, \nabla\Phi\rangle\,dX
\right|\\
&\le& \left| \lim_{m\to+\infty} \int_{K\setminus K_\epsilon} y^a \langle\nabla (W_m)_+, \nabla\Phi\rangle\,dX
-\int_{K\setminus K_\epsilon} y^a \langle\nabla W, \nabla\Phi\rangle\,dX
\right|
\\ &\le& \sqrt{C_o \eta} +\eta.
\end{eqnarray*}
By taking~$\eta$ as small as we like, we complete the proof of~\eqref{DF780}.

Now we finish the proof of Lemma~\ref{POS}
by a density argument. Let~$\Phi\in \dot H^s_a(\R^{n+1}_+)$ and~$\epsilon>0$.
We take~$\Phi_\epsilon\in C^\infty_0(\R^{n+1}_+)$ such that
$$ \int_{\R^{n+1}_+} y^a |\nabla (\Phi-\Phi_\epsilon)|^2\,dX\le\epsilon.$$
The existence of such~$\Phi_\epsilon$ is guaranteed by~\eqref{D123}.
Then, recalling~\eqref{C0} and~\eqref{DF780} for the function~$\Phi_\epsilon$,
we obtain that
\begin{eqnarray*}
&& \left|
\lim_{m\to+\infty} \int_{\R^{n+1}_+} y^a \langle\nabla (W_m)_+, \nabla\Phi\rangle\,dX
-\int_{\R^{n+1}_+} y^a \langle\nabla W, \nabla\Phi\rangle\,dX
\right| \\
&\le&
\left|
\lim_{m\to+\infty} \int_{\R^{n+1}_+} y^a \langle\nabla (W_m)_+, \nabla\Phi_\epsilon\rangle\,dX
-\int_{\R^{n+1}_+} y^a \langle\nabla W, \nabla\Phi_\epsilon\rangle\,dX
\right|
\\&&\qquad + \left(\sqrt{ C_o }+ \sqrt{\int_{\R^{n+1}_+} y^a|\nabla W|^2\,dX}\right)\,
\sqrt{
\int_{\R^{n+1}_+} y^a |\nabla (\Phi-\Phi_\epsilon)|^2\,dX }\\
&\le& 0+\left(\sqrt{ C_o }+ \sqrt{\int_{\R^{n+1}_+} y^a|\nabla W|^2\,dX}\right)\,\sqrt{\epsilon}.
\end{eqnarray*}
Accordingly, by taking~$\epsilon$ as small as we please, we obtain that
$$\lim_{m\to+\infty} \int_{\R^{n+1}_+} y^a \langle\nabla (W_m)_+, \nabla\Phi\rangle\,dX
=\int_{\R^{n+1}_+} y^a \langle\nabla W, \nabla\Phi\rangle\,dX,$$
for any~$\Phi\in \dot H^s_a(\R^{n+1}_+)$, thus completing the
proof of Lemma~\ref{POS}.
\end{proof}

\section{Palais-Smale condition for~$\mathcal{I}_\epsilon$}\label{sec:PS MP}

Once we have found a minimum of~$\mathcal{I}_\varepsilon$, 
we apply a contradiction procedure to prove the existence of a second critical point. 

Roughly speaking, the idea is the following: let us suppose that $U=0$ is the only critical point; 
thus, we prove some compactness and geometric properties of the functional 
(based on the fact that the critical point is unique), and these facts allow us 
to apply the Mountain Pass Theorem, that provides a second critical point. Hence, 
we reach a contradiction, so $U=0$ cannot be the only critical point of $\mathcal{I}_\varepsilon$. 

As we did in Proposition~\ref{PScond}
for the minimal solution,
also to find the second solution we need to prove that a Palais-Smale condition 
holds true below a certain threshold, as stated in the following result:

\begin{prop}\label{PScond2}
There exists~$C>0$, depending on~$h$, $q$, $n$
and~$s$, such that the following statement
holds true.
Let $\{U_k\}_{k\in\mathbb{N}}\subset \dot{H}^s_a(\mathbb{R}^{n+1}_+)$ be a sequence satisfying
\begin{enumerate}
\item[(i)] $\displaystyle\lim_{k\to+\infty}\mathcal{I}_\epsilon(U_k)= c_\epsilon$, with 
\begin{equation}\label{ceps2}
c_\epsilon+C \epsilon^{\frac{1}{2\gamma}}<
\dfrac{s}{n}S^{\frac{n}{2s}},\end{equation}
where $\gamma=1+\frac{2}{n-2s}$ and ~$S$ is the Sobolev constant appearing in Proposition~\ref{traceIneq},
\item[(ii)] $\displaystyle\lim_{k\to+\infty}\mathcal{I}'_\epsilon(U_k)= 0.$ 
\end{enumerate}
Assume also that~$U=0$ is the only critical point of~$\mathcal{I}_\varepsilon$.

Then~$\{U_k\}_{k\in\mathbb{N}}$ contains a subsequence strongly convergent in $\dot{H}^s_a(\mathbb{R}^{n+1}_+)$.
\end{prop}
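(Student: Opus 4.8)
The plan is to mimic the structure of the proof of Proposition~\ref{PScond} for the minimal solution, but working with the translated functional~$\mathcal{I}_\varepsilon$ and exploiting the contradiction hypothesis that~$U=0$ is the only critical point of~$\mathcal{I}_\varepsilon$, which is what will buy us the needed compactness of the weak limit. First I would invoke Corollary~\ref{cor:bound}: since~$\{U_k\}$ satisfies (i) and (ii), the quantities~$|\mathcal{I}_\varepsilon(U_k)|$ and~$\sup_{[V]_a=1}|\langle\mathcal{I}_\varepsilon'(U_k),V\rangle|$ are bounded for~$k$ large, hence~$[U_k]_a\le M$ for some~$M>0$. Therefore, up to a subsequence, $U_k\rightharpoonup U_\infty$ weakly in~$\dot H^s_a(\R^{n+1}_+)$, $U_k\to U_\infty$ a.e., and by Theorem~1.1.4 of~\cite{evans} there are nonnegative measures~$\mu$ on~$\R^{n+1}_+$ and~$\nu$ on~$\R^n$ with~$y^a|\nabla (U_k)_+|^2\to\mu$ and~$(U_k)_+^{2^*_s}(\cdot,0)\to\nu$ in the sense of Definition~\ref{convMeasures}. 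A preliminary tightness statement, analogous to Lemma~\ref{tightness}, should be established for~$\mathcal{I}_\varepsilon$: repeating the cut-off argument of Lemma~\ref{tightness}, using the estimates~\eqref{LA1-00} and~\eqref{LA2-00} in place of Lemma~\ref{lemma basic}, together with the energy threshold~\eqref{ceps2}, one shows that for every~$\eta>0$ there is~$\rho>0$ with $\int_{\R^{n+1}_+\setminus B_\rho^+}y^a|\nabla U_k|^2\,dX+\int_{\R^n\setminus B_\rho}(U_k)_+^{2^*_s}(x,0)\,dx<\eta$ for all~$k$; in particular the sequence is tight.

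Next I would apply the Concentration-Compactness Principle, Proposition~\ref{CCP}, to the tight bounded sequence~$\{(U_k)_+\}$: this yields an at most countable set~$J$, points~$x_j\in\R^n$ and weights~$\nu_j,\mu_j\ge0$ with $\nu=(U_\infty)_+^{2^*_s}(\cdot,0)+\sum_{j\in J}\nu_j\delta_{x_j}$, $\mu\ge y^a|\nabla (U_\infty)_+|^2+\sum_{j\in J}\mu_j\delta_{(x_j,0)}$, and~$\mu_j\ge S\nu_j^{2/2^*_s}$. The heart of the argument is to exclude the atoms: suppose~$\mu_j\ne0$ for some~$j$, test~\eqref{DER} against~$\phi_\delta(U_k)_+$ for a cut-off~$\phi_\delta$ concentrated near~$X_j:=(x_j,0)$ (the bound~$[\phi_\delta(U_k)_+]_a\le C$ is proved exactly as~\eqref{fi delta bounded}), pass to the limit in~$k$ and then in~$\delta$; the gradient cross term and the subcritical~$g_1$-term vanish in the limit (using Lemma~\ref{lemma:compact}, the H\"older inequality and the local compact embedding, Theorem~7.1 of~\cite{DPV}, noting~$q+1<2$), while the term~$\int\phi_\delta(U_k)_+^{p+1}(x,0)\,dx$ and the Dirichlet term converge to~$\int\phi_\delta\,d\nu$ and~$\int\phi_\delta\,d\mu$; this forces~$0\ge\mu_j-\nu_j$, hence~$\nu_j\ge\mu_j\ge S\nu_j^{2/2^*_s}$, so~$\nu_j\ge S^{n/2s}$. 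Then, arguing as in the corresponding part of the proof of Proposition~\ref{PScond}, one estimates $\lim_k(\mathcal{I}_\varepsilon(U_k)-\tfrac12\langle\mathcal{I}_\varepsilon'(U_k),U_k\rangle)$ from below using~\eqref{LA1-00},~\eqref{LA2-00}, Fatou, and the bound~$\nu_j\ge S^{n/2s}$, to get~$c_\varepsilon\ge\frac{s}{n}S^{n/2s}-C\varepsilon^{1/2\gamma}$, contradicting~\eqref{ceps2}. Hence~$\mu_j=\nu_j=0$ for all~$j\in J$, so $(U_k)_+^{2^*_s}(\cdot,0)\rightharpoonup (U_\infty)_+^{2^*_s}(\cdot,0)$ against~$C_0(\R^n)$ test functions.

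With the atoms excluded, I would upgrade to strong convergence of the traces: combining this measure convergence with the tightness, Lemma~\ref{PSL-2} gives~$(U_k)_+(\cdot,0)\to(U_\infty)_+(\cdot,0)$ in~$L^{2^*_s}(\R^n)$, and then Lemma~\ref{PSL-1} yields convergence of the nonlinear terms $(U_k)_+^q(\cdot,0)$ and $(U_k)_+^p(\cdot,0)$ in the relevant~$L^r$ spaces. Here is where the contradiction hypothesis enters again, in a second, subtler way: one must translate these facts about~$(U_k)_+$ into convergence of the full sequence~$U_k$. Since~$U_\infty$ is the weak limit and is a critical point of~$\mathcal{I}_\varepsilon$, by hypothesis~$U_\infty=0$; then~$g(x,(U_k)_+(x,0))\to g(x,0)=0$ in the appropriate dual norm, and feeding this into~\eqref{DER} tested against an arbitrary~$\Phi$ with~$[\Phi]_a=1$ (exactly as in the final Cauchy-sequence estimate of the proof of Proposition~\ref{PScond}, using~\eqref{TraceIneq} to control~$\|\Phi(\cdot,0)\|_{L^{2^*_s}}$) shows $[U_k-U_m]_a\to0$, so~$\{U_k\}$ is Cauchy in~$\dot H^s_a(\R^{n+1}_+)$ and converges strongly. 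The main obstacle I anticipate is precisely this last passage from~$(U_k)_+$ to~$U_k$: unlike in Proposition~\ref{PScond}, the limit~$U_\infty$ is not automatically nonnegative, so one genuinely needs the uniqueness assumption (and possibly Lemma~\ref{POS}, to see that the weak limit of~$(U_k)_+$ coincides with the weak limit of~$U_k$ when the latter is strictly positive on bounded sets, handling the sign of~$U_\infty$ carefully); keeping track of which estimates survive when~$U_\infty$ could a priori be sign-changing is the delicate bookkeeping of the proof.
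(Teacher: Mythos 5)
Your overall architecture (bound via Corollary~\ref{cor:bound}, tightness, Concentration-Compactness, exclusion of atoms by the energy threshold, then a Cauchy/strong-convergence argument using that the weak limit must be the unique critical point~$0$) is the right shape, but the quantitative core of the argument does not work as you have set it up. The contradiction with~\eqref{ceps2} requires the atom to contribute the \emph{full} amount~$\frac{s}{n}S^{\frac{n}{2s}}$, up to an error of size~$C\epsilon^{\frac{1}{2\gamma}}$ with~$C$ depending only on~$h,q,n,s$. The estimates~\eqref{LA1-00} and~\eqref{LA2-00} cannot deliver this: \eqref{LA2-00} only gives~$\int g\,U\ge\frac18\|U_+(\cdot,0)\|_{L^{2^*_s}}^{2^*_s}-C_\epsilon$, so after combining with~\eqref{LA1-00} the coefficient in front of the critical norm degenerates from~$\frac{s}{n}$ to (at best)~$\frac{s}{8n}-C(\epsilon+\delta)$, and both estimates carry additive constants~$C_\epsilon$, $C_{\delta,\epsilon}$ (containing~$M_\delta\int U_\epsilon^{p+1}$ and~$\|h\|_{L^1}$ terms) that are in no way of order~$\epsilon^{\frac{1}{2\gamma}}$. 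Hence from~$\nu_j\ge S^{n/2s}$ you would only conclude something like~$c_\epsilon\ge\frac{s}{8n}S^{\frac{n}{2s}}-C_{\delta,\epsilon}$, which does \emph{not} contradict~\eqref{ceps2}. The same defect infects your tightness step: the analogue of Lemma~\ref{tightness} for~$\mathcal{I}_\epsilon$ needs lower bounds for the localized pieces that are sharp to order~$\epsilon^{\frac{1}{2\gamma}}$, and these cannot be obtained by substituting~\eqref{LA1-00}--\eqref{LA2-00} for Lemma~\ref{lemma basic}; one needs exact identities of the type~$\mathcal{I}_\epsilon(W)-\frac12\langle\mathcal{I}_\epsilon'(W),U_\epsilon+W\rangle=\frac{s}{n}\int\big((U_\epsilon+W_+)^{p+1}-U_\epsilon^{p+1}\big)dx+o_k(1)+O(\epsilon)$, which in turn require knowing \emph{beforehand} that~$U_k\rightharpoonup0$ (so that Lemma~\ref{lemma:conv} and Brezis--Lieb type splittings such as~\eqref{second}, \eqref{usare-1} apply). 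In other words, the uniqueness hypothesis must be used at the very beginning (as in Lemma~\ref{lemma:weak}), not only in your final paragraph. A secondary, fixable omission: in your atom-exclusion test with~$\phi_\delta(U_k)_+$, the nonlinear term is~$\big((U_\epsilon+(U_k)_+)^p-U_\epsilon^p\big)(U_k)_+$, not~$(U_k)_+^{p+1}$; the cross terms with~$U_\epsilon$ must be shown to vanish (e.g.\ by H\"older, using that~$\int_{B_\delta}U_\epsilon^{2^*_s}\to0$ as~$\delta\to0$), which you do not address.

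For comparison, the paper avoids these difficulties by first proving~$U_k\rightharpoonup0$ (Lemma~\ref{lemma:weak}) and tightness (Lemma~\ref{lemma:tight}) with the sharp constants, and then, instead of running the concentration argument on~$(U_k)_+$ and~$\mathcal{I}_\epsilon$, translating back: setting~$V_k:=U_\epsilon+U_k$, showing~$\mathcal{F}_\epsilon(V_k)\le\mathcal{I}_\epsilon(U_k)+\mathcal{F}_\epsilon(U_\epsilon)$ (a convexity inequality) and~$\mathcal{F}_\epsilon'(V_k)\to0$, applying Proposition~\ref{CCP} to~$(V_k)_+$ (here Lemma~\ref{POS} and the strict positivity of~$U_\epsilon$ are what guarantee~$(V_k)_+\rightharpoonup U_\epsilon$), and exploiting the exact identity~$\mathcal{F}_\epsilon(V_k)-\frac12\langle\mathcal{F}_\epsilon'(V_k),V_k\rangle=\frac{s}{n}\|(V_k)_+(\cdot,0)\|_{L^{p+1}}^{p+1}-\epsilon\big(\frac{1}{q+1}-\frac12\big)\int h(V_k)_+^{q+1}$, in which the~$\mathcal{F}_\epsilon(U_\epsilon)$ contributions cancel on both sides and the atom yields the full~$\frac{s}{n}S^{\frac{n}{2s}}$. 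If you want to keep your direct~$\mathcal{I}_\epsilon$ route, you would have to replace~\eqref{LA1-00}--\eqref{LA2-00} by these sharp identities together with Lemma~\ref{lemma:conv} and~\eqref{second}; as written, the proposal has a genuine gap at exactly the step where the threshold~\eqref{ceps2} is supposed to be contradicted.
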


\begin{rem} \label{rem:3.3}
The limit in (ii) is intended in the following way 
\begin{eqnarray*}
&& \lim_{k\to+\infty}\|\mathcal{I}'_\epsilon(U_k)\|_
{\mathcal{L}(\dot{H}^s_a(\mathbb{R}^{n+1}_+),\dot{H}^s_a(\mathbb{R}^{n+1}_+))} 
\\ &&\qquad = \lim_{k\to+\infty}\sup_{V\in\dot{H}^s_a(\mathbb{R}^{n+1}_+), 
[V]_{a}=1}  
\left|\langle \mathcal{I}'_\epsilon(U_k), V\rangle\right|
=0,
\end{eqnarray*}
where $\mathcal{L}(\dot{H}^s_a(\mathbb{R}^{n+1}_+),\dot{H}^s_a(\mathbb{R}^{n+1}_+))$ 
consists of all the linear functionals from $\dot{H}^s_a(\mathbb{R}^{n+1}_+)$ 
in $\dot{H}^s_a(\mathbb{R}^{n+1}_+)$.
\end{rem}

We observe that a sequence that satisfies the assumptions of Proposition~\ref{PScond2} is weakly convergent. 
The precise statement goes as follows:

\begin{lemma}\label{lemma:weak}
Let $\{U_k\}_{k\in\mathbb{N}}\subset \dot{H}^s_a(\mathbb{R}^{n+1}_+)$ be a sequence satisfying
the hypotheses of Proposition \ref{PScond2}. 
Assume also that~$U=0$ is the only critical point of~$\mathcal{I}_\varepsilon$.

Then, up to a subsequence, $U_k$ weakly converges to~0 in~$\dot{H}^s_a(\R^{n+1}_+)$ as~$k\to+\infty$.
\end{lemma}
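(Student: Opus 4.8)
The plan is to combine the a priori bound coming from Corollary~\ref{cor:bound} with the extra assumption that $U=0$ is the unique critical point of~$\mathcal{I}_\varepsilon$. First I would observe that the hypotheses (i) and (ii) of Proposition~\ref{PScond2} imply, for $k$ large, that $|\mathcal{I}_\varepsilon(U_k)|+\sup_{[V]_a=1}|\langle\mathcal{I}'_\varepsilon(U_k),V\rangle|\le\kappa$ for some $\kappa>0$ (depending on $c_\varepsilon$ and on how fast the limits are attained). Hence Corollary~\ref{cor:bound} gives a constant $M>0$ with $[U_k]_a\le M$ for all $k$. Since $\dot{H}^s_a(\R^{n+1}_+)$ is a Hilbert space, boundedness yields a subsequence, still denoted $\{U_k\}$, that converges weakly in $\dot{H}^s_a(\R^{n+1}_+)$ to some limit $U_\infty\in\dot{H}^s_a(\R^{n+1}_+)$.

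Next I would show that $U_\infty$ is a critical point of $\mathcal{I}_\varepsilon$, i.e. $\langle\mathcal{I}'_\varepsilon(U_\infty),V\rangle=0$ for every $V\in\dot{H}^s_a(\R^{n+1}_+)$. The linear (diffusion) part of~\eqref{DER} passes to the limit directly by weak convergence: $\int_{\R^{n+1}_+}y^a\langle\nabla U_k,\nabla V\rangle\,dX\to\int_{\R^{n+1}_+}y^a\langle\nabla U_\infty,\nabla V\rangle\,dX$. For the nonlinear boundary term $\int_{\R^n}g(x,U_k(x,0))V(x,0)\,dx$, one uses that, by the isometry~\eqref{equivNorms} and Theorem~7.1 in~\cite{DPV}, $u_k:=U_k(\cdot,0)$ converges to $U_\infty(\cdot,0)$ strongly in $L^r_{\rm loc}(\R^n)$ for every $r\in[1,2^*_s)$ and a.e. in $\R^n$; since $g(x,t)$ grows at most like $|h(x)|\,(1+t_+^{p})$ (recall the splitting $g=g_1+g_2$ and the bounds $|g_1|\le\epsilon|h|t_+^q$, $g_2\le(U_\varepsilon+t_+)^p$ together with the boundedness of $U_\varepsilon$ from Corollary~\ref{coro:bound}), a standard truncation/Vitali argument, combined with testing against $V\in C_0^\infty(\overline{\R^{n+1}_+})$ first and then density, gives $\int_{\R^n}g(x,u_k)V(\cdot,0)\,dx\to\int_{\R^n}g(x,U_\infty(\cdot,0))V(\cdot,0)\,dx$. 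Using assumption (ii), namely $\langle\mathcal{I}'_\varepsilon(U_k),V\rangle\to0$, we conclude $\langle\mathcal{I}'_\varepsilon(U_\infty),V\rangle=0$ for all $V$, so $U_\infty$ is a critical point.

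Finally, by the standing assumption that $U=0$ is the \emph{only} critical point of $\mathcal{I}_\varepsilon$, it follows that $U_\infty=0$, which is exactly the assertion that $U_k$ converges weakly to $0$ in $\dot{H}^s_a(\R^{n+1}_+)$ along the extracted subsequence. I expect the main technical obstacle to be the passage to the limit in the nonlinear term $\int_{\R^n}g(x,u_k(x,0))V(x,0)\,dx$: one has only local strong convergence of the traces, the critical exponent $p=2^*_s-1$ leaves no room for a gain of integrability, and $V(\cdot,0)$ need not be compactly supported, so the tail of the integral must be controlled by the $L^{2^*_s}$-bound on $u_k$ together with the summability of $h$ from~\eqref{h0}–\eqref{h1}. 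Once the weak continuity of $\mathcal{I}'_\varepsilon$ is established on test functions and propagated by density (using the uniform bound $[U_k]_a\le M$ exactly as in the closing density argument of the proof of Proposition~\ref{PScond}), the rest is immediate.
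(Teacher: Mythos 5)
Your proposal is correct and follows essentially the same route as the paper: a uniform bound from Corollary~\ref{cor:bound}, extraction of a weak limit $U_\infty$, passage to the limit in $\langle\mathcal{I}'_\varepsilon(U_k),\Psi\rangle$ for smooth compactly supported $\Psi$ using the growth bound on $g$, a.e.\ convergence of the traces and dominated convergence, then a density argument to conclude $\mathcal{I}'_\varepsilon(U_\infty)=0$, and finally $U_\infty=0$ by the uniqueness assumption. The only cosmetic difference is that the paper invokes the Dominated Convergence Theorem (via Theorem~4.9 in~\cite{brezis}) where you suggest a Vitali-type argument, and the density step is applied to the limiting identity $\langle\mathcal{I}'_\varepsilon(U_\infty),\Psi\rangle=0$ rather than to the sequence itself.
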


\begin{proof}
Notice that assumptions (i) and (ii) imply that there exists~$\kappa>0$ 
such that 
$$ |\mathcal{I}_\epsilon(U_k)|+\sup_{{V\in H^s_a(\R^{n+1}_+)}\atop{ [V]_a =1 }}
\big|\langle {\mathcal{I}}_\epsilon'(U_k),V\rangle\big|\le\kappa.$$
Hence, by Corollary~\ref{cor:bound} we have that there exists a positive constant~$M$ (independent of~$k$) such that 
\begin{equation}\label{uniformBoundUk}
[U_k]_a\leq M.
\end{equation}
Therefore, there exists a subsequence 
(that we still denote by $U_k$) 
converging weakly to some function $U_\infty\in \dot{H}^s_a(\mathbb{R}^{n+1}_+)$, that is,
\begin{equation}\label{kqwrwet}
U_k\rightharpoonup U_\infty\quad\hbox{ in }\dot{H}^s_a(\mathbb{R}^{n+1}_+),
\end{equation}
as~$k\to+\infty$. We now claim that
\begin{equation}\label{U fa zero}
U_\infty=0.
\end{equation} 
For this, we first observe that, thanks to~\eqref{uniformBoundUk} 
and Theorem~7.1 in~\cite{DPV}, we have that 
\begin{equation}\label{asfewgjgfjr}
U_k(\cdot,0)\rightarrow U_\infty(\cdot,0)\qquad\hbox{ in }L^\alpha_{\rm{loc}}(\R^n),\qquad 1\leq \alpha<2^*_s,
\end{equation}
and so
\begin{equation}\label{punto}
U_k(\cdot,0)\rightarrow U_\infty(\cdot,0) \quad {\mbox{ a. e. }}\R^n.
\end{equation}
Let now $\Psi\in C_0^\infty(\mathbb{R}^{n+1})$, $\psi:=\Psi(\cdot,0)$ 
and~$K:=supp(\psi)$. According to~\eqref{DER}, 
\begin{equation}\begin{split}\label{adweterjutrj}
\langle\mathcal{I}_\epsilon'(U_k),\Psi\rangle =\,& \int_{\R^{n+1}_+}y^a\langle\nabla U_k,\nabla\Psi\rangle\,dX -
\int_{\R^n}g(x,U_k(x,0))\psi(x)\,dx \\
=\,& \int_{\R^{n+1}_+}y^a\langle\nabla U_k,\nabla\Psi\rangle\,dX -
\int_{K}g(x,U_k(x,0))\psi(x)\,dx.
\end{split}\end{equation}
Thanks to~\eqref{kqwrwet}, we have that 
\begin{equation}\label{kqwrwet-1}
\int_{\R^{n+1}_+}y^a\langle\nabla U_k,\nabla\Psi\rangle\,dX \to 
\int_{\R^{n+1}_+}y^a\langle\nabla U_\infty,\nabla\Psi\rangle\,dX 
\end{equation}
as~$k\to+\infty$. Moreover, \eqref{punto} implies that
$$ g(x,U_k(\cdot,0))\to g(x,U_\infty(\cdot,0)) \quad {\mbox{ a. e. }}\R^n,$$
as~$k\to+\infty$. Also, notice that 
$$ (1+r)^p-1\le C(1+r^p), $$
for any~$r\ge0$ and for some positive constant~$C>0$. 
Hence, recalling that~$U_\epsilon>0$, 
thanks to Proposition~\ref{prop:pos},
we can use this with~$r:=t/U_\epsilon$ 
and we obtain that
$$ (U_\epsilon+t)^p-U_\epsilon^p=U_\epsilon^p\left[\left(1+\frac{t}{U_\epsilon}\right)^p-1\right] \le C\,U_\epsilon^p\left(1+\frac{t^p}{U_\epsilon^p}\right) =C\left(U_\epsilon^p+t^p\right).$$
This, formulas~\eqref{fdcvbpfp11}
and~\eqref{def g small} give that 
\begin{equation}\label{alkdfsjgerphitrji}
|g(x,t)|\le C\left(U_\epsilon^p+t^p\right)+\epsilon |h|.\end{equation}
Hence, for any~$k\in\N$, 
\begin{equation}\label{wqrfewhghbenb}
|g(x,U_k(x,0))|\,|\psi| \le C|\psi| \left(U_\epsilon^p(x,0)+U_k^p(x,0)\right)+\epsilon |h||\psi|.
\end{equation}
This means that the sequence~$g(\cdot,U_k(\cdot,0))$ is bounded 
by a sequence that is strongly convergent in~$L^1_{\rm{loc}}(\R^n)$. 
Moreover, by Theorem 4.9 in \cite{brezis} 
we have that there exists a function $f\in L^1_{\rm{loc}}(\R^n)$ such that, 
up to a subsequence, 
\begin{equation}\label{wqrfewhghbenb-1}
C|\psi| \left(U_\epsilon^p(\cdot,0)+U_k^p(\cdot,0)\right)+\epsilon|\psi||h|\le |f|.
\end{equation}
Formulas \eqref{wqrfewhghbenb} and \eqref{wqrfewhghbenb-1} together with \eqref{punto}
imply that we can use the Dominated Convergence Theorem (see e.g. Theorem 4.2 in~\cite{brezis}) 
and we obtain that 
$$ \int_{K}g(x,U_k(x,0))\psi(x)\,dx\to \int_{K}g(x,U_\infty(x,0))\psi(x)\,dx,$$
as~$k\to+\infty$. 
Using this and~\eqref{kqwrwet-1} into~\eqref{adweterjutrj} we have
$$ 
\langle\mathcal{I}_\epsilon'(U_k),\Psi\rangle\to 
\int_{\R^{n+1}_+}y^a\langle\nabla U_\infty,\nabla\Psi\rangle\,dX -
\int_{\R^n}g(x,U_\infty(x,0))\psi(x)\,dx =
\langle\mathcal{I}_\epsilon'(U_\infty),\Psi\rangle,$$
as~$k\to+\infty$. On the other hand, assumption~(ii) implies that 
$$ \langle\mathcal{I}_\epsilon'(U_k),\Psi\rangle\to 0$$
as~$k\to+\infty$. The last two formulas imply that
\begin{equation}\label{qwrewputhjtynmf}
\langle\mathcal{I}_\epsilon'(U_\infty),\Psi\rangle =0, \quad {\mbox{ for any~$\Psi\in C^\infty_0(\R^{n+1}_+)$}}.\end{equation} 
Let now~$\Psi\in\dot{H}^s_a(\R^{n+1})$, with~$\psi:=\Psi(\cdot,0)$. 
Then by~\eqref{D123} there exists 
a sequence of functions~$\Psi_m\in C^\infty_0(\R^{n+1}_+)$, with~$\psi_m:=\Psi_m(\cdot,0)$, such 
that
\begin{equation}\label{wefreypqqpqwpqwpqw}
{\mbox{$\Psi_m\to\Psi$ in~$\dot{H}^s_a(\R^{n+1}_+)$ as~$m\to+\infty$.}}
\end{equation} 
By Proposition~\ref{traceIneq} this implies also that
\begin{equation}\label{wefreypqqpqwpqwpqw-1}
{\mbox{$\psi_m\to\psi$ in~$L^{2^*_s}(\R^{n})$ as~$m\to+\infty$.}}
\end{equation} 
Therefore, from~\eqref{qwrewputhjtynmf} we deduce that for any~$m\in\N$
\begin{equation}\label{distri}
0=\langle\mathcal{I}_\epsilon'(U_\infty),\Psi_m\rangle =
\int_{\R^{n+1}_+}y^a\langle\nabla U_\infty,\nabla\Psi_m\rangle\,dX -
\int_{\R^n}g(x,U_\infty(x,0))\psi_m(x)\,dx.
\end{equation}
Now, \eqref{wefreypqqpqwpqwpqw} 
implies that 
\begin{equation*}\begin{split}
&\left| \int_{\R^{n+1}_+}y^a\langle\nabla U_\infty,\nabla\Psi_m\rangle\,dX -\int_{\R^{n+1}_+}y^a\langle\nabla U_\infty,\nabla\Psi\rangle\,dX\right|
\\\le\, & \int_{\R^{n+1}_+}y^a|\nabla U_\infty|\,|\nabla(\Psi_m-\Psi)|\,dX\\
\le\, & \sqrt{\int_{\R^{n+1}_+}y^a|\nabla U_\infty|^2\,dX} \cdot 
\sqrt{\int_{\R^{n+1}_+}y^a|\nabla(\Psi_m-\Psi)|^2\,dX}\to 0,
\end{split}\end{equation*}
as~$m\to+\infty$. Moreover, by H\"older inequality with exponents~$2^*_s$ 
and~$\frac{2^*_s}{2^*_s-1}$ we get 
\begin{equation*}\begin{split}
&\left| \int_{\R^n}g(x,U_\infty(x,0))\psi_m(x)\,dx- \int_{\R^n}g(x,U_\infty(x,0))\psi(x)\,dx\right| \\
\le \,& \int_{\R^n}|g(x,U_\infty(x,0))|\,|\psi_m(x)-\psi(x)|\,dx\\
\le \,& C\int_{\R^n}(U_\epsilon^p+U_\infty^p)\,|\psi_m(x)-\psi(x)|\,dx+\epsilon\int_{\R^n}|h|\,|\psi_m(x)-\psi(x)|\,dx\\
\le \,& C\left(\int_{\R^n}(U_\epsilon^p+U_\infty^p)^{\frac{2^*_s}{2^*_s-1}}\,dx\right)^{\frac{2^*_s-1}{2^*_s}}\, 
\left(\int_{\R^n}|\psi_m(x)-\psi(x)|^{2^*_s}\,dx\right)^{\frac{1}{2^*_s}}\\
& + \epsilon\left(\int_{\R^n}|h|^{\frac{2^*_s}{2^*_s-1}}\,dx\right)^{\frac{2^*_s-1}{2^*_s}}\, 
\left(\int_{\R^n}|\psi_m(x)-\psi(x)|^{2^*_s}\,dx\right)^{\frac{1}{2^*_s}},
\end{split}\end{equation*}
where we have used \eqref{alkdfsjgerphitrji}. Furthermore, noticing that $\frac{2^*_sp}{2^*_s-1}=2^*_s$, we have that 
$$ \left(U_\epsilon^p+U_\infty^p\right)^{\frac{2^*_s}{2^*_s-1}}\le 
\left(U_\epsilon+U_\infty\right)^{\frac{2^*_sp}{2^*_s-1}}\leq\left(U_\epsilon+U_\infty\right)^{2^*_s},$$ 
up to renaming constants. 
Thus, since~$U_\epsilon$, $U_\infty\in\dot{H}^s_a(\R^{n+1}_+)$, and $h\in L^r(\R^n)$ for every $1\leq r\leq +\infty$, by
Proposition~\ref{traceIneq} we deduce that 
\begin{equation*}\begin{split}
&\left| \int_{\R^n}g(x,U_\infty(x,0))\psi_m(x)\,dx- \int_{\R^n}g(x,U_\infty(x,0))\psi(x)\,dx\right| \\
\le \,& C\, 
\left(\int_{\R^n}|\psi_m(x)-\psi(x)|^{2^*_s}\,dx\right)^{\frac{1}{2^*_s}}\to 0
\end{split}\end{equation*}
as~$m\to+\infty$, thanks to~\eqref{wefreypqqpqwpqwpqw-1}. 
All in all and going back to~\eqref{distri} we obtain that 
$$ 0=\lim_{m\to+\infty}\langle\mathcal{I}_\epsilon'(U_\infty),\Psi_m\rangle=
\langle\mathcal{I}_\epsilon'(U_\infty),\Psi\rangle,$$
and this shows that~\eqref{qwrewputhjtynmf} holds true for any~$\Psi\in\dot{H}^s_a(\R^{n+1})$. Namely,~$U_\infty$ is a critical point 
for~$\mathcal{I}_\epsilon$. Since~$U=0$ is the only critical point of~$\mathcal{I}_\epsilon$, we obtain the claim in~\eqref{U fa zero}. 
This concludes the proof of Lemma~\ref{lemma:weak}.
\end{proof}

As we did in the first part to obtain the existence of the minimum, 
(see in particular Lemma~\ref{tightness}), 
to prove Proposition~\ref{PScond2} we first need to show 
that the sequence is tight, according to Definition \ref{defTight}. 
Then we can prove the following:

\begin{lemma}\label{lemma:tight}
Let $\{U_k\}_{k\in\mathbb{N}}\subset \dot{H}^s_a(\mathbb{R}^{n+1}_+)$ be a sequence satisfying
the hypotheses of Proposition \ref{PScond2}. 
Assume also that~$U=0$ is the only critical point of~$\mathcal{I}_\varepsilon$.

Then, for all $\eta>0$ there exists $\rho>0$ such that for every $k\in\mathbb{N}$ there holds
$$\int_{\mathbb{R}^{n+1}_+\setminus B_\rho^+}{y^a|\nabla U_k|^2\,dX}
+\int_{\mathbb{R}^n\setminus\{B_\rho\cap\{y=0\}\}}{|U_k(x,0)|^{2^*_s}\,dx}
<\eta.$$
In particular, the sequence~$\{U_k\}_{k\in\mathbb{N}}$ is tight.
\end{lemma}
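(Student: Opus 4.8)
The plan is to reproduce the architecture of the proof of Lemma~\ref{tightness}, with the functional~$\mathcal F_\epsilon$ replaced by~$\mathcal I_\epsilon$ and the homogeneous nonlinearities replaced by the terms~$g$ and~$G$ of~\eqref{def g small}--\eqref{def G}, using the estimates~\eqref{LA1-00} and~\eqref{LA2-00} together with the pointwise bounds~\eqref{9dkjcc9gg-2} and~\eqref{alkdfsjgerphitrji} as substitutes for the ``scaling'' computations available in the purely critical case. First I would note that, by hypotheses~(i) and~(ii) of Proposition~\ref{PScond2}, the sequence satisfies $|\mathcal I_\epsilon(U_k)|+\sup_{[V]_a=1}|\langle\mathcal I_\epsilon'(U_k),V\rangle|\le\kappa$ for some~$\kappa>0$, so that Corollary~\ref{cor:bound} gives~$[U_k]_a\le M$ and Lemma~\ref{lemma:weak} gives~$U_k\rightharpoonup0$ weakly in~$\dot H^s_a(\R^{n+1}_+)$ and a.e., up to a subsequence. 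I then argue by contradiction: suppose there is~$\eta_0>0$ such that for every~$\rho>0$ one can choose~$k=k(\rho)$ with
$$\int_{\R^{n+1}_+\setminus B_\rho^+}y^a|\nabla U_k|^2\,dX+\int_{\R^n\setminus\{B_\rho\cap\{y=0\}\}}|U_k(x,0)|^{2^*_s}\,dx\ge\eta_0;$$
then, exactly as in~\eqref{forse0}, the uniform bound~$[U_k]_a\le M$ together with Propositions~\ref{WeightedSob} and~\ref{traceIneq} forces~$k(\rho)\to+\infty$ as~$\rho\to+\infty$.

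Next I fix a small~$\epsilon>0$ and choose a radius~$r=r_\epsilon$, with~$r_\epsilon\to+\infty$ as~$\epsilon\to0$, large enough that the tails beyond~$B_{r_\epsilon}^+$ of both~$U_k$ and of the frozen profile~$U_\epsilon$ (which is bounded by Corollary~\ref{coro:bound} and lies in~$\dot H^s_a(\R^{n+1}_+)$, with~$[U_\epsilon]_a\to0$) are small. A pigeonhole argument over the annuli~$I_l=\{r+l\le|X|\le r+l+1\}$, as in~\eqref{epsBound}, then produces an index~$\bar l$ on which the combined quantity $\int_{I_{\bar l}}y^a|\nabla U_k|^2+\int_{I_{\bar l}}y^a|U_k|^{2\gamma}+\int_{I_{\bar l}\cap\{y=0\}}|U_k(x,0)|^{2^*_s}$ is at most~$\epsilon$. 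Introducing a cut-off~$\chi$ that equals~$1$ inside~$B_{r+\bar l}$ and~$0$ outside~$B_{r+\bar l+1}$, and setting~$V_k:=\chi U_k$, $W_k:=(1-\chi)U_k$, the bounds~$[V_k]_a,[W_k]_a\le C$ follow from Proposition~\ref{WeightedSob} as in~\eqref{bbbb}. The genuinely new feature with respect to Lemma~\ref{tightness} is the lack of homogeneity of~$\mathcal I_\epsilon$, so that the differences~$\langle\mathcal I_\epsilon'(U_k)-\mathcal I_\epsilon'(V_k),V_k\rangle$ and its~$W_k$-analogue do not telescope; I estimate them by writing~$g=g_1+g_2$ with~$|g_1|\le\epsilon|h|t_+^q$ and, by elementary bounds of the type~$(a+b)^p\le C(a^p+b^p)$, $|g_2(x,t)|\le C(U_\epsilon^p(x,0)+t_+^p)$. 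Since all the relevant integrals are supported either on~$I_{\bar l}$ or outside~$B_{r_\epsilon}^+$, where~$U_\epsilon$ has negligible mass, the contribution of~$U_\epsilon$ inside~$g_2$ is harmless and the~$\epsilon|h|$ piece is absorbed, leading to $|\langle\mathcal I_\epsilon'(V_k),V_k\rangle|+|\langle\mathcal I_\epsilon'(W_k),W_k\rangle|\le C\epsilon^{1/\gamma}+o_k(1)$ by hypothesis~(ii).

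I would then carry out the three-step lower-bound scheme. In Step~1, applying~\eqref{LA1-00} and~\eqref{LA2-00} with~$U$ replaced by~$V_k$ and choosing~$\delta$ and~$\epsilon$ small so that the coefficient of~$\|(V_k)_+(\cdot,0)\|_{L^{2^*_s}(\R^n)}^{2^*_s}$ stays positive, one gets~$\mathcal I_\epsilon(V_k)\ge-C_\epsilon+o_k(1)$, with~$C_\epsilon\to0$ as~$\epsilon\to0$ (the subcritical part costs only~$O(\epsilon)$ since~$q+1<2$ and~$h\in L^m$ by~\eqref{h0}, and the~$U_\epsilon$-dependent constants are controlled through~$[U_\epsilon]_a\to0$). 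In Step~2, I rescale~$\psi_k:=\alpha_k W_k$ with~$\alpha_k^{p-1}=[W_k]_a^2/\|(W_k)_+(\cdot,0)\|_{L^{p+1}(\R^n)}^{p+1}$; combining~\eqref{LA2-00}, the trace inequality~\eqref{TraceIneq}, the identity~$\frac12-\frac1{p+1}=\frac sn$, and the fact that the contradiction hypothesis together with~$k(\rho)\to+\infty$ keeps~$\int_{\R^n}(W_k)_+^{p+1}(x,0)\,dx$ away from zero, one obtains~$\int_{\R^n}(W_k)_+^{p+1}(x,0)\,dx\ge S^{n/2s}-C_\epsilon+o_k(1)$ and hence~$\mathcal I_\epsilon(W_k)\ge\frac sn S^{n/2s}-C_\epsilon+o_k(1)$. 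In Step~3, I expand~$\mathcal I_\epsilon(U_k)$ via~$U_k=V_k+W_k$ and the identity~\eqref{DER} applied to the pairs~$(U_k,V_k)$ and~$(U_k,W_k)$: the~$\epsilon$-weighted and cross~$g$-terms are again~$O(\epsilon^{1/\gamma})+o_k(1)$, while the critical cross terms are absorbed via~$1-\chi^{p+1}-(1-\chi)^{p+1}\ge0$, valid since~$p+1>2$, which gives~$\mathcal I_\epsilon(U_k)\ge\mathcal I_\epsilon(V_k)+\mathcal I_\epsilon(W_k)-C\epsilon^{1/\gamma}$. Passing to the limit~$k\to+\infty$ and using Steps~1--2 yields~$c_\epsilon\ge\frac sn S^{n/2s}-C\epsilon^{1/2\gamma}$, which contradicts~\eqref{ceps2}; the final assertion about tightness in the sense of Definition~\ref{defTight} is then immediate, since the displayed inequality controls in particular the weighted Dirichlet tail (the tail of the negative part of~$U_k(\cdot,0)$ being absorbed by the weighted Sobolev--trace estimates once the gradient tail is under control).

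The main obstacle, compared with the already established Lemma~\ref{tightness}, is precisely this absence of homogeneity: the localization~$U_k=V_k+W_k$ does not split~$\int g(x,U_k)U_k$ exactly, so one has to control the mixed contributions of the frozen profile~$U_\epsilon$ inside~$g$. This is resolved by choosing the cut-off radius~$r_\epsilon$ so large that~$U_\epsilon$ has negligible tail there, so that on the relevant region~$g_2(x,t)=(U_\epsilon+t_+)^p-U_\epsilon^p$ is effectively comparable to~$t_+^p$, while the genuinely subcritical remainder~$g_1$, of size~$O(\epsilon|h|t_+^q)$, is absorbed into the~$\epsilon$-error; beyond this, the bookkeeping of the various powers of~$\epsilon$ appearing in~\eqref{LA1-00}--\eqref{LA2-00} and in the annulus estimate is routine and entirely parallel to the computation already carried out for~$\mathcal F_\epsilon$.
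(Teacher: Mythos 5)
Your overall architecture (contradiction hypothesis, pigeonhole over the annuli $I_l$, cut-off splitting $U_k=V_k+W_k$, three-step lower bounds) is the paper's, but there is a genuine gap in how you handle the interior piece, and it breaks the final contradiction. The paper never pairs $\mathcal I_\epsilon'$ against $W_{i,k}$ alone: it pairs against $U_\epsilon+W_{i,k}$ (see \eqref{laskgprekjn b}--\eqref{boundWi}) and then uses the weak convergence $W_{i,k}\rightharpoonup0$ through Lemma~\ref{lemma:conv} and dominated convergence (as in \eqref{ccc96}, \eqref{ccc95}, \eqref{ewhupytlkvdxsmnvew}) so that every cross term involving $U_\epsilon$ is $o_k(1)$ at fixed $\epsilon$; Step~1 then reduces to the manifestly nonnegative limit $\left(\tfrac12-\tfrac1{p+1}\right)\lim_k\int_{\R^n}\big[(U_\epsilon+(W_{1,k})_+)^{p+1}-U_\epsilon^{p+1}\big]\,dx$, giving $\mathcal I_\epsilon(W_{1,k})\ge -C\epsilon^{1/2\gamma}+o_k(1)$ with a constant built only from the fixed data. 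Your Step~1 instead invokes \eqref{LA1-00}--\eqref{LA2-00} with $U=V_k$; these carry the additive constants $C_{\delta,\epsilon}$ and $C_\epsilon$, which contain quantities of the type $M_\delta\|U_\epsilon(\cdot,0)\|_{L^{p+1}(\R^n)}^{p+1}$, and the only information available about them is that they tend to $0$ as $\epsilon\to0$ (since $[U_\epsilon]_a\to0$ by Theorem~\ref{MINIMUM}), with no rate established anywhere. Hence your argument yields $c_\epsilon\ge\frac sn S^{n/2s}-C\epsilon^{1/2\gamma}-\omega(\epsilon)$ with an unquantified $\omega(\epsilon)$, which is compatible with \eqref{ceps2}: the claimed display $c_\epsilon\ge\frac sn S^{n/2s}-C\epsilon^{1/2\gamma}$, and therefore the contradiction, does not follow. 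Your ``negligible tail of $U_\epsilon$'' device is sound only on the support of $W_k$, i.e.\ outside $B_{r_\epsilon}$; for the piece $V_k$, where $U_\epsilon$ carries its mass, the cross terms must die in $k$ (via Lemma~\ref{lemma:conv}), not in $\epsilon$, and this is precisely what the paper's choice of test function accomplishes.

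Two further points. First, you cannot ``choose $r_\epsilon$ so that the tails of both $U_k$ and $U_\epsilon$ are small'': uniform smallness of the tails of $U_k$ is the statement being proved; only the tail of $U_\epsilon$ is at your disposal, and the smallness of $U_k$ on the transition annulus comes exclusively from the pigeonhole --- which is why the paper's version \eqref{epsBoundV2} of the pigeonholed sum includes the mixed quantities $y^a|\nabla(U_k+U_\epsilon)|^2$, $y^a(|U_k|+U_\epsilon)^{2\gamma}$ and $(|U_k|+U_\epsilon)^{2^*_s}$. Second, the $o_k(1)$'s you assert in Step~3 for the cross terms supported near the origin (e.g.\ $\int U_\epsilon^p (V_k)_+$, $\int h\,U_\epsilon^q(V_k)_+$, and the Brezis--Lieb-type splitting \eqref{second}) require Lemma~\ref{lemma:conv}, which you never invoke and which the tail argument cannot replace; likewise, your closing remark that the negative part of $U_k(\cdot,0)$ is ``absorbed by the weighted Sobolev--trace estimates'' is not an argument --- the dichotomy producing the lower bound \eqref{usare} must be run with the positive part, as in \eqref{contradV2}.
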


\begin{proof} 
From Lemma~\ref{lemma:weak} we have that
\begin{equation}\begin{split}\label{weak convergence}
& U_k\rightharpoonup 0 \quad \hbox{ in }\dot{H}^s_a(\mathbb{R}^{n+1}_+) \quad {\mbox{ as }}k\to+\infty \\
{\mbox{and }}& U_k(\cdot,0)\rightarrow 0\quad  \hbox{ a.e. in }\mathbb{R}^{n}\quad {\mbox{ as }}k\to+\infty.
\end{split}\end{equation}
Now we proceed by contradiction. 
That is, we suppose that there exists~$\eta_0>0$ such that for every~$\rho>0$ there exists~$k\in\mathbb{N}$ such that 
\begin{equation}\label{contradV2}
\int_{\R^{n+1}_+\setminus B_\rho^+} y^a |\nabla U_k|^2\,dX +
\int_{ \R^n \setminus\left(B_\rho\cap\{y=0\}\right) } (U_k)_+^{2^*_s}(x,0)\,dx \ge \eta_0.
\end{equation}
Proceeding as in \eqref{forse0}, one can prove that actually $k\rightarrow +\infty$ as $\rho\rightarrow +\infty$.

Let $U_\varepsilon$ be the local minimum of the functional $\mathcal{F}_\varepsilon$ found in Theorem~\ref{MINIMUM}.
Since~$U_\epsilon\in\dot{H}^s_a(\R^{n+1}_+)$, from
Propositions~\ref{WeightedSob}
and~\ref{traceIneq} we have that for any~$\epsilon>0$ there exists~$r:=r_\epsilon>0$ such that 
\begin{equation}\label{lasfrepungfmh}\begin{split}
&\int_{\R^{n+1}_+\setminus B_r^+}y^a|\nabla U_\epsilon|^2\,dX + 
\int_{\R^{n+1}_+\setminus B_r^+}y^a|U_\epsilon|^{2\gamma}\,dX \\&\qquad+ 
\int_{\R^{n}\setminus \left(B_r^+\cap \{y=0\}\right)}|U_\epsilon(x,0)|^{2^*_s}\,dx <\epsilon,
\end{split}\end{equation}
where~$\gamma:=1+\frac{2}{n-2s}$. Moreover, by~\eqref{uniformBoundUk} and 
again by Propositions~\ref{WeightedSob}
and~\ref{traceIneq} we deduce that 
\begin{equation}\label{epsBoundV2-3}
\begin{split}
\int_{\R^{n+1}_+}y^a&|\nabla U_k|^2\,dX+\int_{\R^{n+1}_+}{y^a|U_k|^{2\gamma}\,dX}+\int_{\R^n}{|U_k(x,0)|^{2^*_s}\,dx}\\
&+\int_{\R^{n+1}_+}{y^a|\nabla (U_k+U_\varepsilon)|^2\,dX}
+\int_{\R^{n+1}_+}{y^a(|U_k|+U_\varepsilon)^{2\gamma}\,dX}\\
&+\int_{\R^n}{(|U_k(x,0)|+U_\varepsilon(x,0))^{2^*_s}\,dx}\leq \tilde{M},
\end{split}\end{equation}
for some~$\tilde{M}>0$.

Now let~$j_\epsilon\in\N$ be integer part of~$\frac{\tilde{M}}{\epsilon}$, 
and set, for any~$l\in\{0,1,\ldots,j_\epsilon\}$
$$ I_l:=\{(x,y)\in\R^{n+1}_+ : r+l\le |(x,y)|\le r+l+1\}.$$
Notice that~$j_\epsilon\to +\infty$ as~$\epsilon\to 0$. 
Therefore, by~\eqref{epsBoundV2-3} we have that 
\begin{eqnarray*}
(j_\epsilon+1)\epsilon &\ge & \frac{\tilde{M}}{\epsilon}\epsilon\\
&\ge & \sum_{l=0}^{j_\epsilon} \Big( 
\int_{I_l}y^a|\nabla U_k|^2\,dX+\int_{I_l}{y^a|U_k|^{2\gamma}\,dX}
+\int_{I_l\cap\{y=0\}}{|U_k(x,0)|^{2^*_s}\,dx}\\
&&\qquad\quad +\int_{I_l}{y^a|\nabla (U_k+U_\varepsilon)|^2\,dX}
+\int_{I_l}{y^a(|U_k|+U_\varepsilon)^{2\gamma}\,dX}\\
&&\qquad\quad +\int_{I_l\cap\{y=0\}}{(|U_k(x,0)|+U_\varepsilon(x,0))^{2^*_s}\,dx} \Big).
\end{eqnarray*}
This implies that there exists~$\bar{l}\in\{0,1,\ldots,j_\epsilon\}$
such that, up to a subsequence, 
\begin{equation}\label{epsBoundV2}
\begin{split}
\int_{I_{\bar{l}}}y^a&|\nabla U_k|^2\,dX+\int_{I_{\bar{l}}}{y^a|U_k|^{2\gamma}\,dX}+\int_{I_{\bar{l}}\cap\{y=0\}}{|U_k(x,0)|^{2^*_s}\,dx}\\
&+\int_{I_{\bar{l}}}{y^a|\nabla (U_k+U_\varepsilon)|^2\,dX}
+\int_{I_{\bar{l}}}{y^a(|U_k|+U_\varepsilon)^{2\gamma}\,dX}\\
&+\int_{I_{\bar{l}}\cap\{y=0\}}{(|U_k(x,0)|+U_\varepsilon(x,0))^{2^*_s}\,dx}
\leq \varepsilon.
\end{split}\end{equation}

\noindent Let now $\chi\in C^\infty_0(\R^{n+1}_+,[0,1])$ be a cut-off function such that  
\begin{equation}\label{chi}
\chi(x,y)=\begin{cases}
1,\qquad |(x,y)|\leq r+\bar{l},\\
0,\qquad |(x,y)|\geq r+\bar{l}+1,
\end{cases}
\quad {\mbox{ and }} \quad |\nabla \chi|\leq 2.
\end{equation}
Define, for any~$k\in\N$,
\begin{equation}\label{def Wk}
W_{1,k}:=\chi U_k \quad {\mbox{ and }} \quad W_{2,k}:=(1-\chi)U_k.
\end{equation}
Hence~$W_{1,k}+W_{2,k}=U_k$ for any~$k\in\N$.
Moreover, 
\begin{equation}\label{conv Wk}
W_{1,k},\,W_{2,k}\rightharpoonup 0 \quad {\mbox{ in }} \dot{H}^s_a(\R^{n+1}_+),
\end{equation}
as~$k\to+\infty$. Indeed, for any~$\Psi\in\dot{H}^s_a(\R^{n+1}_+)$ 
with~$[\Psi]_a=1$ and~$\delta>0$, we have that 
$$  \left|\int_{\mathbb{R}^{n+1}_+}y^a\langle\nabla U_{k},\nabla\Psi\rangle \,dX\right|\le\frac{\delta}{2},$$
for any~$k$ sufficiently large, say~$k\ge \bar{k}(\delta)$, thanks to~\eqref{weak convergence}. Moreover, the compactness
result in Lemma~\ref{lemma:compact}
implies that
$$ \int_{I_{\bar{l}} }y^a|U_k|^2\,dX\le \frac{\delta^2}{16},$$
for~$k$ large enough (say~$k\ge\bar{k}(\delta)$, 
up to renaming~$\bar{k}(\delta)$). Therefore, recalling~\eqref{def Wk} and~\eqref{chi}
and using H\"older inequality, we obtain that 
\begin{eqnarray*}
&&\left|\int_{\mathbb{R}^{n+1}_+}y^a\langle\nabla W_{1,k},\nabla\Psi\rangle \,dX \right|\\&=& 
\left|\int_{\mathbb{R}^{n+1}_+}y^a\langle\nabla (\chi\,U_{k}),\nabla\Psi\rangle \,dX\right|\\
&\le  & \left|\int_{\mathbb{R}^{n+1}_+}y^a\chi \langle\nabla U_{k},\nabla\Psi\rangle \,dX\right| + 
\left|\int_{\mathbb{R}^{n+1}_+}y^a U_k \langle\nabla\chi,\nabla\Psi\rangle \,dX\right|\\
&\le & \left|\int_{\mathbb{R}^{n+1}_+}y^a\langle\nabla U_{k},\nabla\Psi\rangle \,dX\right|+ \int_{\mathbb{R}^{n+1}_+}y^a |U_k|\, |\nabla\chi|\,|\nabla\Psi|\,dX\\
&\le & \frac{\delta}{2} + 2\sqrt{\int_{\mathbb{R}^{n+1}_+}y^a |U_k|^2\,dX} 
\cdot\sqrt{\int_{\mathbb{R}^{n+1}_+}y^a |\nabla\Psi|^2\,dX}\\
&\le &\frac{\delta}{2}+2\frac{\delta}{4}\\
&=&\delta,
\end{eqnarray*}
which proves~\eqref{conv Wk} for~$W_{1,k}$. The proof for~$W_{2,k}$ 
is similar, and so we omit it.  

Furthermore, from~\eqref{conv Wk} and Theorem~7.1 in~\cite{DPV} we have that 
\begin{equation}\label{convWi}\begin{split}
& W_{i,k}(\cdot,0)\rightarrow 0\quad {\mbox{ a.e. }}\R^n,\\
&{\mbox{and }}(U_\varepsilon+W_{i,k})(\cdot,0)\rightarrow U_\varepsilon(\cdot,0)\ \hbox{ in }L^\alpha_{\rm{loc}}(\R^n),\;\; \forall\;1\leq \alpha<2^*_s,\; i=1,2,
\end{split}\end{equation}
as~$k\to+\infty$. Notice also that there exists a positive constant~$C$ 
(independent of~$k$) such that 
\begin{equation}\label{lim124}
[U_\epsilon +W_{i,k}]_a\le C,
\end{equation}
for~$i\in\{1,2\}$. Let us show~\eqref{lim124} only for~$W_{1,k}$, 
being the proof for~$W_{2,k}$ similar. From~\eqref{def Wk} we obtain that 
\begin{eqnarray*}
&& [W_{i,k}]_a^2=\int_{\R^{n+1}_+}y^a|\nabla W_{1,k}|^2\,dX 
= \int_{\R^{n+1}_+}y^a|\nabla(\chi U_k)|^2\,dX\\
&&\qquad \le 2\int_{\R^{n+1}_+}y^a\chi^2 |\nabla U_k|^2\,dX +
2\int_{\R^{n+1}_+}y^a |U_k|^2|\nabla\chi|^2\,dX\\
&&\qquad \le 2M +8\int_{I_{\bar{l}}}y^a |U_k|^2\,dX\le 2M+8C,
\end{eqnarray*}
for some~$C>0$ independent of~$k$, 
thanks to~\eqref{uniformBoundUk}, \eqref{chi} 
and Lemma~\ref{lemma:compact}. This, together with the fact 
that~$U_\epsilon\in\dot{H}^s_a(\R^{n+1}_+)$, gives~\eqref{lim124}.

Therefore, using hypothesis~(ii), 
\begin{equation}\label{conv1}
\lim_{k\to+\infty}\langle\mathcal{I}_\varepsilon'(U_k),
U_\varepsilon+W_{i,k}\rangle =0,\qquad i=1,2.
\end{equation}
On the other hand, by~\eqref{DER},
\begin{equation}\begin{split}\label{laskgprekjn b}
&\left| 
\langle \mathcal{I}_\varepsilon'(U_k)-\mathcal{I}_\varepsilon'(W_{1,k}),
U_\varepsilon+W_{1,k}\rangle \right| \\
\le\,& \left| \int_{\mathbb{R}^{n+1}_+} {y^a\langle \nabla (1-\chi)U_{k},\nabla(U_\varepsilon+W_{1,k})\rangle\,dX} \right|\\
& +\left| \varepsilon \int_{\mathbb{R}^n}{h(x)\left( (U_\varepsilon + (W_{1,k})_+ )^q-(U_\varepsilon +(U_k)_+)^q \right)(x,0)(U_\varepsilon+W_{1,k})(x,0)\,dx} \right| \\
&+\left| \int_{\mathbb{R}^n} \left( (U_\varepsilon+(W_{1,k})_+)^p
-(U_\varepsilon+(U_k)_+)^p \right)(x,0)(U_\varepsilon+W_{1,k})(x,0)\,dx \right|\\
=:\,& I_1+I_2+I_3.
\end{split}\end{equation}
To estimate~$I_1$, notice that~$I_1\le I_{1,1}+I_{1,2}$, where 
\begin{eqnarray*}
I_{1,1}&:=&\left|\int_{ \mathbb{R}^{n+1}_+} {y^a\langle \nabla (1-\chi)U_{k},\nabla U_\varepsilon\rangle\,dX} \right|\\
{\mbox{and }}\quad I_{1,2}&:=&\left|\int_{\mathbb{R}^{n+1}_+} {y^a\langle \nabla (1-\chi)U_{k},\nabla(\chi U_{k})\rangle\,dX} \right|.
\end{eqnarray*}
We split further~$I_{1,1}$ as 
\begin{eqnarray*}
I_{1,1}\le \left|\int_{\R^{n+1}_+\setminus B^+_{r+\bar{l}} } {y^a(1-\chi)\langle \nabla U_{k},\nabla U_\varepsilon\rangle\,dX} \right| + 
\left|\int_{ I_{\bar{l}}} {y^a U_k\langle \nabla (1-\chi),\nabla U_\varepsilon\rangle\,dX} \right|\\
\end{eqnarray*}
Since~$B^+_r\subset B^+_{r+\bar{l}}$, by H\"older inequality, \eqref{lasfrepungfmh} and~\eqref{uniformBoundUk} we have that
\begin{eqnarray*}
&&\left|\int_{\R^{n+1}_+\setminus B^+_{r+\bar{l}}} {y^a(1-\chi)\langle \nabla U_{k},\nabla U_\varepsilon\rangle\,dX} \right|\\
&&\qquad \le  \sqrt{\int_{\R^{n+1}_+} {y^a|\nabla U_{k}|^2\,dX}}
\cdot\sqrt{\int_{\R^{n+1}_+\setminus B^+_r} {y^a|\nabla U_\epsilon|^2\,dX}}
\le M\epsilon^{1/2}.
\end{eqnarray*}
Moreover, by~\eqref{chi} and applying twice the H\"older inequality 
(first with exponent~$1/2$ and then
with exponents~$\gamma$ and~$\frac{\gamma}{\gamma-1}$) we obtain that
\begin{eqnarray*}
&& \left|\int_{ I_{\bar{l}}} {y^a U_k\langle \nabla (1-\chi),\nabla U_\varepsilon\rangle\,dX} \right|\\
&&\qquad \le 2\left(\int_{I_{\bar{l}}}y^a|U_k|^2\,dX\right)^{1/2}
\left(\int_{I_{\bar{l}}}y^a|\nabla U_\epsilon|^2\,dX\right)^{1/2}\\
&&\qquad \le 2\left(\int_{I_{\bar{l}}}y^a|U_k|^2\,dX\right)^{1/2}
\left(\int_{\R^{n+1}_+}y^a|\nabla U_\epsilon|^2\,dX\right)^{1/2}\\
&&\qquad \le C\left(\int_{I_{\bar{l}}} y^a\,dX\right)^{\frac{\gamma-1}{2\gamma}} 
\left(\int_{I_{\bar{l}}}y^a|U_k|^{2\gamma}\,dX\right)^{\frac{1}{2\gamma}}
\le C\epsilon^{1/2\gamma},
\end{eqnarray*}
up to renaming constants, where~\eqref{epsBoundV2} was used in the last line. 
Hence, 
$$ I_{1,1}\le C\epsilon^{1/2\gamma},$$
for a suitable constant~$C>0$. Let us estimate~$I_{1,2}$: 
\begin{eqnarray*}
I_{1,2}&\le &\left|\int_{I_{\bar{l}}} {y^a|U_k|^2\langle \nabla (1-\chi),\nabla\chi\rangle\,dX} \right|
+ \left|\int_{I_{\bar{l}}} {y^a U_k\chi\langle\nabla (1-\chi),\nabla U_{k}\rangle\,dX} \right|\\ 
&&\qquad +\left|\int_{I_{\bar{l}}} {y^a\chi(1-\chi)|\nabla U_{k}|^2\,dX} \right|
+\left|\int_{I_{\bar{l}}} {y^a(1-\chi)U_k \langle\nabla U_{k},\nabla\chi\rangle\,dX} \right|.
\end{eqnarray*}
Thus, in the same way as before, and using~\eqref{epsBoundV2} once more, 
we obtain that~$I_{1,2}\le C\epsilon^{1/2\gamma}$ for some~$C>0$. Therefore
\begin{equation} \label{est1}
I_1 \le C\epsilon^{1/2\gamma},
\end{equation}
for some positive constant~$C$. 

We estimate now~$I_2$. For this, we first observe that formulas~\eqref{fdcvbpfp11} and~\eqref{def Wk} give that
$$ \left|(U_\epsilon+(W_{1,k})_+)^q-(U_\epsilon+(U_k)_+)^q\right| \le 
L|(W_{1,k})_+-(U_k)_+|^q = L (U_k)_+^q|1-\chi|^q,$$
for a suitable constant~$L>0$. 
Consequently, applying H\"older inequality with 
exponents~$\frac{2^*_s}{2^*_s-1-q}$, $\frac{2^*_s}{q}$ and~$2^*_s$ 
we obtain that 
\begin{eqnarray*}
I_2 &\le & \epsilon \int_{\R^n}|h|\,\left|(U_\epsilon+(W_{1,k})_+)^q-(U_\epsilon+(U_k)_+)^q\right|\,|U_\epsilon+W_{1,k}|\,dx\\
&\le & \epsilon L\int_{\R^n}|h| (U_k)_+^q |U_\epsilon+W_{1,k}|\,dx\\
&\le & \epsilon L \|h\|_{L^{\frac{2^*_s}{2^*_s-1-q}}(\R^n)} 
\left(\int_{\R^n}(U_k)_+^{2^*_s}\,dx\right)^{\frac{q}{2^*_s}}
\left( \int_{\R^n}|U_\epsilon+W_{1,k}|^{2^*_s}\,dx\right)^{\frac{1}{2^*_s}}
\\
&\le & C\epsilon, 
\end{eqnarray*}
for some~$C>0$, thanks to~\eqref{h0}, \eqref{uniformBoundUk} and Proposition~\ref{traceIneq}.

To estimate $I_3$, let us define the auxiliary function
$$ f(t):=(U_\varepsilon+t\chi (U_k)_+ +(1-t)(U_k)_+)^p,\qquad t\in[0,1].$$
Thus, recalling \eqref{def Wk}, we have that
\begin{equation*}\begin{split}
& \left|(U_\varepsilon+(W_{1,k})_+)^p-(U_\varepsilon+(U_k)_+)^p\right|
=\left|(U_\varepsilon+\chi(U_k)_+)^p-(U_\varepsilon+(U_k)_+)^p\right|
\\&\qquad =|f(1)-f(0)|=\left|\int_0^1f'(t)\,dt\right|\\
&\qquad \le p(1-\chi)(U_k)_+\int_0^1|U_\varepsilon+t\chi (U_k)_+ +(1-t)(U_k)_+|^{p-1}\,dt\\
&\qquad \leq p(1-\chi)(U_k)_+(U_\varepsilon+(U_k)_+)^{p-1}
\leq C(1-\chi)(U_k)_+ U_\varepsilon^{p-1}+C(1-\chi)(U_k)_+^p,
\end{split}\end{equation*}
for a suitable positive constant $C$. Therefore,
\begin{equation*}\begin{split}
I_3&\leq C\Big(\int_{\mathbb{R}^n}(1-\chi(x,0))(U_k)_+(x,0)U_\varepsilon^p(x,0)\,dx \\
&\qquad\quad  +\int_{\mathbb{R}^n}(1-\chi(x,0))(U_k)_+^p(x,0) U_\varepsilon(x,0)\,dx \\
&\qquad \quad +
\int_{I_{\bar{l}}} \chi(x,0)(1-\chi(x,0))U_\varepsilon^{p-1}(x,0)(U_k)_+^2(x,0)\,dx 
\\&\qquad\quad +\int_{I_{\bar{l}}}\chi(x,0)(1-\chi(x,0))(U_k)_+^{p+1}(x,0)\,dx\Big)\\
&=: I_{3,1}+I_{3,2}+I_{3,3}+I_{3,4}.
\end{split}\end{equation*}
Concerning $I_{3,1}$ and $I_{3,2}$ we are in the position to apply Lemma \ref{lemma:conv} 
with $V_k:=U_k$, $U_o:=U_\epsilon$ and $\psi:=1-\chi(\cdot,0)$ 
(notice that $\alpha:=1$ and $\beta:=p$ and $\alpha:=p$ and $\beta:=1$, respectively). 
So we obtain that both $I_{3,1}=o_k(1)$ and $I_{3,2}=o_k(1)$. 

Moreover, using H\"older inequality with exponent $\frac{p+1}{p-1}$ and $\frac{p+1}{2}$, 
Proposition \ref{traceIneq} and \eqref{epsBoundV2}, we have 
$$I_{3,3}\leq \left(\int_{I_{\bar{l}}}U_\epsilon^{p+1}(x,0)\,dx\right)^{\frac{p-1}{p+1}}
\left(\int_{I_{\bar{l}}}(U_k)_+^{p+1}(x,0)\,dx\right)^{\frac{2}{p+1}} \le 
C\varepsilon^{\frac{2}{p+1}},$$
for a suitable $C>0$. Finally, making use of \eqref{epsBoundV2} 
once again we obtain that $I_{3,4}\le C\epsilon$, for some $C>0$. 
Consequently, putting all these informations together we get 
$$I_3\leq C\varepsilon^{\frac{2}{p+1}}+o_k(1).$$
All in all, from \eqref{laskgprekjn b} we obtain that 
\begin{equation}\label{boundW1}
|\langle \mathcal{I}_\varepsilon'(U_k)-
\mathcal{I}_\varepsilon'(W_{1,k}),U_\varepsilon+W_{1,k}\rangle|\leq C\varepsilon^{\frac{1}{2\gamma}}
+o_k(1).
\end{equation}
Likewise, it can be checked that
\begin{equation}\label{boundW2}
|\langle \mathcal{I}_\varepsilon'(U_k)-\mathcal{I}_\varepsilon'(W_{2,k}),
U_\varepsilon+W_{2,k}\rangle|\leq C\varepsilon^{\frac{1}{2\gamma}}+o_k(1).
\end{equation}
Therefore, using this and \eqref{conv1}, 
\begin{equation}\label{boundWi}
|\langle \mathcal{I}_\varepsilon'(W_{i,k}),U_\varepsilon+W_{i,k}\rangle|
\leq C\varepsilon^{\frac{1}{2\gamma}} +o_k(1), \qquad i=1,2.
\end{equation}

From now on we organize the proof in three steps as follows: 
in the forthcoming Step 1 and 2 we show lower bounds for $\mathcal{I}_\epsilon(W_{1,k})$ 
and $\mathcal{I}_\epsilon(W_{2,k})$, respectively. Then, in Step 3 we use these estimates 
to obtain a lower bound for $\mathcal{I}_\epsilon(U_k)$ that will give a contradiction 
with the assumptions on $\mathcal{I}_\epsilon$, and so the desired claim in Lemma \ref{lemma:tight}
follows. 
\smallskip 

\noindent {\em Step 1: Lower bound for $\mathcal{I}_\epsilon(W_{1,k})$.} 
From \eqref{def I} and \eqref{DER} we have 
\begin{equation}\begin{split}\label{limW1}
&\mathcal{I}_\varepsilon(W_{1,k})-
\frac{1}{2}\langle \mathcal{I}_\varepsilon'(W_{1,k}),U_\varepsilon+W_{1,k}\rangle\\
 =\,& -\frac12\int_{\R^{n+1}_+}y^a\langle\nabla U_\epsilon,\nabla W_{1,k}\rangle\,dX \\
&-\frac{\varepsilon}{q+1}\int_{\R^n}h(x)\left( 
(U_\epsilon+(W_{1,k})_+)^{q+1}(x,0) -U_\varepsilon^{q+1}(x,0)\right)\,dx \\& + 
\epsilon\int_{\R^n}h(x)U^q_\epsilon(x,0)(W_{1,k})_+(x,0)\,dx \\
& -\frac{1}{p+1}\int_{\R^n}\left( 
(U_\epsilon+(W_{1,k})_+)^{p+1}(x,0) -U_\varepsilon^{p+1}(x,0)\right)\,dx \\& + 
\int_{\R^n}U^p_\epsilon(x,0)(W_{1,k})_+(x,0)\,dx \\
& +\frac{\epsilon}{2}\int_{\R^n}h(x)
\left( (U_\epsilon+(W_{1,k})_+)^{q}(x,0)-U_\epsilon^q(x,0)\right)
\left(U_\epsilon +W_{1,k}\right)(x,0) \,dx \\
& +\frac{1}{2}\int_{\R^n}\left( (U_\epsilon+(W_{1,k})_+)^{p}(x,0)-U_\epsilon(x,0)\right)
\left(U_\epsilon +W_{1,k}\right)(x,0) \,dx.
\end{split}\end{equation}
Thanks to \eqref{conv Wk}, we have that 
$$ \lim_{k\to+\infty}\int_{\R^{n+1}_+}y^a\langle\nabla U_\epsilon,\nabla W_{1,k}\rangle\,dX=0.$$
Moreover, from Lemma \ref{lemma:conv} applied here with $V_k:=W_{1,k}$, $U_o:=U_\epsilon$, 
$\psi:=h$, $\alpha:=1$ and $\beta:=q$ we have that 
\begin{equation}\label{ccc96}
\lim_{k\to+\infty}\int_{\R^n}h(x)U^q_\epsilon(x,0)(W_{1,k})_+(x,0)\,dx =0.\end{equation}
Analogously, by taking $V_k:=W_{1,k}$, $U_o:=U_\epsilon$, 
$\psi:=1$, $\alpha:=1$ and $\beta:=p$ in Lemma \ref{lemma:conv} 
(notice that in this case $\alpha+\beta=p+1=2^*_s$ and $\psi\in L^{\infty}(\R^n)$) 
we obtain that
\begin{equation}\label{ccc95}
\lim_{k\to+\infty}\int_{\R^n}U^p_\epsilon(x,0)(W_{1,k})_+(x,0)\,dx =0.\end{equation}
Taking the limit as $k\to+\infty$ in \eqref{limW1} and using the last three formulas, we obtain that 
\begin{equation}\begin{split}\label{limW1-1}
&\lim_{k\to+\infty}\mathcal{I}_\varepsilon(W_{1,k})-
\frac{1}{2}\langle \mathcal{I}_\varepsilon'(W_{1,k}),U_\varepsilon+W_{1,k}\rangle\\
 =\,& \lim_{k\to+\infty}\Big(-\frac{\varepsilon}{q+1}\int_{\R^n}h(x)\left( 
(U_\epsilon+(W_{1,k})_+)^{q+1}(x,0) -U_\varepsilon^{q+1}(x,0)\right)\,dx \\
& \qquad \quad -\frac{1}{p+1}\int_{\R^n}\left( 
(U_\epsilon+(W_{1,k})_+)^{p+1}(x,0) -U_\varepsilon^{p+1}(x,0)\right)\,dx \\
& \qquad \quad +\frac{\epsilon}{2}\int_{\R^n}h(x)\left( 
(U_\epsilon+(W_{1,k})_+)^{q}(x,0)-U_\epsilon^q(x,0)\right)
\left(U_\epsilon +W_{1,k}\right)(x,0) \,dx \\
& \qquad \quad +\frac{1}{2}\int_{\R^n}\left( (U_\epsilon+(W_{1,k})_+)^{p}(x,0)-U_\epsilon^p(x,0)\right)
\left(U_\epsilon +W_{1,k}\right)(x,0) \,dx\Big).
\end{split}\end{equation}
Now we observe that if $x\in\R^n$ is such that $W_{1,k}(x,0)\le 0$, then 
$$ (U_\epsilon+(W_{1,k})_+)^{q}(x,0)-U_\epsilon^q(x,0)=U_\epsilon^{q}(x,0)-U_\epsilon^q(x,0)=0,$$ 
and so 
\begin{eqnarray*}
&&\int_{\R^n}h(x)\left( 
(U_\epsilon+(W_{1,k})_+)^{q}(x,0)-U_\epsilon^q(x,0)\right)
\left(U_\epsilon +W_{1,k}\right)(x,0) \,dx \\
&=& \int_{\R^n}h(x)\left( 
(U_\epsilon+(W_{1,k})_+)^{q}(x,0)-U_\epsilon^q(x,0)\right)
\left(U_\epsilon +(W_{1,k})_+\right)(x,0) \,dx\\
&=& \int_{\R^n}h(x)
(U_\epsilon+(W_{1,k})_+)^{q+1}(x,0)\,dx - \int_{\R^n} h(x)U_\epsilon^{q+1}(x,0)\,dx \\&&\qquad -
\int_{\R^n} h(x)U_\epsilon^q(x,0) (W_{1,k})_+(x,0) \,dx\\
&=& \int_{\R^n}h(x)\left((U_\epsilon+(W_{1,k})_+)^{q+1}(x,0) - U_\epsilon^{q+1}(x,0)\right)\,dx \\&&\qquad-
\int_{\R^n} h(x)U_\epsilon^q(x,0) (W_{1,k})_+(x,0) \,dx.
\end{eqnarray*}
Analogously 
\begin{eqnarray*}
&& \int_{\R^n}\left( (U_\epsilon+(W_{1,k})_+)^{p}(x,0)-U_\epsilon^p(x,0)\right)
\left(U_\epsilon +W_{1,k}\right)(x,0) \,dx\\
&=& \int_{\R^n}\left((U_\epsilon+(W_{1,k})_+)^{p+1}(x,0) - U_\epsilon^{p+1}(x,0)\right)\,dx -
\int_{\R^n} U_\epsilon^p(x,0) (W_{1,k})_+(x,0) \,dx.
\end{eqnarray*}
Therefore, using once more \eqref{ccc96} and \eqref{ccc95}, from \eqref{limW1-1} we obtain that 
\begin{equation}\begin{split}\label{limW1-2}
&\lim_{k\to+\infty}\mathcal{I}_\varepsilon(W_{1,k})-
\frac{1}{2}\langle \mathcal{I}_\varepsilon'(W_{1,k}),U_\varepsilon+W_{1,k}\rangle\\
 =\,& \lim_{k\to+\infty}\Big(-\frac{\varepsilon}{q+1}\int_{\R^n}h(x)\left( 
(U_\epsilon+(W_{1,k})_+)^{q+1}(x,0) -U_\varepsilon^{q+1}(x,0)\right)\,dx \\
& \qquad \quad -\frac{1}{p+1}\int_{\R^n}\left( 
(U_\epsilon+(W_{1,k})_+)^{p+1}(x,0) -U_\varepsilon^{p+1}(x,0)\right)\,dx \\
& \qquad \quad +\frac{\epsilon}{2} \int_{\R^n}h(x)\left((U_\epsilon+(W_{1,k})_+)^{q+1}(x,0) 
- U_\epsilon^{q+1}(x,0)\right)\,dx \\& \qquad \quad-
\frac{\epsilon}{2}\int_{\R^n} h(x)U_\epsilon^q(x,0) (W_{1,k})_+(x,0) \,dx \\
& \qquad \quad +\frac{1}{2}\int_{\R^n}\left((U_\epsilon+(W_{1,k})_+)^{p+1}(x,0) - U_\epsilon^{p+1}(x,0)\right)\,dx \\& \qquad \quad-\frac{1}{2}
\int_{\R^n} U_\epsilon^p(x,0) (W_{1,k})_+(x,0) \,dx\Big)\\
=\,& \lim_{k\to+\infty}\Big(-\frac{\varepsilon}{q+1}\int_{\R^n}h(x)\left( 
(U_\epsilon+(W_{1,k})_+)^{q+1}(x,0) -U_\varepsilon^{q+1}(x,0)\right)\,dx \\
& \qquad \quad -\frac{1}{p+1}\int_{\R^n}\left( 
(U_\epsilon+(W_{1,k})_+)^{p+1}(x,0) -U_\varepsilon^{p+1}(x,0)\right)\,dx \\
& \qquad \quad +\frac{\epsilon}{2} \int_{\R^n}h(x)\left((U_\epsilon+(W_{1,k})_+)^{q+1}(x,0) 
- U_\epsilon^{q+1}(x,0)\right)\,dx \\
& \qquad \quad +\frac{1}{2}\int_{\R^n}\left((U_\epsilon+(W_{1,k})_+)^{p+1}(x,0) - 
U_\epsilon^{p+1}(x,0)\right)\,dx \\
=\,& \lim_{k\to+\infty}\Big(-\epsilon\left(\frac{1}{q+1}-\frac12\right)\int_{\R^n}h(x)\left( 
(U_\epsilon+(W_{1,k})_+)^{q+1}(x,0) -U_\varepsilon^{q+1}(x,0)\right)\,dx \\
& \qquad \quad +\left(\frac12-\frac{1}{p+1}\right)\int_{\R^n}\left( 
(U_\epsilon+(W_{1,k})_+)^{p+1}(x,0) -U_\varepsilon^{p+1}(x,0)\right)\,dx\Big).
\end{split}\end{equation}
Now we claim that 
\begin{equation}\label{ewhupytlkvdxsmnvew}
\lim_{k\to+\infty}\int_{\R^n}h(x)\left( 
(U_\epsilon+(W_{1,k})_+)^{q+1}(x,0) -U_\varepsilon^{q+1}(x,0)\right)\,dx=0. 
\end{equation}
For this, notice that if $x\in\R^n\setminus B_{r+\bar{l}+1}$, then $W_{1,k}(x,0)=0$, thanks to 
\eqref{def Wk} and \eqref{chi}. Therefore, for any $x\in\R^n\setminus B_{r+\bar{l}+1}$ we have that 
$$ (U_\epsilon+(W_{1,k})_+)^{q+1}(x,0) -U_\varepsilon^{q+1}(x,0) = 
U_\epsilon^{q+1}(x,0) -U_\varepsilon^{q+1}(x,0)=0.$$ 
Thus 
\begin{equation}\begin{split}\label{qwtrewgrhgvr}
&\int_{\R^n}h(x)\left( 
(U_\epsilon+(W_{1,k})_+)^{q+1}(x,0) -U_\varepsilon^{q+1}(x,0)\right)\,dx \\=\,& 
\int_{B_{r+\bar{l}+1}}h(x)\left( 
(U_\epsilon+(W_{1,k})_+)^{q+1}(x,0) -U_\varepsilon^{q+1}(x,0)\right)\,dX.
\end{split}\end{equation}
Thanks to \eqref{convWi}, we have that $W_{1,k}(\cdot, 0)$ converges to zero a.e. in $\R^n$, 
and so $(W_{1,k})_+(\cdot, 0)$ converges to zero a.e. in $\R^n$, as $k\to+\infty$. 
Therefore
$$ U_\epsilon+(W_{1,k})_+^{q+1}(x,0) \to U_\varepsilon^{q+1}(x,0)\quad 
{\mbox{ for a.e. }}x\in\R^n,$$
as $k\to+\infty$. 
Moreover the strong convergence of $W_{1,k}(\cdot,0)$ in $L^{q+1}_{\rm{loc}}(\R^n)$ 
(due again to \eqref{convWi}) and Theorem 4.9 in \cite{brezis} imply that 
there exists a function $F\in L^{q+1}_{\rm{loc}}(\R^n)$ such that 
$|W_{1,k}(x,0)|\le |F(x)|$ for a.e. $x\in\R^n$. This and the boundedness of $U_\epsilon$ 
(see Corollary \ref{coro:bound}) give that 
$$ h(U_\epsilon+(W_{1,k})_+)^{q+1}\le |h|(|U_\epsilon|+|W_{1,k}|)^{q+1}\le C|h|(1+|F|^{q+1})
\in L^1(B_{r+\bar{l}+1}),$$
for a suitable $C>0$. Thus, the Dominated Convergence Theorem applies, and 
together with \eqref{qwtrewgrhgvr} give the convergence in \eqref{ewhupytlkvdxsmnvew}.

Consequently, from \eqref{limW1-2} and \eqref{ewhupytlkvdxsmnvew} we obtain that 
\begin{equation}\begin{split}\label{limPositive}
&\lim_{k\rightarrow\infty}
\left(\mathcal{I}_\varepsilon(W_{1,k})-\frac{1}{2}\langle 
\mathcal{I}_\varepsilon'(W_{1,k}),U_\varepsilon+W_{1,k}\rangle\right)\\
=\,& \left(\frac12-\frac{1}{p+1}\right) \lim_{k\rightarrow\infty}
\int_{\mathbb{R}^n} \left( (U_\varepsilon+(W_{1,k})_+)^{p+1}(x,0)-U_\varepsilon^{p+1}(x,0)\right)
\,dx \geq 0
\end{split}\end{equation}
(recall that $p+1=2^*_s>2$). 
In particular, by \eqref{boundWi} and \eqref{limPositive}, there holds
\begin{equation}\begin{split}\label{infBoundW1}
\mathcal{I}_\varepsilon(W_{1,k})&=  \mathcal{I}_\varepsilon(W_{1,k})
-\frac{1}{2}\langle \mathcal{I}_\varepsilon'(W_{1,k}),U_\varepsilon+W_{1,k}\rangle
+\frac{1}{2}\langle \mathcal{I}_\varepsilon'(W_{1,k}),U_\varepsilon+W_{1,k}\rangle \\
&\geq \mathcal{I}_\varepsilon(W_{1,k})
-\frac{1}{2}\langle \mathcal{I}_\varepsilon'(W_{1,k}),U_\varepsilon+W_{1,k}\rangle
-C\varepsilon^{\frac{1}{2\gamma}} +o_k(1)\\
&\ge -C\varepsilon^{\frac{1}{2\gamma}} +o_k(1),
\end{split}\end{equation}
where $C$ is a positive constant that may change from line to line. 

Formula \eqref{infBoundW1} provides the desired estimate from below 
for $\mathcal{I}_\epsilon(W_{1,k})$. Next step 
is to obtain an estimate from below for $\mathcal{I}_\epsilon(W_{2,k})$ as well.

\smallskip

\noindent {\em Step 2: Lower bound for $\mathcal{I}_\epsilon(W_{2,k})$.} 
We first observe that formula~\eqref{fdcvbpfp11} implies that there exists a constant $L>0$ such that
$$ |(U_\epsilon+(W_{2,k})_+)^q(x,0) -U_\epsilon^q(x,0)|\le L(W_{2,k})_+^q(x,0).$$
Hence 
\begin{equation*}\begin{split}
& \left|\varepsilon
\int_{\mathbb{R}^n} 
h(x)\left((U_\varepsilon+(W_{2,k})_+)^{q}(x,0)-U_\varepsilon^q(x,0)\right)
(U_\varepsilon+W_{2,k})(x,0)\,dx\right|\\
&\qquad \le \varepsilon
\int_{\mathbb{R}^n} 
|h(x)|\left|(U_\varepsilon+(W_{2,k})_+)^{q}(x,0)-U_\varepsilon^q(x,0)\right|
|(U_\varepsilon+W_{2,k})(x,0)|\,dx\\
&\qquad \le \varepsilon L
\int_{\mathbb{R}^n} 
|h(x)| (W_{2,k})_+^{q}(x,0)|(U_\varepsilon+W_{2,k})(x,0)|\,dx\\
&\qquad \le \varepsilon L\left(
\int_{\mathbb{R}^n} |h(x)| (W_{2,k})_+^{q}(x,0)U_\varepsilon(x,0)\,dx +
\int_{\mathbb{R}^n} |h(x)| (W_{2,k})_+^{q+1}(x,0)\,dx\right).
\end{split}\end{equation*}
Thanks to Lemma \ref{lemma:conv} (applied here with $V_k:=W_{2,k}$, $U_o:=U_\epsilon$, 
$\psi:=h$, $\alpha:=q$ and $\beta:=1$) we have that 
$$ \lim_{k\to+\infty}\int_{\mathbb{R}^n} |h(x)| (W_{2,k})_+^{q}(x,0)U_\varepsilon(x,0)\,dx=0.$$ 
Moreover, by H\"older inequality with exponents $\frac{2^*_s}{2^*_s-1-q}$ and $\frac{2^*_s}{q+1}$ 
we obtain that 
\begin{eqnarray*}
&& \int_{\mathbb{R}^n} |h(x)| (W_{2,k})_+^{q+1}(x,0)\,dx\\&&\qquad \le 
\left(\int_{\mathbb{R}^n} |h(x)|^{\frac{2^*_s}{2^*_s-1-q}}\,dx\right)^{\frac{2^*_s-1-q}{2^*_s}} 
\left(\int_{\mathbb{R}^n}(W_{2,k})_+^{2^*_s}(x,0)\,dx\right)^{\frac{q+1}{2^*_s}}
\\&&\qquad \le C \left(\int_{\mathbb{R}^n}(U_{k})_+^{2^*_s}(x,0)\,dx\right)^{\frac{q+1}{2^*_s}} 
\le C [U_k]_a^{q+1}\le C,
\end{eqnarray*}
for some constant $C>0$, 
where we have also used \eqref{h0}, Proposition \ref{traceIneq} and Corollary \ref{coro:bound}. 
The last three formulas imply that 
\begin{equation*}
\left|\varepsilon
\int_{\mathbb{R}^n} 
h(x)\left((U_\varepsilon+(W_{2,k})_+)^{q}(x,0)-U_\varepsilon^q(x,0)\right)
(U_\varepsilon+W_{2,k})(x,0)\,dx\right|\le C\epsilon +o_k(1),\end{equation*}
for a suitable $C>0$.
This, together with \eqref{DER}, \eqref{conv Wk} and \eqref{boundWi} (with $i=2$) gives 
\begin{equation}\begin{split}\label{upBoundW2}
& \int_{\mathbb{R}^{n+1}_+}{y^a|\nabla W_{2,k}|^2\,dX}\\
=\,& \langle \mathcal{I}_\epsilon'(W_{2,k}, U_\epsilon+W_{2,k}\rangle 
- \int_{\mathbb{R}^{n+1}_+}{y^a\langle\nabla W_{2,k}, \nabla U_\epsilon\rangle,dX}\\
&\qquad + \epsilon\int_{\R^n}h(x)\left((U_\varepsilon+(W_{2,k})_+)^{q}(x,0)-U_\varepsilon^q(x,0)\right)
(U_\varepsilon+W_{2,k})(x,0)\,dx \\
&\qquad 
+ \int_{\R^n}\left((U_\varepsilon+(W_{2,k})_+)^{p}(x,0)-U_\varepsilon^p(x,0)\right)
(U_\varepsilon+W_{2,k})(x,0)\,dx \\
 \leq\,& C\varepsilon^{\frac{1}{2\gamma}} +o_k(1) 
+\int_{\R^n}\left((U_\varepsilon+(W_{2,k})_+)^{p}(x,0)-U_\varepsilon^p(x,0)\right)
(U_\varepsilon+W_{2,k})(x,0)\,dx.
\end{split}\end{equation}
Now notice that if $x\in\R^n$ is such that $W_{2,k}\le 0$ then 
$$ (U_\varepsilon+(W_{2,k})_+)^{p}(x,0)-U_\varepsilon^p(x,0) =0,$$
and so 
\begin{eqnarray*}
&& \int_{\R^n}\left((U_\varepsilon+(W_{2,k})_+)^{p}(x,0)-U_\varepsilon^p(x,0)\right)
(U_\varepsilon+W_{2,k})(x,0)\,dx \\
&=& \int_{\R^n}\left((U_\varepsilon+(W_{2,k})_+)^{p}(x,0)-U_\varepsilon^p(x,0)\right)
(U_\varepsilon+(W_{2,k})_+)(x,0)\,dx\\
&=& \int_{\R^n}\left((U_\varepsilon+(W_{2,k})_+)^{p+1}(x,0)
-U_\varepsilon^{p+1}(x,0)\right)\,dx -\int_{\R^n}
U_\varepsilon^{p}(x,0)(W_{2,k})_+(x,0)\,dx.
\end{eqnarray*}
According to Lemma \ref{lemma:conv} (applied here with $V_k:=W_{2,k}$, $U_o:=U_\epsilon$, 
$\psi:=1$, $\alpha:=1$ and $\beta:=p$) we have that 
$$ \lim_{k\to+\infty}\int_{\R^n}
U_\varepsilon^{p}(x,0)(W_{2,k})_+(x,0)\,dx=0.$$ 
Therefore, \eqref{upBoundW2} becomes
\begin{equation}\begin{split}\label{upBoundW2-1}
& \int_{\mathbb{R}^{n+1}_+}{y^a|\nabla W_{2,k}|^2\,dX}\\
\le\, & \int_{\R^n}\left((U_\varepsilon+(W_{2,k})_+)^{p+1}(x,0)
-U_\varepsilon^{p+1}(x,0)\right)\,dx + C\varepsilon^{\frac{1}{2\gamma}} +o_k(1).
\end{split}\end{equation}
Furthermore, it is not difficult to see that that there exist two constants $0<c_1<c_2$ such that 
$$ c_1\leq \frac{(1+t)^{p+1}-1-t^{p+1}}{t^p+t}\leq c_2,\qquad t>0 $$
Thus, setting $t:=\frac{(W_{2,k})_+}{U_\epsilon}$, one has
\begin{eqnarray*}
&& (U_\varepsilon+(W_{2,k})_+)^{p+1}-U_\varepsilon^{p+1}= 
U_\epsilon^{p+1} \left[ \left(  1+\frac{(W_{2,k})_+}{U_\epsilon} \right)^{p+1}-1\right] \\
&&\qquad \le U_\epsilon^{p+1} \left[ c_2 \left(   \frac{(W_{2,k})_+^p}{U_\epsilon^p}
+\frac{(W_{2,k})_+}{U_\epsilon} \right) + \frac{(W_{2,k})_+^{p+1}}{U_\epsilon^{p+1}} \right] \\
&&\qquad =c_2 U_\epsilon (W_{2,k})_+^p +c_2 U_\epsilon^p(W_{2,k})_+ +(W_{2,k})_+^{p+1}.
\end{eqnarray*}
Therefore 
\begin{equation*}\begin{split}
& \int_{\mathbb{R}^n}((U_\varepsilon+(W_{2,k})_+)^{p+1}(x,0)-
U_\varepsilon^{p+1}(x,0))\,dx \\
\leq\,&  c_2\int_{\mathbb{R}^n}{U_\varepsilon^p(x,0) (W_{2,k})_+(x,0)\,dx}
+c_2 \int_{\mathbb{R}^n}{(W_{2,k})_+^p(x,0)U_\varepsilon(x,0) \,dx}
\\&\qquad+\int_{\mathbb{R}^n}{(W_{2,k})_+^{p+1}(x,0) \,dx}.
\end{split}\end{equation*}
Applying Lemma \ref{lemma:conv} once more, we obtain that 
\begin{eqnarray*}
&&\lim_{k\to+\infty}\int_{\mathbb{R}^n}{U_\varepsilon^p(x,0) (W_{2,k})_+(x,0)\,dx}=0\\
{\mbox{and }} && \lim_{k\to+\infty}\int_{\mathbb{R}^n}{(W_{2,k})_+^p(x,0)U_\varepsilon(x,0) \,dx}=0.
\end{eqnarray*}
Hence, going back to \eqref{upBoundW2-1}, we get 
\begin{equation}\label{upW2}
\int_{\mathbb{R}^{n+1}_+}{y^a|\nabla W_{2,k}|^2\,dX}\leq 
\int_{\mathbb{R}^n}{(W_{2,k})_+^{p+1}(x,0)\,dx} +C\varepsilon^{\frac{1}{2\gamma}}+o_k(1).
\end{equation}

Now we observe that, thanks to \eqref{def Wk}, $W_{2,k}=U_k$ outside $B_{r+\bar{l}+1}$. 
So, using \eqref{contradV2} with $\rho:=r+\bar{l}+1$, we have 
\begin{eqnarray*}
&& \int_{\R^{n+1}_+\setminus B_{r+\bar{l}+1}^+} y^a |\nabla W_{2,k}|^2\,dX +
\int_{ \R^n \setminus\left(B_{r+\bar{l}+1}\cap\{y=0\}\right) } (W_{2,k})_+^{2^*_s}(x,0)\,dx \\
&&\qquad =
\int_{\R^{n+1}_+\setminus B_{r+\bar{l}+1}^+} y^a |\nabla U_k|^2\,dX +
\int_{ \R^n \setminus\left(B_{r+\bar{l}+1}\cap\{y=0\}\right) } (U_k)_+^{2^*_s}(x,0)\,dx \ge \eta_0,
\end{eqnarray*}
for some $k$ that depends on $\rho$. 
This implies that either 
$$\int_{ \R^{n+1}_+\setminus B_{r+\bar{l}+1}^+ }{y^a|\nabla W_{2,k}|^2\,dX}\geq\frac{\eta_0}{2}$$
or
$$\int_{\mathbb{R}^n\setminus\{B_{r+\bar{l}+1}\cap \{y=0\}\}}{(W_{2,k})_+^{p+1}(x,0)\,dx}\geq \frac{\eta_0}{2}.
$$
In the first case we have 
$$\int_{\mathbb{R}^{n+1}_+}{y^a|\nabla W_{2,k}|^2\,dX}\geq 
\int_{\R^{n+1}_+\setminus B_{r+\bar{l}+1}^+}{y^a|\nabla W_{2,k}|^2\,dX}
\ge \frac{\eta_0}{2}.$$
From this and \eqref{upW2} it follows 
\begin{equation}\label{usare}
\int_{\mathbb{R}^n}{(W_{2,k})_+^{p+1}(x,0) \,dx}> \frac{\eta_0}{4}.
\end{equation}
In the second case, this inequality holds trivially. 

Accordingly, we can define~$\psi_k:=\alpha_k W_{2,k}$, where
\begin{equation}\label{def alfa}
\alpha^{p-1}_k:=\frac{[W_{2,k}]_a^2}
{\|(W_{2,k})_+(\cdot,0)\|_{L^{p+1}(\mathbb{R}^n)}^{p+1}}.\end{equation}
We claim that 
\begin{equation}\label{wqftrejhlipppuy}
[W_{2,k}]_a^2\le \|(W_{2,k})_+(\cdot,0)\|_{L^{p+1}(\R^n)}^{p+1}+C\epsilon^{\frac{2}{p+1}}+o_k(1),
\end{equation}
for a suitable positive constant $C$. For this, notice 
that \eqref{DER}, \eqref{conv Wk} and \eqref{boundWi} give 
\begin{equation}\begin{split}\label{sqwlgergbedsdc}
[W_{2,k}]_a^2 =\,& \langle \mathcal{I}_\epsilon'(W_{2,k}),U_\epsilon+W_{2,k}\rangle  - 
\int_{\R^{n+1}_+}y^a\langle \nabla U_\epsilon,\nabla W_{2,k}\rangle\,dX \\
&\quad +\varepsilon\int_{\mathbb{R}^n}h(x)\left(  (U_\varepsilon+(W_{2,k})_+)^{q}(x,0)-U_\epsilon^q(x,0)
\right) (U_\epsilon+W_{2,k})(x,0)\,dx
\\&\quad +\int_{\mathbb{R}^n}\left( (U_\varepsilon+(W_{2,k})_+)^{p}(x,0)-U_\epsilon^p(x,0)\right)
(U_\epsilon+W_{2,k})(x,0)\,dx\\
\le\, & C\epsilon^{\frac{1}{2\gamma}} +o_k(1) \\
&\quad +\varepsilon\int_{\mathbb{R}^n}h(x)\left(  (U_\varepsilon+(W_{2,k})_+)^{q}(x,0)-U_\epsilon^q(x,0)
\right) (U_\epsilon+W_{2,k})(x,0)\,dx
\\&\quad +\int_{\mathbb{R}^n}\left( (U_\varepsilon+(W_{2,k})_+)^{p}(x,0)-U_\epsilon^p(x,0)\right)
(U_\epsilon+W_{2,k})(x,0)\,dx.
\end{split}\end{equation}
We can rewrite 
\begin{eqnarray*}
&&\int_{\mathbb{R}^n}h(x)\left(  (U_\varepsilon+(W_{2,k})_+)^{q}(x,0)-U_\epsilon^q(x,0)
\right) (U_\epsilon+W_{2,k})(x,0)\,dx\\
&=& \int_{\mathbb{R}^n}h(x)\left(  (U_\varepsilon+(W_{2,k})_+)^{q+1}(x,0)-U_\epsilon^{q+1}(x,0)\right)\,dx
\\&&\qquad-\int_{\R^n}h(x) U_\epsilon^{q}(x,0)(W_{2,k})_+(x,0)\,dx\\
&=& \int_{\mathbb{R}^n}h(x)\left(  (U_\varepsilon+(W_{2,k})_+)^{q+1}(x,0)-U_\epsilon^{q+1}(x,0)\right)\,dx
+o_k(1),
\end{eqnarray*}
where we have applied once again Lemma \ref{lemma:conv}. Analogously, 
\begin{eqnarray*}
&&\int_{\mathbb{R}^n}\left( (U_\varepsilon+(W_{2,k})_+)^{p}(x,0)-U_\epsilon^p(x,0)\right)
(U_\epsilon+W_{2,k})(x,0)\,dx\\
&=& \int_{\mathbb{R}^n}\left((U_\varepsilon+(W_{2,k})_+)^{p+1}(x,0)-U_\epsilon^{p+1}(x,0)\right)\,dx
+o_k(1).
\end{eqnarray*}
Plugging these informations into \eqref{sqwlgergbedsdc}, we obtain that
\begin{equation}\begin{split}\label{sqwlgergbedsdc-1}
[W_{2,k}]_a^2 \le& 
\epsilon 
\int_{\mathbb{R}^n}h(x)\left(  (U_\varepsilon+(W_{2,k})_+)^{q+1}(x,0)-U_\epsilon^{q+1}(x,0)\right)\,dx\\
&\quad +\int_{\mathbb{R}^n}\left((U_\varepsilon+(W_{2,k})_+)^{p+1}(x,0)-U_\epsilon^{p+1}(x,0)\right)\,dx\\
&\quad +C\epsilon^{\frac{1}{2\gamma}} +o_k(1).
\end{split}\end{equation}

\noindent So using~\eqref{usare-1} into \eqref{sqwlgergbedsdc-1} we obtain 
\begin{equation}\begin{split}\label{sqwlgergbedsdc-2}
[W_{2,k}]_a^2 \le& 
\int_{\mathbb{R}^n}\left((U_\varepsilon+(W_{2,k})_+)^{p+1}(x,0)-U_\epsilon^{p+1}(x,0)\right)\,dx\\
&\quad +C\epsilon^{\frac{1}{2\gamma}} +o_k(1),
\end{split}\end{equation}
up to renaming constants. 
Now we use~\eqref{second} with $V_k:=W_{2,k}$ and $U_o:=U_\epsilon$ and we get 
\begin{eqnarray*}
[W_{2,k}]_a^2 &\le & \int_{\mathbb{R}^n}(U_\varepsilon+(W_{2,k})_+)^{p+1}(x,0)\,dx 
-\int_{\R^n}U_\epsilon^{p+1}(x,0)\,dx \\
&&\qquad +C\epsilon^{\frac{1}{2\gamma}} +o_k(1)\\
&=& \| (U_\eps + (W_{2,k})_+)(\cdot,0)\|_{L^{p+1}(\R^n)}^{p+1}
- \| U_\eps \|_{L^{p+1}(\R^n)}^{p+1}+C\epsilon^{\frac{1}{2\gamma}} +o_k(1)\\
&=&\|(W_{2,k})_+(\cdot,0)\|_{L^{p+1}(\R^n)}^{p+1} +C\epsilon^{\frac{1}{2\gamma}}+o_k(1),
\end{eqnarray*}
and this shows~\eqref{wqftrejhlipppuy}. 

From \eqref{def alfa}, \eqref{wqftrejhlipppuy} and \eqref{usare} we have that 
\begin{equation}\label{star}
\alpha_k^{p-1}= \frac{[W_{2,k}]_a^2}
{\|(W_{2,k})_+(\cdot,0)\|_{L^{p+1}(\mathbb{R}^n)}^{p+1}}
\le 1+C\epsilon^{\frac{1}{2\gamma}} +o_k(1).
\end{equation}
Notice also that, with the choice of $\alpha_k$ in \eqref{def alfa}, it holds
$$ [\psi_k]_a^2=\alpha_k^2 [W_{2,k}]_a^2 = \alpha_k^{p+1} 
\|(W_{2,k})_+(\cdot,0)\|_{L^{p+1}(\R^n)}^{p+1}= \|(\psi_{k})_+(\cdot,0)\|_{L^{p+1}(\R^n)}^{p+1}. $$
Hence, by \eqref{equivNorms} and Proposition~\ref{traceIneq}, we have that 
\begin{eqnarray*}
&& S\le\frac{[\psi_k(\cdot,0)]_{\dot{H}^s(\R^n)}^2}{\|(\psi_k)_+(\cdot,0)\|_{L^{p+1}(\R^n)}^2} =  \frac{[\psi_k]_{a}^2}{\|(\psi_k)_+(\cdot,0)\|_{L^{p+1}(\R^n)}^2}\\
&&\qquad = \frac{\|(\psi_k)_+(\cdot,0)\|_{L^{p+1}(\R^n)}^{p+1}}{\|(\psi_k)_+(\cdot,0)\|_{L^{p+1}(\R^n)}^2} = \|(\psi_k)_+(\cdot,0)\|_{L^{p+1}(\R^n)}^{p-1}.
\end{eqnarray*}
Accordingly, 
$$ \|(W_{2,k})_+(\cdot,0)\|_{L^{p+1}(\R^n)}^{p+1}=\frac{\|(\psi_k)_+(\cdot,0)\|_{L^{p+1}(\R^n)}^{p+1}}{\alpha_k^{p+1}}\ge S^{n/2s}\frac{1}{\alpha_k^{p+1}},$$
where we have used the fact that~$p-1=2^*_s-2=\frac{4s}{n-2s}$. 
This, together with~\eqref{star}, gives that
\begin{equation}\begin{split}\label{SobW2}
S^{n/2s}\le\, & (1+C\epsilon^{\frac{2}{p+1}} +o_k(1))^{\frac{p+1}{p-1}}
\|(W_{2,k})_+(\cdot,0)\|_{L^{p+1}(\R^n)}^{p+1}\\
\le\, & \|(W_{2,k})_+(\cdot,0)\|_{L^{p+1}(\R^n)}^{p+1} 
+C\epsilon^{\frac{1}{2\gamma}} +o_k(1).
\end{split}\end{equation}
Moreover, by \eqref{conv Wk} and Lemma \ref{lemma:conv} we have that 
\begin{eqnarray*}
&& \mathcal{I}_\varepsilon(W_{2,k})-
\frac{1}{2}\langle \mathcal{I}'_\varepsilon(W_{2,k}),U_\varepsilon+W_{2,k}\rangle \\
&=& -\int_{\R^{n+1}_+}y^a\langle\nabla W_{2,k},\nabla U_\epsilon\rangle\,dX \\
&&\quad -\frac{\epsilon}{q+1}\int_{\R^n}h(x)\left( (U_\epsilon+(W_{2,k})_+)^{q+1}(x,0)
-U_\epsilon^{q+1}(x,0)\right)\,dx \\
&&\quad +\epsilon\int_{\R^n}h(x)U_\epsilon^q(x,0)(W_{2,k})_+(x,0)\,dx\\
&&\quad -\frac{1}{p+1}\int_{\R^n}\left( (U_\epsilon+(W_{2,k})_+)^{p+1}(x,0)
-U_\epsilon^{p+1}(x,0)\right)\,dx \\
&&\quad +\int_{\R^n}U_\epsilon^p(x,0)(W_{2,k})_+(x,0)\,dx\\
&&\quad +\frac{\epsilon}{2}\int_{\R^n}h(x)\left( (U_\epsilon+(W_{2,k})_+)^{q}(x,0)
-U_\epsilon^{q}(x,0)\right)(U_\epsilon+W_{2,k})(x,0))\,dx \\
&&\quad +\frac{1}{2}\int_{\R^n}\left( (U_\epsilon+(W_{2,k})_+)^{p}(x,0)
-U_\epsilon^{p}(x,0)\right)(U_\epsilon+W_{2,k})(x,0))\,dx \\
&=& -\epsilon\left(\frac{1}{q+1}-\frac12\right)
\int_{\R^n}h(x)\left( (U_\epsilon+(W_{2,k})_+)^{q+1}(x,0)
-U_\epsilon^{q+1}(x,0)\right)\,dx \\
&&\quad +\left(\frac12-\frac{1}{p+1}\right)
\int_{\R^n}\left( (U_\epsilon+(W_{2,k})_+)^{p+1}(x,0)
-U_\epsilon^{p+1}(x,0)\right)\,dx + o_k(1).
\end{eqnarray*}
We observe that~$\frac12-\frac{1}{p+1}=\frac{s}{n}$. 
Thus, using~\eqref{usare-1} we have that 
\begin{eqnarray*}
&& \mathcal{I}_\varepsilon(W_{2,k})-
\frac{1}{2}\langle \mathcal{I}'_\varepsilon(W_{2,k}),U_\varepsilon+W_{2,k}\rangle \\
&\ge & \frac{s}{n}\int_{\R^n}\left( (U_\epsilon+(W_{2,k})_+)^{p+1}(x,0)
-U_\epsilon^{p+1}(x,0)\right)\,dx -C\epsilon +o_k(1),
\end{eqnarray*}
for some~$C>0$. Therefore, using~\eqref{second} with 
$V_k:=W_{2,k}$ and $U_o:=U_\epsilon$, we have that 
$$ \mathcal{I}_\varepsilon(W_{2,k})-\frac{1}{2}\langle \mathcal{I}'_\varepsilon(W_{2,k}),U_\varepsilon+W_{2,k}\rangle\geq
\frac{s}{n}\|(W_{2,k})_+\|_{L^{p+1}(\R^n)}^{p+1}-C\epsilon +o_k(1).$$
Furthermore, by~\eqref{SobW2},
$$ \mathcal{I}_\varepsilon(W_{2,k})-\frac{1}{2}\langle \mathcal{I}'_\varepsilon(W_{2,k}),
U_\varepsilon+W_{2,k}\rangle\geq\frac{s}{n}S^{n/2s}-C\varepsilon^{\frac{1}{2\gamma}}+o_k(1).$$
This and \eqref{boundWi} give the desired estimate for $\mathcal{I}_\epsilon(W_{2,k})$, namely
\begin{equation}\label{infBoundW2}
\mathcal{I}_\varepsilon(W_{2,k})\geq\frac{s}{n}S^{n/2s}-C\varepsilon^{\frac{1}{2\gamma}}+o_k(1).
\end{equation}

\smallskip

\noindent {\em Step 3: Lower bound for $\mathcal{I}_\epsilon(U_k)$.} 
Now, keeping in mind the estimates obtained in~\eqref{infBoundW1} 
and~\eqref{infBoundW2} for~$\mathcal{I}_\epsilon(W_{1,k})$ 
and~$\mathcal{I}_\epsilon(W_{2,k})$ respectively, 
we will produce an estimate for~$\mathcal{I}_\epsilon(U_k)$. 
Indeed, notice first that~$U_k=\chi U_k +(1-\chi)U_k=W_{1,k}+W_{2,k}$, 
thanks to~\eqref{def Wk}. Hence, recalling~\eqref{def I}, we have 
\begin{eqnarray*}
&&\mathcal{I}_\varepsilon(U_k)\\ &=&\mathcal{I}_\varepsilon(W_{1,k})+\mathcal{I}_\varepsilon(W_{2,k})+
\int_{\mathbb{R}^{n+1}_+}y^a\langle\nabla W_{1,k},\nabla W_{2,k}\rangle\,dX\\
&&\quad -\frac{\epsilon}{q+1}\int_{\R^n} h(x)\left((U_\epsilon+(U_k)_+)^{q+1}(x,0)-U_\epsilon^{q+1}(x,0)\right)\,dx \\&&\quad+\epsilon\int_{\R^n}h(x)U_\epsilon^q(x,0)(U_k)_+(x,0)\,dx\\
&&\quad -\frac{1}{p+1}\int_{\R^n} \left((U_\epsilon+(U_k)_+)^{p+1}(x,0)-U_\epsilon^{p+1}(x,0)\right)\,dx \\&&\quad+\int_{\R^n}U_\epsilon^p(x,0)(U_k)_+(x,0)\,dx\\
&&\quad +\frac{\epsilon}{q+1}\int_{\R^n} h(x)\left((U_\epsilon+(W_{1,k})_+)^{q+1}(x,0)-U_\epsilon^{q+1}(x,0)\right)\,dx \\&&\quad-\epsilon\int_{\R^n}h(x)U_\epsilon^q(x,0)(W_{1,k})_+(x,0)\,dx\\
&&\quad +\frac{1}{p+1}\int_{\R^n} \left((U_\epsilon+(W_{1,k})_+)^{p+1}(x,0)-U_\epsilon^{p+1}(x,0)\right)\,dx \\&&\quad-\int_{\R^n}U_\epsilon^p(x,0)(W_{1,k})_+(x,0)\,dx\\
&&\quad +\frac{\epsilon}{q+1}\int_{\R^n} h(x)\left((U_\epsilon+(W_{2,k})_+)^{q+1}(x,0)-U_\epsilon^{q+1}(x,0)\right)\,dx\\&&\quad -\epsilon\int_{\R^n}h(x)U_\epsilon^q(x,0)(W_{2,k})_+(x,0)\,dx\\
&&\quad +\frac{1}{p+1}\int_{\R^n} \left((U_\epsilon+(W_{2,k})_+)^{p+1}(x,0)-U_\epsilon^{p+1}(x,0)\right)\,dx \\&&\quad-\int_{\R^n}U_\epsilon^p(x,0)(W_{2,k})_+(x,0)\,dx.
\end{eqnarray*} 
Thanks to Lemma~\ref{lemma:conv} we have that 
\begin{eqnarray*}
&&\lim_{k\to+\infty} \int_{\R^n}h(x)U_\epsilon^q(x,0)(U_k)_+(x,0)\,dx=0,\\
&&
\lim_{k\to+\infty}\int_{\R^n}U_\epsilon^p(x,0)(U_k)_+(x,0)\,dx,\\
&&\lim_{k\to+\infty} \int_{\R^n}h(x)U_\epsilon^q(x,0)(W_{1,k})_+(x,0)\,dx=0, 
\\&& \lim_{k\to+\infty}\int_{\R^n}U_\epsilon^p(x,0)(W_{1,k})_+(x,0)\,dx=0,\\
&& \lim_{k\to+\infty}\int_{\R^n}h(x)U_\epsilon^q(x,0)(W_{2,k})_+(x,0)\,dx
\\ {\mbox{ and }} && \lim_{k\to+\infty}\int_{\R^n}U_\epsilon^p(x,0)(W_{2,k})_+(x,0)\,dx=0.
\end{eqnarray*}
Therefore, 
\begin{eqnarray*}
\mathcal{I}_\varepsilon(U_k)&=&\mathcal{I}_\varepsilon(W_{1,k})+\mathcal{I}_\varepsilon(W_{2,k})+
\int_{\mathbb{R}^{n+1}_+}y^a\langle\nabla W_{1,k},\nabla W_{2,k}\rangle\,dX\\
&&\quad -\frac{\epsilon}{q+1}\int_{\R^n} h(x)\left((U_\epsilon+(U_k)_+)^{q+1}(x,0)-U_\epsilon^{q+1}(x,0)\right)\,dx \\
&&\quad -\frac{1}{p+1}\int_{\R^n} \left((U_\epsilon+(U_k)_+)^{p+1}(x,0)-U_\epsilon^{p+1}(x,0)\right)\,dx \\
&&\quad +\frac{\epsilon}{q+1}\int_{\R^n} h(x)\left((U_\epsilon+(W_{1,k})_+)^{q+1}(x,0)-U_\epsilon^{q+1}(x,0)\right)\,dx \\
&&\quad +\frac{1}{p+1}\int_{\R^n} \left((U_\epsilon+(W_{1,k})_+)^{p+1}(x,0)-U_\epsilon^{p+1}(x,0)\right)\,dx \\
&&\quad +\frac{\epsilon}{q+1}\int_{\R^n} h(x)\left((U_\epsilon+(W_{2,k})_+)^{q+1}(x,0)-U_\epsilon^{q+1}(x,0)\right)\,dx \\
&&\quad +\frac{1}{p+1}\int_{\R^n} \left((U_\epsilon+(W_{2,k})_+)^{p+1}(x,0)-U_\epsilon^{p+1}(x,0)\right)\,dx +o_k(1).
\end{eqnarray*} 
Since the terms with~$\epsilon$ in front are bounded 
(see~\eqref{usare-1} and notice that it holds also for~$U_k$ and~$W_{1,k}$), 
we have that 
\begin{equation}\begin{split}\label{ok-1}
\mathcal{I}_\varepsilon(U_k)\geq\,&\mathcal{I}_\varepsilon(W_{1,k})+\mathcal{I}_\varepsilon(W_{2,k})+
\int_{\mathbb{R}^{n+1}_+}y^a\langle\nabla W_{1,k},\nabla W_{2,k}\rangle\,dX\\
&\quad -\frac{1}{p+1}\int_{\R^n} \left((U_\epsilon+(U_k)_+)^{p+1}(x,0)-U_\epsilon^{p+1}(x,0)\right)\,dx \\
&\quad +\frac{1}{p+1}\int_{\R^n} \left((U_\epsilon+(W_{1,k})_+)^{p+1}(x,0)-U_\epsilon^{p+1}(x,0)\right)\,dx \\
&\quad +\frac{1}{p+1}\int_{\R^n} \left((U_\epsilon+(W_{2,k})_+)^{p+1}(x,0)-U_\epsilon^{p+1}(x,0)\right)\,dx \\&\quad-C\epsilon+ o_k(1).
\end{split}\end{equation} 
Now notice that 
\begin{equation}\begin{split}\label{ok1}
& \int_{\mathbb{R}^{n+1}_+}y^a\langle\nabla W_{1,k},\nabla W_{2,k}\rangle\,dX\\
&\qquad = \frac12\int_{\mathbb{R}^{n+1}_+}y^a\langle\nabla(U_k- W_{1,k}),\nabla W_{1,k}\rangle\,dX 
\\&\qquad\qquad+\frac12\int_{\mathbb{R}^{n+1}_+}y^a\langle\nabla(U_k- W_{2,k}),\nabla W_{2,k}\rangle\,dX.
\end{split}\end{equation}
Moreover, from~\eqref{DER} we have that for any $i\in\{1,2\}$
\begin{equation}\begin{split}\label{ok2}
&\langle \mathcal{I}'_\varepsilon(U_k)-\mathcal{I}_\epsilon'(W_{i,k}),
U_\varepsilon+W_{i,k}\rangle\\
=\,& \int_{\mathbb{R}^{n+1}_+}y^a\langle\nabla(U_k- W_{i,k}),\nabla (W_{i,k}+U_\epsilon)\rangle\,dX \\
&\qquad -\epsilon\int_{\R^n} h(x)\left((U_\epsilon+(U_k)_+)^q-
U_\epsilon^q\right)(U_\epsilon+W_{i,k})\,dx 
\\&\qquad-\int_{\R^n}\left((U_\epsilon+(U_k)_+)^p-
U_\epsilon^p\right)(U_\epsilon+W_{i,k})\,dx \\
&\qquad +\epsilon\int_{\R^n} h(x)\left((U_\epsilon+(W_{i,k})_+)^q-
U_\epsilon^q\right)(U_\epsilon+W_{i,k})\,dx \\&\qquad
+\int_{\R^n}\left((U_\epsilon+(W_{i,k})_+)^p-
U_\epsilon^p\right)(U_\epsilon+W_{i,k})\,dx.
\end{split}\end{equation}
We claim that 
\begin{equation}\label{speriamo}\begin{split}
&\int_{\R^n} h(x)\left((U_\epsilon+(U_k)_+)^q(x,0)-
U_\epsilon^q(x,0)\right)(U_\epsilon+W_{i,k})(x,0)\,dx\le C+o_k(1)\\
{\mbox{and }} &\int_{\R^n} h(x)\left((U_\epsilon+(W_{i,k})_+)^q(x,0)-
U_\epsilon^q(x,0)\right)(U_\epsilon+W_{i,k})(x,0)\,dx \le C+o_k(1),
\end{split}\end{equation}
for some~$C>0$. 
Let us prove the first estimate in~\eqref{speriamo}. 
For this, we notice that if~$x\in\R^n$ is such that~$U_k(x,0)\ge0$
then also~$W_{i,k}(x,0)\ge0$, thanks to the definition of~$W_{i,k}$ 
given in~\eqref{def Wk}. Hence 
\begin{eqnarray*}
&&\int_{\R^n} h(x)\left((U_\epsilon+(U_k)_+)^q(x,0)-
U_\epsilon^q(x,0)\right)(U_\epsilon+W_{i,k})(x,0)\,dx\\
&&\qquad =\int_{\R^n} h(x)\left((U_\epsilon+(U_k)_+)^q(x,0)-
U_\epsilon^q(x,0)\right)(U_\epsilon+(W_{i,k})_+)(x,0)\,dx\\
&&\qquad \le \int_{\R^n} h(x)\left((U_\epsilon+(U_k)_+)^q(x,0)-
U_\epsilon^q(x,0)\right)(U_\epsilon+(U_{k})_+)(x,0)\,dx\\
&&\qquad =\int_{\R^n} h(x)\left((U_\epsilon+(U_k)_+)^{q+1}(x,0)-
U_\epsilon^{q+1}(x,0)\right)\,dx\\&&\qquad\qquad-\int_{\R^n}h(x)U_\epsilon^{q}(x,0)
(U_{k})_+)(x,0)\,dx\\
&&\qquad \le C+o_k(1),
\end{eqnarray*}
for a suitable~$C>0$, thanks to~\eqref{usare-1} 
(that holds true also for~$U_k$) and Lemma~\ref{lemma:conv}. 
Analogously one can prove also the second estimate in~\eqref{speriamo}, 
and this finishes the proof of~\eqref{speriamo}.

Hence, from~\eqref{ok-1}, \eqref{ok1}, \eqref{ok2} and \eqref{speriamo} we get 
\begin{eqnarray*}
&&\mathcal{I}_\varepsilon(U_k)\\ &\ge &\mathcal{I}_\varepsilon(W_{1,k})+\mathcal{I}_\varepsilon(W_{2,k})+
\frac12 \langle \mathcal{I}_\epsilon'(U_k)-\mathcal{I}_\epsilon'(W_{1,k}), 
U_\epsilon+W_{1,k}\rangle \\&&\quad+ 
\frac12 \langle \mathcal{I}_\epsilon'(U_k)-\mathcal{I}_\epsilon'(W_{2,k}), 
U_\epsilon+W_{2,k}\rangle\\
&&\quad -\frac{1}{p+1}\int_{\R^n} \left((U_\epsilon+(U_k)_+)^{p+1}(x,0)-U_\epsilon^{p+1}(x,0)\right)\,dx \\
&&\quad +\frac{1}{p+1}\int_{\R^n} \left((U_\epsilon+(W_{1,k})_+)^{p+1}(x,0)-U_\epsilon^{p+1}(x,0)\right)\,dx \\
&&\quad +\frac{1}{p+1}\int_{\R^n} \left((U_\epsilon+(W_{2,k})_+)^{p+1}(x,0)-U_\epsilon^{p+1}(x,0)\right)\,dx \\
&&\quad +\frac{1}{2}\int_{\R^n}\left( (U_\epsilon+(U_k)_+)^p(x,0)
-U_\epsilon^p(x,0)\right)(U_\epsilon+W_{1,k})(x,0)\,dx \\
&&\quad -\frac{1}{2}\int_{\R^n}\left( (U_\epsilon+(W_{1,k})_+)^p(x,0)
-U_\epsilon^p(x,0)\right)(U_\epsilon+W_{1,k})(x,0)\,dx \\
&&\quad +\frac{1}{2}\int_{\R^n}\left( (U_\epsilon+(U_k)_+)^p(x,0)
-U_\epsilon^p(x,0)\right)(U_\epsilon+W_{2,k})(x,0)\,dx \\
&&\quad -\frac{1}{2}\int_{\R^n}\left( (U_\epsilon+(W_{2,k})_+)^p(x,0)
-U_\epsilon^p(x,0)\right)(U_\epsilon+W_{1,k})(x,0)\,dx \\&&\quad-C\epsilon +o_k(1) .
\end{eqnarray*}
Moreover, the estimates in~\eqref{boundW1} and~\eqref{boundW2} give 
\begin{eqnarray*}
\mathcal{I}_\varepsilon(U_k)&\ge &
\mathcal{I}_\varepsilon(W_{1,k})+\mathcal{I}_\varepsilon(W_{2,k})\\
&&\quad -\frac{1}{p+1}\int_{\R^n} \left((U_\epsilon+(U_k)_+)^{p+1}(x,0)-U_\epsilon^{p+1}(x,0)\right)\,dx \\
&&\quad +\frac{1}{p+1}\int_{\R^n} \left((U_\epsilon+(W_{1,k})_+)^{p+1}(x,0)-U_\epsilon^{p+1}(x,0)\right)\,dx \\
&&\quad +\frac{1}{p+1}\int_{\R^n} \left((U_\epsilon+(W_{2,k})_+)^{p+1}(x,0)-U_\epsilon^{p+1}(x,0)\right)\,dx \\
&&\quad +\frac{1}{2}\int_{\R^n}\left( (U_\epsilon+(U_k)_+)^p(x,0)
-U_\epsilon^p(x,0)\right)(U_\epsilon+W_{1,k})(x,0)\,dx \\
&&\quad -\frac{1}{2}\int_{\R^n}\left( (U_\epsilon+(W_{1,k})_+)^p(x,0)
-U_\epsilon^p(x,0)\right)(U_\epsilon+W_{1,k})(x,0)\,dx \\
&&\quad +\frac{1}{2}\int_{\R^n}\left( (U_\epsilon+(U_k)_+)^p(x,0)
-U_\epsilon^p(x,0)\right)(U_\epsilon+W_{2,k})(x,0)\,dx \\
&&\quad -\frac{1}{2}\int_{\R^n}\left( (U_\epsilon+(W_{2,k})_+)^p(x,0)
-U_\epsilon^p(x,0)\right)(U_\epsilon+W_{1,k})(x,0)\,dx \\&&\quad-C\epsilon^{\frac{1}{2\gamma}} +o_k(1).
\end{eqnarray*}
Now we use Lemma~\ref{lemma:conv} once more to see that, 
for~$i\in\{1,2\}$, 
\begin{eqnarray*}
&& \int_{\R^n}\left( (U_\epsilon+(W_{i,k})_+)^p(x,0)
-U_\epsilon^p(x,0)\right) (U_\epsilon+W_{i,k})(x,0)\,dx\\
&&\qquad 
= \int_{\R^n}  ( U_\epsilon+(W_{i,k})_+ )^{p+1}(x,0)\,dx
-\int_{\R^n} U_\epsilon^{p+1}(x,0)\,dx +o_k(1).
\end{eqnarray*}
Hence, using this and collecting some terms, we have 
\begin{equation}\begin{split}\label{ok forse}
&\mathcal{I}_\varepsilon(U_k)\\
\ge \,&
\mathcal{I}_\varepsilon(W_{1,k})+\mathcal{I}_\varepsilon(W_{2,k})\\
&\quad -\frac{1}{p+1}\int_{\R^n} \left((U_\epsilon+(U_k)_+)^{p+1}(x,0)-U_\epsilon^{p+1}(x,0)\right)\,dx \\
&\quad +\frac{1}{2}\int_{\R^n}\left( (U_\epsilon+(U_{k})_+)^p(x,0)
-U_\epsilon^p(x,0)\right)(U_\epsilon+W_{1,k})(x,0)\,dx \\
&\quad +\frac{1}{2}\int_{\R^n}\left( (U_\epsilon+(U_{k})_+)^p(x,0)
-U_\epsilon^p(x,0)\right)(U_\epsilon+W_{2,k})(x,0)\,dx \\
&\quad -\left(\frac12-\frac{1}{p+1}\right)\int_{\R^n} \left((U_\epsilon+(W_{1,k})_+)^{p+1}(x,0)-U_\epsilon^{p+1}(x,0)\right)\,dx \\
&\quad -\left(\frac12-\frac{1}{p+1}\right)
\int_{\R^n} \left((U_\epsilon+(W_{2,k})_+)^{p+1}(x,0)-U_\epsilon^{p+1}(x,0)\right)\,dx
\\&\quad-C\epsilon^{\frac{1}{2\gamma}} +o_k(1).
\end{split}\end{equation}
Now we claim that 
\begin{equation}\label{speriamo-1}
\lim_{k\to+\infty}\int_{\R^n}\left( (U_\epsilon+(U_{k})_+)^p(x,0)
-U_\epsilon^p(x,0)\right)U_\epsilon(x,0)\,dx=0.
\end{equation}
Indeed, we first observe that for any~$a\ge b\ge0$
$$ a^p-b^p =p\int_b^at^{p-1}\,dt\le pa^{p-1}(a-b).$$ 
Hence, taking~$a:=U_\epsilon+(U_{k})_+$ and~$b:=U_\epsilon$, we have that
$$ |(U_\epsilon+(U_k)_+)^p-U_\epsilon^p|\le p(U_\epsilon+(U_k)_+)^{p-1}(U_k)_+.$$
Accordingly, 
\begin{eqnarray*}
&&\int_{\R^n}\left( (U_\epsilon+(U_{k})_+)^p(x,0)
-U_\epsilon^p(x,0)\right)U_\epsilon(x,0)\,dx\\
&&\qquad \le p \int_{\R^n}(U_\epsilon+(U_{k})_+)^{p-1}(x,0)
(U_k)_+(x,0)U_\epsilon(x,0)\,dx.\\
\end{eqnarray*}
We now use H\"older inequality with exponents~$\frac{2^*_s}{p-1}=\frac{n}{2s}$ 
and~$\frac{n}{n-2s}$ and obtain 
\begin{eqnarray*}
&&\int_{\R^n}\left( (U_\epsilon+(U_{k})_+)^p(x,0)
-U_\epsilon^p(x,0)\right)U_\epsilon(x,0)\,dx\\
&&\qquad \le p \left(\int_{\R^n}(U_\epsilon+(U_{k})_+)^{2^*_s}(x,0)\,dx
\right)^{\frac{2s}{n}} \left(\int_{\R^n}
(U_k)_+^{\frac{n}{n-2s}}(x,0)U_\epsilon^{\frac{n}{n-2s}}(x,0)\,dx
\right)^{\frac{n-2s}{n}}\\
&&\qquad \le C [U_\epsilon+(U_{k})_+]_a^{p-1} \left(\int_{\R^n}
(U_k)_+^{\frac{n}{n-2s}}(x,0)U_\epsilon^{\frac{n}{n-2s}}(x,0)\,dx
\right)^{\frac{n-2s}{n}}\\
&&\qquad \le C\left(\int_{\R^n}
(U_k)_+^{\frac{n}{n-2s}}(x,0)U_\epsilon^{\frac{n}{n-2s}}(x,0)\,dx
\right)^{\frac{n-2s}{n}},
\end{eqnarray*}
for some positive~$C$ that may change from line to line, thanks to Proposition~\ref{traceIneq} and~\eqref{uniformBoundUk}. 
Now the desired claim in~\eqref{speriamo-1} simply follows 
by using Lemma~\ref{lemma:conv} with~$V_k:=U_k$, $U_o:=U_\epsilon$, 
$\psi:=1$, $\alpha:=\frac{n}{n-2s}$ and~$\beta:=\frac{n}{n-2s}$ 
(notice that~$\alpha+\beta=2^*_s$). 

From~\eqref{speriamo-1} we deduce that 
\begin{eqnarray*}
&&\int_{\R^n}\left( (U_\epsilon+(U_{k})_+)^p(x,0)
-U_\epsilon^p(x,0)\right)(U_\epsilon+W_{1,k})(x,0)\,dx 
\\&&\qquad  +\int_{\R^n}\left( (U_\epsilon+(U_{k})_+)^p(x,0)
-U_\epsilon^p(x,0)\right)(U_\epsilon+W_{2,k})(x,0)\,dx\\
&&\qquad = \int_{\R^n}\left( (U_\epsilon+(U_{k})_+)^p(x,0)
-U_\epsilon^p(x,0)\right)(U_\epsilon+W_{1,k}+W_{2,k})(x,0)\,dx+o_k(1)\\
&&\qquad = \int_{\R^n}\left( (U_\epsilon+(U_{k})_+)^p(x,0)
-U_\epsilon^p(x,0)\right)(U_\epsilon + U_k)(x,0)\,dx+o_k(1)\\
&&\qquad = \int_{\R^n}\left( (U_\epsilon+(U_{k})_+)^{p+1}(x,0)
-U_\epsilon^{p+1}(x,0)\right)(x,0)\,dx+o_k(1),
\end{eqnarray*}
where Lemma~\ref{lemma:conv} was used once again in the last line. 
Plugging this information into~\eqref{ok forse} we obtain 
\begin{eqnarray*}
&&\mathcal{I}_\varepsilon(U_k)\\&\ge &
\mathcal{I}_\varepsilon(W_{1,k})+\mathcal{I}_\varepsilon(W_{2,k})\\
&&\quad +\left(\frac12-\frac{1}{p+1}\right)
\int_{\R^n} \left((U_\epsilon+(U_k)_+)^{p+1}(x,0)-U_\epsilon^{p+1}(x,0)\right)\,dx \\
&&\quad -\left(\frac12-\frac{1}{p+1}\right)\int_{\R^n} \left((U_\epsilon+(W_{1,k})_+)^{p+1}(x,0)-U_\epsilon^{p+1}(x,0)\right)\,dx \\
&&\quad -\left(\frac12-\frac{1}{p+1}\right)\int_{\R^n} \left((U_\epsilon+(W_{2,k})_+)^{p+1}(x,0)-U_\epsilon^{p+1}(x,0)\right)\,dx-C\epsilon^{\frac{1}{2\gamma}} +o_k(1).
\end{eqnarray*}
Now we use~\eqref{second} with $U_o:=U_\epsilon$ and $V_k:=U_k$, $V_k:=W_{1,k}$ and $V_k:=W_{2,k}$ 
respectively, and so 
\begin{eqnarray*}
\mathcal{I}_\varepsilon(U_k)&\ge &
\mathcal{I}_\varepsilon(W_{1,k})+\mathcal{I}_\varepsilon(W_{2,k})
+\left(\frac12-\frac{1}{p+1}\right)
\int_{\R^n}(U_k)_+^{p+1}(x,0)\,dx\\
&&\qquad -\left(\frac12-\frac{1}{p+1}\right)\int_{\R^n} (W_{1,k})_+^{p+1}(x,0)\,dx  \\
&&\qquad -\left(\frac12-\frac{1}{p+1}\right)\int_{\R^n} (W_{2,k})_+^{p+1}(x,0)\,dx-C\epsilon^{\frac{1}{2\gamma}} +o_k(1).
\end{eqnarray*}
Notice now that for any~$x\in\R^n$
\begin{eqnarray*}
&&(U_k)_+^{p+1}(x,0) - (W_{1,k})_+^{p+1}(x,0)-(W_{2,k})_+^{p+1}(x,0)\\
&=& (U_k)_+^{p+1}(x,0) - \chi^{p+1}(x,0)(U_{k})_+^{p+1}(x,0)-(1-\chi(x,0))^{p+1}(U_{k})_+^{p+1}(x,0)\\
&=&(U_k)_+^{p+1}(x,0)\left(1-\chi^{p+1}(x,0)-(1-\chi)^{p+1}(x,0)\right)\ge 0. 
\end{eqnarray*}
This and the fact that~$p+1>2$ give 
$$
\mathcal{I}_\varepsilon(U_k)\ge 
\mathcal{I}_\varepsilon(W_{1,k})+\mathcal{I}_\varepsilon(W_{2,k})
-C\epsilon^{\frac{1}{2\gamma}} +o_k(1).
$$
Finally, this, together with~\eqref{infBoundW1} and~\eqref{infBoundW2}, 
implies that 
\begin{equation*}
\mathcal{I}_\varepsilon(U_k)\geq \frac{s}{n}S^{n/2s}-C\varepsilon^{\frac{1}{2\gamma}}+o_k(1),
\end{equation*}
up to renaming constants. Therefore, taking the limit as~$k\to+\infty$
we have 
$$ c_\epsilon =\lim_{k\to+\infty}\mathcal{I}_\epsilon(U_k)\ge 
 \frac{s}{n}S^{n/2s}-C\varepsilon^{\frac{1}{2\gamma}}. $$
This gives a contradiction with~\eqref{ceps2}
and finishes the proof of Lemma~\ref{lemma:tight}. 
\end{proof}

We are now in the position to show that the functional~$\mathcal{I}_\epsilon$ 
introduced in~\eqref{def I} satisfies a Palais-Smale condition. 

\begin{proof}[Proof of Proposition \ref{PScond2}]
Thanks to Lemma~\ref{lemma:weak} we know that the sequence~$U_k$ 
weakly converges to~0 in~$\dot{H}^s_a(\R^{n+1}_+)$ as~$k\to+\infty$. 

For any~$k\in\N$, we set~$V_k:=U_\epsilon +U_k$, where~$U_\epsilon$ 
is the local minimum of~$\mathcal{F}_\epsilon$ found Theorem~\ref{MINIMUM}.
Since~$U_\epsilon$ is a critical point of~$\mathcal{F}_\epsilon$, 
from~\eqref{pqoeopwoegi} we deduce that 
$$ \int_{\R^{n+1}_+}y^a\langle\nabla U_\epsilon,\nabla U_k\rangle\,dX =
\epsilon\int_{\R^n}h(x)U_\epsilon^q(x,0)U_k(x,0)\,dx +
\int_{\R^n}U_\epsilon^p(x,0)U_k(x,0)\,dx.$$
Therefore, recalling~\eqref{f ext} and~\eqref{def I} we have
\begin{equation}\begin{split}\label{aslpooooooo}
\mathcal{F}_\epsilon(V_k) =\,& \frac12\int_{\R^{n+1}_+}
y^a|\nabla(U_\epsilon+U_k)|^2\,dX \\
&\quad -\frac{\epsilon}{q+1}\int_{\R^n}h(x)(U_\epsilon+U_k)_+^{q+1}(x,0)\,dx
-\frac{1}{p+1}\int_{\R^n}(U_\epsilon+U_k)_+^{p+1}(x,0)\,dx\\
=\,& \mathcal{I}_\epsilon(U_k)+\mathcal{F}_\epsilon(U_\epsilon)
+\int_{\R^{n+1}_+}y^a\langle\nabla U_\epsilon,\nabla U_k\rangle\,dX\\
&\quad +\frac{\epsilon}{q+1}\int_{\R^n}h(x)\left(  (U_\epsilon+(U_k)_+)^{q+1}(x,0) -(U_\epsilon+U_k)_+^{q+1}(x,0)\right)\,dx\\
&\quad +\frac{1}{p+1}\int_{\R^n}\left(  (U_\epsilon+(U_k)_+)^{p+1}(x,0) -(U_\epsilon+U_k)_+^{p+1}(x,0)\right)\,dx\\
&\quad -\epsilon\int_{\R^n}h(x)U_\epsilon^q(x,0)(U_k)_+(x,0)\,dx 
-\int_{\R^n}U_\epsilon^p(x,0)(U_k)_+(x,0)\\
=\,& \mathcal{I}_\epsilon(U_k)+\mathcal{F}_\epsilon(U_\epsilon)\\
&\quad +\frac{\epsilon}{q+1}\int_{\R^n}h(x)\Big(  (U_\epsilon+(U_k)_+)^{q+1}(x,0) -(U_\epsilon+U_k)_+^{q+1}(x,0)\\
&\qquad \qquad\quad  +(q+1)U_\epsilon^q(x,0)(U_k-(U_k)_+)(x,0)\Big)\,dx\\
&\quad +\frac{1}{p+1}\int_{\R^n}\Big(  (U_\epsilon+(U_k)_+)^{p+1}(x,0) -(U_\epsilon+U_k)_+^{p+1}(x,0)\\
&\qquad \qquad\quad +(p+1)U_\epsilon^p(x,0)(U_k-(U_k)_+)(x,0)\Big)\,dx.
\end{split}\end{equation}
We now claim that 
\begin{equation}\label{aslgttrjuytk}
(U_\epsilon+(U_k)_+)^{r+1}(x,0) -(U_\epsilon+U_k)_+^{r+1}(x,0) +(r+1)U_\epsilon^r(x,0)(U_k-(U_k)_+)(x,0)\le 0,
\end{equation}
for any~$x\in\R^n$ and~$r\in\{p,q\}$. Indeed, the claim is trivially 
true if~$U_k(x,0)\ge0$. Hence we suppose that~$U_k(x,0)<0$, and so~\eqref{aslgttrjuytk} becomes
\begin{equation}\label{aslgttrjuytk-1}
U_\epsilon^{r+1}(x,0) -(U_\epsilon+U_k)_+^{r+1}(x,0) +(r+1)U_\epsilon^r(x,0)(U_k-(U_k)_+)(x,0)\le 0.\end{equation}
Given~$a>0$, the function~$f(t):=(a+t)^{r+1}_+$, for~$t\in\R$, is convex, 
and therefore it satisfies for any~$b<0$
$$ f(b)\ge f(0) +f'(0)b,$$
that is
$$ (a+b)^{r+1}_+\ge a^{r+1}+(r+1)a^r b.$$ 
Thus, taking~$a:=U_\epsilon(x,0)$ and~$b:=U_k(x,0)$ we have 
$$ (U_\epsilon+U_k)_+^{r+1}(x,0)\ge U_\epsilon^{r+1}(x,0)+(r+1)U_\epsilon^r(x,0) U_k(x,0),$$
which shows~\eqref{aslgttrjuytk-1}, and in turn~\eqref{aslgttrjuytk}. 

Accordingly, using~\eqref{aslgttrjuytk} into~\eqref{aslpooooooo} we get 
\begin{equation}\label{minore}
\mathcal{F}_\epsilon(V_k)\le \mathcal{I}_\epsilon(U_k)+\mathcal{F}_\epsilon(U_\epsilon).
\end{equation}
This and assumption~(i) in Proposition~\ref{PScond2} imply that 
\begin{equation}\label{questa}
|\mathcal{F}_\epsilon(V_k)|\le C,
\end{equation}
for a suitable~$C>0$ independent of~$k$. 

Now we recall that~$U_\epsilon$ is a critical point of~$\mathcal{F}_\epsilon$. 
Hence, from~\eqref{pqoeopwoegi} we deduce that for 
any~$\Psi\in\dot{H}^s_a(\R^{n+1}_+)$ with~$\psi:=\Psi(\cdot,0)$
\begin{equation}\label{dopo-1}
\int_{\R^{n+1}_+}y^a\langle\nabla U_\epsilon,\nabla\Psi\rangle\,dX 
= \epsilon\int_{\R^n}h(x)U_\epsilon^q(x,0)\psi(x)\,dx 
+\int_{\R^n}U_\epsilon^p(x,0)\psi(x)\,dx.
\end{equation}
Moreover, 
from~\eqref{pqoeopwoegi} and~\eqref{DER} we have that 
\begin{eqnarray*}
&&\langle\mathcal{F}_\epsilon'(V_k),\Psi\rangle \\
&=& \int_{\R^{n+1}_+}y^a\langle\nabla V_k,\nabla\Psi\rangle\,dX
-\epsilon\int_{\R^n}h(x)(V_k)_+^q(x,0)\psi(x)\,dx 
-\int_{\R^n} (V_k)_+^p(x,0)\psi(x)\,dx\\
&=&\langle\mathcal{I}_\epsilon'(U_k),\Psi\rangle +\int_{\R^{n+1}_+}y^a\langle\nabla U_\epsilon,\nabla\Psi\rangle\,dX \\
&&\quad +\epsilon\int_{\R^n}h(x)\left( (U_\epsilon+(U_k)_+)^q(x,0)
-U_\epsilon^q(x,0)\right)\psi(x)\,dx\\
&&\quad +\int_{\R^n}\left( (U_\epsilon+(U_k)_+)^p(x,0)
-U_\epsilon^p(x,0)\right)\psi(x)\,dx\\
&&\quad -\epsilon\int_{\R^n}h(x)(V_k)_+^q(x,0)\psi(x)\,dx 
-\int_{\R^n} (V_k)_+^p(x,0)\psi(x)\,dx.
\end{eqnarray*}
Using~\eqref{dopo-1} in the formula above 
and recalling that~$V_k=U_\epsilon+U_k$, we obtain  
\begin{equation}\begin{split}\label{lasgtprjutr}
&\langle\mathcal{F}_\epsilon'(V_k),\Psi\rangle\,=\,
\langle\mathcal{I}_\epsilon'(U_k),\Psi\rangle \\
&\qquad +\epsilon\int_{\R^n}h(x)\left( (U_\epsilon+(U_k)_+)^q(x,0)
-(U_\epsilon+U_k)_+^q(x,0)\right)\psi(x)\,dx\\
&\qquad +\int_{\R^n}\left( (U_\epsilon+(U_k)_+)^p(x,0)
-(U_\epsilon+U_k)_+^p(x,0)\right)\psi(x)\,dx.
\end{split}\end{equation}
We claim that 
\begin{equation}\begin{split}\label{viene?}
& \lim_{k\to+\infty}\int_{\R^n}h(x)\left( (U_\epsilon+(U_k)_+)^q(x,0)
-(U_\epsilon+U_k)_+^q(x,0)\right)\psi(x)\,dx=0\\
{\mbox{and }} & \lim_{k\to+\infty} \int_{\R^n}\left( (U_\epsilon+(U_k)_+)^p(x,0)
-(U_\epsilon+U_k)_+^p(x,0)\right)\psi(x)\,dx=0.
\end{split}\end{equation}
Notice that if~$U_k(x,0)\ge0$ then 
\begin{eqnarray*}
&& (U_\epsilon+(U_k)_+)^q(x,0) -(U_\epsilon+U_k)_+^q(x,0)\\&&\qquad= (U_\epsilon+U_k)^q(x,0)
-(U_\epsilon+U_k)^q(x,0)=0\\
{\mbox{and }} && (U_\epsilon+(U_k)_+)^p(x,0) -(U_\epsilon+U_k)_+^p(x,0)
\\&&\qquad= (U_\epsilon+U_k)^p(x,0)
-(U_\epsilon+U_k)^p(x,0)=0.
\end{eqnarray*}
Therefore the claim becomes
\begin{equation}\begin{split}\label{viene?-1}
& \lim_{k\to+\infty}\int_{\R^n\cap\{U_k(\cdot,0)<0\}}h(x)\left( U_\epsilon^q(x,0)
-(U_\epsilon+U_k)_+^q(x,0)\right)\psi(x)\,dx=0\\
{\mbox{and }} & \lim_{k\to+\infty} \int_{\R^n\cap\{U_k(\cdot,0)<0\}}\left( U_\epsilon^p(x,0)
-(U_\epsilon+U_k)_+^p(x,0)\right)\psi(x)\,dx=0.
\end{split}\end{equation}
Now, we recall that Lemma \ref{lemma:weak} here and the compact embedding in Theorem 7.1 in \cite{DPV} 
imply that $U_k(\cdot,0)\to0$ a.e. in $\R^n$ as $k\to+\infty$. 
Moreover, we notice that, by the H\"older inequality with exponents $\frac{2^*_s}{2^*_s-1-q}$, 
$\frac{2^*_s}{q}$ and $2^*_s$,  
\begin{eqnarray*}
&&\left| \int_{\R^n\cap\{U_k(\cdot,0)<0\}}h(x) U_\epsilon^q(x,0)\psi(x)\,dx\right| 
\\&&\qquad \le 
\left( \int_{\R^n\cap\{U_k(\cdot,0)<0\} } |h(x)|^{\frac{2^*_s}{2^*_s-1-q}}
\,dx\right)^{\frac{2^*_s-1-q}{2^*_s}}
\left( \int_{\R^n\cap\{U_k(\cdot,0)<0\} } U_\epsilon^{2^*_s}(x,0)\,dx\right)^{\frac{q}{2^*_s}}
\\&&\qquad\cdot
\left( \int_{\R^n\cap\{U_k(\cdot,0)<0\} }|\psi(x)|^{2^*_s} \,dx\right)^{\frac{1}{2^*_s}}\\
&&\qquad \le \|h\|_{L^{\frac{2^*_s}{2^*_s-1-q}}(\R^n)}S^{-q/2}[U_\epsilon]_a^q S^{-1/2}[\Psi]_a\le C,
\end{eqnarray*}
for some $C>0$, thanks to \eqref{h0} and Proposition \ref{traceIneq}. Consequently
$$ h\left( U_\epsilon^q(\cdot,0)
-(U_\epsilon+U_k)_+^q(\cdot,0)\right)\psi\le |h| U_\epsilon^q(\cdot,0)
|\psi|\in L^1(\R^n\cap\{U_k(\cdot,0)<0\}).$$
Hence, by the Dominated Convergence Theorem we get the first limit in \eqref{viene?-1}.

To prove the second limit in \eqref{viene?-1}, we use the H\"older inequality 
with exponents $\frac{2^*_s}{p}=\frac{2n}{n+2s}$ and $2^*_s=\frac{2n}{n-2s}$ 
to see that
\begin{eqnarray*}
&&\left| \int_{\R^n\cap\{U_k(\cdot,0)<0\}}U_\epsilon^p(x,0)\psi(x)\,dx\right| \\
&\le& 
\left( \int_{\R^n\cap\{U_k(\cdot,0)<0\} } U_\epsilon^{2^*_s}(x,0)\,dx\right)^{\frac{n+2s}{2n}}
\left( \int_{\R^n\cap\{U_k(\cdot,0)<0\} }|\psi(x)|^{2^*_s} \,dx\right)^{\frac{1}{2^*_s}}\\
&\le & S^{-p/2}[U_\epsilon]_a^p S^{-1/2}[\Psi]_a\le C,
\end{eqnarray*}
for a suitable $C>0$, where \eqref{TraceIneq} was also used. 
Therefore, 
$$ \left( U_\epsilon^p(\cdot,0)
-(U_\epsilon+U_k)_+^p(\cdot,0)\right)\psi\le U_\epsilon^p(\cdot,0)|\psi|
\in L^1(\R^n\cap\{U_k(\cdot,0)<0\}).$$
So the second limit in \eqref{viene?-1} follows from the Dominated Convergence Theorem.
This shows~\eqref{viene?-1} and so the proof of~\eqref{viene?} is finished. 

As a consequence of~\eqref{lasgtprjutr}, \eqref{viene?} and 
assumption~(ii) in Proposition~\ref{PScond2} we have 
that
\begin{equation}\label{dlghjtrjh}
{\mbox{$\mathcal{F}_\epsilon'(V_k)\to0$ as~$k\to+\infty$}}\end{equation} 
in the sense of Remark~\ref{rem:3.3}.
This, together with~\eqref{questa} and Lemma~\ref{lemma bound}, implies that
the sequence~$V_k$ is uniformly bounded in~$\dot{H}^s_a(\R^{n+1}_+)$, 
namely there exists a constant~$M>0$ such that
\begin{equation}\label{boundM}
{\mbox{$[V_k]_a\le M$ for all~$k\in\N$.}}\end{equation} 
Hence,~$V_k$ is weakly convergent in~$\dot{H}^s_a(\R^{n+1}_+)$ 
to some function~$V_0$. Since~$V_k=U_\epsilon+U_k$ and~$U_k$ 
weakly converges to~0 in~$\dot{H}^s_a(\R^{n+1}_+)$ as~$k\to+\infty$ 
(see Lemma~\ref{lemma:weak}), it turns out that~$V_0=U_\epsilon$. 
Also, we recall that~$U_\epsilon$ is positive, thanks to Proposition~\ref{prop:pos}.
Therefore, we are in the position to apply Lemma~\ref{POS} 
with~$W_m:=V_k$ and~$W:=U_\epsilon$, and we obtain that 
\begin{equation}\label{qwwrtruoiungf}
{\mbox{$(V_k)_+$ weakly converges to $U_\epsilon$ in~$\dot{H}^s_a(\R^{n+1}_+)$ as~$k\to+\infty$.}}
\end{equation}
We also show that
\begin{equation}\label{qwwrtruoiungf-1}
{\mbox{the sequence $\{V_k\}_k$ is tight, according to Definition~\ref{defTight}.}}
\end{equation}
For this, we fix~$\eta>0$. Thanks to Lemma~\ref{lemma:tight}, we have that 
there exists~$\rho_1>0$ such that 
$$ \int_{\R^{n+1}_+\setminus B_{\rho_1}^+}y^a|\nabla U_k|^2\,dX<\frac{\eta}{4},$$
for any~$k\in\N$. Moreover, since~$U_\epsilon\in\dot{H}^s_a(\R^{n+1}_+)$, 
there exists~$\rho_2>0$ such that 
$$ \int_{\R^{n+1}_+\setminus B_{\rho_2}^+}y^a|\nabla U_\epsilon|^2\,dX<\frac{\eta}{4}.$$
We take~$\rho:=\max\{\rho_1,\rho_2\}$, and so the two formulas above
give that 
\begin{eqnarray*}
&&\int_{\R^{n+1}_+\setminus B_\rho^+}y^a|\nabla V_k|^2\,dX \\
& = & \int_{\R^{n+1}_+\setminus B_\rho^+}y^a|\nabla U_k|^2\,dX 
+\int_{\R^{n+1}_+\setminus B_\rho^+}y^a|\nabla U_\epsilon|^2\,dX
\\&&\qquad+2\int_{\R^{n+1}_+\setminus B_\rho^+}y^a\langle\nabla U_k,\nabla U_\epsilon\rangle\,dX\\
&\le &\int_{\R^{n+1}_+\setminus B_\rho^+}y^a|\nabla U_k|^2\,dX 
+\int_{\R^{n+1}_+\setminus B_\rho^+}y^a|\nabla U_\epsilon|^2\,dX\\
&&\qquad 
+2\sqrt{\int_{\R^{n+1}_+\setminus B_\rho^+}y^a|\nabla U_k|^2\,dX}\cdot
\sqrt{\int_{\R^{n+1}_+\setminus B_\rho^+}y^a|\nabla U_\epsilon|^2\,dX}\\
&\le & \frac{\eta}{4}+\frac{\eta}{4}+2\frac{\sqrt{\eta}}{2}\frac{\sqrt{\eta}}{2} =\eta.
\end{eqnarray*}
This shows~\eqref{qwwrtruoiungf-1}. 

Also, Theorem~1.1.4 in~\cite{evans} gives the existence of two measures 
on~$\R^n$ and~$\R^{n+1}_+$, $\nu$ and~$\mu$ respectively, such 
that~$(V_k)_+^{2^*_s}(\cdot,0)$ converges to~$\nu$ 
and~$y^a|\nabla (V_k)_+|^2$ converges to~$\mu$ as~$k\to+\infty$, 
according to Definition~1.1.2 in~\cite{evans} (see also Definition \ref{convMeasures}). 
This, \eqref{qwwrtruoiungf} and~\eqref{qwwrtruoiungf-1} imply that 
the hypotheses of Proposition~\ref{CCP} are satisfied, 
and so there exist an at most countable set~$J$ and 
three families~$\{x_j\}_{j\in J}\in\R^n$, $\{\nu_j\}_{j\in J}$ 
and~$\{\mu_j\}_{j\in J}$, with~$\nu_j,\mu_j\ge0$ such that 
\begin{equation}\label{lklhjgghf}
{\mbox{$(V_k)_+^{2^*_s}$ converges to~$\nu=U_\epsilon^{2^*_s}+\sum_{j\in J}\nu_j\delta_{x_j}$ as~$k\to+\infty$,}}
\end{equation}
\begin{equation}\label{lklhjgghf-2}
{\mbox{$y^a|\nabla(V_k)_+|^2$ converges to~$\mu\ge y^a|\nabla U_\epsilon|^2+\sum_{j\in J}\mu_j\delta_{(x_j,0)}$ as~$k\to+\infty$}}
\end{equation}
and 
\begin{equation}\label{lklhjgghf-3}
\mu_j\ge S \nu_j^{2/2^*_s}\quad {\mbox{ for all }} j\in J.
\end{equation}

We claim now that $\nu_j=\mu_j=0$ for every $j\in J$. 
To prove this, we argue by contradiction and we suppose that there exists $j\in J$ 
such that $\mu_j=0$. We denote $X_j:=(x_j,0)$, we fix $\delta>0$ and we take a cut-off function 
$\phi_\delta\in C^\infty(\R^{n+1}_+,[0,1])$ such that 
$$
\phi_\delta(X)=\begin{cases}
1,\qquad {\mbox{ if }} X\in B^+_{\delta/2}(X_j),\\
0,\qquad {\mbox{ if }} X\in (B^+_{\delta}(X_j))^c,
\end{cases}
\quad {\mbox{ and }} \quad |\nabla \phi_\delta|\leq \frac{C}{\delta},$$
for some $C>0$. 

Now, it is not difficult to show that the sequence $\phi_\delta(V_k)_+$ is uniformly 
bounded in $\dot{H}^s_a(\R^{n+1}_+)$. Therefore, from \eqref{dlghjtrjh} and \eqref{pqoeopwoegi} 
we have that 
\begin{equation}\begin{split}\label{sixth}
0=\,& \lim_{k\to+\infty} \langle\mathcal{F}_\epsilon'(V_k),\phi_\delta(V_k)_+\rangle\\
=\,& \lim_{k\to+\infty}\Big( \int_{\R^{n+1}_+}y^a\langle\nabla V_k,\nabla(\phi_\delta (V_k)_+)\rangle\,dX\\
&\qquad \qquad -\epsilon\int_{\R^n}h(x)(V_k)_+^{q+1}(x,0)\phi_\delta(x,0)\,dx 
-\int_{\R^n}(V_k)_+^{p+1}(x,0)\phi_\delta(x,0)\,dx\Big)\\
=\,& \lim_{k\to+\infty}\Big( \int_{\R^{n+1}_+}y^a|\nabla (V_k)_+|^2\phi_\delta \,dX
+\int_{\R^{n+1}_+}y^a\langle\nabla (V_k)_+,\nabla\phi_\delta\rangle (V_k)_+\,dX\\
&\qquad \qquad -\epsilon\int_{\R^n}h(x)(V_k)_+^{q+1}(x,0)\phi_\delta(x,0)\,dx 
-\int_{\R^n}(V_k)_+^{p+1}(x,0)\phi_\delta(x,0)\,dx\Big).
\end{split}\end{equation}
We recall that $p+1=2^*_s$ and we use \eqref{lklhjgghf} and \eqref{lklhjgghf-2} to see that 
\begin{eqnarray}\label{qqqqqqqqqqqpppp} 
\lim_{k\to+\infty}\int_{\R^n}(V_k)_+^{p+1}(x,0)\phi_\delta(x,0)\,dx&=& \int_{\R^n}\phi_\delta(x,0)\,d\nu\\
{\mbox{and }}  \lim_{k\to+\infty}\int_{\R^{n+1}_+}y^a|\nabla (V_k)_+|^2\phi_\delta \,dX &=& 
\int_{\R^{n+1}_+}\phi_\delta\,d\mu\label{qqqqqqqqqqqpppp-1}.
\end{eqnarray}
Also, the weak convergence in \eqref{qwwrtruoiungf}, \eqref{equivNorms} 
and Theorem 7.1 in \cite{DPV} imply that 
$(V_k)_+(\cdot,0)$ strongly converges to $U_\epsilon(\cdot,0)$ in $L^r_{\rm{loc}}(\R^n)$ 
as $k\to+\infty$, for any $r\in[1,2^*_s)$. Accordingly, 
\begin{eqnarray*}
&& \left|\int_{\R^n}h(x)(V_k)_+^{q+1}(x,0)\phi_\delta(x,0)\,dx- 
\int_{\R^n}h(x)U_\epsilon^{q+1}(x,0)\phi_\delta(x,0)\,dx\right| \\
&&\qquad \le \|h\|_{L^\infty(\R^n)} 
\left| \int_{B_\delta^+(X_j)\cap\{y=0\} } 
\left((V_k)_+^{q+1}(x,0)-U_\epsilon^{q+1}(x,0)\right)\,dx\right|\to 0,
\end{eqnarray*}
as $k\to+\infty$, since $1<q+1<2^*_s$. This implies that 
$$ \lim_{k\to+\infty}\int_{\R^n}h(x)(V_k)_+^{q+1}(x,0)\phi_\delta(x,0)\,dx
= \int_{\R^n}h(x)U_\epsilon^{q+1}(x,0)\phi_\delta(x,0)\,dx.$$ 
Taking the limit as $\delta\to0$ we have 
\begin{equation}\label{forth}\begin{split}
&\lim_{\delta\to0} \lim_{k\to+\infty}\int_{\R^n}h(x)(V_k)_+^{q+1}(x,0)\phi_\delta(x,0)\,dx
\\&\qquad= \lim_{\delta\to0}\int_{B_\delta^+(X_j)\cap\{y=0\} }h(x)U_\epsilon^{q+1}(x,0)\phi_\delta(x,0)\,dx=0.
\end{split}\end{equation}
Finally, we claim that 
\begin{equation}\label{fifth}
\lim_{\delta\to0}\lim_{k\to+\infty} 
\int_{\R^{n+1}_+}y^a\langle\nabla (V_k)_+,\nabla\phi_\delta\rangle (V_k)_+\,dX=0.
\end{equation}
For this, we apply the H\"older inequality and we use \eqref{boundM} to obtain that 
\begin{equation}\begin{split}\label{slasgkrehrtohurt}
&\left|\int_{\R^{n+1}_+}y^a\langle\nabla (V_k)_+,\nabla\phi_\delta\rangle (V_k)_+\,dX \right|
\\&\qquad
=\left| \int_{B_\delta^+(X_j)}y^a\langle\nabla (V_k)_+,\nabla\phi_\delta\rangle (V_k)_+\,dX\right|\\
&\qquad 
\le \left( \int_{B_\delta^+(X_j)}y^a|\nabla (V_k)_+|^2\,dX\right)^{1/2}
\left( \int_{B_\delta^+(X_j)}y^a(V_k)_+^2|\nabla\phi_\delta|^2\,dX\right)^{1/2}\\&\qquad\le 
M\left( \int_{B_\delta^+(X_j)}y^a(V_k)_+^2|\nabla\phi_\delta|^2\,dX\right)^{1/2}.
\end{split}\end{equation}
Again by \eqref{boundM} and Lemma \ref{lemma:compact},
we deduce that 
\begin{eqnarray*}&& \left| \int_{B_\delta^+(X_j)}y^a(V_k)_+^2|\nabla\phi_\delta|^2\,dX 
- \int_{B_\delta^+(X_j)}y^aU_\epsilon^2|\nabla\phi_\delta|^2\,dX\right| \\
&&\quad\le \frac{C^2}{\delta^2} \left| \int_{B_\delta^+(X_j)}y^a(V_k)_+^2\,dX - 
\int_{B_\delta^+(X_j)}y^aU_\epsilon^2\,dX\right| \to 0,\end{eqnarray*}
as $k\to+\infty$. Hence 
\begin{equation}\label{sldujhktpriujr}
\lim_{k\to+\infty} \int_{B_\delta^+(X_j)}y^a(V_k)_+^2|\nabla\phi_\delta|^2\,dX= 
\int_{B_\delta^+(X_j)}y^aU_\epsilon^2|\nabla\phi_\delta|^2\,dX.
\end{equation}
Now by the
H\"older inequality with exponents $\gamma$ and $\frac{\gamma}{\gamma-1}$ we have that 
\begin{eqnarray*}
&& \int_{B_\delta^+(X_j)}y^aU_\epsilon^2|\nabla\phi_\delta|^2\,dX \\&\le & 
\left( \int_{B_\delta^+(X_j)}y^aU_\epsilon^{2\gamma}\,dX\right)^{\frac1{\gamma}} 
\left(\int_{B_\delta^+(X_j)}y^a|\nabla\phi_\delta|^{\frac{2\gamma}{\gamma-1}}\,dX
\right)^{\frac{\gamma-1}{\gamma}}\\
&\le & \frac{C^2}{\delta^2}
\left( \int_{B_\delta^+(X_j)}y^aU_\epsilon^{2\gamma}\,dX\right)^{\frac1{\gamma}}
\left(\int_{B_\delta^+(X_j)}y^a\,dX\right)^{\frac{\gamma-1}{\gamma}}\\
&\le & C\delta^{\frac{(n+a+1)(\gamma-1)}{\gamma}-2}
\left( \int_{B_\delta^+(X_j)}y^aU_\epsilon^{2\gamma}\,dX\right)^{\frac1{\gamma}},
\end{eqnarray*}
up to renaming constants. Since $\frac{(n+a+1)(\gamma-1)}{\gamma}-2=0$, this implies that 
$$ \int_{B_\delta^+(X_j)}y^aU_\epsilon^2|\nabla\phi_\delta|^2\,dX\le C 
\left( \int_{B_\delta^+(X_j)}y^aU_\epsilon^{2\gamma}\,dX\right)^{\frac1{\gamma}},$$
for a suitable positive constant $C$. Hence, 
$$ \lim_{\delta\to0} \int_{B_\delta^+(X_j)}y^aU_\epsilon^2|\nabla\phi_\delta|^2\,dX =0.$$
This,
together with \eqref{slasgkrehrtohurt} and \eqref{sldujhktpriujr}, proves \eqref{fifth}. 

From \eqref{sixth}, \eqref{qqqqqqqqqqqpppp}, \eqref{qqqqqqqqqqqpppp-1}, \eqref{forth} and \eqref{fifth} 
we obtain that 
\begin{eqnarray*}
&& 0= \lim_{\delta\to0}\lim_{k\to+\infty} \langle\mathcal{F}_\epsilon'(V_k),\phi_\delta(V_k)_+\rangle\\
\\&&\qquad= \lim_{\delta\to0}
\Big( \int_{\R^{n+1}_+}\phi_\delta \,d\mu-\int_{\R^n}\phi_\delta(x,0)\,d\nu\Big)
\ge  \mu_j-\nu_j.
\end{eqnarray*}
Therefore, this and \eqref{lklhjgghf-3} give that $\nu_j\ge \mu_j\ge S\nu_j^{2/2^*_s}$. 
Hence, either $\nu_j=\mu_j=0$ or $\nu_j^{1-2/2^*_s}\ge S$. 
Since we are in the case $\mu_j\neq0$, the first possibility cannot occur. As a consequence,
\begin{equation}\label{maggiore}
\nu_j\ge S^{n/2s}.
\end{equation}

Now, taking the limit as $k\to+\infty$ in \eqref{minore} and 
recalling assumption (i) of Proposition \ref{PScond2},  \eqref{dlghjtrjh} 
and \eqref{boundM}, we have that 
\begin{equation}\begin{split}\label{vabbe0}
&c_\epsilon +\mathcal{F}_\epsilon(U_\epsilon)\\
 \ge\, & \lim_{k\to+\infty}\left(\mathcal{F}_\epsilon(V_k)
-\frac12\langle \mathcal{F}_\epsilon'(V_k),V_k\rangle \right)\\
=\,& \lim_{k\to+\infty} \left[\left(\frac12-\frac{1}{p+1}\right)\int_{\R^n}(V_k)_+^{p+1}(x,0)\, dx 
-\epsilon\left(\frac{1}{q+1}-\frac12\right)\int_{\R^n}h(x)(V_k)_+^{q+1}(x,0)\,dx\right].
\end{split}\end{equation}
We claim that 
\begin{equation}\label{vabbe}
\lim_{k\to+\infty}\int_{\R^n}(V_k)_+^{p+1}(x,0)\, dx \ge S^{n/2s}+ 
\int_{\R^n}U_\epsilon^{p+1}(x,0)\, dx.
\end{equation}
For this, we take a sequence $\{\varphi_m\}_{m\in\N}\in C^\infty_0(\R^n,[0,1])$ 
such that $\displaystyle\lim_{m\to+\infty}\varphi_m(x)=1$ for any $x\in\R^n$. 
By \eqref{lklhjgghf} we have that 
$$ \lim_{k\to+\infty}\int_{\R^n}(V_k)_+^{p+1}(x,0)\, dx \ge 
\lim_{k\to+\infty}\int_{\R^n}(V_k)_+^{p+1}(x,0)\varphi_m(x)\, dx 
= \int_{\R^n}\varphi_m(x)\, d\nu.$$ 
Moreover, thanks to Fatou's lemma and \eqref{maggiore}, 
$$ \lim_{m\to+\infty}\int_{\R^n}\varphi_m(x)\,d\nu \ge \int_{\R^n}\,d\mu \ge S^{n/2s} + 
\int_{\R^n}U_\epsilon^{p+1}(x,0)\,dx.$$ 
The last two formulas imply that  
\begin{eqnarray*}
&& \lim_{k\to+\infty}\int_{\R^n}(V_k)_+^{p+1}(x,0)\, dx= 
\lim_{m\to+\infty}\lim_{k\to+\infty}\int_{\R^n}(V_k)_+^{p+1}(x,0)\, dx 
\\ &&\qquad\ge \lim_{m\to+\infty} \int_{\R^n}\varphi_m(x)\,d\nu 
\ge S^{n/2s} + 
\int_{\R^n}U_\epsilon^{p+1}(x,0)\,dx,
\end{eqnarray*}
which gives the desired result in \eqref{vabbe}. 
We now show that 
\begin{equation}\label{vabbe-1}
\lim_{k\to+\infty}\int_{\R^n}h(x)(V_k)_+^{q+1}(x,0)\,dx = 
\int_{\R^n}h(x)U_\epsilon^{q+1}(x,0)\,dx.
\end{equation}
Indeed, thanks to \eqref{boundM} we know that $[(V_k)_+]_a\le M$. 
Therefore, Proposition \ref{traceIneq} and Theorem 7.1 in \cite{DPV} imply that 
\begin{eqnarray*}
&& \|(V_k)_+(\cdot,0)-U_\epsilon(\cdot,0)\|_{L^{2^*_s}(\R^n)}\le 2M\\
{\mbox{and }} && (V_k)_+(\cdot,0)\to U_\epsilon(\cdot,0) \ {\mbox{ in }} L^{q+1}_{\rm{loc}}( \R^n )
\ {\mbox{ as }}k\to+\infty. 
\end{eqnarray*}
Thus, we fix $R>0$ and we use \eqref{h1}, \eqref{h0} and the
H\"older inequality to obtain that  
\begin{eqnarray*}
&&\left| \int_{\R^n}h(x)\left((V_k)_+(x,0)-U_\epsilon(x,0)\right)^{q+1}\,dx\right| \\
&&\qquad \le  
\int_{B_R}|h(x)|\,\left|(V_k)_+(x,0)-U_\epsilon(x,0)\right|^{q+1}\,dx \\&&\qquad\qquad+ 
\int_{\R^n\setminus B_R}|h(x)|\,\left|(V_k)_+(x,0)-U_\epsilon(x,0)\right|^{q+1}\,dx\\
&&\qquad \le  \|h\|_{L^\infty(\R^n)} \|(V_k)_+(\cdot,0)-U_\epsilon(\cdot,0)\|_{L^{q+1}(B_R)} \\&&\qquad\qquad+ 
\|h\|_{L^{\frac{2^*_s}{2^*_s-q-1}}(\R^n\setminus B_R)} \|(V_k)_+(\cdot,0)-U_\epsilon(\cdot,0)\|_{L^{2^*_s}(\R^n)}\\
&&\qquad \le C\|(V_k)_+(\cdot,0)-U_\epsilon(\cdot,0)\|_{L^{q+1}(B_R)}\,dx + 
(2M)^{q+1}\|h\|_{L^{\frac{2^*_s}{2^*_s-q-1}}(\R^n\setminus B_R)}. 
\end{eqnarray*}
Hence, letting first $k\to+\infty$ and then $R\to+\infty$, we obtain \eqref{vabbe-1}. 

Also, we observe that $\frac12-\frac{1}{p+1}=\frac{s}{n}$. 
Using this and plugging \eqref{vabbe} and \eqref{vabbe-1} into \eqref{vabbe0} we obtain that 
\begin{eqnarray*}
&&c_\epsilon + \mathcal{F}_\epsilon(U_\epsilon) \\&\ge & \frac{s}{n}S^{n/2s} + 
\left(\frac12-\frac{1}{p+1}\right)\int_{\R^n}U_\epsilon^{p+1}(x,0)\, dx 
\\&&\qquad-\epsilon\left(\frac{1}{q+1}-\frac12\right)\int_{\R^n} h(x)
U_\epsilon^{q+1}(x,0)\,dx\\
&=& \frac{s}{n}S^{n/2s}+\mathcal{F}_\epsilon(U_\epsilon).\end{eqnarray*}
Hence 
$$ c_\epsilon\ge \frac{s}{n}S^{n/2s}, $$
and this is a contradiction with \eqref{ceps2}.

As a consequence, necessarily $\mu_j=\nu_j=0$ for any $j\in J$. Hence, by \eqref{lklhjgghf}
\begin{equation}\label{chiama}
\lim_{k\to+\infty}\int_{\R^n}
(V_k)_+^{2^*_s}(x,0)\varphi(x)\,dx = 
\int_{\R^n} U_\epsilon^{2^*_s}(x,0)\varphi(x)\,dx,\end{equation}
for any $\varphi\in C_0(\R^n)$. 
Furthermore, by Lemma \ref{lemma:tight} and the fact that $U_\epsilon(\cdot,0)\in L^{2^*_s}(\R^n)$
(thanks to Proposition \ref{traceIneq}), we have that for any $\eta>0$ there exists $\rho>0$ 
such that 
$$ \int_{\R^n\setminus B_\rho} (V_k)_+^{2^*_s}(x,0)\,dx <\eta.$$ 
Thus we are in the position to apply Lemma \ref{PSL-2}
with $v_k:=(V_k)_+(\cdot,0)$ 
and $v:=U_\epsilon(\cdot,0)$, and we obtain that 
$(V_k)_+(\cdot,0)\to U_\epsilon(\cdot,0)$ in $L^{2^*_s}(\R^n)$ as $k\to+\infty$. 
Then, by Lemma \ref{PSL-1}
(again applied with $v_k:=(V_k)_+(\cdot,0)$ 
and $v:=U_\epsilon(\cdot,0)$) we have that 
\begin{eqnarray*}
&& \lim_{k\to+\infty}\int_{\R^n}|(V_k)_+^q(x,0)-U_\epsilon^q(x,0)|^{\frac{2^*_s}{q}}\,dx =0\\
{\mbox{and }} && \lim_{k\to+\infty}\int_{\R^n}|(V_k)_+^p(x,0)-U_\epsilon^p(x,0)|^{\frac{2n}{n+2s}}\,dx =0.
\end{eqnarray*}
Therefore, we can fix $\delta\in(0,1)$ (that we will take arbitrarily small in the sequel), 
and say that 
\begin{equation}\label{pegyhrehehrht}
\begin{split}
&\int_{\R^n}|(V_k)_+^q(x,0)-(V_m)_+^q(x,0)|^{\frac{2^*_s}{q}}\,dx \\&\qquad+ 
\int_{\R^n}|(V_k)_+^p(x,0)-(V_m)_+^p(x,0)|^{\frac{2n}{n+2s}}\,dx\le \delta
\end{split}\end{equation}
for $k$ and $m$ sufficiently large (say bigger that some $k_\star(\delta)$). 

We now take $\Psi\in\dot{H}^s_a(\R^{n+1}_+)$ with $\psi:=\Psi(\cdot,0)$ and such that 
\begin{equation}\label{hlkhfjfdsafgh}
[\Psi]_a=1. 
\end{equation}
From \eqref{dlghjtrjh} we have that, for large $k$ 
(say $k\ge k_\star(\delta)$, up to renaming $k_\star(\delta)$), we deduce that 
$$ \left|\langle \mathcal{F}_\epsilon'(V_k),\Psi\rangle\right|\le\delta. $$
As a consequence of this and \eqref{pqoeopwoegi}, 
\begin{eqnarray*}
&& \left| \int_{\R^{n+1}_+}y^a\langle\nabla V_k,\nabla \Psi\rangle\,dX \right.\\
&&\qquad\left.
-\epsilon \int_{\R^n}h(x)(V_k)_+^q(x,0)\psi(x)\,dx -\int_{\R^n}(V_k)_+^p(x,0)\psi(x)\,dx\right|\le 
\delta.
\end{eqnarray*}
In particular, for $k,m\ge k_\star(\delta)$, 
\begin{eqnarray*}
&&\Big| \int_{\R^{n+1}_+}y^a\langle\nabla(V_k-V_m),\nabla \Psi\rangle\,dX 
\\&&\qquad -\epsilon \int_{\R^n}h(x)\left((V_k)_+^q(x,0)-(V_m)_+^q(x,0)\right)\psi(x)\,dx 
\\&&\qquad -\int_{\R^n}\left((V_k)_+^p(x,0)-(V_m)_+^p(x,0)\right)\psi(x)\,dx\Big|\le 
2\delta.
\end{eqnarray*}
Now we use the H\"older inequality with exponents $\frac{2n}{n+2s-q(n-2s)}$, 
$\frac{2^*_s}{q}=\frac{2n}{q(n-2s)}$ and $2^*_s=\frac{2n}{n-2s}$, and with exponents 
$\frac{2^*_s}{p}=\frac{2n}{n+2s}$ and $2^*_s$, and we obtain that 
\begin{eqnarray*}
&&\left| \int_{\R^{n+1}_+}y^a\langle\nabla(V_k-V_m),\nabla \Psi\rangle\,dX \right| \\
&\le & \epsilon\left|\int_{\R^n}h(x)\left((V_k)_+^q(x,0)-(V_m)_+^q(x,0)\right)\psi(x)\,dx\right| 
\\&&\qquad
+ \left|\int_{\R^n}\left((V_k)_+^p(x,0)-(V_m)_+^p(x,0)\right)\psi(x)\,dx\right| +2\delta \\
&\le & \epsilon \left[\int_{\R^n}|h(x)|^{\frac{2n}{n+2s-q(n-2s)}}\,dx
\right]^{\frac{n+2s-q(n-2s)}{2n}}\\ &&\qquad\cdot
\left[ \int_{\R^n}\left|(V_k)_+^q(x,0)-(V_m)_+^q(x,0)\right|^{\frac{2^*_s}{q}}\,dx\right]^{\frac{q(n-2s)}{2n}}
\left[\int_{\R^n}|\psi(x)|^{2^*_s}\,dx\right]^{\frac{1}{2^*_s}} \\
&&\qquad + \left[ \int_{\R^n}\left|(V_k)_+^p(x,0)-(V_m)_+^p(x,0)\right|^{\frac{2n}{n+2s}}\,dx\right]^{\frac{n+2s}{2n}}
\left[ \int_{\R^n}|\psi(x)|^{2^*_s}\,dx\right]^{\frac{1}{2^*_s}} +2\delta.
\end{eqnarray*}
Hence, from \eqref{h0} and \eqref{pegyhrehehrht} we have that 
$$ \left| \int_{\R^{n+1}_+}y^a\langle\nabla(V_k-V_m),\nabla \Psi\rangle\,dX \right|\le 
C\delta^{\frac{q(n-2s)}{2n}}\|\psi\|_{L^{2^*_s}(\R^n)} +C\delta^{\frac{n+2s}{2n}}
\|\psi\|_{L^{2^*_s}(\R^n)} +2\delta,$$
for a suitable positive constant $C$. Now notice that \eqref{TraceIneq} and \eqref{hlkhfjfdsafgh} 
imply that $\|\psi\|_{L^{2^*_s}(\R^n)}\le S^{-1/2}[\Psi]_a=S^{-1/2}$, and so 
\begin{equation}\label{menomale}
\left| \int_{\R^{n+1}_+}y^a\langle\nabla(V_k-V_m),\nabla \Psi\rangle\,dX \right|\le 
C\delta^a,\end{equation}
for some $C,a>0$, as long as $k,m\ge k_\star(\delta)$. 
Also, 
$$ \nabla(V_k-V_m) = \nabla V_k -\nabla V_m = \nabla(U_k+U_\epsilon)-\nabla(U_m+U_\epsilon) 
=\nabla U_k-\nabla U_m.$$ 
Hence, plugging this into \eqref{menomale}, we have 
$$ \left| \int_{\R^{n+1}_+}y^a\langle\nabla(U_k-U_m),\nabla \Psi\rangle\,dX \right|\le 
C\delta^a.$$ 
Since this inequality is valid for any $\Psi$ satisfying \eqref{hlkhfjfdsafgh}, 
we deduce that 
$$ [U_k-U_m]_a\le C\delta^a, $$
namely $U_k$ is a Cauchy sequence in $\dot{H}^s_a(\R^{n+1}_+)$. 
Then, the desired result plainly follows. 
\end{proof}

\section{Bound on the minimax value}\label{sec:BMMV}

The goal of this section is to show that
the minimax value (computed along a suitable path)
lies below the critical threshold 
given by
Proposition~\ref{PScond2}.
The chosen path will be a suitably cut-off
rescaling of the fractional Sobolev minimizers
introduced in~\eqref{tale}.

To start with,
we set
\begin{equation}\label{67ydv8888kk}
\tilde G(x,U):=\int_0^{U} \tilde g(x,t)\,dt
\end{equation}
and
$$ \tilde g(x,t):= \begin{cases}
(U_\varepsilon+t)^q-U_\varepsilon^q,\;\hbox{ if }t\geq 0,
\\ 0\;\hbox{ if }t< 0.
\end{cases}$$
We observe that
\begin{equation}\label{G0poo}
\tilde G(x,0)=0.\end{equation}
Also, we see that~$\tilde g(x,t)\ge0$ for any~$t\in\R$,
and so
\begin{equation}\label{G0po}
{\mbox{$\tilde G(x,U)\ge0$ for any~$U\ge0$.}}\end{equation}
Moreover, recalling~\eqref{h2}, we write the ball~$B$
in~\eqref{h2} as~$B_{\mu_0}(\xi)$
for some~$\xi\in\R^n$ and~$\mu_0>0$.
We fix a cut-off function~$\bar\phi\in C^\infty_0(B_{\mu_0}(\xi),\,[0,1])$
with
\begin{equation}\label{PP0dsfhuw5r6t78y009}
{\mbox{$\bar\phi(x)=1$ for any~$x\in B_{\mu_0/2}(\xi)$.}}\end{equation}
The quantities~$\xi\in\R^n$ and~$\mu_0>0$, as well as
the cut-off~$\bar\phi$, are fixed from now on.
Also, if~$z$ is as in~\eqref{tale}, given~$\mu>0$, we let
\begin{equation}\label{tale2} z_{\mu,\xi}(x):=\mu^{-\frac{n-2s}{2}}
z\left( \frac{x-\xi}{\mu}\right).\end{equation}
Let also~$\bar Z_{\mu,\xi}$ be the extension of~$\bar\phi z_{\mu,\xi}$,
according to~\eqref{poisson}.

{F}rom \eqref{tale}, we know that
\begin{equation}\label{SO89sd}
S= \frac{[z]_{\dot H^s(\R^n)}^2}{\|z\|^2_{L^{2^*_s}(\R^n)}}\end{equation}
and~$(-\Delta)^s z=z^p$. Thus,
by testing this equation against~$z$ itself, we obtain that
$$ [z]_{\dot H^s(\R^n)}^2 = \|z\|^{2^*_s}_{L^{2^*_s}(\R^n)},$$
which, together with~\eqref{SO89sd}, gives that
$$ \|z\|_{L^{2^*_s}(\R^n)}=S^{\frac{n-2s}{4s}}$$
and so
$$ [z]_{\dot H^s(\R^n)}^2 = S^{\frac{n}{2s}}.$$ 
Moreover, by scaling, we have that
\begin{equation}\label{8dvsuyuscal}
\|z_{\mu,\xi}\|_{L^{2^*_s}(\R^n)}=\|z\|_{L^{2^*_s}(\R^n)}=
S^{\frac{n-2s}{4s}}\end{equation}
and
\begin{equation*}
[z_{\mu,\xi}]_{\dot H^s(\R^n)}^2 =
[z]_{\dot H^s(\R^n)}^2= S^{\frac{n}{2s}}.\end{equation*}
{F}rom the equivalence of norms in~\eqref{equivNorms}
and Proposition~21 in~\cite{SV}, we have that
\begin{equation}\label{8dvsuyuscal-2}
[\bar Z_{\mu,\xi}]_a^2
=[\bar\phi z_{\mu,\xi}]_{\dot H^s(\R^n)}^2\le S^{\frac{n}{2s}} + C\mu^{n-2s},\end{equation}
for some~$C>0$.

This setting is fixed from now on, together with
the minimum~$u_\epsilon(x)=U_\epsilon(x,0)$ given in Theorem~\ref{MINIMUM}.
Now we show that the effect of the cut-off on the Lebesgue norm
of the rescaled Sobolev minimizers is negligible when~$\mu$
is small. The quantitative statement is the following:

\begin{lemma}\label{LE:NU90}
We have that
$$ \int_{\R^n} |\bar\phi^{2^*_s}-1|\, z_{\mu,\xi}^{2^*_s} \,dx
\le C\mu^{n},$$
for some~$C>0$.
\end{lemma}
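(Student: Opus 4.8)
The plan is to estimate the integral by exploiting the fact that $\bar\phi^{2^*_s}-1$ is supported away from the origin (more precisely, it vanishes on $B_{\mu_0/2}(\xi)$) together with the explicit decay of $z_{\mu,\xi}$ at infinity. First I would record that, by~\eqref{PP0dsfhuw5r6t78y009}, $\bar\phi(x)=1$ for $x\in B_{\mu_0/2}(\xi)$, hence $\bar\phi^{2^*_s}(x)-1=0$ for such $x$; moreover $0\le\bar\phi\le1$, so $|\bar\phi^{2^*_s}-1|\le1$ everywhere. Therefore
\begin{equation*}
\int_{\R^n} |\bar\phi^{2^*_s}-1|\, z_{\mu,\xi}^{2^*_s}\,dx
\le \int_{\R^n\setminus B_{\mu_0/2}(\xi)} z_{\mu,\xi}^{2^*_s}(x)\,dx.
\end{equation*}

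Next I would use the explicit form of $z_{\mu,\xi}$ coming from~\eqref{tale} and~\eqref{tale2}. Since $z(x)=c_\star(1+|x|^2)^{-\frac{n-2s}{2}}$, we have
\begin{equation*}
z_{\mu,\xi}^{2^*_s}(x)=\mu^{-n}\,z\!\left(\frac{x-\xi}{\mu}\right)^{2^*_s}
=c_\star^{2^*_s}\,\mu^{-n}\left(1+\frac{|x-\xi|^2}{\mu^2}\right)^{-n}
=c_\star^{2^*_s}\,\frac{\mu^{n}}{\big(\mu^2+|x-\xi|^2\big)^{n}},
\end{equation*}
where we used $\frac{(n-2s)2^*_s}{2}=n$. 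Substituting $y:=(x-\xi)/\mu$ and passing to polar coordinates, for $\mu\le1$ (which is the regime of interest, since the claim is an estimate for small $\mu$) one gets
\begin{equation*}
\int_{\R^n\setminus B_{\mu_0/2}(\xi)} z_{\mu,\xi}^{2^*_s}(x)\,dx
= c_\star^{2^*_s}\int_{\R^n\setminus B_{\mu_0/(2\mu)}} \frac{dy}{\big(1+|y|^2\big)^{n}}
\le C\int_{\mu_0/(2\mu)}^{+\infty} \frac{r^{n-1}}{r^{2n}}\,dr
= C\int_{\mu_0/(2\mu)}^{+\infty} r^{-n-1}\,dr,
\end{equation*}
and the last integral equals $\frac{1}{n}\big(\frac{2\mu}{\mu_0}\big)^{n}$, which is $\le C\mu^{n}$ for a constant $C$ depending on $n$, $s$ and $\mu_0$ (hence on the fixed data). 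Combining the two displays gives the desired bound.

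I do not expect any genuine obstacle here; the only points requiring a little care are bookkeeping ones. One should make sure the constant is uniform in $\mu$ over $\mu\in(0,1]$ (or $\mu\le\mu_0$): this is automatic because after the rescaling the radius of the excised ball is $\mu_0/(2\mu)\ge \mu_0/2$, so the tail integral is bounded by its value at that fixed radius times the extra factor $\mu^{n}$ coming from the change of variables; for $\mu\ge 1$ the statement is vacuous for our purposes (and in any case $\int_{\R^n}z_{\mu,\xi}^{2^*_s}=S^{n/2s}$ is bounded, so one could trivially absorb large $\mu$ into the constant if desired). One should also verify the arithmetic identity $\frac{(n-2s)2^*_s}{2}=n$, which is immediate from $2^*_s=\frac{2n}{n-2s}$. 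With these remarks the proof is complete.
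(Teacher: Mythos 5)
Your proof is correct and follows essentially the same route as the paper: both arguments reduce the integral to $\R^n\setminus B_{\mu_0/2}(\xi)$ using that $\bar\phi\equiv 1$ on $B_{\mu_0/2}(\xi)$, rescale via $y=(x-\xi)/\mu$, and bound the tail of $z^{2^*_s}$ by $C|y|^{-2n}$ to produce the factor $\mu^n$. The explicit computation of $z_{\mu,\xi}^{2^*_s}$ and the uniformity remarks are fine but add nothing beyond the paper's shorter version of the same estimate.
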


\begin{proof}
We observe that
\begin{equation*}
\begin{split}
& \int_{\R^n\setminus B_{\frac{\mu_0}2}(\xi)} z_{\mu,\xi}^{2^*_s} (x)\,dx
= \mu^{-n} 
\int_{\R^n\setminus B_{\frac{\mu_0}2}(\xi)} 
z^{2^*_s}\left( \frac{x-\xi}{\mu}\right)\,dx \\
&\qquad =
\int_{\R^n\setminus B_{\frac{\mu_0}{2\mu}} } 
z^{2^*_s}(y)\,dy
\le C\,\int_{\R^n\setminus B_{\frac{\mu_0}{2\mu}} } 
|y|^{-2n}\,dy \le C\mu^{n},
\end{split}
\end{equation*}
for some~$C>0$ (that may vary from line to line and may also
depend on~$\mu_0$). As a consequence, recalling~\eqref{PP0dsfhuw5r6t78y009},
we have that
\begin{equation*}
\int_{\R^n} |\bar\phi^{2^*_s}-1|\, z_{\mu,\xi}^{2^*_s} \,dx
=\int_{\R^n\setminus B_{\frac{\mu_0}2}(\xi)}
|\bar\phi^{2^*_s}-1|\, z_{\mu,\xi}^{2^*_s} \,dx 
\le C\mu^{n}. \qedhere
\end{equation*}
\end{proof}

The next result states that we can always ``concentrate the mass
near the positivity set of $h$'', in order
to detect a positive integral out of it.

\begin{lemma}\label{SP}
We have that
\begin{equation}\label{TO:P:SP}
\int_{\R^n} h(x)\,
\tilde G\big( x,t\bar\phi(x)\,z_{\mu,\xi}(x)\big)\,dx
\ge 0,\end{equation}
for any~$\mu>0$ and any~$t\ge0$.
\end{lemma}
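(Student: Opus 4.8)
The statement to prove is Lemma~\ref{SP}: for $\mu>0$ and $t\ge0$,
$$\int_{\R^n} h(x)\,\tilde G\big(x,t\bar\phi(x)z_{\mu,\xi}(x)\big)\,dx\ge 0.$$

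Here is my plan. The key observation is that the integrand is supported in the ball $B=B_{\mu_0}(\xi)$ on which $\inf_B h>0$, because $\bar\phi$ is supported in $B_{\mu_0}(\xi)$ by construction (see~\eqref{PP0dsfhuw5r6t78y009} and the paragraph preceding it), and $\tilde G(x,0)=0$ by~\eqref{G0poo}. Therefore on the set where the integrand is nonzero we have $h(x)>0$. It remains to check that the other factor, $\tilde G\big(x,t\bar\phi(x)z_{\mu,\xi}(x)\big)$, is nonnegative everywhere. Since $t\ge0$, $\bar\phi\ge0$ and $z_{\mu,\xi}\ge0$ (recall $z$ from~\eqref{tale} is a positive function, hence so is its rescaling $z_{\mu,\xi}$ defined in~\eqref{tale2}), the argument $U:=t\bar\phi(x)z_{\mu,\xi}(x)$ is $\ge 0$ at every point $x\in\R^n$. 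Then~\eqref{G0po} gives $\tilde G(x,U)\ge0$ for any $U\ge0$, so the full integrand $h(x)\,\tilde G(x,t\bar\phi z_{\mu,\xi})$ is a product of something which is either zero or strictly positive (namely $h$, restricted to the support of $\bar\phi$) times something nonnegative, hence is $\ge0$ pointwise a.e. Integrating preserves the inequality, which yields~\eqref{TO:P:SP}.

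Concretely, I would write: let $x\in\R^n$. If $\bar\phi(x)z_{\mu,\xi}(x)=0$, then $t\bar\phi(x)z_{\mu,\xi}(x)=0$ and by~\eqref{G0poo} the integrand vanishes at $x$. If $\bar\phi(x)z_{\mu,\xi}(x)>0$, then $x\in\mathrm{supp}(\bar\phi)\subset B_{\mu_0}(\xi)=B$, so $h(x)\ge\inf_B h>0$ by~\eqref{h2}; moreover $t\bar\phi(x)z_{\mu,\xi}(x)\ge 0$, so~\eqref{G0po} gives $\tilde G\big(x,t\bar\phi(x)z_{\mu,\xi}(x)\big)\ge0$, and hence $h(x)\,\tilde G\big(x,t\bar\phi(x)z_{\mu,\xi}(x)\big)\ge0$. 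In all cases the integrand is nonnegative a.e.\ in $\R^n$, and~\eqref{TO:P:SP} follows by integrating.

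There is essentially no main obstacle here — the lemma is a bookkeeping consequence of three facts already recorded: the sign of $\tilde g$ (hence~\eqref{G0po}), the normalization $\tilde G(x,0)=0$ (\eqref{G0poo}), and the containment of $\mathrm{supp}(\bar\phi)$ inside the ball where $h$ is positive. The only point requiring a word of care is that one should not claim $h\ge0$ globally (it need not be); the argument must go through the localization to $\mathrm{supp}(\bar\phi)$. I would make sure the write-up handles the two cases (integrand zero / $h>0$) explicitly so that this localization is transparent.
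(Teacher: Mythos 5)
Your proposal is correct and follows essentially the same route as the paper's proof: the integrand vanishes outside $B_{\mu_0}(\xi)$ by~\eqref{G0poo} since $\bar\phi$ is supported there, and on that ball one concludes using~\eqref{h2} together with~\eqref{G0po} applied to the nonnegative argument $t\bar\phi z_{\mu,\xi}$. Your explicit two-case pointwise write-up is just a slightly more detailed phrasing of the same localization argument.
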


\begin{proof} 
We have that $\bar\phi(x)=0$ if~$x\in \R^n\setminus
B_{\mu_0}(\xi)$. Thus, using~\eqref{G0poo},
we have that
$$ \tilde G\big(x,t\bar\phi(x)\,z_{\mu,\xi}(x) \big)=0$$
for any~$x\in  \R^n\setminus
B_{\mu_0}(\xi)$.
Therefore
$$ \int_{\R^n} h(x)\,
\tilde G\big( x,t\bar\phi(x)\,z_{\mu,\xi}(x)\big)\,dx
=\int_{B_{\mu_0}(\xi)} h(x)\,\tilde G
\big( x,t\bar\phi(x)\,z_{\mu,\xi}(x)\big)\,dx.$$
Then, the desired result follows from~\eqref{h2}
and~\eqref{G0po}.
\end{proof}

Now we check
that the geometry
of the mountain pass is satisfied by the functional~$\mathcal{I}_\varepsilon$.
Indeed, we first observe that Proposition~\ref{prop:zero}
gives that~$0$ is a local minimum for the functional~$
\mathcal{I}_\varepsilon$. The next result shows that
the path induced by the function~$\bar Z_{\mu,\xi}$
attains negative values, in a somehow uniform way
(the uniform estimates in $\mu$
in Lemma \ref{yu799}
will be needed in
the subsequent Corollary \ref{CC-below-PS} and,
from these facts, we will be able to deduce the
mountain pass geometry, check that the minimax
values stays below the critical threshold and complete the proof
of Theorem~\ref{TH:MP} in the forthcoming
Section~\ref{sec:proof}). To this goal,
it is useful
to introduce the auxiliary functional
\begin{equation}\label{def I2}
\mathcal{I}^\star_\varepsilon(U):=
\frac{1}{2}\int_{\mathbb{R}^{n+1}_+}{y^a|\nabla U|^2\,dX}-
\int_{\mathbb{R}^n}{G^\star(x,U(x,0))\,dx},\end{equation}
where
$$ G^\star(x,U):=\int_0^{U} g^\star(x,t)\,dt$$
and
$$ g^\star(x,t):= \begin{cases} 
(U_\varepsilon+t)^p-U_\varepsilon^p,\;\hbox{ if }t\geq 0,
\\ 0\;\hbox{ if }t< 0.
\end{cases}$$
By~\eqref{def I} and~\eqref{67ydv8888kk}, we see that~$G=G^\star+\epsilon h\tilde G$.
Thus,
as a consequence of Lemma~\ref{SP}, we have that
\begin{equation}\label{789hgbnjjjjjKK}
\mathcal{I}_\varepsilon(t\bar Z_{\mu,\xi})
=\mathcal{I}_\varepsilon^\star(t\bar Z_{\mu,\xi})
-\epsilon\int_{\R^n} h(x)\,\tilde G(x,t\bar Z_{\mu,\xi}(x))\,dx
\leq
\mathcal{I}_\varepsilon^\star(t\bar Z_{\mu,\xi}).\end{equation}
Then we have:

\begin{lemma}\label{yu799}
There exists~$\mu_1\in(0,\mu_0)$ such that
$$ \lim_{t\to+\infty} 
\sup_{\mu\in(0,\mu_1)}
\mathcal{I}_\varepsilon^\star(t\bar Z_{\mu,\xi}) =-\infty.$$
In particular, there exists~$T_1>0$ such that
\begin{equation}\label{789hgshh67g000io}
\sup_{\mu\in(0,\mu_1)}
\mathcal{I}_\varepsilon^\star(t\bar Z_{\mu,\xi})\le 0
\end{equation}
for any~$t\ge T_1$.
\end{lemma}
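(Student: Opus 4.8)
The plan is to estimate $\mathcal{I}^\star_\varepsilon(t\bar Z_{\mu,\xi})$ from above by expanding the definition \eqref{def I2}, controlling the gradient term via \eqref{8dvsuyuscal-2} and the nonlinear term from below by a fixed positive multiple of $t^{p+1}$, uniformly for small $\mu$. First I would write, using \eqref{def I2},
\[
\mathcal{I}^\star_\varepsilon(t\bar Z_{\mu,\xi})=\frac{t^2}{2}[\bar Z_{\mu,\xi}]_a^2-\int_{\mathbb{R}^n}G^\star\big(x,t\,\bar\phi(x)\,z_{\mu,\xi}(x)\big)\,dx,
\]
and then invoke \eqref{8dvsuyuscal-2} to bound the first summand by $\frac{t^2}{2}\big(S^{n/2s}+C\mu^{n-2s}\big)\le \frac{t^2}{2}\big(S^{n/2s}+C\mu_0^{n-2s}\big)$, which is uniform in $\mu\in(0,\mu_0)$. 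So the whole point is the lower bound on the integral of $G^\star$.

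For the second summand, I would use the elementary inequality that for any $a\ge0$ and $\tau\ge0$ one has $(a+\tau)^{p+1}-a^{p+1}-(p+1)a^p\tau\ge c_0\,\tau^{p+1}$ for some $c_0>0$ depending only on $p$ (this is the reverse-type inequality already used in the paper, e.g.\ the constants $c_1,c_2$ in Step~2 of Lemma~\ref{lemma:tight}); integrating in $t$ gives $G^\star(x,\tau)\ge \frac{c_0}{p+1}\tau^{p+1}$ for $\tau\ge0$ (the $U_\varepsilon^p\tau$ term drops out since we only need a one-sided bound and $G^\star$ contains precisely the subtracted linear part). Hence
\[
\int_{\mathbb{R}^n}G^\star\big(x,t\,\bar\phi(x)\,z_{\mu,\xi}(x)\big)\,dx\ge \frac{c_0\,t^{p+1}}{p+1}\int_{\mathbb{R}^n}\bar\phi^{p+1}(x)\,z_{\mu,\xi}^{p+1}(x)\,dx.
\]
Now $p+1=2^*_s$, and by \eqref{8dvsuyuscal} together with Lemma~\ref{LE:NU90},
\[
\int_{\mathbb{R}^n}\bar\phi^{2^*_s}z_{\mu,\xi}^{2^*_s}\,dx=\int_{\mathbb{R}^n}z_{\mu,\xi}^{2^*_s}\,dx-\int_{\mathbb{R}^n}(1-\bar\phi^{2^*_s})z_{\mu,\xi}^{2^*_s}\,dx\ge S^{n/2s}-C\mu^n\ge \frac{1}{2}S^{n/2s}
\]
for all $\mu$ smaller than some $\mu_1\in(0,\mu_0)$. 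This is the step that fixes the choice of $\mu_1$ and makes everything uniform in $\mu\in(0,\mu_1)$.

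Combining the two bounds, for all $\mu\in(0,\mu_1)$ and all $t\ge0$,
\[
\mathcal{I}^\star_\varepsilon(t\bar Z_{\mu,\xi})\le \frac{t^2}{2}\big(S^{n/2s}+C\mu_0^{n-2s}\big)-\frac{c_0\,S^{n/2s}}{2(p+1)}\,t^{p+1},
\]
and since $p+1>2$ the right-hand side is a function of $t$ alone that tends to $-\infty$ as $t\to+\infty$; taking the supremum over $\mu\in(0,\mu_1)$ changes nothing because the bound is already $\mu$-independent, which gives the first assertion. The existence of $T_1$ with \eqref{789hgshh67g000io} then follows immediately by picking $T_1$ so large that the displayed quartic-versus-quadratic (in general, $t^{p+1}$ versus $t^2$) expression is $\le0$ for $t\ge T_1$. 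The only mild subtlety—the ``main obstacle'' such as it is—is to make sure the lower bound on $G^\star$ and the cut-off estimate of Lemma~\ref{LE:NU90} are genuinely uniform in $\mu$, i.e.\ that the constant $c_0$ and the threshold $\mu_1$ do not degenerate; both are handled by the observations above (the inequality for $(a+\tau)^{p+1}$ has a $\mu$-independent constant, and the loss $C\mu^n$ in Lemma~\ref{LE:NU90} is absorbed once $\mu<\mu_1$).
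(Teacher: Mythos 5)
Your proposal is correct and follows essentially the same route as the paper: bound $[\bar Z_{\mu,\xi}]_a^2$ uniformly via \eqref{8dvsuyuscal-2}, bound $\int_{\R^n}G^\star$ from below by a fixed multiple of $t^{p+1}\int_{\R^n}\bar\phi^{2^*_s}z_{\mu,\xi}^{2^*_s}\,dx$, and use Lemma~\ref{LE:NU90} with \eqref{8dvsuyuscal} to fix $\mu_1$ so that this integral stays above $\tfrac12 S^{n/2s}$, after which $p+1>2$ forces the limit $-\infty$ uniformly in $\mu$. The only deviation is that you lower-bound $G^\star$ by the convexity inequality $(a+\tau)^{p+1}-a^{p+1}-(p+1)a^p\tau\ge c_0\tau^{p+1}$ (which is true by an elementary infimum argument, although it is not literally the two-sided estimate with $c_1,c_2$ from Step~2 of Lemma~\ref{lemma:tight} that you cite), whereas the paper drops the linear term via a Young-type bound $u_\epsilon^{p}\,z_{\mu,\xi}\le u_\epsilon^{p+1}+z_{\mu,\xi}^{p+1}$ at the cost of harmless $-C-Ct$ terms; both yield the same conclusion.
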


\begin{proof} We observe that, if~$U\ge0$,
\begin{equation}\label{9sg78ujohFgj}
G^\star(x,U)=\int_0^{U} (U_\varepsilon+t)^p-U_\varepsilon^p\,dt
=\frac{(U_\varepsilon+U)^{p+1} -U_\varepsilon^{p+1}}{p+1}
-U_\varepsilon^p\,U.
\end{equation}
Moreover,
$$ U_\epsilon^{p}(x,0)\,
\bar Z_{\mu,\xi}(x,0)\le u_\epsilon^{p+1}(x)+
z_{\mu,\xi}^{p+1}(x).$$
Using this and~\eqref{9sg78ujohFgj},
we obtain that
\begin{eqnarray*}
&&G^\star(x,t\bar Z_{\mu,\xi}(x,0))\\
&=& \frac{1}{p+1}\left((U_\epsilon(x,0) +t\bar Z_{\mu,\xi}(x,0))^{p+1}-
U_\epsilon^{p+1}(x,0)\right)
-tU_\epsilon^p (x,0)\,\bar Z_{\mu,\xi}(x,0)
\\ &\ge&
\frac{1}{p+1}\Big(
(t\bar \phi(x) z_{\mu,\xi}(x))^{p+1}-
u_\epsilon^{p+1}(x)\Big)
-t u_\epsilon^{p+1}(x) -tz^{p+1}_{\mu,\xi}(x)
.\end{eqnarray*}
Thus, integrating over~$\R^n$
and recalling~\eqref{h0}, \eqref{8dvsuyuscal} and the fact that~$2^*_s=p+1$, we get
\begin{equation}\label{78990iuii-0}
\begin{split}
& \int_{\R^n}
G^\star(x,t\bar Z_{\mu,\xi})\,dx\\ &\qquad\ge \frac{t^{p+1}}{p+1}\int_{\R^n}
\bar\phi^{p+1}(x) z_{\mu,\xi}^{p+1}(x)\,dx
-C-Ct,\end{split}\end{equation}
for some~$C>0$ (up to renaming constants).

Now we deduce from Lemma~\ref{LE:NU90}
that there exists~$\mu_1\in(0,1)$ such that
if~$\mu\in(0,\mu_1)$ then
\begin{equation*}
\int_{\R^n}
\bar\phi^{p+1}(x) z_{\mu,\xi}^{p+1}(x)\,dx \ge \frac{ S^{\frac{n}{2s}} }{2}.
\end{equation*}
Now, by inserting this
into~\eqref{78990iuii-0}, we obtain that, if~$\mu\in(0,\mu_1)$,
then
$$ \int_{\R^n}
G^\star(x,t\bar Z_{\mu,\xi})\,dx\ge \frac{ S^{\frac{n}{2s}}\,t^{p+1} }{2(p+1)}
-C-Ct.$$
This and~\eqref{def I2}
give that
$$ \mathcal{I}_\varepsilon^\star(t\bar Z_{\mu,\xi}) \le
\frac{t^2 [\bar Z_{\mu,\xi}]^2_a}{2}
+C(1+t)-\frac{ S^{\frac{n}{2s}}\,t^{p+1} }{2(p+1)}.$$
Hence, recalling~\eqref{8dvsuyuscal-2},
$$ \mathcal{I}_\varepsilon^\star(t\bar Z_{\mu,\xi}) \le
C(1+t+t^2)-\frac{ S^{\frac{n}{2s}}\,t^{p+1} }{2(p+1)},$$
up to renaming constants, for any~$\mu\in(0,\mu_1)$.
Since~$p+1>2$,
the desired claim easily follows.
\end{proof}

Now we introduce a series of purely elementary, but useful,
estimates.

\begin{lemma}\label{ABC-1}
For any~$a$, $b\ge0$ and any~$p>1$, we have that
\begin{equation}\label{9sghjtrf}
(a+b)^p\ge a^p +b^p.\end{equation}
Also, if~$a$, $b>0$, we have that
\begin{equation}\label{9sghjtrf-2}
(a+b)^p> a^p +b^p.\end{equation}
\end{lemma}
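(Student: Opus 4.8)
The statement to prove is an elementary inequality: for $a,b\ge0$ and $p>1$, $(a+b)^p\ge a^p+b^p$, with strict inequality when $a,b>0$.

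The plan is to reduce the homogeneous inequality to a one-variable statement by scaling, and then to use monotonicity. First I would dispose of the trivial cases: if $a=0$ or $b=0$, both sides are equal and \eqref{9sghjtrf} holds with equality (so in particular there is nothing to prove for \eqref{9sghjtrf-2} in those cases, since the hypothesis there is $a,b>0$). So I may assume $a,b>0$, which in fact is exactly the setting needed for the stronger conclusion \eqref{9sghjtrf-2}, and \eqref{9sghjtrf} will follow a fortiori together with the boundary cases just treated.

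Next, by dividing through by $(a+b)^p>0$, the claim $(a+b)^p\ge a^p+b^p$ becomes $1\ge s^p+(1-s)^p$ where $s:=\frac{a}{a+b}\in(0,1)$. The key observation is that for $s\in(0,1)$ and $p>1$ one has $s^p<s$ and $(1-s)^p<1-s$, because the function $t\mapsto t^p$ satisfies $t^p<t$ on $(0,1)$ (indeed $t^{p}=t\cdot t^{p-1}$ and $t^{p-1}<1$ since $p-1>0$ and $t<1$). Adding these two strict inequalities gives $s^p+(1-s)^p<s+(1-s)=1$, which is precisely \eqref{9sghjtrf-2} after multiplying back by $(a+b)^p$. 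The non-strict inequality \eqref{9sghjtrf} then follows by combining this with the degenerate cases $a=0$ or $b=0$.

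I do not anticipate any real obstacle here; the only point requiring a word of care is making sure the strict inequality $t^p<t$ on $(0,1)$ is justified cleanly (via $t^{p-1}<1$, using $p>1$ and $0<t<1$), and that the division by $(a+b)^p$ is legitimate, which it is once we have reduced to $a,b>0$. An alternative, equally short route would be to fix $b>0$ and set $f(a):=(a+b)^p-a^p-b^p$, note $f(0)=0$, and compute $f'(a)=p\bigl((a+b)^{p-1}-a^{p-1}\bigr)>0$ for $a>0$ since $t\mapsto t^{p-1}$ is strictly increasing; hence $f(a)>0$ for $a>0$, giving \eqref{9sghjtrf-2}, and $f(a)\ge0$ for $a\ge0$, giving \eqref{9sghjtrf}. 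I would present whichever of these two arguments is shorter to typeset; both avoid any heavy computation.
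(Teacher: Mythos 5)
Your proof is correct. Your primary argument is genuinely different from the paper's: you normalize by dividing by $(a+b)^p$, set $s:=\frac{a}{a+b}\in(0,1)$, and reduce the claim to $s^p+(1-s)^p<1$, which follows from the elementary strict bound $t^p=t\cdot t^{p-1}<t$ for $t\in(0,1)$ and $p>1$; this route exploits the homogeneity of the inequality and needs only the observation $t^{p-1}<1$, so it is arguably the most self-contained option. The paper instead handles the trivial cases $a=0$ or $b=0$ and then, for $a,b>0$, studies $f(b):=(a+b)^p-a^p-b^p$, noting $f'(b)=p\bigl((a+b)^{p-1}-b^{p-1}\bigr)>0$ (since $a>0$ and $t\mapsto t^{p-1}$ is strictly increasing) and concluding $f(b)>f(0)=0$ — which is precisely the alternative argument you sketch at the end of your proposal, up to swapping the roles of $a$ and $b$. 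Both arguments are complete and equally short; the only point each must make cleanly is the strictness (your $t^{p-1}<1$ versus the paper's strict monotonicity of the power function), and you address this explicitly.
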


\begin{proof} If either~$a=0$ or~$b=0$, then~\eqref{9sghjtrf}
is obvious. So we can suppose that~$a\ne0$ and~$b\ne0$.
We let~$f(b):=(a+b)^p - a^p-b^p$.
Notice that~$f'(b)=p\big( (a+b)^{p-1} - b^{p-1}\big)>0$, since~$a>0$.
Hence
$$ (a+b)^p-a^p-b^p = f(b)> f(0)=0,$$
since~$b>0$, as desired.
\end{proof}

The result in Lemma~\ref{ABC-1} can be made more precise when~$p\ge2$,
as follows:

\begin{lemma}\label{ABC-2}
Let~$p\ge2$. Then, there exists~$c_p>0$ such that,
for any~$a$ and~$b\ge0$,
$$ (a+b)^p\ge a^p +b^p + c_p\,a^{p-1} b.$$
\end{lemma}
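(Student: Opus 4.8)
The statement to be proved is the inequality
$$(a+b)^p\ge a^p+b^p+c_p\,a^{p-1}b\qquad\text{for all }a,b\ge0,$$
when $p\ge2$, for some constant $c_p>0$ depending only on $p$. First I would dispose of the degenerate cases: if $a=0$ the inequality reads $b^p\ge b^p$, which is trivial; so from now on assume $a>0$. Then, exactly as in the proof of Lemma~\ref{ABC-1}, I would reduce to a one–variable statement by homogeneity: dividing through by $a^p$ and setting $t:=b/a\ge0$, the claim becomes
$$(1+t)^p\ge1+t^p+c_p\,t\qquad\text{for all }t\ge0.$$

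The natural way to extract $c_p$ is to introduce the function
$$\varphi(t):=(1+t)^p-1-t^p,\qquad t\ge0,$$
and to show that $\varphi(t)\ge c_p\,t$ for a suitable $c_p>0$. The key observations are: (i) $\varphi(0)=0$; (ii) $\varphi'(t)=p\big((1+t)^{p-1}-t^{p-1}\big)$, and since $p\ge2$ the exponent $p-1\ge1$, so $\tau\mapsto\tau^{p-1}$ is increasing and hence $\varphi'(t)>0$ for $t>0$, giving $\varphi(t)>0$ for $t>0$; (iii) $\varphi'(0)=p>0$. From (i) and (iii), $\varphi(t)/t\to p$ as $t\to0^+$, so $\varphi(t)/t$ is bounded below near $0$ by a positive constant. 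For large $t$, $\varphi(t)=(1+t)^p-1-t^p\ge p t^{p-1}-1$ by the mean value theorem (or by the expansion $(1+t)^p\ge t^p+pt^{p-1}$), so $\varphi(t)/t\ge pt^{p-2}-1/t\to+\infty$ as $t\to+\infty$ because $p\ge2$. Therefore the continuous, strictly positive function $t\mapsto\varphi(t)/t$ on $(0,+\infty)$ has a positive infimum: setting
$$c_p:=\inf_{t>0}\frac{\varphi(t)}{t}\in(0,+\infty),$$
we get $\varphi(t)\ge c_p\,t$ for all $t\ge0$, which is the rescaled inequality. Multiplying back by $a^p$ and recalling $t=b/a$ yields $(a+b)^p\ge a^p+b^p+c_p\,a^{p-1}b$, first for $a>0$ and then, by the trivial case, for all $a\ge0$.

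The only mild subtlety — and the step I would be most careful about — is justifying that the infimum of $\varphi(t)/t$ over $(0,\infty)$ is actually attained (or at least is strictly positive), since $(0,\infty)$ is not compact. This is handled precisely by the two limiting computations above: the limit at $0^+$ equals $p>0$ and the limit at $+\infty$ is $+\infty$, so the infimum is realized on a compact subinterval $[\delta,R]\subset(0,\infty)$ where $\varphi/t$ is continuous and strictly positive, hence the infimum is a positive minimum. Alternatively one can avoid the extremal argument entirely by splitting into the cases $t\in[0,1]$ and $t\ge1$: on $[0,1]$ use convexity of $\tau\mapsto\tau^p$ to bound $(1+t)^p-1\ge pt+c\,t$-type terms, and on $[1,+\infty)$ use $(1+t)^p-t^p\ge pt^{p-1}\ge p t$; either route gives an explicit $c_p$ (for instance one can take $c_p=p$ after checking the two ranges). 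I would present the clean infimum argument, as it mirrors the style of the preceding lemmas such as Lemma~\ref{PSL-1} and Lemma~\ref{lemma basic} in this paper, where suprema/infima of elementary one-variable functions are used in the same spirit.
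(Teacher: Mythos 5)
Your argument is correct and follows essentially the same route as the paper: reduce by homogeneity to $t=b/a$, study $f(t)=\frac{(1+t)^p-1-t^p}{t}$, extend it continuously by $f(0)=p$, and take $c_p$ as its positive infimum over $[0,+\infty)$. The only inaccuracy is the claim that $f(t)\to+\infty$ as $t\to+\infty$ ``because $p\ge2$'': in the borderline case $p=2$ the limit is $2=p$ (indeed $f\equiv 2$), as the paper notes by treating $p=2$ and $p>2$ separately, but since your own bound $f(t)\ge p\,t^{p-2}-1/t\ge p-1>0$ for $t\ge1$ still keeps $f$ bounded away from zero at infinity, the infimum remains strictly positive and the proof goes through unchanged.
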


\begin{proof} If~$a=0$, then we are done, so we suppose~$a\ne0$
and we set~$t_o:=b/a$.
For any~$t>0$, we let
$$ f(t):=\frac{ (1+t)^p-1-t^p }{ t }.$$
{F}rom~\eqref{9sghjtrf-2} (used here with~$a:=1$ and~$b:=t$),
we know that~$f(t)>0$ for any~$t>0$. Moreover
$$ \lim_{t\to 0} f(t)=
\lim_{t\to 0} \frac{ 1 + pt +o(t)-1-t^p }{ t }=p,$$
hence $f$ can be continuously extended over~$[0,+\infty)$
by setting~$f(0):=p$. Furthermore,
\begin{eqnarray*}
&& \lim_{t\to +\infty} f(t)=
\lim_{t\to +\infty} t^{p-1} 
\left( \left(\frac{1}{t}+1\right)^p-\frac{1}{t^p}-1\right)
\\&&\qquad=\lim_{t\to +\infty}
t^{p-1}
\left( 1+\frac{p}{t}+o\left(\frac{1}{t}\right)-\frac{1}{t^p}-1\right) 
\\ &&\qquad=\lim_{t\to +\infty} t^{p-2}\left( p
+\frac{o\left(\frac{1}{t}\right)}{ \frac{1}{t} }
\right)-\frac{1}{t} 
= \left\{ \begin{matrix}
p & {\mbox{ if }} p=2,\\
+\infty & {\mbox{ if }} p>2.
\end{matrix}\right.
\end{eqnarray*}
In any case,
$$ \lim_{t\to +\infty} f(t) \ge f(0)=p,$$
hence
$$ c_p:=\inf_{[0,+\infty)} f =\min_{[0,+\infty)} f >0.$$
As a consequence,
\begin{eqnarray*}
&& (a+b)^p- a^p -b^p - c_p\,a^{p-1} b\\
&=& a^p \big( (1+t_o)^p -1- t_o^p -c_p\, t_o\big)\\
&=& a^p t_o \big( f(t_o)-c_p\big)\\
&\ge& 0,\end{eqnarray*}
as desired.
\end{proof}

It is worth to stress that the result in Lemma~\ref{ABC-2}
does not hold when~$p\in(1,2)$, differently than what
is often stated in the literature:
as a counterexample, one can take~$b=1$ and observe that
\begin{eqnarray*}
&& \lim_{a\to0} \frac{(a+b)^p- a^p -b^p}{a^{p-1} b}
=\lim_{a\to0} \frac{(a+1)^p- a^p -1}{a^{p-1}}\\
&& \qquad
=\lim_{a\to0} \frac{1+pa+o(a)- a^p -1}{a^{p-1}}
=\lim_{a\to0} pa^{2-p} + a^{1-p} o(a)- a=0
\end{eqnarray*}
when~$p\in(1,2)$. In spite of this additional difficulty,
when~$p\in(1,2)$ one can obtain a variant of Lemma~\ref{ABC-2}
under an additional assumption on the size of~$b$.
The precise statement goes as follows:

\begin{lemma}\label{ABC-3}
Let~$p\in(1,2)$ and~$\kappa>0$.
Then, there exists~$c_{p,\kappa}>0$ such that,
for any~$a>0$, $b\ge0$, with~$\frac{b}{a}\in[0,\kappa]$, we have
$$ (a+b)^p\ge a^p +b^p + c_{p,\kappa}\,a^{p-1} b.$$
\end{lemma}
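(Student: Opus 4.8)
The plan is to mimic the proof of Lemma~\ref{ABC-2}, replacing the global infimum of the auxiliary function by an infimum over the compact interval $[0,\kappa]$, so that the failure of positivity at $+\infty$ no longer matters. Concretely, I would assume $a>0$ (the case $a=0$ forces $b=0$ and the inequality is trivial), set $t_o:=b/a\in[0,\kappa]$, and introduce for $t\ge0$ the function
$$ f(t):=\frac{(1+t)^p-1-t^p}{t},\qquad f(0):=p,$$
exactly as in Lemma~\ref{ABC-2}. The continuity of $f$ on $[0,+\infty)$ (hence on $[0,\kappa]$) is established there: near $t=0$ one has $(1+t)^p=1+pt+o(t)$, so $f(t)\to p$, and $f$ is manifestly continuous on $(0,+\infty)$.

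The key point is that by~\eqref{9sghjtrf-2} of Lemma~\ref{ABC-1}, applied with $a:=1$ and $b:=t$, we have $(1+t)^p>1+t^p$ for every $t>0$, hence $f(t)>0$ for all $t\in(0,+\infty)$; and $f(0)=p>0$ as well. Therefore $f$ is a continuous, strictly positive function on the compact set $[0,\kappa]$, so
$$ c_{p,\kappa}:=\min_{[0,\kappa]} f>0 $$
is well defined and positive. (Here is where the restriction $b/a\le\kappa$ is used in an essential way: without it the infimum over all of $[0,+\infty)$ could be $0$, as the counterexample preceding the statement shows, since $f(t)\to 0$ as $t\to+\infty$ when $p\in(1,2)$.)

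With $c_{p,\kappa}$ in hand the conclusion follows by the same algebra as in Lemma~\ref{ABC-2}: writing $t_o=b/a$,
\begin{equation*}\begin{split}
(a+b)^p-a^p-b^p-c_{p,\kappa}\,a^{p-1}b
&= a^p\big((1+t_o)^p-1-t_o^p-c_{p,\kappa}\,t_o\big)\\
&= a^p\,t_o\big(f(t_o)-c_{p,\kappa}\big)\ge 0,
\end{split}\end{equation*}
because $t_o\in[0,\kappa]$ gives $f(t_o)\ge c_{p,\kappa}$, and $a^p t_o\ge 0$. This proves the claim. There is no genuine obstacle here beyond making sure the interval over which one minimizes is compact and that $f$ does not vanish on it; the only subtlety worth flagging is precisely that one must not attempt to take the minimum over all of $[0,+\infty)$, which is why the hypothesis $b/a\in[0,\kappa]$ appears.
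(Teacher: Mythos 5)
Your proposal is correct and follows essentially the same route as the paper's own proof: the same auxiliary function $f(t)=\frac{(1+t)^p-1-t^p}{t}$ extended by $f(0):=p$, positivity from~\eqref{9sghjtrf-2}, the minimum $c_{p,\kappa}:=\min_{[0,\kappa]}f>0$ over the compact interval, and the identical factorization $a^p t_o\big(f(t_o)-c_{p,\kappa}\big)\ge0$. Your added remark explaining why the constraint $b/a\le\kappa$ is essential (since $f(t)\to0$ as $t\to+\infty$ for $p\in(1,2)$) is consistent with the counterexample discussed in the paper just before the statement.
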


\begin{proof}
The proof is a variation of
the one of Lemma~\ref{ABC-2}. Full details
are provided for the facility of the reader.
We set~$t_o:=\frac{b}{a}\in [0,\kappa]$.
For any~$t>0$, we let
$$ f(t):=\frac{ (1+t)^p-1-t^p }{ t }.$$
{F}rom~\eqref{9sghjtrf-2} (used here with~$a:=1$ and~$b:=t$),
we know that~$f(t)>0$ for any~$t>0$. Moreover,
$f$ can be continuously extended over~$[0,+\infty)$
by setting~$f(0):=p$. Therefore
$$ c_{p,\kappa}:=\min_{[0,\kappa]} f>0.$$
As a consequence,
\begin{eqnarray*}
&& (a+b)^p- a^p -b^p - c_{p,\kappa}\,a^{p-1} b\\
&=& a^p \big( (1+t_o)^p -1- t_o^p -c_{p,\kappa}\, t_o\big)\\
&=& a^p t_o \big( f(t_o)-c_{p,\kappa}\big)\\
&\ge& 0,\end{eqnarray*}
as desired.
\end{proof}

Now we consider the functional introduced in~\eqref{def I2},
deal with the path induced by the function~$z$
in~\eqref{tale} (suitably scaled and cut-off)
and show that the associated
mountain pass level for~$\mathcal{I}^\star_\varepsilon$
lies below the critical
threshold~$\frac{s}{n} S^{\frac{n}{2s}}$
(see Proposition~\ref{PScond2}).
The precise result goes as follows:

\begin{lemma}\label{7cu889k9}
There exists~$\mu_\star\in(0,\mu_0)$ such that if~$\mu\in(0,\mu_\star)$
then we have
\begin{equation}\label{8tyunbvc666678}
\sup_{t\ge0} \mathcal{I}^\star_\varepsilon(t\bar Z_{\mu,\xi})<
\frac{s}{n} S^{\frac{n}{2s}}.\end{equation}
\end{lemma}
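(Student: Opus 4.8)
The goal is to bound the supremum over $t\ge 0$ of $\mathcal{I}_\varepsilon^\star(t\bar Z_{\mu,\xi})$ strictly below $\frac{s}{n}S^{\frac{n}{2s}}$, for $\mu$ small. First I would note that by Lemma~\ref{yu799} (or rather its proof), $\mathcal{I}_\varepsilon^\star(t\bar Z_{\mu,\xi})\to-\infty$ as $t\to+\infty$ uniformly for $\mu\in(0,\mu_1)$, and $\mathcal{I}_\varepsilon^\star(0)=0$, so the supremum is attained at some $t=t_\mu\ge 0$ lying in a bounded interval $[0,T_1]$ independent of $\mu$; moreover $t_\mu$ is bounded away from $0$ since for small $t>0$ the quadratic term dominates and $\mathcal{I}_\varepsilon^\star(t\bar Z_{\mu,\xi})>0$ while we can exhibit some fixed value of $t$ at which the functional is, say, $\le \frac12 S^{n/2s}$ (using $p+1>2$). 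So it suffices to estimate $\mathcal{I}_\varepsilon^\star(t\bar Z_{\mu,\xi})$ for $t$ in a fixed compact subset of $(0,+\infty)$.

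\textbf{Key steps.} Write, using \eqref{def I2} and \eqref{9sg78ujohFgj},
\begin{equation*}
\mathcal{I}_\varepsilon^\star(t\bar Z_{\mu,\xi})
=\frac{t^2}{2}[\bar Z_{\mu,\xi}]_a^2
-\frac{1}{p+1}\int_{\R^n}\Big((U_\epsilon+t\bar Z_{\mu,\xi})^{p+1}(x,0)-U_\epsilon^{p+1}(x,0)\Big)\,dx
+t\int_{\R^n}U_\epsilon^p(x,0)\,\bar Z_{\mu,\xi}(x,0)\,dx.
\end{equation*}
The last term is $\ge 0$ but must be \emph{subtracted} after bounding the $(p+1)$-term from below; here the elementary inequalities in Lemmata~\ref{ABC-2} and~\ref{ABC-3} are the essential tools. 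Since $p=\frac{n+2s}{n-2s}$ can be either $\ge 2$ or in $(1,2)$ depending on $n,s$, I would split into these two cases. When $p\ge 2$, Lemma~\ref{ABC-2} with $a=U_\epsilon$, $b=t\bar Z_{\mu,\xi}$ gives
\begin{equation*}
(U_\epsilon+t\bar Z_{\mu,\xi})^{p+1}\ge U_\epsilon^{p+1}+t^{p+1}\bar Z_{\mu,\xi}^{p+1}+c_{p+1}\,t\,U_\epsilon^p\,\bar Z_{\mu,\xi},
\end{equation*}
so that the positive linear term $t\int U_\epsilon^p\bar Z_{\mu,\xi}$ is \emph{absorbed} (up to a constant factor $\tfrac{c_{p+1}}{p+1}$) by the lower bound, leaving
\begin{equation*}
\mathcal{I}_\varepsilon^\star(t\bar Z_{\mu,\xi})\le \frac{t^2}{2}[\bar Z_{\mu,\xi}]_a^2-\frac{t^{p+1}}{p+1}\int_{\R^n}\bar\phi^{p+1}z_{\mu,\xi}^{p+1}\,dx.
\end{equation*}
When $p\in(1,2)$ I would use Lemma~\ref{ABC-3} instead, which requires $\frac{b}{a}=\frac{t\bar Z_{\mu,\xi}}{U_\epsilon}$ to stay bounded by some fixed $\kappa$; this is where the boundedness of $U_\epsilon$ from below on compact sets and of $t$ in a compact set is used, but $\bar Z_{\mu,\xi}$ itself is \emph{not} uniformly bounded as $\mu\to 0$ near $\xi$, so the cleanest route is to only absorb the linear term on the region where $\bar Z_{\mu,\xi}\le \kappa U_\epsilon$ and to bound the remaining region (a neighborhood of $\xi$ of radius $\sim\mu$) by $O(\mu^{n-2s})$ using \eqref{8dvsuyuscal} and Hölder. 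In either case, after using Lemma~\ref{LE:NU90} to replace $\int\bar\phi^{p+1}z_{\mu,\xi}^{p+1}$ by $\|z\|_{L^{p+1}}^{p+1}-O(\mu^n)=S^{n/2s}-O(\mu^n)$ and \eqref{8dvsuyuscal-2} to bound $[\bar Z_{\mu,\xi}]_a^2\le S^{n/2s}+C\mu^{n-2s}$, I arrive at
\begin{equation*}
\mathcal{I}_\varepsilon^\star(t\bar Z_{\mu,\xi})\le \max_{t\ge 0}\Big(\frac{t^2}{2}(S^{n/2s}+C\mu^{n-2s})-\frac{t^{p+1}}{p+1}(S^{n/2s}-C\mu^{n})\Big)+C\mu^{n-2s}.
\end{equation*}
The unperturbed maximum $\max_{t\ge 0}\big(\frac{t^2}{2}A-\frac{t^{p+1}}{p+1}B\big)$ with $A=B=S^{n/2s}$ and $p+1=2^*_s$ equals exactly $\frac{s}{n}S^{n/2s}$ (a direct computation using $\frac12-\frac1{p+1}=\frac{s}{n}$); a perturbation of $A$ by $+C\mu^{n-2s}$ and $B$ by $-C\mu^n$ changes this maximum by $O(\mu^{n-2s})$. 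So the right-hand side is $\frac{s}{n}S^{n/2s}+O(\mu^{n-2s})$, which is not yet strictly less than the threshold.

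\textbf{The main obstacle.} The crux, exactly as in the classical Brezis--Nirenberg argument, is that the naive estimate only gives $\le \frac{s}{n}S^{n/2s}+O(\mu^{n-2s})$, which is too weak. To gain the strict inequality one must extract a \emph{negative} contribution of order strictly larger than $\mu^{n-2s}$ (for $n>4s$ this is the $O(\mu^{n-2s})$ coming from the interaction term $t\int U_\epsilon^p\bar Z_{\mu,\xi}$ itself, which is genuinely of order $\mu^{\frac{n-2s}{2}}\gg\mu^{n-2s}$ and has the right sign after the absorption; for borderline dimensions one needs the finer expansion of $[\bar\phi z_{\mu,\xi}]^2_{\dot H^s}$). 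Concretely, I would \emph{not} fully absorb the linear term but keep a portion of $t\int U_\epsilon^p\bar Z_{\mu,\xi}\,dx$, estimate it from below by $c\,\mu^{\frac{n-2s}{2}}$ using that $U_\epsilon>0$ is continuous and positive near $\xi$ (Proposition~\ref{prop:pos}, Corollary~\ref{coro:conti}) and that $\int_{B_{\mu_0/2}(\xi)}z_{\mu,\xi}\,dx\ge c\mu^{\frac{n-2s}{2}}$, and observe that since $\frac{n-2s}{2}<n-2s$ whenever $n>2s$ — wait, that inequality is false; rather $\frac{n-2s}{2}<n-2s\iff n>2s$, which holds — so this term dominates the error $C\mu^{n-2s}$ precisely when $n>2s$, which is our standing hypothesis. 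Thus, choosing $\mu_\star$ small enough that $c\,\mu^{\frac{n-2s}{2}}>C\mu^{n-2s}$ for $\mu\in(0,\mu_\star)$ (also shrinking $\mu_\star\le\mu_1$), the strict inequality \eqref{8tyunbvc666678} follows. I expect the bookkeeping of which powers of $\mu$ appear, and verifying that the interaction term really contributes at order $\mu^{\frac{n-2s}{2}}$ with the correct sign after the Lemma~\ref{ABC-2}/\ref{ABC-3} absorption, to be the delicate point; everything else is the elementary one-variable optimization above together with the already-established norm estimates \eqref{8dvsuyuscal}, \eqref{8dvsuyuscal-2} and Lemma~\ref{LE:NU90}.
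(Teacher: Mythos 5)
Your overall architecture (reduce to $t\in[0,T_1]$ via Lemma~\ref{yu799}, bound $G^\star$ from below by the pure critical power plus an interaction remainder using the elementary inequalities, then a one-variable maximization in which a gain of some order $\mu^\beta$ must beat the $O(\mu^{n-2s})$ error coming from \eqref{8dvsuyuscal-2}) is the same as the paper's. The genuine gap is in the mechanism you propose for the strict gain. You claim that a negative portion of the linear interaction term $t\int_{\R^n}U_\epsilon^p(x,0)\bar Z_{\mu,\xi}(x,0)\,dx\sim\mu^{\frac{n-2s}{2}}$ survives ``after the Lemma~\ref{ABC-2}/\ref{ABC-3} absorption''. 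It cannot: that term enters $\mathcal{I}^\star_\epsilon$ with a plus sign, and the cross term produced by any pointwise convexity inequality of the type $(a+b)^{p+1}\ge a^{p+1}+b^{p+1}+c\,a^{p}b$ valid for all $b/a\ge0$ has constant at most $p+1$, since $\big((1+\tau)^{p+1}-1-\tau^{p+1}\big)/\tau\to p+1$ as $\tau\to0^+$; moreover the bulk of $\int u_\epsilon^p\bar\phi z_{\mu,\xi}\,dx$ comes from the region $|x-\xi|\sim\mu_0$, where $\tau=t\bar\phi z_{\mu,\xi}/u_\epsilon=O(\mu^{\frac{n-2s}{2}})$ is precisely the regime in which no surplus over $p+1$ is available. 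So no negative multiple of the linear term can be extracted; its cancellation is already built into $G^\star$ (it follows from Lemma~\ref{ABC-1} alone, as in \eqref{FUND-ABC-2}), and the strict gain must come from the next-order interaction terms. This is what the paper's inequality \eqref{FUND-ABC} provides: for $n\in(2s,4s]$ (where $p\ge2$) the gain is $c\int u_\epsilon\,(t\bar\phi z_{\mu,\xi})^{2^*_s-1}\,dx\gtrsim\mu^{\frac{n-2s}{2}}$, obtained from Lemma~\ref{ABC-2} with the roles of $a$ and $b$ swapped; for $n>4s$ (where $p$ may be less than $2$, so Lemma~\ref{ABC-2} is unavailable) the paper works on the annulus $B_{2\sqrt{\mu}}(\xi)\setminus B_{\sqrt{\mu}}(\xi)$, where $t\bar\phi z_{\mu,\xi}/u_\epsilon$ is bounded uniformly for all $t\in[0,T_1]$, and Lemma~\ref{ABC-3} yields the quadratic gain $c\int u_\epsilon^{p-1}(t\bar\phi z_{\mu,\xi})^2\,dx\gtrsim\mu^{\frac n2}$, which beats $\mu^{n-2s}$ exactly because $n>4s$.

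Two further points. First, your proposal offers no valid route to a gain in the regime $n>6s$, where $p<2$: there the only mechanism you name is the (invalid) linear-term one, and your invocation of Lemma~\ref{ABC-3} is used to control an error near $\xi$, not to produce the gain; note also that the ratio condition in Lemma~\ref{ABC-3} degenerates as $t\to0$ if you try to place the gain on a small ball around $\xi$, which is why the paper formulates the gain as $u_\epsilon^{p-1}(t\bar\phi z_{\mu,\xi})^2$ on the annulus, where the condition holds for every $t\in[0,T_1]$. Second, your intermediate claim that Lemma~\ref{ABC-2} ``absorbs'' the linear term up to the factor $c_{p+1}/(p+1)$ and leaves the clean bound $\frac{t^2}{2}[\bar Z_{\mu,\xi}]_a^2-\frac{t^{p+1}}{p+1}\int\bar\phi^{p+1}z_{\mu,\xi}^{p+1}\,dx$ is not justified as written (it would require $c_{p+1}\ge p+1$, while the lemma only provides some $c_{p+1}>0$); the bound is true, but it comes directly from Lemma~\ref{ABC-1}. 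Finally, the verification that the gain is actually felt at the maximum, carried out in the paper through the expansion $T(\mu)=1-c_o\mu^\beta+o(\mu^\beta)$ of the maximizer of $\Psi$, is only hinted at in your sketch and would need to be made precise.
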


\begin{proof} We will take~$\mu_\star\le \mu_1$,
where~$\mu_1>0$ was introduced in
Lemma~\ref{yu799}.
We also take~$T_1$ as in Lemma~\ref{yu799}. Then, by~\eqref{789hgshh67g000io},
\begin{equation}\label{8tyunbvc666678-pre}
\sup_{t\ge T_1} 
\sup_{\mu\in(0,\mu_\star)}
\mathcal{I}^\star_\varepsilon(t\bar Z_{\mu,\xi})
\le \sup_{t\ge T_1}
\sup_{\mu\in(0,\mu_1)}
\mathcal{I}^\star_\varepsilon(t\bar Z_{\mu,\xi}) 
\le 0
<\frac{s}{n} S^{\frac{n}{2s}}.\end{equation}
Consequently, 
we have that
the claim in~\eqref{8tyunbvc666678}
holds true if we prove that,
for any~$\mu\in(0,\mu_\star)$,
\begin{equation}\label{9ikUUYTGp}
\sup_{t\in[0,T_1]} \mathcal{I}^\star_\varepsilon(t\bar Z_{\mu,\xi})<
\frac{s}{n} S^{\frac{n}{2s}}.\end{equation}
To this goal, we set
\begin{equation}\label{CJ}
m:=\left\{
\begin{matrix}
2 & {\mbox{ if }} n>4s,\\
2^*_s-1 & {\mbox{ if }} n\in(2s,4s],
\end{matrix}
\right.
\end{equation}
and
\begin{equation}\label{CJ-2}
\Omega:=\left\{
\begin{matrix}
B_{{2}{\sqrt{\mu}}}(\xi)\setminus B_{{\sqrt{\mu}}}(\xi)
& {\mbox{ if }} n>4s,\\
\R^n & {\mbox{ if }} n\in(2s,4s].
\end{matrix}
\right.
\end{equation}
For further reference, we point out that, if~$n\in(2s,4s]$, then~$m-2=\frac{6s-n}{n-2s}>0$,
and so
\begin{equation}\label{CJ-200}
{\mbox{$m-2\ge0$ for every $n>2s$.}}
\end{equation}
We claim that, for any~$t\in [0,T_1]$,
any~$\mu\in(0,\mu_\star)$
and any~$x\in \Omega$, we have
\begin{equation}\label{FUND-ABC}
G^\star\big(x,t\bar\phi(x) z_{\mu,\xi}(x)\big)\ge
\frac{t^{2^*_s}\bar\phi^{2^*_s} (x)\,
z_{\mu,\xi}^{2^*_s}(x)}{2^*_s}+\frac{c\,
u_\varepsilon^{2^*_s-m}(x)
\,t^m\bar\phi^m (x)\,z_{\mu,\xi}^m(x)}{m},\end{equation}
for some~$c>0$.

To prove it, we distinguish two cases, according to
whether~$n>4s$ or~$n\in(2s,4s]$.
If~$n>4s$, we take~$a:=u_\varepsilon(x)$
and~$b\ge0$, with~$b\le
t\bar\phi(x) z_{\mu,\xi}(x)$, and~$x\in\Omega=B_{{2}{\sqrt{\mu}}}
(\xi)\setminus B_{{\sqrt{\mu}}}(\xi)$.
Notice that, in this case,
\begin{equation}\label{FJghy67890hgfddfg}
a\ge
\inf_{ B_{2\sqrt{\mu}}(\xi)\setminus B_{{\sqrt{\mu}}} (\xi) }
u_\varepsilon\ge
\inf_{ B_{2}(\xi)}
u_\varepsilon \ge a_0,\end{equation}
for some~$a_0>0$.
Moreover, from~\eqref{tale},
$$ b\le t z_{\mu,\xi}(x) =
t\mu^{-\frac{n-2s}{2}} z\left( \frac{x-\xi}{\mu}\right)
= \frac{c_\star t\mu^{-\frac{n-2s}{2}} }{ \left( 1+ 
\left|\frac{x-\xi}{\mu}\right|^2
\right)^{\frac{n-2s}{2}}}
= \frac{c_\star t\mu^{\frac{n-2s}{2}} }{ \left( \mu^2+ 
|x-\xi|^2\right)^{\frac{n-2s}{2}}}
.$$
Since~$x\in B_{2\sqrt{\mu}}(\xi)\setminus B_{{\sqrt{\mu}}} (\xi)$,
we obtain that~$|x-\xi|\ge {\sqrt{\mu}}$ and so
$$ b\le 
\frac{c_\star t\mu^{\frac{n-2s}{2}} }{ \left( \mu^2+
\mu\right)^{\frac{n-2s}{2}}} \le
\frac{c_\star t\mu^{\frac{n-2s}{2}} }{ 
\mu^{\frac{n-2s}{2}}} \le c_\star T_1.$$
{F}rom this and~\eqref{FJghy67890hgfddfg} we obtain that~$b/a\le\kappa$,
for some~$\kappa>0$, hence we can apply
Lemma~\ref{ABC-3} and obtain that
\begin{eqnarray*}
G^\star\big(x,t\bar\phi(x) z_{\mu,\xi}(x)\big)
&=&\int_0^{t\bar\phi(x) z_{\mu,\xi}(x)} \big[(u_\epsilon(x)+b)^p-u_\epsilon^p(x)\big]\,db
\\&=&
\int_0^{t\bar\phi(x) z_{\mu,\xi}(x)} \big[ (a+b)^p-a^p\big]\,db
\\ &\ge& \int_0^{t\bar\phi(x) z_{\mu,\xi}(x)} \big[
b^p +c_{p,\kappa} a^{p-1}b\big]\,db
\\ &=& \frac{
\big( t\bar\phi(x) z_{\mu,\xi}(x)\big)^{p+1} }{p+1}
+ c_{p,\kappa} u_\epsilon^{p-1}(x)\, \frac{\big(
t\bar\phi(x) z_{\mu,\xi}(x) \big)^2}{2}.
\end{eqnarray*}
This and~\eqref{CJ}
complete the proof of~\eqref{FUND-ABC}
when~$n>4s$ (recall that~$p+1=2^*_s$).

Now we prove~\eqref{FUND-ABC} when~$n\in(2s,4s]$.
In this case, we 
observe that
$$ p=\frac{n+2s}{n-2s}\ge 2.$$
So we choose~$a\ge0$,
with~$a\le t\bar\phi(x) z_{\mu,\xi}(x)$,
and~$b:=u_\varepsilon(x)$, and we can
use Lemma~\ref{ABC-2} to obtain that
\begin{eqnarray*}
G^\star\big(x,t\bar\phi(x) z_{\mu,\xi}(x)\big)
&=&\int_0^{t\bar\phi(x) z_{\mu,\xi}(x)} \big[
(u_\epsilon(x)+a)^p-u_\epsilon^p(x)\big]\,da
\\&=&
\int_0^{t\bar\phi(x) z_{\mu,\xi}(x)} \big[
(a+b)^p-b^p\big]\,da
\\ &\ge& \int_0^{t\bar\phi(x) z_{\mu,\xi}(x)} \big[
a^p +c_{p} a^{p-1}b\big]\,da
\\ &=&
\frac{ \big( t\bar\phi(x) z_{\mu,\xi}(x)\big)^{p+1} }{p+1}
+c_p \frac{ \big( t\bar\phi(x) z_{\mu,\xi}(x)\big)^{p} }{p}\,
u_\epsilon(x).\end{eqnarray*}
This and~\eqref{CJ}
imply~\eqref{FUND-ABC}
when~$n\in(2s,4s]$.
With this, we have completed the proof of~\eqref{FUND-ABC}.

Now we claim that,
for any~$t\in [0,T_1]$,
any~$\mu\in(0,\mu_\star)$
and any~$x\in \R^n$,
\begin{equation}\label{FUND-ABC-2}
G^\star\big(x,t\bar\phi(x) z_{\mu,\xi}(x)\big)\ge
\frac{t^{2^*_s}\bar\phi^{2^*_s} (x)\,
z_{\mu,\xi}^{2^*_s}(x)}{2^*_s}
.\end{equation}
We remark that~\eqref{FUND-ABC} is
a stronger inequality than~\eqref{FUND-ABC-2}, but~\eqref{FUND-ABC}
only holds in~$\Omega$, while~\eqref{FUND-ABC-2} holds in the whole
of~$\R^n$ (this is an advantadge in the case~$n>4s$, according to~\eqref{CJ-2}).
To prove~\eqref{FUND-ABC-2},
we use Lemma~\ref{ABC-1}, with~$a:=u_\epsilon(x)$
and~$b\ge0$, to see that
\begin{eqnarray*}
G^\star\big(x,t\bar\phi(x) z_{\mu,\xi}(x)\big)
&=&\int_0^{t\bar\phi(x) z_{\mu,\xi}(x)} \big[(u_\epsilon(x)+b)^p-u_\epsilon^p(x)\big]\,db
\\&=&
\int_0^{t\bar\phi(x) z_{\mu,\xi}(x)}\big[ (a+b)^p-a^p\big]\,db
\\ &\ge& \int_0^{t\bar\phi(x)z_{\mu,\xi}(x)} b^p\,db
\\ &=& \frac{ \big( t\bar\phi(x)\,z_{\mu,\xi}(x)\big)^{p+1} }{p+1},
\end{eqnarray*}
and this establishes~\eqref{FUND-ABC-2}.

By combining~\eqref{FUND-ABC} and~\eqref{FUND-ABC-2},
we obtain that
\begin{equation}\label{FUNF-78h}
\begin{split}
& \int_{\R^n} G^\star\big(x,t\bar\phi(x) z_{\mu,\xi}(x)\big)\,dx\\
&\qquad=
\int_{\R^n\setminus\Omega} G^\star\big(x,t\bar\phi(x) z_{\mu,\xi}(x)\big)\,dx
+
\int_{\Omega} G^\star\big(x,t\bar\phi(x) z_{\mu,\xi}(x)\big)\,dx\\
&\qquad\ge
\int_{\R^n\setminus\Omega}
\frac{t^{2^*_s}\bar\phi^{2^*_s} (x)\,
z_{\mu,\xi}^{2^*_s}(x)}{2^*_s}\,dx
\\ &\qquad\qquad+\int_{\Omega}
\frac{t^{2^*_s}\bar\phi^{2^*_s} (x)\,
z_{\mu,\xi}^{2^*_s}(x)}{2^*_s}+\frac{c\,
u_\varepsilon^{2^*_s-m}(x)
\,t^m\bar\phi^m (x)\,z_{\mu,\xi}^m(x)}{m}\,dx
\\ &\qquad\ge
\frac{t^{2^*_s}}{2^*_s}
\int_{\R^n} \bar\phi^{2^*_s} (x)\,z_{\mu,\xi}^{2^*_s}(x)\,dx
+ \frac{c\,t^m}{m}
\int_{\Omega} u_\varepsilon^{2^*_s-m}(x)
\,\bar\phi^m (x)\,z_{\mu,\xi}^m(x) \,dx.
\end{split}
\end{equation}
Now we claim that
\begin{equation}\label{89-0fgUIjjk906}
\int_{\Omega} u_\varepsilon^{2^*_s-m}(x)
\,\bar\phi^m (x)\,z_{\mu,\xi}^m(x) \,dx \ge c' \mu^\beta,
\end{equation}
for some~$c'>0$, where
\begin{equation}\label{CJ-beta}
\beta:=\left\{
\begin{matrix}
\displaystyle\frac{n}{2} & {\mbox{ if }} n>4s,\\
\, \\
\displaystyle\frac{n-2s}{2} & {\mbox{ if }} n\in(2s,4s],
\end{matrix}
\right.
\end{equation}
To prove this, when~$n>4s$
we remark that, for small~$\mu$,
we have~$B_{{2}{\sqrt{\mu}}}(\xi)\subseteq B_{\mu_0/2}(\xi)$,
and~$\bar\phi=1$ in this set, due to~\eqref{PP0dsfhuw5r6t78y009}.
So, we use~\eqref{CJ} and~\eqref{CJ-2} and we have that
\begin{eqnarray*}
&& \int_{\Omega} u_\varepsilon^{2^*_s-m}(x)
\,\bar\phi^m (x)\,z_{\mu,\xi}^m(x) \,dx
=
\int_{B_{{2}{\sqrt{\mu}}}(\xi)\setminus B_{{\sqrt{\mu}}}(\xi)}
u_\varepsilon^{2^*_s-2}(x)\,z_{\mu,\xi}^2(x) \,dx
\\ &&\qquad\ge
\inf_{B_2(\xi)} u_\varepsilon^{2^*_s-2}
\,\int_{B_{{2}{\sqrt{\mu}}}(\xi)\setminus B_{{\sqrt{\mu}}}(\xi)}
z_{\mu,\xi}^2(x) \,dx
\\ &&\qquad=
\inf_{B_2(\xi)} u_\varepsilon^{2^*_s-2}
\,\mu^{-(n-2s)}\,\int_{B_{{2}{\sqrt{\mu}}}(\xi)\setminus B_{{\sqrt{\mu}}}(\xi)}
z^2\left(\frac{x-\xi}{\mu}\right) \,dx
\\&&\qquad=
\inf_{B_2(\xi)} u_\varepsilon^{2^*_s-2}
\,\mu^{2s}\,\int_{B_{\frac{2}{\sqrt{\mu}}}
\setminus B_{\frac{1}{\sqrt{\mu}}}}
z^2(y)\,dy.
\end{eqnarray*}
Thus, recalling~\eqref{tale} and taking~$\mu$ suitably small, we have that
\begin{eqnarray*}
&& \int_{\Omega} u_\varepsilon^{2^*_s-m}(x)
\,\bar\phi^m (x)\,z_{\mu,\xi}^m(x) \,dx
\ge c_1 \mu^{2s}\int_{1/\sqrt{\mu}}^{2/\sqrt{\mu}}
\frac{\rho^{n-1}d\rho}{(1+\rho^2)^{n-2s}}
\\ &&\qquad
\ge c_1 \mu^{2s}\int_{1/\sqrt{\mu}}^{2/\sqrt{\mu}}
\frac{\rho^{n-1}d\rho}{(2\rho^2)^{n-2s}}
= c_2 \mu^{\frac{n}{2}},
\end{eqnarray*}
for some~$c_1$, $c_2>0$.
This proves~\eqref{89-0fgUIjjk906} when~$n>4s$.

Now we prove~\eqref{89-0fgUIjjk906} when~$n\in(2s,4s]$.
For this, we exploit~\eqref{CJ} and~\eqref{CJ-2}
and we observe that
\begin{eqnarray*}
&& \int_{\Omega} u_\varepsilon^{2^*_s-m}(x)\,\bar{\phi}^m(x)\, z_{\mu,\xi}^m(x)\,dx
=
\int_{\R^n} u_\varepsilon (x)\,\bar{\phi}^{2^*_s-1}(x)\, z_{\mu,\xi}^{2^*_s-1}(x)\,dx
\\ &&\qquad
\ge \mu^{-\frac{n+2s}{2}}
\int_{B_{2\sqrt{\mu}}(\xi)} u_\varepsilon(x)\, 
z^p \left(\frac{x-\xi}{\mu}\right)\,dx
\\ &&\qquad\ge
\mu^{-\frac{n+2s}{2}}
\,\inf_{B_1(\xi)}u_\varepsilon
\,\int_{B_{2\sqrt{\mu}}(\xi)} 
z^p\left(\frac{x-\xi}{\mu}\right)\,dx
\\ &&\qquad=
\mu^{\frac{n-2s}{2}}
\,\inf_{B_1(\xi)}u_\varepsilon\,
\int_{B_1} 
z^p(y)\,dy
\\ &&\qquad\ge c' \mu^{\frac{n-2s}{2}},
\end{eqnarray*}
for some~$c'>0$, which establishes~\eqref{89-0fgUIjjk906} when~$n\in(2s,4s]$.
The proof of~\eqref{89-0fgUIjjk906}
is thus complete.

Now,
by inserting \eqref{89-0fgUIjjk906}
into~\eqref{FUNF-78h}, we obtain that
\begin{equation}\label{FUNF-78hIII}
\int_{\R^n} G^\star\big(x,t\bar\phi(x) z_{\mu,\xi}(x)\big)\,dx\ge
\frac{t^{2^*_s}}{2^*_s}
\int_{\R^n} \bar\phi^{2^*_s} (x)\,z_{\mu,\xi}^{2^*_s}(x)\,dx
+ \frac{c\,t^m\,\mu^\beta}{m},\end{equation}
for some~$c>0$, up to renaming constants.

By Lemma~\ref{LE:NU90}
and~\eqref{FUNF-78hIII},
we conclude that
\begin{equation*}
\int_{\R^n} G^\star\big(x,t\bar\phi(x) z_{\mu,\xi}(x)\big)\,dx\ge
\frac{t^{2^*_s}}{2^*_s}
\int_{\R^n} \,z_{\mu,\xi}^{2^*_s}(x)\,dx
+ \frac{c\,\mu^\beta\,t^m}{m}-\frac{C\mu^{n}\,t^{2^*_s}}{2^*_s}.\end{equation*}
This and~\eqref{8dvsuyuscal} give that
\begin{equation*}
\int_{\R^n} G^\star(x,t\bar\phi z_{\mu,\xi})\,dx \ge
\frac{t^{2^*_s} \,S^{\frac{n}{2s}}}{2^*_s}+
\frac{c\,\mu^\beta\,t^m}{m}
-\frac{C \mu^n \,t^{2^*_s}}{2^*_s},\end{equation*}
for some~$c$, $C>0$.
As a consequence, recalling~\eqref{8dvsuyuscal-2},
we obtain that
\begin{eqnarray*}
\mathcal{I}^\star_\varepsilon(t\bar Z_{\mu,\xi}) &\le&
\frac{t^2\,[\bar Z_{\mu,\xi}]_a^2}{2}
-
\frac{t^{2^*_s} \,S^{\frac{n}{2s}}}{2^*_s}-
\frac{c\,\mu^\beta\,t^m}{m}
+\frac{C \mu^n \,t^{2^*_s}}{2^*_s}
\\ &\le&
\frac{t^{2} \,S^{\frac{n}{2s}}}{2}
-
\frac{t^{2^*_s} \,S^{\frac{n}{2s}}}{2^*_s}-
\frac{c\,\mu^\beta\,t^m}{m}
+\frac{C \mu^n \,t^{2^*_s}}{2^*_s}
+\frac{C \mu^{n-2s} \,t^{2}}{2},
\end{eqnarray*}
and so, up to renaming constants,
\begin{equation}\label{8dfuihgs019}
\mathcal{I}^\star_\varepsilon(t\bar Z_{\mu,\xi})
\le S^{\frac{n}{2s}} \Psi(t),\end{equation}
with
$$ \Psi(t):=
\frac{t^{2}}{2}
-\frac{t^{2^*_s} }{2^*_s}-
\frac{c\,\mu^\beta\,t^m}{m}
+\frac{C \mu^n \,t^{2^*_s}}{2^*_s}
+\frac{C \mu^{n-2s} \,t^{2}}{2},$$
for some~$C$, $c>0$.

Now we claim that
\begin{equation}\label{PSIMA}
\sup_{t\ge0}\Psi(t)<\frac{s}{n}
,\end{equation}
provided that~$\mu>0$ is suitably small.
To check this, we notice that~$\Psi(0)=0$
and
$$ \lim_{t\to+\infty}\Psi(t)=-\infty,$$
since~$2^*_s>\max\{2,m\}$ (recall~\eqref{CJ}).
As a consequence, $\Psi$ attains its maximum at some point~$T\in[0,+\infty)$.
If~$T=0$, then~$\Psi(T)=0$ and~\eqref{PSIMA}
is obvious, so we can assume that~$T\in(0,+\infty)$.
Accordingly, we have that~$\Psi'(T)=0$.
Therefore
$$ 0=\frac{\Psi'(T)}{T}=
1-T^{2^*_s-2}-
c\,\mu^\beta\,T^{m-2}
+C \mu^n \,T^{2^*_s-2}
+C \mu^{n-2s}.$$
So we set
$$ \Phi_\mu(t):=
1-t^{2^*_s-2}-
c\,\mu^\beta\,t^{m-2}
+C \mu^n \,t^{2^*_s-2}
+C \mu^{n-2s}$$
and we have that~$T=T(\mu)$ is a solution
of~$\Phi_\mu(T)=0$.
We remark that
$$ \Phi_\mu'(t)=
-(2^*_s-2)(1-C \mu^n)t^{2^*_s-3}-
c\,\mu^\beta\,(m-2) t^{m-3} <0,$$
since~$m-2\ge0$ and~$(2^*_s-2)(1-C \mu^n)\ge0$
for small~$\mu$ (recall~\eqref{CJ-200}). This says that~$\Phi_\mu$ is strictly decreasing,
hence~$T=T(\mu)$ is the unique solution of~$\Phi_\mu(T(\mu))=0$.
It is now convenient to write~$\tau(\mu):=T(\mu^{\frac{1}{\beta}})$
and~$\eta:=\mu^\beta$,
so that our equation becomes
\begin{eqnarray*}
&& 0 =\Phi_\mu(T(\mu)) = \Phi_\mu (\tau(\mu^\beta))=\Phi_\mu(\tau(\eta))
\\ &&\qquad=
1-(1-C \eta^{\frac{n}{\beta}})(\tau(\eta))^{2^*_s-2}-
c\,\eta\,(\tau(\eta))^{m-2}
+C \eta^{\frac{n-2s}{\beta}}.
\end{eqnarray*}
Accordingly, if we differentiate in~$\eta$, we have that
\begin{equation}\label{0syufertyuiuytrgb}\begin{split}
& 0=\frac{\partial}{\partial \eta}\left(
1-(1-C \eta^{\frac{n}{\beta}})(\tau(\eta))^{2^*_s-2}-
c\,\eta\,(\tau(\eta))^{m-2}
+C \eta^{\frac{n-2s}{\beta}}
\right) \\
&\qquad=
-(2^*_s-2)(1-C \eta^{\frac{n}{\beta}})(\tau(\eta))^{2^*_s-3}\tau'(\eta)
+C\,\frac{n}{\beta} \eta^{\frac{n}{\beta}-1} 
(\tau(\eta))^{2^*_s-2}
\\ &\qquad\qquad-
c\,(\tau(\eta))^{m-2}
-
c(m-2)\,\eta\,(\tau(\eta))^{m-3}\tau'(\eta)
+{\frac{C(n-2s)}{\beta}} \eta^{\frac{n-2s}{\beta}-1}.
\end{split}\end{equation}
Now we claim that
\begin{equation}\label{BETA78HGV}
\frac{n-2s}{\beta}-1 >0.
\end{equation}
Indeed, using~\eqref{CJ-beta}, we see that
\begin{eqnarray*}
\frac{n-2s}{\beta}-1 &=& 
\left\{
\begin{matrix}
\displaystyle\frac{2(n-2s)}{n}-1 & {\mbox{ if }} n>4s,\\
\, \\
2-1 & {\mbox{ if }} n\in(2s,4s],
\end{matrix}\right.\\
&=&
\left\{
\begin{matrix}
\displaystyle\frac{n-4s}{n} & {\mbox{ if }} n>4s,\\
\, \\
1 & {\mbox{ if }} n\in(2s,4s],\end{matrix}\right.
\end{eqnarray*}
which proves~\eqref{BETA78HGV}.

Now we observe that when~$\mu=0$, we have that~$T=1$ is a solution
of~$\Phi_0(t)=0$, i.e.~$T(0)=1$ and so~$\tau(0)=1$.
Hence, we evaluate~\eqref{0syufertyuiuytrgb} at~$\eta=0$
and we conclude that
$$ 0=
-(2^*_s-2)\tau'(0) - c.$$
We remark that~\eqref{BETA78HGV}
was used here.
Then, we obtain
$$ \tau'(0)=-\frac{c}{2^*_s-2},$$
which gives that
$$ \tau(\eta) =1-\frac{c\eta}{2^*_s-2} + o(\eta)$$
and so
$$ T(\mu)=\tau(\mu^\beta)=1-\frac{c\mu^\beta}{2^*_s-2} + o(\mu^\beta)=
1-c_o\mu^\beta+ o(\mu^\beta),$$
for some~$c_o>0$.
Therefore
\begin{eqnarray*}
&& \sup_{t\ge0}\Psi(t) =\Psi(T(\mu))\\
&&\qquad =
(1+C \mu^{n-2s})\frac{(T(\mu))^{2}}{2}
-(1-C \mu^n)\frac{(T(\mu))^{2^*_s} }{2^*_s}-
\frac{c\,\mu^\beta\,(T(\mu))^m}{m}\\
&&\qquad=
(1+C \mu^{n-2s})\frac{(1-c_o\mu^\beta+ o(\mu^\beta))^{2}}{2}
-(1-C \mu^n)\frac{(1-c_o\mu^\beta+ o(\mu^\beta))^{2^*_s} }{2^*_s}
\\ &&\qquad\qquad-
\frac{c\,\mu^\beta\,(1-c_o\mu^\beta+ o(\mu^\beta))^m}{m}\\
&&\qquad=
(1+C \mu^{n-2s})\frac{1-2c_o\mu^\beta}{2}
-(1-C \mu^n)\frac{1-2^*_sc_o\mu^\beta}{2^*_s}-
\frac{c\,\mu^\beta}{m}
+ o(\mu^\beta)\\
&&\qquad=
\frac{1-2c_o\mu^\beta}{2}
-\frac{1-2^*_sc_o\mu^\beta}{2^*_s}-
\frac{c\,\mu^\beta}{m}
+ o(\mu^\beta)
\\ &&\qquad=
\frac{1}{2}-\frac{1}{2^*_s}
-\frac{c\,\mu^\beta}{m}
+ o(\mu^\beta)
\\ &&\qquad<
\frac{1}{2}-\frac{1}{2^*_s}
\\ &&\qquad=\frac{s}{n}
,\end{eqnarray*}
and this proves~\eqref{PSIMA}.

Using~\eqref{8dfuihgs019}
and~\eqref{PSIMA}, we obtain that
$$ \sup_{t\in[0,T_1]}
\mathcal{I}^\star_\varepsilon(t\bar Z_{\mu,\xi})
\le S^{\frac{n}{2s}} \sup_{t\ge0} \Psi(t)<
S^{\frac{n}{2s}}\cdot\frac{s}{n},$$
which proves~\eqref{9ikUUYTGp}
and so it completes the proof of Lemma~\ref{7cu889k9}.
\end{proof}

By combining \eqref{789hgbnjjjjjKK}
with Lemmata~\ref{yu799} and~\ref{7cu889k9}, we obtain:

\begin{corollary}\label{CC-below-PS}
There exists~$\mu_\star>0$ such that if~$\mu\in(0,\mu_\star)$
we have that
\begin{eqnarray*}
&& \lim_{t\to+\infty}
\mathcal{I}_\varepsilon(t\bar Z_{\mu,\xi}) =-\infty
\\{\mbox{and }}&&
\sup_{t\ge0}
\mathcal{I}_\varepsilon(t\bar Z_{\mu,\xi})<
\frac{s}{n} S^{\frac{n}{2s}}.\end{eqnarray*}
\end{corollary}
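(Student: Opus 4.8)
The statement of Corollary~\ref{CC-below-PS} follows by assembling three facts already proved in this section, so the plan is essentially a bookkeeping argument. First I would recall the inequality~\eqref{789hgbnjjjjjKK}, which says that for every $\mu>0$ and every $t\ge0$ one has $\mathcal{I}_\varepsilon(t\bar Z_{\mu,\xi})\le\mathcal{I}_\varepsilon^\star(t\bar Z_{\mu,\xi})$; this comparison is the bridge that lets us pass all estimates from the auxiliary functional $\mathcal{I}^\star_\varepsilon$ (introduced in~\eqref{def I2}, and easier to handle since it drops the sign-indefinite term $\epsilon h\tilde G$) to the functional $\mathcal{I}_\varepsilon$ that actually enters the Mountain Pass scheme.

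Next I would fix $\mu_\star>0$ to be the smaller of the threshold given by Lemma~\ref{7cu889k9} and the threshold $\mu_1$ appearing in Lemma~\ref{yu799} (note both lemmas already require $\mu\in(0,\mu_0)$, and $\mu_\star\le\mu_0$ automatically). Then for any $\mu\in(0,\mu_\star)$ the second claim of the corollary is immediate: by~\eqref{789hgbnjjjjjKK},
\[
\sup_{t\ge0}\mathcal{I}_\varepsilon(t\bar Z_{\mu,\xi})
\le\sup_{t\ge0}\mathcal{I}^\star_\varepsilon(t\bar Z_{\mu,\xi})
<\frac{s}{n}S^{\frac{n}{2s}},
\]
the last strict inequality being exactly~\eqref{8tyunbvc666678} in Lemma~\ref{7cu889k9}.

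For the first claim, about the limit as $t\to+\infty$, I would again use~\eqref{789hgbnjjjjjKK} to write $\mathcal{I}_\varepsilon(t\bar Z_{\mu,\xi})\le\mathcal{I}^\star_\varepsilon(t\bar Z_{\mu,\xi})$, and then invoke Lemma~\ref{yu799}: since $\mu\in(0,\mu_\star)\subseteq(0,\mu_1)$, the lemma gives $\sup_{\mu\in(0,\mu_1)}\mathcal{I}^\star_\varepsilon(t\bar Z_{\mu,\xi})\to-\infty$ as $t\to+\infty$, hence a fortiori $\mathcal{I}^\star_\varepsilon(t\bar Z_{\mu,\xi})\to-\infty$ for our fixed $\mu$, and therefore $\mathcal{I}_\varepsilon(t\bar Z_{\mu,\xi})\to-\infty$ as well.

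There is essentially no obstacle here: all the analytic content has already been absorbed into Lemmata~\ref{yu799} and~\ref{7cu889k9} (the delicate point being the estimate in Lemma~\ref{7cu889k9} that keeps the minimax value strictly below $\frac{s}{n}S^{n/2s}$, which relied on the fine expansions in Lemmata~\ref{ABC-1}--\ref{ABC-3} and the lower bound~\eqref{89-0fgUIjjk906}), and the corollary is just their conjunction through the sign comparison~\eqref{789hgbnjjjjjKK}. The only care needed is to make sure the single constant $\mu_\star$ is chosen below both thresholds simultaneously, which is harmless.
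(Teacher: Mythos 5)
Your proposal is correct and coincides with the paper's argument: the corollary is obtained precisely by combining the comparison~\eqref{789hgbnjjjjjKK} with Lemmata~\ref{yu799} and~\ref{7cu889k9}, choosing $\mu_\star$ below both thresholds (in the paper the $\mu_\star$ of Lemma~\ref{7cu889k9} is already taken $\le\mu_1$). Nothing further is needed.
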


The result in Corollary~\ref{CC-below-PS}
says that the path induced by the function~$\bar Z_{\mu,\xi}$
is a mountain pass path which
lies below the critical threshold given in Proposition~\ref{PScond2}
(so, from now on, the value of~$\mu\in(0,\mu_\star)$
will be fixed so that
Corollary~\ref{CC-below-PS} holds true).

\section{Proof of Theorem \ref{TH:MP}}\label{sec:proof}

In this section we establish Theorem \ref{TH:MP}. 
For this, we argue by contradiction and we suppose that $U=0$ is the only critical point 
of $\mathcal{I}_\epsilon$. As a consequence, 
the functional~$\mathcal{I}_\epsilon$
verifies 
the Palais-Smale condition below the critical level given in~\eqref{ceps2}, according to Proposition \ref{PScond2}.

In addition,~$\mathcal{I}_\epsilon$ fulfills the mountain pass geometry,
and the minimax level~$c_\epsilon$ stays strictly
below
the level~$\frac{s}{n}S^{\frac{n}{2s}}$, as shown in
Proposition~\ref{prop:zero}
and
Corollary~\ref{CC-below-PS}.

Hence, for small~$\epsilon$, we have that~$
c_\epsilon+C \epsilon^{\frac{1}{2\gamma}}$ remains strictly below~$
\frac{s}{n}S^{\frac{n}{2s}}$, thus satisfying~\eqref{ceps2}.

Then, exploiting Proposition~\ref{PScond2}
and the Mountain Pass Theorem in~\cite{AmRab, GG},
we obtain the existence of another critical point,
in contradiction with
the assumption.
This ends the proof of
Theorem \ref{TH:MP}.


\begin{thebibliography}{1}
\setlinespacing{0.98}
\frenchspacing

\bibitem{ABC} {\sc A. Ambrosetti, H. Brezis, G. Cerami}: Combined effects of concave and 
convex nonlinearities in some elliptic problems. {\it J. Funct. Anal.} {\bf 122} (2) (1994), 519--543.

\bibitem{AGP1}  {\sc A. Ambrosetti, J. Garcia Azorero, I. Peral}: 
Perturbation of $\Delta u+u^{(N+2)/(N-2)}=0$, the Scalar Curvature Problem in~$\R^N$, and related Topics. 
{\it J. Funct. Anal.} {\bf 165} (1998), 112--149. 

\bibitem{AGP} {\sc A. Ambrosetti, J. Garcia Azorero, I. Peral}: 
Elliptic variational problems in $\R^n$ with critical growth. 
{\it J. Differential Equations} {\bf 168} (2000), 10--32. 

\bibitem{ALM} {\sc  A. Ambrosetti, Y.Y. Li, A. Malchiodi}: On the Yamabe problem and the scalar 
curvature problems under boundary conditions. {\it Math. Ann.} {\bf 322} (2002), no. 4, 667--699.

\bibitem{AM} {\sc  A. Ambrosetti, A. Malchiodi}: A multiplicity result for the Yamabe problem on $S^n$. 
{\it J. Funct. Anal.} {\bf 168} (1999), no. 2, 529--561.

\bibitem{AM1} {\sc  A. Ambrosetti, A. Malchiodi}: On the symmetric 
scalar curvature problem on $S^n$. {\it J. Differential Equations} {\bf 170} (2001), no. 1, 228--245.

\bibitem{AmRab} {\sc A. Ambrosetti, P. Rabinowitz}: Dual variational methods 
in critical point theory and applications. {\it J. Funct. Anal.} {\bf 14} 
(1973), 349--381.

\bibitem{bcss}{\sc B. Barrios, E. Colorado, R. Servadei, F. Soria}: A critical fractional equation 
with concave-convex nonlinearities. 
{\it Ann. Inst. H. Poincar{\'e} Anal. Non Lin{\'e}aire}. 
\\DOI: 10.1016/j.anihpc.2014.04.003 

\bibitem{BertiM} {\sc M. Berti, A. Malchiodi}: Non-compactness and multiplicity results for the Yamabe 
problem on~$S^n$. {\it J. Funct. Anal.} {\bf 180} (2001), no. 1, 210--241.

\bibitem{Bertoin} {\sc J. Bertoin}: {\it L\'evy processes}. Volume 121 of Cambridge Tracts in
Mathematics. Cambridge University Press, Cambridge, 1996.

\bibitem{brezis} {\sc H. Brezis}: {\it Analyse fonctionnelle}. Th\'eorie et applications. 
Masson, Paris, 1983. 

\bibitem{BL} {\sc H. Brezis, E. Lieb}:
A relation between pointwise convergence of functions and convergence of functionals. 
{\it Proc. Amer. Math. Soc.} {\bf 88} (1983), no. 3, 486--490.

\bibitem{BN} {\sc H. Brezis, L. Nirenberg}:
Positive solutions of nonlinear elliptic equations involving critical
Sobolev exponents. {\it Comm. Pure Appl. Math.} {\bf 36} (1983),
no. 4, 437--477.

\bibitem{Bucur} {\sc C. D. Bucur, E. Valdinoci}: Nonlocal diffusion and applications. 
{\it Preprint} (2015), http://arxiv.org/abs/1504.08292 

\bibitem{CR} {\sc X. Cabr\'e, X. Ros-Oton}:
Sobolev and isoperimetric inequalities with monomial weights. 
{\it J. Differential Equations} {\bf 255} (2013), no. 11, 4312--4336. 

\bibitem{CabS} {\sc X. Cabr\'e, Y. Sire}: Nonlinear equations for fractional Laplacians, I: 
Regularity, maximum principles, and Hamiltonian estimates. {\it Ann. Inst. H. Poincar{\'e} Anal. Non Lin{\'e}aire} {\bf 31} (2014), no. 1, 
23--53. 

\bibitem{CS} {\sc L. Caffarelli, L. Silvestre}:
An extension problem related to the fractional Laplacian.
{\it Comm. Partial Differential Equations} {\bf 32} (2007), 1245--1260.

\bibitem{CW} {\sc F. Catrina, Z.-Q. Wang}: Symmetric solutions for the prescribed scalar curvature 
problem. {\it Indiana Univ. Math. J}. {\bf 49} (2000), no. 2, 779--813.

\bibitem{Chang} {\sc S.-Y. A. Chang, M. d. M. Gonz\'alez}:
Fractional Laplacian in conformal geometry.
{\it Adv. Math.} {\bf 226} (2011), no. 2, 1410--1432.

\bibitem{NEW1} {\sc G. Chen, Y. Zheng}:
A perturbation result for the~$Q_\gamma$ curvature problem on~${{\mathbb S}}^n$.
{\it Nonlinear Anal.} {\bf 97} (2014), 4--14. 

\bibitem{Chierchia} {\sc L. Chierchia}: {\it Lezioni di analisi matematica 2},
Aracne Roma, 1997.

\bibitem{Cing} {\sc S. Cingolani}: Positive solutions to perturbed elliptic problems in $\R^N$ 
involving critical Sobolev exponent. {\it Nonlinear Anal.} {\bf 48} (2002), no. 8, 
Ser. A: Theory Methods, 1165--1178.

\bibitem{COZ} {\sc A. Cotsiolis, N. Tavoularis}:
Best constants for Sobolev inequalities for higher order fractional
derivatives. {\it J. Math. Anal. Appl.} {\bf 295} (2004),
225--236.

\bibitem{Da} {\sc E. Dancer}:
New solutions of equations on~$\R^n$.
{\it Ann. Scuola Norm. Sup. Pisa Cl. Sci. (4)} {\bf 30} (2001), no. 3--4, 535--563.

\bibitem{DPV} {\sc E. Di Nezza, G. Palatucci, E. Valdinoci}:
Hitchhiker's guide to the fractional Sobolev spaces.
{\it Bull. Sci. Math.} {\bf 136} (2012), no. 5, 521--573.

\bibitem{vecchio} {\sc S. Dipierro, M. Medina, I. Peral, E. Valdinoci}: 
Bifurcation results for a fractional elliptic
equation with critical exponent in~$\R^n$. {\it Preprint} (2014), http://arxiv.org/pdf/1410.3076.pdf

\bibitem{DipVAl} {\sc S. Dipierro, E. Valdinoci}: A density property for fractional weighted Sobolev 
spaces. {\it Atti Accad. Naz. Lincei Rend. Lincei Mat. Appl.} {\bf 26} (2015), 1--26.

\bibitem{evans} {\sc L.C. Evans}: {\it Weak convergence methods for nonlinear 
partial differential equations}. CBMS Regional Conference Series in Mathematics, 74. 
Published for the Conference Board of the Mathematical Sciences, Washington, DC; 
by the American Mathematical Society, Providence, RI, 1990. viii+80 pp.

\bibitem{FKS} {\sc E. G. Fabes, C. E. Kenig, R. Serapioni}: 
The local regularity of solutions of degenerate elliptic equations. 
{\it Comm. Partial Differential Equations} {\bf 7(1)} (1982), 77--116. 

\bibitem{GMP} {\sc J. Garc\'ia Azorero, E. Montefusco, I. Peral}:
Bifurcation for the p-laplacian in $\mathbb{R}^N$.
{\it Adv. Differential Equations} {\bf 5} (2000), no. 4--6, 435--464.

\bibitem{GG} {\sc N. Ghoussoub, D. Preiss}:
A general mountain pass principle for locating and classifying critical points.
{\it Ann. Inst. H. Poincar{\'e} Anal. Non Lin{\'e}aire}
{\bf 6} (1989), no. 5, 321--330.

\bibitem{JIE13} {\sc M. d. M. Gonz\'alez, J. Qing}:
Fractional conformal Laplacians and fractional Yamabe problems.
{\it Anal. PDE} {\bf 6} (2013), no. 7, 1535--1576.

\bibitem{WANG15} {\sc M. d. M. Gonz\'alez, M. Wang}:
Further results on the fractional Yamabe problem:
the umbilic case. {\it Preprint} (2015), 
http://arxiv.org/abs/1503.02862

\bibitem{NEW2a} {\sc T. Jin, Y. Y. Li, J. Xiong}:
On a fractional Nirenberg problem, Part I: 
Blow up analysis and compactness of solutions. 
{\it J. Eur. Math. Soc. (JEMS)} {\bf 16} (2014), no. 6, 1111--1171.

\bibitem{NEW2b} {\sc T. Jin, Y. Y. Li, J. Xiong}:
On a fractional Nirenberg problem, Part II: Existence of solutions. 
{\it Int. Math. Res. Not. IMRN} (2015), no. 6, 1555--1589. 

\bibitem{L1} {\sc P. L. Lions}: The concentration-compactness principle in the calculus of variations. 
The limit case. I. 
{\it Rev. Mat. Iberoamericana 1} (1985), no. 1, 145--201.

\bibitem{L2} {\sc P. L. Lions}: The concentration-compactness principle in the calculus of variations. 
The limit case. II. 
{\it Rev. Mat. Iberoamericana 1} (1985), no. 2, 45--121.

\bibitem{Mal} {\sc A. Malchiodi}:
Multiple positive solutions of some elliptic equations in~$\R^n$.
{\it Nonlinear Anal.} {\bf 43} (2001), no. 2, 159--172.

\bibitem{Mal2} {\sc A. Malchiodi}: The scalar curvature problem on~$S^n$:
an approach via Morse theory. {\it Calc. Var. Partial Differential Equations}
{\bf 14} (2002), no. 4, 429--445.

\bibitem{Mawhin-Willem} {\sc J. Mawhin, M. Willem}: 
Origin and evolution of the Palais-Smale condition in critical point theory. 
{\it J. Fixed Point Theory Appl.} {\bf 7} (2010), no. 2, 265--290.

\bibitem{Milnor} {\sc J. Milnor}: {\it Morse theory}. 
Based on lecture notes by M. Spivak and R. Wells. Annals of Mathematics Studies, 
No. 51 Princeton University Press, Princeton, N.J. 1963 vi+153 pp. 

\bibitem{MOL} {\sc S.A. Mol{\v{c}}anov, E. Ostrovski{\u\i}}: 
Symmetric stable processes as traces of
degenerate diffusion processes. {\it Teor. Verojatnost. i Primenen.}
{\bf 14} (1969), 127--130.

\bibitem{giampiero} {\sc G. Palatucci, A. Pisante}: Improved Sobolev embeddings, 
profile decomposition and concentration-compactness for fractional Sobolev spaces. 
{\it Calc. Var. Partial Differential Equations} {\bf 50} (2014), no. 3-4, 799--829.

\bibitem{SV-2} {\sc R. Servadei, E. Valdinoci}: Mountain pass solutions for non-local elliptic operators. 
{\it J. Math. Anal. Appl.} {\bf 389} (2012), no. 2, 887--898.

\bibitem{SV-1} {\sc R. Servadei, E. Valdinoci}: A Brezis-Nirenberg result for non-local 
critical equations in low dimension. {\it Commun. Pure Appl. Anal.} {\bf 12} (2013), no. 6, 2445--2464.

\bibitem{SV:weak} {\sc R. Servadei, E. Valdinoci}: Weak and viscosity solutions of the fractional 
Laplace equation. {\it Publ. Mat.} {\bf 58} (2014), no. 1, 133--154.

\bibitem{SV} {\sc R. Servadei, E. Valdinoci} The Brezis-Nirenberg result for the fractional Laplacian. 
{\it Trans. Amer. Math. Soc.} {\bf 367} (2015), no. 1, 66--102.

\bibitem{Silv} {\sc L. E. Silvestre}: Regularity of the obstacle problem for a fractional power of the 
Laplace operator. Thesis (Ph.D.)--The University of Texas at Austin. 2005. 95 pp.

\bibitem{WEI} {\sc J. Wei, X. Xu}, 
Prescribing $Q$-curvature problem on~${{\mathbb S}}^n$.
{\it J. Funct. Anal.} {\bf 257} (2009), no. 7, 1995--2023.

\end{thebibliography}
\end{document}